\def\BibTeX{{\rm B\kern-.05em{\sc i\kern-.025em b}\kern-.08em
    T\kern-.1667em\lower.7ex\hbox{E}\kern-.125emX}}
\newtheorem{theorem}{Theorem}
\newtheorem{definition}{Definition}
\newtheorem{lemma}{Lemma}
\newtheorem{corollary}{Corollary}[theorem]
\newtheorem{assumption}{Assumption}   
\DeclareMathOperator*{\argmin}{arg\,min}
\begin{document}

\makeatletter
\newcommand{\thickhline}{%
    \noalign {\ifnum 0=`}\fi \hrule height 1pt
    \futurelet \reserved@a \@xhline
}
\newcolumntype{"}{@{\hskip\tabcolsep\vrule width 1pt\hskip\tabcolsep}}
\makeatother

\title{{Variance reduction for Riemannian non-convex optimization with batch size adaptation}\\
\thanks{Andi Han and Junbin Gao are with the Discipline of Business Analytics, University of Sydney Business School, University of Sydney, Australia. (Email: andi.han@sydney.edu.au, junbin.gao@sydney.edu.au). }
}

\author{Andi Han, Junbin Gao}

\maketitle
\pagestyle{headings}
\pagenumbering{arabic}

\begin{abstract}
    Variance reduction techniques are popular in accelerating gradient descent and stochastic gradient descent for optimization problems defined on both Euclidean space and Riemannian manifold. In this paper, we further improve on existing variance reduction methods for non-convex Riemannian optimization, including R-SVRG and R-SRG/R-SPIDER with batch size adaptation. We show that this strategy can achieve lower total complexities for optimizing both general non-convex and gradient dominated functions under both finite-sum and online settings. As a result, we also provide simpler convergence analysis for R-SVRG and improve complexity bounds for R-SRG under finite-sum setting. Specifically, we prove that R-SRG achieves the same near-optimal complexity as R-SPIDER without requiring a small step size. Empirical experiments on a variety of tasks demonstrate effectiveness of proposed adaptive batch size scheme.  
\end{abstract}

\begin{IEEEkeywords}
Riemannian Optimization; Non-convex Optimization; Online Optimization; Stochastic Variance Reduction; Batch Size Adaptation; 
\end{IEEEkeywords}

\section{Introduction}
Consider the following online and finite-sum optimization problems defined on a smooth Riemannian manifold $\mathcal{M}$.
\begin{equation}
    \min_{x \in \mathcal{M}} f(x) := \begin{cases} \mathbb{E}[f(x; \omega)], & \text{ online}\\
    \frac{1}{n} \sum_{i=1}^n f_i(x), & \text{ finite-sum } \end{cases} \label{problem_formulation_begin}
\end{equation}
where $f: \mathcal{M} \xrightarrow[]{} \mathbb{R}$ is a smooth real-valued non-convex function. The finite-sum formulation of minimizing an empirical average of $n$ component functions is a special type of online optimization where $\omega$ can be finitely sampled. For some cases, $n$ can be large or possibly infinite and only streaming stochastic gradients are available. This corresponds to the online problem with $\omega$ indexed by $i$. Hence, for notation clarity, we only consider the case $f(x) := \frac{1}{n}\sum_{i=1}^n f_i(x)$ and refer to it as finite-sum or online optimization depending on size of $n$. Problem \eqref{problem_formulation_begin} encompasses a great variety of machine learning applications, including principal component analysis \cite{SatoRSVRG2019}, low rank matrix completion \cite{BoumalTR_LRMC2011}, Riemannian centroid computation \cite{YuanLBFGS_mean2016}, independent component analysis \cite{TheisICA2009} and so forth. 

Some traditional solutions treat \eqref{problem_formulation_begin} as a nonlinearly constrained problem and therefore projected gradient based methods are readily applied. However, for some manifold types, particularly positive definite matrices, the projection operator can be expensive to evaluate \cite{ZhangRSVRG2016}. Also, because this class of methods ignores geometry of the search space, they are often outperformed by methods that respect geometric structure of manifolds \cite{YuanLBFGS_mean2016}. 

For these reasons, a growing interest is on solving \eqref{problem_formulation_begin} directly over the manifold space via Riemannian gradient based methods. Two basic solutions are Riemannian steepest descent (R-SD) \cite{UdristeRMGD1994} and Riemannian stochastic gradient descent (R-SGD) \cite{BonnabelRMSGD2013}. Although R-SD enjoys a faster convergence rate $\mathcal{O}(1/T)$ than $\mathcal{O}(1/\sqrt{T})$ of R-SGD for non-convex optimization \cite{BoumalRTR_global2019,HosseiniSGD2019}, R-SD requires a full pass over $n$ component functions per iteration. This computation is extremely costly when $n$ is large, thereby prohibiting its applicability for online optimization. Recent improvement on R-SD is limited to extending Nesterov acceleration to Riemannian manifold for geodesic (strongly) convex functions \cite{AhnNAG2020,ZhangNAG2018,AlimisisPNAG2020}. For general non-convex functions, it is unknown whether faster convergence guarantee is maintained. On the other hand, despite with higher per-iteration efficiency, R-SGD suffers from high gradient variance, similar to its Euclidean version. Therefore it relies on a decaying step size to ensure convergence \cite{HosseiniSGD2019}. A current line of research focuses on adapting gradient and step size of R-SGD, motivated by the success of adaptive methods on Deep Learning applications. In particular, some successful efforts have been made to generalize Adagrad, Adam and Rmsprop to manifold optimization \cite{KumarRMRmsprop2018, KasaiRmsprop2019, KasaiRMadam2020,BecigneulRMadam2018}. These methods can be viewed as preconditioned R-SGD and do not theoretically outperform R-SGD with better complexity. 

To improve on R-SD and R-SGD and achieve lower total complexity, many studies leverage variance reduction techniques from unconstrained optimization in the Euclidean space. Zhang \emph{et al}. \cite{ZhangRSVRG2016} proposed Riemannian stochastic variance reduced gradient (R-SVRG) based on the ideas in \cite{ReddiSVRG2016,JohnsonSVRG2013}, Sato \emph{et al}. \cite{SatoRSVRG2019} further developed a more general R-SVRG with retraction and vector transport. By occasionally evaluating full gradient of a reference point, R-SVRG allows a larger step size and hence converges faster particularly around optimal point. But on manifold space, when the reference point is far from current iterates, the use of vector transport can incur unintended distortion. Therefore, inspired by the work of \cite{NguyenSRG2017}, Kasai \emph{et al}. \cite{KasaiRSRG2018} introduced Riemannian stochastic recursive gradient (R-SRG) that transports gradients between consecutive iterates. More recently, Zhang \emph{et al}. \cite{ZhangRSPIDER2018} and Zhou \emph{et al}. \cite{ZhouRSPIDER2019} independently proposed Riemannian stochastic path integrated differential estimator (R-SPIDER) that hybrids the same recursive gradient estimator with normalized gradient descent as in \cite{FangSPIDER2018}. They showed that R-SPIDER achieves the near-optimal complexity similar to its vector space counterpart. Other related work includes \cite{TripuraneniASGD2018,BabanezhadMASAGA2018} where Polyak iterate averaging \cite{PolyakIA1992} and SAGA \cite{DefazioSAGA2014} are also generalized to Riemannian manifolds. However, their analysis is limited to retraction or geodesic (strongly) convex functions. 

With all these promising results of variance reduction, a natural question to ask is whether their complexities can be further improved. A common feature among these methods is periodic computations of full batch gradient, which potentially limits convergence speed particularly during early stage of training. This is because at early stage, stochastic gradients are pointing to similar directions and therefore it is unnecessary to use exact gradients to correct for deviations \cite{BallesSGDadapt2016}. While approaching optimal point, larger batch gradient becomes increasingly important to reduce variance of stochastic gradients. Furthermore, Keskar \emph{et al}. \cite{KeskarLB_deeplearn2016} showed that gradient noise at the outset of training helps escape sharp minima, leading to higher generalization power. Therefore, a reasonable strategy is to gradually increase the batch size throughout optimization path. 

Increasing batch size of SGD is often viewed as an approach to reduce variance so that step size decay is no longer necessary \cite{BallesSGDadapt2016,SmithSGDAdapt2017}. This is usually achieved by pre-specifying a strategy for batch size increase, such as exponential or linear \cite{FriedlanderHSGD2012, ZhouHSGD2018}. Alternatively, adaptively changing the batch size based on gradient variance or model quality often yields improved convergence rates \cite{DeSGDadapt2017,BallesSGDadapt2016,SievertSGDadapt2019}. For variance reduction methods, Harikandeh \emph{et al}. \cite{HarikandehP_SVRG2015} proved that SVRG is robust to inexact gradient at reference point provided that batch size is increasing. Still, they adopted an exponential increase scheme for practical applications. A recent work \cite{JiAdaSPIDERboost2019} provided a unified batch adaptation strategy for variance reduction methods, including SVRG and SRG \cite{NguyenSRG2017}.\footnote{In \cite{JiAdaSPIDERboost2019}, the authors use the term SPIDERboost, which is an improved version of SPIDER. It is noticed that SPIDERboost shares nearly identical formulation as non-convex SRG \cite{NguyenRSRG_FS2019}.} They showed that the same iteration complexities can be guaranteed with potentially fewer per-iteration gradient evaluations. Their experiment results verify the effectiveness of adaptive batch size strategy, which not only outperforms vanilla variance reduction methods, but also dominates some predetermined batch size increase schemes. Motivated by this work, we aim to examine the potential of batch size adaptation on Riemannian optimization problems and improve on state-of-the-art variance reduction methods, including R-SVRG, R-SRG and R-SPIDER.

Our \textbf{main contribution} lies in the following five aspects:
\begin{itemize}
    \item We propose new variance reduction methods with adaptive batch size for non-convex manifold optimization. We provide convergence analysis in terms of retraction and vector transport for both general non-convex functions and gradient dominated functions (see Definition \ref{gradient_dominate_def}). We focus on general mini-batch versions of R-SVRG and R-SRG, which allows more flexible choices of the step size and inner loop size.
    \item We demonstrate that adaptive batch size R-SVRG and R-SRG can preserve the same iteration complexities as their original methods while per-iteration complexities can be potentially reduced. This suggests that total complexity can be much lower for both general non-convex and gradient dominated functions.
    \item Current analysis for non-convex R-SRG \cite{KasaiRSRG2018} is limited to single loop convergence, which is suboptimal compared with R-SPIDER under finite-sum setting. Our analysis of R-SRG with batch size adaptation can be directly applied to its vanilla version. We prove that the same near-optimal complexity can be achieved by R-SRG without requiring a small step size as in R-SPIDER. 
    \item  For R-SVRG, Sato \emph{et al}. \cite{SatoRSVRG2019} only proved convergence under retraction strongly convex functions and Zhang \emph{et al}. \cite{ZhangRSVRG2016} proved convergence under non-convex functions but restricted to exponential map and parallel transport. Therefore, we first complete convergence analysis of non-convex R-SVRG with retraction and vector transport following the standard analysis of Lyapunov function. Then we show that our proof of adaptive version can be trivially generalized to R-SVRG. This new analysis turns out to be much simpler without the need to construct a proper Lyapunov function and avoids using trigonometric distance bound (see Lemma \ref{trig_distance_bound_lemma}) as a result. 
    \item Finally, our experiment results demonstrate superiority of adaptive batch size scheme over a number of applications.
\end{itemize}

\section{Preliminaries}

Before presenting our algorithms, some preliminary concepts and notations should be defined. Problem \eqref{problem_formulation_begin} requires to find a critical point of objective function with Riemannian manifold constraint. By exploiting intrinsic properties of Riemannian manifold, the problem can be regarded as unconstrained optimization over manifold space. 

A Riemannian manifold $\mathcal{M}$ is endowed with a smooth inner product $\langle \cdot, \cdot \rangle_x$ on tangent space ${T}_x\mathcal{M}$ for every $x \in \mathcal{M}$. It naturally induces norm $\|u\|_x := \sqrt{\langle u, u \rangle_x}$ for $u \in T_x\mathcal{M}$. A geodesic curve $\gamma: [0,1] \xrightarrow[]{} \mathcal{M}$ is a locally shortest path with zero acceleration. The exponential map $\text{Exp}_x: T_x\mathcal{M} \xrightarrow{} \mathcal{M}$ maps a tangent vector $u \in T_x\mathcal{M}$ along the geodesic leading to $y = \text{Exp}_x(u) \in \mathcal{M}$ such that $\gamma(0) = x, \gamma(1) = y, \dot{\gamma}(0) = \frac{d}{dt} \gamma(t) = u$. If for any two points $x,y \in \mathcal{U} \subset \mathcal{M}$, there exists a unique geodesic connecting them, exponential map has well-defined inverse $\text{Exp}_x^{-1} : \mathcal{M} \xrightarrow[]{} T_x\mathcal{M}$. Subset $\mathcal{U}$ is often called normal neighbourhood of $x$ and its size is lower bounded by injectivity radius \cite{AbsilMatrixManifold2009}. Accordingly, Riemannian distance $d(x,y) = \| \text{Exp}_x^{-1}(y) \|_x = \| \text{Exp}_y^{-1}(x) \|_y$ is a minimizing distance between $x, y$. Parallel transport $\Gamma_x^y: T_x\mathcal{M} \xrightarrow[]{} T_y\mathcal{M}$ relates tangent vectors on disjoint tangent spaces by transporting them along the geodesic $\gamma$ while preserving norm and inner products. 

For certain manifolds, exponential map and parallel transport can be expensive to evaluate or even do not exist. Indeed, Stiefel and fixed-rank manifold have no closed form for parallel transport \cite{SatoRSVRG2019}. In this paper, our analysis focuses on more general and efficient retraction and vector transport. Retraction $R_x: T_x\mathcal{M} \xrightarrow{} \mathcal{M}$ is the first-order approximation of exponential map and maps a tangent vector $\xi$ to $z = R_x(\xi)$ such that $R_x(0) = x$ and $\text{D} R_x(0)[\xi] = \xi$. Similarly, we can define retraction curve $c(t) := R_x(t\xi)$ and inverse retraction map $R_x^{-1}: \mathcal{M} \xrightarrow[]{} T_x\mathcal{M}$ if $R$ has smooth bijection. Vector transport $\mathcal{T}_x^z v$ or equivalently $\mathcal{T}_\xi v$ with $z = R_x(\xi)$ transports $v \in T_x\mathcal{M}$ along the retraction curve defined by direction $\xi$. Formally, denote tangent bundle as $T\mathcal{M}$, which is the union of tangent spaces on $\mathcal{M}$. Then $\mathcal{T}: T\mathcal{M} \oplus T\mathcal{M} \xrightarrow{} T\mathcal{M}$ satisfies (1) $\mathcal{T}_\xi v \in T_{R_x(\xi)}\mathcal{M}$, (2) $\mathcal{T}_{0_x} v = v$ and (3) $\mathcal{T}_\xi$ is a linear map. An isometric vector transport preserves norm and inner product similar as parallel transport. That is, $\langle \mathcal{T}_x^z u , \mathcal{T}_x^z v \rangle_z = \langle u , v \rangle_x$ for all $u, v \in T_x\mathcal{M}$ and $x, z \in \mathcal{M}$. Throughout this paper, we implicitly assume vector transport is isometric. It is easy to show that retraction and vector transport includes exponential map and parallel transport as special cases. For notation purposes, we omit the subscripts for norm and inner product. Specific indication should be clear from context. Also, we denote $[n] := \{ 1,...,n\}$ and $\mathbbm{1}_{\{\cdot\}}$ as the indicator function.  $\text{grad}f_{\mathcal{I}}(x) := \frac{1}{|\mathcal{I}|} \sum_{i \in \mathcal{I}} \text{grad}f_{i}(x)$ is a mini-batch Riemannian stochastic gradient on $T_x\mathcal{M}$, where $\mathcal{I} \subset [n]$ is an index set with cardinality $|\mathcal{I}|$. When $\mathcal{I} \equiv [n]$, we obtain the full gradient as $\text{grad}f(x) = \frac{1}{n} \sum_{i=1}^n \text{grad}f_i(x)$.

\begin{table*}[!t]
\setlength{\tabcolsep}{3pt}
\renewcommand{\arraystretch}{1.5}
\caption{Comparison of IFO complexity between existing results and this work on {general non-convex} problems. We present complexities in terms of parameters defined in Assumptions \ref{common_assump}, \ref{assumption_svrg} and \ref{R_svrg_additional_assump}. $\Theta := \max\{L, \sqrt{L_l^2 + \theta^2 G^2}\}$, $\Theta_1 := L + \sqrt{L^2 + \varrho_1 (L_l + \theta G)^2 \mu^2 \nu^2}$, $\Theta_2 := L + \sqrt{L^2 + \varrho_2 (L_l + \theta G)^2}$, where $\varrho_1, \varrho_2 > 0$ are constants that do not depend on any parameters. $\Tilde{B} := \frac{1}{S}\sum_{s=1}^{S} \min\{ {\alpha_1 \sigma^2}/{\beta}_s , n\}$ under finite-sum setting and $\Tilde{B} := \frac{1}{S}\sum_{s=1}^{S} \min\{ {\alpha_1 \sigma^2}/{\beta}_s , {\alpha_2 \sigma^2}/{\epsilon^2}\}$ under online setting.}
\label{complexity_table_summary}
\resizebox{\textwidth}{!}{%
\begin{tabular}{c || c | c c | c c}
\thickhline

                               \multicolumn{2}{c|}{\multirow{2}{*}{General non-convex}}          & \multicolumn{2}{c|}{(Retraction and vector transport)} & \multicolumn{2}{c}{(Exponential map and parallel transport)} \\
                               \multicolumn{2}{c|}{}           & Finite-sum                 & Online                 & Finite-sum                     & Online                    \\
\hline\hline
\multirow{3}{*}{\shortstack{Existing \\ work}} & R-SVRG \cite{ZhangRSVRG2016}    &   ---                            & ---                       & $\mathcal{O}\big(n + \frac{L \zeta^{1/2} n^{2/3}}{\epsilon^2} \big)$                               & ---                          \\
                               & R-SRG \cite{KasaiRSRG2018}    & $\mathcal{O} \big( n + \frac{\Theta^2}{\epsilon^4} \big)$                       &  ---                      &  $\mathcal{O} \big( n + \frac{L^2}{\epsilon^4} \big)$                              & ---                          \\
                               & R-SPIDER \cite{ZhouRSPIDER2019,ZhangRSPIDER2018} & $\mathcal{O} \big( n + \frac{\Theta \sqrt{n}}{\epsilon^2} \big)^{\mathrm{*}}$                            &  $\mathcal{O} \big( \frac{\Theta \sigma}{\epsilon^3} \big)$                       & $\mathcal{O} \big( n + \frac{L \sqrt{n}}{\epsilon^2} \big)^{\mathrm{*}}$                               &  $\mathcal{O} \big( \frac{L \sigma}{\epsilon^3}  \big)$                                \\
\hline
\multirow{4}{*}{\shortstack{This \\ work}}     & R-SVRG    & $\mathcal{O}\big( n + \frac{\Theta_1 n^{2/3} }{\epsilon^2} \big)$                           & $\mathcal{O}\big( \frac{\Theta_1 \sigma^{4/3}}{\epsilon^{10/3}} \big)$                       & $\mathcal{O}\big( n + \frac{L n^{2/3} }{\epsilon^2} \big)$                               &    $\mathcal{O}\big( \frac{L \sigma^{4/3}}{\epsilon^{10/3}} \big)$                       \\
                               & R-SRG     &  $\mathcal{O}\big( n + \frac{\Theta_2 \sqrt{n}}{\epsilon^2}  \big)$                          &   $\mathcal{O}\big( \frac{\Theta_2 \sigma}{\epsilon^3} \big)$                     &  $\mathcal{O}\big( n + \frac{L \sqrt{n}}{\epsilon^2}  \big)$                              &  $\mathcal{O}\big( \frac{L \sigma}{\epsilon^3} \big)$                         \\
                               & R-AbaSVRG & $\mathcal{O}\big( \Tilde{B} + \frac{\Theta_1 \Tilde{B}}{n^{1/3} \epsilon^2} +  \frac{\Theta_1 n^{2/3} }{\epsilon^2} \big)$                           &  $\mathcal{O}\big( \frac{\Theta_1 \Tilde{B}}{\sigma^{2/3} \epsilon^{4/3}} + \frac{\Theta_1 \sigma^{4/3}}{\epsilon^{10/3}} \big)$                      &  $\mathcal{O}\big( \Tilde{B} + \frac{L \Tilde{B}}{n^{1/3} \epsilon^2} +  \frac{L n^{2/3} }{\epsilon^2} \big)$                              &  $\mathcal{O}\big( \frac{L \Tilde{B}}{\sigma^{2/3} \epsilon^{4/3}} + \frac{L \sigma^{4/3}}{\epsilon^{10/3}} \big)$                         \\
                               & R-AbaSRG  &   $\mathcal{O}\big( \Tilde{B} + \frac{\Theta_2 \Tilde{B} }{\sqrt{n} \epsilon^2} + \frac{\Theta_2 \sqrt{n}}{\epsilon^2}  \big)$                         &   $\mathcal{O}\big( \frac{\Theta_2 \Tilde{B}}{\sigma \epsilon} + \frac{\Theta_2 \sigma}{\epsilon^3} \big)$                     &   $\mathcal{O}\big( \Tilde{B} + \frac{L \Tilde{B} }{\sqrt{n} \epsilon^2} + \frac{L \sqrt{n}}{\epsilon^2}  \big)$                              &   $\mathcal{O}\big( \frac{L \Tilde{B}}{\sigma \epsilon} + \frac{L \sigma}{\epsilon^3} \big)$                        \\
\thickhline
\end{tabular}}
\vspace{1pt}

{\raggedright \, $^{\mathrm{*}}$ In \cite{ZhouRSPIDER2019}, they present finite-sum complexities of R-SPIDER as minimum of finite-sum and online complexities, which simply applies online choices of parameters to finite-sum setting. \par}
\end{table*}

\section{Overview of Algorithms}

\subsection{Riemannian SGD and variance reduction}

A default solution for optimizing problem \eqref{problem_formulation_begin} is to use R-SGD that iteratively updates 
\begin{equation}
    x_{t+1} = R_{x_t}\big( -\eta_t \text{grad}f_{\mathcal{I}_t}(x_t) \big), \label{RSGD_update}
\end{equation}
where $\eta_t > 0$ is the step size. The updates move along the retraction curve from current iterate with the direction determined by negative gradient. When $\mathcal{M} \equiv \mathbb{R}^d$, \eqref{RSGD_update} reduces to $x_{t+1} = x_t - \eta_t \nabla f_{\mathcal{I}_t}(x_t)$, which is the standard SGD update on Euclidean space. 

Variance reduction techniques leverage previous gradient information to construct a modified stochastic gradient with variance that decreases as training progresses. R-SVRG adopts a double loop structure where, at the start of each epoch (i.e. outer loop), a snapshot point $\Tilde{x}$ is fixed and full gradient is evaluated. Within each inner iteration, mini-batch stochastic gradients are computed for the current iterate $x_t$ as well as for the snapshot point. A modified gradient at $x_t$ is then constructed by adjusting deviations according to the difference between the stochastic gradient and full gradient at $\Tilde{x}$. Since Riemannian gradients of $x_t$ and $\Tilde{x}$ are defined on disjoint tangent spaces, vector transport is used to combine gradient information. That is, 
\begin{equation}
    v_t = \text{grad}f_{\mathcal{I}_t}(x_{t}) - \mathcal{T}_{\Tilde{x}}^{x_{t}} \big( \text{grad}f_{\mathcal{I}_t}(\Tilde{x}) - \text{grad}f(\Tilde{x}) \big). \label{SVRG_estimator}
\end{equation}
Instead of using gradient information from a distant reference point, R-SRG recursively modifies stochastic gradients based on the previous iterate. That is, after computing batch gradient $v_0 = \text{grad}f(x_0)$ on an initial point, a modified gradient is constructed within each inner loop as
\begin{equation}
    v_t = \text{grad}f_{\mathcal{I}_t}(x_{t}) - \mathcal{T}_{x_{t-1}}^{x_{t}} \big( \text{grad}f_{\mathcal{I}_t}(x_{t-1}) - v_{t-1} \big). \label{recursive_grad_estimator}
\end{equation}
This is followed by a standard retraction update $x_{t+1} = R_{x_t}(- \eta_t v_t )$. Note for both original formulation of R-SVRG \cite{SatoRSVRG2019,ZhangRSVRG2016} and R-SRG \cite{KasaiRSRG2018}, stochastic gradient $\text{grad}f_{i_t}(x_t)$ rather than mini-batch gradient $\text{grad}f_{\mathcal{I}_t}(x_{t})$ is considered in \eqref{SVRG_estimator} and \eqref{recursive_grad_estimator}. R-SPIDER employs the same recursive gradient estimator as in \eqref{recursive_grad_estimator}. A fundamental difference is the use of normalized gradient for its update, which is given by $x_{t+1} = R_{x_t} \big(-\eta_t \frac{v_t}{\|v_t\|} \big)$. Therefore, it requires step size $\eta_t$ to be proportional to the desired accuracy $\epsilon$ and depend on $\| v_t\|$. Also, R-SPIDER does not adopt inner-outer loop framework. This formulation results in a distinct proof idea that shows progress every iteration by bounding the distance between consecutive iterates. 

\subsection{Batch size adaptation}
For all the above mentioned variance reduction methods, batch gradient of a reference point is required on occasions, which potentially hinders their performance for large datasets and slows down their convergence in the initial epochs. The intuition of adaptively increasing the batch size is simple. At early stages, a small batch gradient is sufficient to achieve variance reduction with all stochastic gradients pointing to similar directions. Towards the end of optimization where exploration area becomes smaller, larger batch gradients are needed to correct for deviation of stochastic gradients. This motivates the improved SGD \cite{DeSGDadapt2017,SievertSGDadapt2019}, the improved SVRG \cite{HarikandehP_SVRG2015} and its variant SCSG \cite{LeiSCSG2017} on Euclidean space. Despite all these efforts, few of them design an adaptive batch size scheme based on information throughout training process. Particularly, both the improved SVRG and SCSG simply resorts to an exponential increase of batch size. 

In this paper, we adopt the idea in \cite{JiAdaSPIDERboost2019} to design adaptive batch size based on norm of modified gradients. It is generally believed that gradient norm decreases as optimization proceeds and therefore is indicative of optimization stages. Our primary analysis is based on inner-outer loop formulation of R-SVRG and R-SRG. For R-SPIDER, we defer its analysis to Appendix \ref{r_abaspider_appendix} because we notice the use of variable step size imposes some difficulties in generalizing this adaptive strategy. By assuming a bounded gradient norm, we can similarly prove its convergence. Nevertheless, the total complexity can be worse than its original complexity in practice. Let $s$ and $t$ respectively represent the outer loop and inner loop index. Ideally as shown from the convergence analysis, batch size $B^s$ should be adjusted inversely proportional to $\sum_{t} \| v_t^s \|^2$, which involves modified gradient in epoch $s$. However, $B^s$ should be determined before inner iteration starts where $v_t^s$ are computed and hence this strategy is infeasible. Rather than adopting a backtracking line search approach as in \cite{DeSGDadapt2017}, Ji \emph{et al}. \cite{JiAdaSPIDERboost2019} simply replaces $\sum_{t} \| v_t^s \|^2$ with gradients in the preceding epoch, which is $\sum_t\| v^{s-1}_{t} \|^2$. Instead of focusing on epoch-wise progress, they consider telescoping over all epochs to prove its convergence. 

\subsection{Riemannian variance reduction with batch size adaptation}
Riemannian adaptive batch-size SVRG (R-AbaSVRG) and SRG (R-AbaSRG) are discussed in Section \ref{R_abasvrg_section} and \ref{R_abasrg_section}. At the start of each epoch, $B^s$ is determined by $\frac{\alpha_1 \sigma^2 m}{\sum_t \| v_t^{s-1} \|^2}$ where $\alpha_1$ is a parameter that should be sufficiently large and $m, \sigma^2$ are the size of inner loop and variance bound respectively. As training progresses, $B^s$ should gradually increase to $n$ under finite-sum setting and to $\alpha_2\sigma^2/\epsilon^2$ under online setting. Without-replacement sampling is employed to construct batch gradients. This is to ensure that full batch gradient is computed under finite-sum setting, thus recovering vanilla R-SVRG and R-SRG. Under online setting, it makes no theoretical difference between with- and without-replacement sampling as $n$ approaches infinity. Here we consider setting the initial point (or reference point) as the last iterate of the previous epoch. This is in contrast to some update rules such as uniform selection in R-SRG \cite{KasaiRSRG2018} or Riemannian centroid in R-SVRG \cite{SatoRSVRG2019}. Especially for R-SRG, this simple modification allows us to derive double loop convergence, which is stronger than single loop convergence in \cite{KasaiRSRG2018} under finite-sum setting.

\section{Assumptions and definitions}
We first present three sets of assumptions as follows. Assumption \ref{common_assump} is standard to analysis of all variance reduction methods on Riemannian manifold. Assumption \ref{assumption_svrg} is required for analysing SVRG-type methods and Assumption \ref{R_svrg_additional_assump} is further needed to establish convergence of R-SVRG under traditional Lyapunov analysis. All assumptions are commonly made in the analysis of algorithms using retraction and vector transport, see  \cite{HuangQN2015,KasaiRSRG2018,SatoRSVRG2019,ZhouRSPIDER2019}.

\begin{assumption}
\label{common_assump} ~\\\vspace{-0.15in}
    \begin{enumerate}[leftmargin = 0.85cm, label=(\ref{common_assump}.\arabic*)]
        \item Function $f$ and its component functions $f_i, i = 1,...,n$ are twice continuously differentiable. \label{function_countinuous_assump}
        \item Iterate sequences produced by algorithms stay continuously in a neighbourhood $\mathcal{X} \subset \mathcal{M}$ around an optimal point $x^*$. Additionally, $\mathcal{X}$ is a totally retractive neighbourhood of $x^*$ where retraction $R$ is a diffeomorphism (i.e. bijective with differentiable inverse). \label{compact_neighbourhood_assump} 
        \item Norms of Riemannian gradient and Riemannian Hessian are bounded. That is, for all $x \in \mathcal{X}$ and any component function $f_i$, there exists constants $G, H > 0$ where $\| \emph{grad}f_i(x) \| \leq G$ and $\| \emph{Hess}f_i(x) \| \leq H$ hold. \label{bounded_norm_assump}
        \item  Variance of Riemannian gradient is bounded. That is, for all $x \in \mathcal{X}$, $\mathbb{E} \| \emph{grad}f_i(x) - \emph{grad}f(x) \|^2 \leq \sigma^2$. \label{bounded_var_assump}
        \item \label{retraction_smooth_assump} Function $f$ is retraction $L$-smooth with respect to retraction $R$. That is, for all $x, y = R_x(\xi) \in \mathcal{X}$, there exists a constant $L > 0$ such that
        $$f(y) \leq f(x) + \langle \emph{grad}f(x), \xi \rangle + \frac{L}{2} \|\xi \|^2.$$
        \item \label{retaction_Ll_assump} Function $f$ is average retraction $L_l$-Lipschitz. That is, for all $x, y \in \mathcal{X}$, there exists a constant $L_l > 0$ such that
        $$\mathbb{E} \| \emph{grad}f_i(x) - P^x_y \emph{grad} f_i(y)  \| \leq L_l \| \xi \|,$$
        where $P_y^x$ is the parallel transport operator from $y$ to $x$ along the retraction curve $c(t) := R_x(t\xi)$ with $c(0) = x, c(1) = y$. Note we distinguish $P_y^x$ with $\Gamma_y^x$ where the latter transports along a geodesic between $x$ and $y$. 
        \item \label{diff_vec_par_bound} (Lemma 3.5 in \cite{HuangQN2015}) Difference between vector transport $\mathcal{T}$ and parallel transport $P$ associated with the same retraction $R$ is bounded. That is, for all $x, y = R_x(\xi) \in \mathcal{X}$ and $ \eta \in T_x\mathcal{M}$, there exists a constant $\theta > 0$, such that
        $$\| \mathcal{T}_x^y \eta - P_x^y \eta \| \leq \theta \| \xi \|\| \eta\|.$$
    \end{enumerate}
\end{assumption}

The expectation in Assumption \ref{bounded_var_assump} and \ref{retaction_Ll_assump} is taken with respect to component index $i$ and therefore is equivalent to sample average. For example, \ref{bounded_var_assump} can be rewritten as $\frac{1}{n}\sum_{i=1}^n \| \text{grad}f_i(x) - \text{grad}f(x)  \| \leq \sigma^2$. Assumption \ref{function_countinuous_assump} and \ref{compact_neighbourhood_assump} are basic for standard analysis. Assumption \ref{bounded_norm_assump} is necessary to establish Lipschitzness with vector transport and is generally satisfied for compact manifolds \cite{KasaiRSRG2018}. Note that Assumption \ref{bounded_var_assump} is introduced to bound deviation resulting from inexact batch gradient at the start of epoch. For vanilla R-SRG and R-SVRG, this assumption is not required. Assumption \ref{retraction_smooth_assump} is guaranteed by combining \ref{function_countinuous_assump}, \ref{compact_neighbourhood_assump} and upper-Hessian bounded condition \cite{HuangQN2015} where $f$ satisfies $\frac{d^2f(R_x(t\xi))}{dt^2} \leq L$ for all $x \in \mathcal{M}, \xi \in T_x\mathcal{M}$ with $\| \xi\| =1$. Assumption \ref{retaction_Ll_assump} can be derived from Assumption \ref{function_countinuous_assump} to \ref{bounded_norm_assump} and the condition that vector transport $\mathcal{T} \in C^0$ \cite{HuangQN2015}. Similarly, Assumption \ref{diff_vec_par_bound} is satisfied by requiring $\mathcal{T} \in C^0$ and $P \in C^{\infty}$. 

\begin{assumption}
\label{assumption_svrg} ~\\\vspace{-0.15in}
    \begin{enumerate}[leftmargin = 0.85cm, label=(\ref{assumption_svrg}.\arabic*)]
        \item The neighbourhood $\mathcal{X}$ is also a totally normal neighbourhood of $x^*$ where exponential map is a diffeomorphism. \label{total_normal_assump}
        \item (Lemma 3 in \cite{HuangRTR_rankone2015}) There exists $\mu, \nu, \delta_{\mu,\nu} > 0$ where for all $x, y = R_x(\xi) \in \mathcal{X}$ with $\| \xi \| \leq \delta_{\mu, \nu}$, we have
        $$\| \xi \| \leq \mu d(x, y), \quad \text{and} \quad d(x, y) \leq \nu \| \xi\|,$$ \label{relation_distance_assump}
        where $d(x, y)$ is the Riemannian distance.
    \end{enumerate}
\end{assumption}

These two assumptions are also basic as in \cite{SatoRSVRG2019}. Assumption \ref{total_normal_assump} is to ensure that we can express Riemannian distance in terms of inverse of exponential map. With $\mathcal{X}$ being both a totally retractive and totally normal neighbourhood, Assumption \ref{relation_distance_assump} characterizes relations between exponential map and retraction. Indeed, we have $\|R_x^{-1}(y) \| \leq \mu \| \text{Exp}^{-1}_x(y) \|$ and $\| \text{Exp}^{-1}_x(y) \| \leq \nu \| R_x^{-1}(y)\|$. This assumption is reasonable as retraction serves as first-order approximation to exponential map and can thus be ensured by choosing a sufficiently small neighbourhood $\mathcal{X}$.

\begin{assumption}
\label{R_svrg_additional_assump} ~\\\vspace{-0.15in}
    \begin{enumerate}[leftmargin = 0.85cm, label=(\ref{R_svrg_additional_assump}.\arabic*)]
        \item The neighbourhood $\mathcal{X}$ is compact with its diameter upper bounded by $D$. That is, $\max_{x, y \in \mathcal{X}} d(x,y) \leq D$. In addition, $\mathcal{X}$ has sectional curvature lower bounded by $\kappa$. \label{compact_manifold_assump}
        \item For all $x,y \in \mathcal{X}$, there exists constant $c_R > 0$ such that $\| R^{-1}_x(y) - \emph{Exp}^{-1}_x(y) \| \leq c_R \| R^{-1}_x(y)\|^2$. \label{close_retract_exp_assump}
    \end{enumerate}
\end{assumption}

Assumption \ref{compact_manifold_assump} is required to apply trigonometric distance bound (Lemma \ref{trig_distance_bound_lemma}) and Assumption \ref{close_retract_exp_assump} is ensuring the difference between exponential map and retraction is small within a neighbourhood. Note that \ref{close_retract_exp_assump} can be implied from Assumption \ref{relation_distance_assump} by triangle inequality. That is $\| R^{-1}_x(y) - \text{Exp}^{-1}_x(y) \| \leq \|R^{-1}_x(y) \| + \| \text{Exp}^{-1}_x(y) \| \leq (1 + \nu) \|R^{-1}_x(y) \|$. 

Indeed, Assumption \ref{common_assump} is sufficient to obtain convergence guarantee for recursive gradient based methods, including R-SRG and R-SPIDER. Additional Assumptions \ref{assumption_svrg} and \ref{R_svrg_additional_assump} introduce constraints on exponential map that bound its difference with retraction. The main intuition is that R-SVRG requires tracing the distances between a remote snapshot point and the iterate sequence, which can only be characterized by exponential map. This is in contrast with recursive gradient estimator that only depends on successive iterates. One final remark is that some assumptions, such as \ref{diff_vec_par_bound} and \ref{relation_distance_assump} are presented as Lemmas in other work. The conditions necessary for ensuring validity of these assumptions are outlined above and hence we can for simplicity rely on these assumptions.

Apart from convergence analysis on general non-convex functions, we also consider an important class of non-convex functions which satisfies Polyak--Łojasiewicz inequality \cite{PolyakPLInEQ1963} on Riemannian manifold, also known as gradient dominance condition. It has been shown that the problem of computing leading eigenvector over the space of Hypersphere satisfies this inequality \cite{ZhangRSVRG2016}. 

\begin{definition}[$\tau$-Gradient Dominance]
\label{gradient_dominate_def}
    A differentiable function $f: \mathcal{M} \xrightarrow{} \mathbb{R}$ is $\tau$-gradient dominated in $\mathcal{X} \subset \mathcal{M}$ if for any $x \in \mathcal{X}$, there exists a $\tau > 0$ such that
    \begin{equation}
        f(x) - f(x^*) \leq \tau \|\emph{grad}f(x) \|^2, \nonumber
    \end{equation}
    where $x^* = \argmin_{x \in \mathcal{M}} f(x)$ is a global minimizer of $f$. 
\end{definition}

With a slightly abuse of notation. we in general refer to $x^* \in \mathcal{M}$ as an optimal point within its neighbourhood $\mathcal{X}$. Only Section \ref{gradient_dominance_section} considers the stronger definition of global minima. Algorithm quality is measured by total IFO complexity to achieve $\epsilon$-accurate solution. A milder definition of $\epsilon$-accuracy that bounds the norm of gradient rather than squared norm of gradient is considered mainly because R-SPIDER (under retraction and vector transport) \cite{ZhouRSPIDER2019} is analysed under this specification. 

\begin{definition}[$\epsilon$-accurate solution and IFO complexity]
    $\epsilon$-accurate solution from a stochastic algorithm is an output $x$ with expected gradient norm no larger than $\epsilon$. That is, $\mathbb{E}\| \emph{grad}f(x) \| \leq \epsilon$. Incremental First-Order (IFO) oracle \emph{\cite{AgarwalIFO2014}} takes a component index $i$ and a point $x \in \mathcal{X}$ and outputs an unbiased stochastic gradient $\emph{grad}f_i(x) \in T_x\mathcal{M}$. IFO complexity counts the total number of IFO oracle calls. 
\end{definition}

\section{Riemannian AbaSVRG} \label{R_abasvrg_section}
Riemannian adaptive batch size SVRG is presented in Algorithm \ref{RadaSVRG_algorithm} where the batch size $B^s$ is adjusted based on the accumulated gradient information from last epoch. Note by simply fixing $B^s = n, s= 1,..., S$, Algorithm \ref{RadaSVRG_algorithm} becomes vanilla R-SVRG under finite-sum setting. We first establish a Theorem that proves non-convex convergence for R-SVRG under retraction and vector transport, which is currently missing in the literature.

\begin{theorem}[Convergence and complexity of R-SVRG under standard analysis]
\label{RSVRG_original_theorem}    
Let $x^* \in \mathcal{M}$ be an optimal point of $f$ and suppose Assumptions \ref{common_assump}, \ref{assumption_svrg} and \ref{R_svrg_additional_assump} hold. Consider Algorithm \ref{RadaSVRG_algorithm} with full batch gradients $B^s = n, s = 1,...,S$ under finite-sum setting. Choose a fixed step size $\eta = \frac{\mu_0 b}{(L_l + \theta G)\mu n^{a_1}(\zeta \nu^2 + 2 c_R D)^{a_2}}$, $m = \lfloor n^{3/2a_1}/ 2b\mu_0 (\zeta \nu^2 + 2 c_R D)^{1-2a_2} \rfloor$, $b \leq n^{a_1}$, where $\zeta \geq 1$ is a curvature constant defined in Lemma \ref{trig_distance_bound_lemma}. Select $a_1, \mu_0 \in (0,1)$, $a_2 \in (0,2)$ and choose $\psi > 0$ such that $\psi \leq \frac{\mu_0}{\mu} \big( 1 - \frac{L \mu_0 (e-1)}{2(L_l + \theta G)(\zeta \nu^2 + 2c_R D)^{2-a_2}\mu} - \frac{L \mu_0 b}{2(L_l + \theta G)(\zeta \nu^2 + 2c_R D)^{a_2} \mu n^{a_1}} - \frac{L\mu_0^2 (e-1) b}{2 (L_l + \theta G)(\zeta \nu^2 + 2c_R D)^{a_2}\mu n^{3/2 a_1}} \big)$ holds. Then output $\Tilde{x}$ after running $T = Sm$ iterations satisfies
\begin{equation}
    \mathbb{E}\| \emph{grad}f(\Tilde{x}) \|^2 \leq \frac{(L_l + \theta G) n^{a_1} (\zeta \nu^2 + 2c_R D)^{a_2} \Delta}{bT \psi}, \nonumber
\end{equation}
where $\Delta := f(\Tilde{x}^0) - f(x^*)$. By choosing $a_1 = 2/3, a_2 = 1/2$, the IFO complexity to achieve $\epsilon$-accurate solution is $\mathcal{O}(n + \frac{(L_l + \theta G) (\zeta \nu^2 + 2 c_R D)^{1/2} n^{2/3} }{\epsilon^2})$.
\end{theorem}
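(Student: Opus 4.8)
The plan is to follow the standard Lyapunov-function analysis of non-convex SVRG (as in Reddi et al.\ and its Riemannian adaptation by Zhang et al.), but carried out entirely in terms of retraction and isometric vector transport. First I would establish a one-step descent inequality for the inner loop. Using retraction $L$-smoothness (Assumption~\ref{retraction_smooth_assump}) on the update $x_{t+1}^s = R_{x_t^s}(-\eta v_t^s)$ gives
\begin{equation*}
  \mathbb{E}[f(x_{t+1}^s)] \leq \mathbb{E}[f(x_t^s)] - \eta\,\mathbb{E}\langle \mathrm{grad}f(x_t^s), v_t^s\rangle + \frac{L\eta^2}{2}\mathbb{E}\|v_t^s\|^2 .
\end{equation*}
Since the mini-batch estimator \eqref{SVRG_estimator} is \emph{not} unbiased on a manifold (because $\mathcal{T}$ is not parallel transport), I would split $\mathbb{E}[v_t^s \mid x_t^s]$ into $\mathrm{grad}f(x_t^s)$ plus a bias term, and control that bias by Assumption~\ref{diff_vec_par_bound} together with Assumption~\ref{bounded_norm_assump}: the bias is $\mathcal{O}(\theta G\,\|R_{x_t^s}^{-1}(\tilde x^s)\|)$ in norm. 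This is the first place where the trigonometric-distance machinery enters implicitly.

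Second, I would bound the mean-squared error $\mathbb{E}\|v_t^s - \mathcal{T}(\cdot)\mathrm{grad}f(x_t^s)\|^2$ of the modified gradient. The key is that $\|v_t^s - (\text{transported true gradient})\|$ is controlled by $\mathbb{E}\|\mathrm{grad}f_{\mathcal{I}_t}(x_t^s) - P\,\mathrm{grad}f_{\mathcal{I}_t}(\tilde x^s)\|^2$, which by average retraction $L_l$-Lipschitzness (Assumption~\ref{retaction_Ll_assump}) and the vector-transport/parallel-transport gap (Assumption~\ref{diff_vec_par_bound}) is $\lesssim (L_l+\theta G)^2 \|R_{x_t^s}^{-1}(\tilde x^s)\|^2$. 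To turn $\|R_{x_t^s}^{-1}(\tilde x^s)\|^2$ into something telescoping, I would pass to Riemannian distance via Assumption~\ref{relation_distance_assump} ($\|R^{-1}\|\le \mu\, d$) and then apply the trigonometric distance bound (Lemma~\ref{trig_distance_bound_lemma}) to expand $d^2(x_{t+1}^s,\tilde x^s)$ in terms of $d^2(x_t^s,\tilde x^s)$, $\eta^2\|v_t^s\|^2$, and a cross term, with the curvature constant $\zeta$ accounting for how geodesic triangles distort; Assumption~\ref{close_retract_exp_assump} is what lets me replace $\mathrm{Exp}^{-1}$ by $R^{-1}$ throughout, producing the $2c_R D$ correction that appears alongside $\zeta\nu^2$.

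Third, I would define a Lyapunov function of the form $\Phi_t^s := \mathbb{E}[f(x_t^s)] + c_t^s\,\mathbb{E}\,d^2(x_t^s,\tilde x^s)$ with a decreasing sequence of coefficients $c_t^s$ satisfying a recursion $c_t^s = c_{t+1}^s(1+\text{something}) + \tfrac12 L_l^2\eta^{?}\cdots$, chosen so that $c_m^s = 0$ (the snapshot coincides with the last inner iterate here) and the cross terms cancel. Summing the telescoped descent over $t=0,\dots,m-1$ and over epochs $s=1,\dots,S$ and choosing $\eta$, $m$, $b$ as in the statement makes all the negative $\|v_t^s\|^2$ coefficients nonnegative and leaves a clean bound $\tfrac{1}{T}\sum_{s,t}\mathbb{E}\|\mathrm{grad}f(x_t^s)\|^2 \lesssim \frac{(L_l+\theta G)n^{a_1}(\zeta\nu^2+2c_R D)^{a_2}\Delta}{bT\psi}$; outputting $\tilde x$ uniformly at random from the iterates gives the stated bound on $\mathbb{E}\|\mathrm{grad}f(\tilde x)\|^2$. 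Finally, plugging $a_1=2/3$, $a_2=1/2$, $b$ up to $n^{2/3}$, and noting each epoch costs $n$ (full batch) plus $2bm = \mathcal{O}(n)$ IFO calls, so $T=Sm$ iterations cost $\mathcal{O}(Sn)$; setting the right-hand side to $\epsilon^2$ yields $S = \mathcal{O}\big((L_l+\theta G)(\zeta\nu^2+2c_RD)^{1/2} n^{-1/3}\epsilon^{-2}\big)$ and total complexity $\mathcal{O}\big(n + (L_l+\theta G)(\zeta\nu^2+2c_RD)^{1/2} n^{2/3}\epsilon^{-2}\big)$.

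The main obstacle I expect is the careful bookkeeping of the Lyapunov coefficient recursion together with the curvature-dependent cross terms from Lemma~\ref{trig_distance_bound_lemma}: one must verify that with the prescribed $\eta$ and $m$ the coefficient of each $\mathbb{E}\|v_t^s\|^2$ in the summed inequality is nonnegative, which is exactly the content of the rather intricate condition imposed on $\psi$ (the three subtracted fractions are precisely the $\frac{L\eta}{2}$-type, $\frac{L\eta}{2b}$-type, and higher-order contributions that must be dominated). Handling the \emph{bias} of the manifold estimator — absent in the Euclidean case — cleanly enough that it merges into the same $(L_l+\theta G)$ constant rather than producing a separate error floor is the other delicate point; this is where isometry of $\mathcal{T}$ and the bounded-gradient assumption do the real work.
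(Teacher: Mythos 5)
Your overall architecture matches the paper's proof: one-step descent from retraction $L$-smoothness, a bound on $\mathbb{E}\|v_t^s\|^2$ via the $(L_l+\theta G)$-Lipschitz property of Lemma \ref{vec_trans_lemma}, expansion of $d^2(x_{t+1}^s,x_0^s)$ via the trigonometric distance bound (Lemma \ref{trig_distance_bound_lemma}) with the $\mathrm{Exp}^{-1}\!\to\!R^{-1}$ substitution from Assumption \ref{close_retract_exp_assump} producing the $\zeta\nu^2+2c_RD$ factor, a Lyapunov function $f(x_t^s)+c_t\,d^2(x_t^s,x_0^s)$ with $c_m=0$, and the final telescoping and IFO count. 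Those parts are sound and are exactly what the paper does.

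However, there is one genuine conceptual error in your plan: you assert that the estimator \eqref{SVRG_estimator} is biased on the manifold ``because $\mathcal{T}$ is not parallel transport'' and propose to control a bias of order $\theta G\,\|R_{x_t^s}^{-1}(\tilde{x}^s)\|$. This is not correct. Conditional on $\mathcal{F}_t^s$ both $x_0^s$ and $x_t^s$ are measurable, so $\mathcal{T}_{x_0^s}^{x_t^s}$ is a \emph{fixed linear map}; since the same sample $\mathcal{I}_t^s$ is used at both points and $v_0^s=\mathrm{grad}f(x_0^s)$ when $B^s=n$, linearity gives $\mathbb{E}[v_t^s\,|\,\mathcal{F}_t^s]=\mathrm{grad}f(x_t^s)-\mathcal{T}_{x_0^s}^{x_t^s}\big(\mathrm{grad}f(x_0^s)-\mathrm{grad}f(x_0^s)\big)=\mathrm{grad}f(x_t^s)$ exactly. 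The discrepancy between $\mathcal{T}$ and parallel transport enters only the \emph{second-moment} bound (which is why $L_l$ becomes $L_l+\theta G$ there), not the mean. This matters beyond bookkeeping: if you insert a spurious bias term into the inner product $-\eta\langle\mathrm{grad}f(x_t^s),\mathbb{E}[v_t^s]\rangle$, you obtain a contribution of order $\eta\,\theta G^2\, d(x_t^s,x_0^s)$ that is \emph{linear} in the distance. The Lyapunov recursion for $c_t$ is built entirely around quadratic quantities $d^2(x_t^s,x_0^s)$ and $\|v_t^s\|^2$, so a linear-in-$d$ term either breaks the cancellation that yields $\delta_t>0$ or, after Young's inequality, leaves a constant error floor that the theorem does not have. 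You should replace the bias-control step with the observation that unbiasedness holds verbatim, after which the rest of your argument goes through as in the paper.
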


\begin{algorithm}[!t]
 \caption{R-AbaSVRG}
 \label{RadaSVRG_algorithm}
 \begin{algorithmic}[1]
  \STATE \textbf{Input:} Step size $\eta$, epoch length $S$, inner loop size $m$, mini-batch size $b$, adaptive batch size parameters $\alpha_1, \alpha_2, \beta_1$, initialization $\Tilde{x}^0$, desired accuracy $\epsilon$.
  \FOR {$s = 1,...,S$}
  \STATE $x_0^{s} = \Tilde{x}^{s-1}$.
  \STATE $B^s = \begin{cases} \min \{\alpha_1 \sigma^2/\beta_s , n\}, & \text{  (finite-sum)}\\
  \min\{\alpha_1 \sigma^2/\beta_s , \alpha_2 \sigma^2/\epsilon^2 \}, & \text{ (online) } \end{cases}$
  \STATE Draw a sample $\mathcal{B}^s$ from $[n]$ of size $B^s$ without replacement.
  \STATE $v_0^{s} = \text{grad}f_{\mathcal{B}^s}({x}_0^s)$. 
  \STATE $\beta_{s+1} = 0$. 
  \FOR {{$t = 0,...,m-1$}}
  \STATE Draw a sample $\mathcal{I}_t^s$ from $[n]$ of size $b$ with replacement.
  \STATE $v_{t}^{s} = \text{grad}f_{\mathcal{I}_t^s}(x_{t}^{s}) - \mathcal{T}_{{x}_0^s}^{x_{t}^{s}} \big( \text{grad}f_{\mathcal{I}_t^s}(x_0^s) - v_0^{s} \big)$. \label{modify_grad_rsvrg_algo}
  \STATE $x_{t+1}^{s} = R_{x_{t}^{s}} ( -\eta v_{t}^{s} )$.
  \STATE $\beta_{s+1} = \beta_{s+1} + \| v_{t}^s \|^2/m$.
  \ENDFOR
  \STATE $\Tilde{x}^{s} = x_m^{s}$.
  \ENDFOR
  \STATE \textbf{Output:} ${\Tilde{x}}$ uniformly selected at random from $\{\{x_t^s\}_{t=0}^{m-1}\}_{s=1}^{S}$.
 \end{algorithmic} 
\end{algorithm}

Proof of Theorem \ref{RSVRG_original_theorem} is included in Appendix \ref{proof_of_theorem_chapter} and the strategy is similar to \cite{ReddiSVRG2016,ZhangRSVRG2016}. That is, we first derive bounds on the norm of modified gradients $\| v_t^s \|^2$ and also on the distance $d^2(x_t^s, x_0^s)$ between current iterate and reference point within an epoch. Then we construct a Lyapunov function $f(x_t^s) + c_t d^2(x_t^s, x_0^s)$. We therefore can show the norm of gradient at current iterate is upper bounded by the difference in Lyapunov functions at consecutive iterates. In this process, trigonometric distance bound is applied to relate $d^2(x_t^s, x_0^s)$ to $d^2(x_{t+1}^s, x_0^s)$. By carefully choosing parameters and managing coefficients $c_t$, we obtain the desired result. Note that the constant $\psi$ is guaranteed to exist when $\mu_0$ is selected sufficiently small and $b \leq n^{a_1}$. The choice of $a_1, a_2$ is suggested in \cite{ZhangRSVRG2016}. Under exponential map and parallel transport, $L_l = L, \theta = 0, \nu = 1, c_R = 0$ and therefore this complexity reduces to $\mathcal{O}(n + \frac{L n^{2/3} \zeta^{1/2}}{\epsilon^2})$ as in \cite{ZhangRSVRG2016}.

Next we present convergence and IFO complexity of R-AbaSVRG. As a simple corollary, we can derive convergence results of R-SVRG with much simpler proof. This also allows analysis of R-SVRG under online setting, which is novel on Riemannian manifold. Define sigma algebras $\mathcal{F}_t^s := \{ \mathcal{B}^1,...,\mathcal{I}_{m-1}^1, \mathcal{B}^2,...,\mathcal{I}_{m-1}^2, ..., \mathcal{B}^s,...,\mathcal{I}_{t-1}^s \}$.  According to the update rule in Algorithm \ref{RadaSVRG_algorithm}, $v_{t-1}^s$ and $x_{t}^s$ are measurable in $\mathcal{F}_t^s$. Therefore, conditional on $\mathcal{F}_t^s$, randomness at current iteration $t$ only comes from sampling $\mathcal{I}_t^s$ or $\mathcal{B}^s$. We first present a Lemma that bounds deviation of modified stochastic gradient to the full gradient.

\begin{lemma}[Gradient estimation error bound for R-AbaSVRG]
\label{lemma_adasvrg}
Suppose Assumptions \ref{common_assump} and \ref{assumption_svrg} hold and consider Algorithm \ref{RadaSVRG_algorithm}. Then we can bound estimation error of the modified gradient $v_t^s$ to the full gradient $\text{grad}f(x_t^s)$ as 
\begin{align}
    &\mathbb{E}[\| v_t^{s} - \emph{grad}f(x_{t}^s) \|^2 | \mathcal{F}_0^s] \nonumber\\
    &\leq \frac{t}{b} (L_l + \theta G)^2 \mu^2 \nu^2 \eta^2 \sum_{i=0}^{t-1} \mathbb{E}[\| v_i^s \|^2 | \mathcal{F}_0^s] + \mathbbm{1}_{\{ B^s < n\}} \frac{\sigma^2}{B^s}. \nonumber
\end{align}
\end{lemma}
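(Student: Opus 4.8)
The plan is to decompose the estimation error into a variance term coming from the inexact reference gradient $v_0^s$ and a term coming from the recursive mini-batch correction, and to handle the latter by a telescoping / martingale argument over the inner iterations. First, I would write $v_t^s - \text{grad}f(x_t^s)$ by unrolling the recursion in line~\ref{modify_grad_rsvrg_algo} of Algorithm~\ref{RadaSVRG_algorithm}: since the estimator in \eqref{SVRG_estimator} is of the one-shot SVRG form (it references the fixed snapshot $x_0^s$, not the previous iterate), we get
\begin{equation}
v_t^s - \text{grad}f(x_t^s) = \underbrace{\big( v_0^s - \text{grad}f(x_0^s) \big)}_{\text{reference error}} + \underbrace{\big[\text{grad}f_{\mathcal{I}_t^s}(x_t^s) - \mathcal{T}_{x_0^s}^{x_t^s}\text{grad}f_{\mathcal{I}_t^s}(x_0^s) - \big(\text{grad}f(x_t^s) - \mathcal{T}_{x_0^s}^{x_0^s}\text{grad}f(x_0^s)\big)\big]}_{\text{correction error}},
\end{equation}
up to the isometry identification of tangent spaces via $\mathcal{T}$. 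Conditioning on $\mathcal{F}_0^s$ (so that $x_0^s$, $v_0^s$ and $B^s$ are fixed), the reference error is $\mathcal{F}_0^s$-measurable; and because $\mathcal{B}^s$ is drawn without replacement, when $B^s = n$ it vanishes, while when $B^s < n$ it is bounded by $\sigma^2/B^s$ using Assumption~\ref{bounded_var_assump} together with the standard without-replacement variance reduction factor — this accounts for the $\mathbbm{1}_{\{B^s<n\}}\sigma^2/B^s$ term.

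Next I would control the correction error. The subtlety is that $v_0^s$ being inexact breaks the usual martingale orthogonality between the reference error and the correction error; so rather than splitting as above it is cleaner to bound $\mathbb{E}[\|v_t^s - \text{grad}f(x_t^s)\|^2\mid\mathcal{F}_0^s]$ directly by following the by-now-standard inductive/recursive bound for SVRG-type estimators. Concretely, conditioning on $\mathcal{F}_t^s$, the term $\text{grad}f_{\mathcal{I}_t^s}(x_t^s) - \mathcal{T}_{x_0^s}^{x_t^s}\text{grad}f_{\mathcal{I}_t^s}(x_0^s)$ has conditional mean $\text{grad}f(x_t^s) - \mathcal{T}_{x_0^s}^{x_t^s}\text{grad}f(x_0^s)$ and, by Assumption~\ref{retaction_Ll_assump} and \ref{diff_vec_par_bound} (which together give $\|\text{grad}f_i(x_t^s) - \mathcal{T}_{x_0^s}^{x_t^s}\text{grad}f_i(x_0^s)\| \le (L_l + \theta G)\|R_{x_0^s}^{-1}(x_t^s)\|$ by inserting the parallel transport $P$ and using the triangle inequality with the gradient norm bound $G$), its second moment over the $b$-sample mini-batch $\mathcal{I}_t^s$ is at most $\frac{1}{b}(L_l+\theta G)^2\|R_{x_0^s}^{-1}(x_t^s)\|^2$. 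Then I would convert $\|R_{x_0^s}^{-1}(x_t^s)\|$ to Riemannian distance via Assumption~\ref{relation_distance_assump} ($\|R_{x_0^s}^{-1}(x_t^s)\| \le \mu\, d(x_0^s, x_t^s)$), and bound $d(x_0^s, x_t^s)$ by the sum of step lengths along the path: $d(x_0^s, x_t^s) \le \sum_{i=0}^{t-1} d(x_i^s, x_{i+1}^s) \le \nu \sum_{i=0}^{t-1}\|R_{x_i^s}^{-1}(x_{i+1}^s)\| = \nu\eta\sum_{i=0}^{t-1}\|v_i^s\|$, using the triangle inequality for $d$ and then Assumption~\ref{relation_distance_assump} again on each leg. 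Squaring and applying Cauchy--Schwarz gives $d^2(x_0^s,x_t^s) \le t\nu^2\eta^2\sum_{i=0}^{t-1}\|v_i^s\|^2$, which produces exactly the factor $\frac{t}{b}(L_l+\theta G)^2\mu^2\nu^2\eta^2\sum_{i=0}^{t-1}\mathbb{E}[\|v_i^s\|^2\mid\mathcal{F}_0^s]$ after taking $\mathbb{E}[\cdot\mid\mathcal{F}_0^s]$.

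Finally, I would combine the two pieces: taking total expectation conditional on $\mathcal{F}_0^s$, the cross term between the $\mathcal{F}_0^s$-measurable reference error and the zero-conditional-mean mini-batch fluctuation vanishes once we also account for the fact that $\text{grad}f(x_0^s) - v_0^s$ is itself $\mathcal{F}_0^s$-measurable, so the contributions add rather than interact, yielding the stated bound. \textbf{The main obstacle} I anticipate is the bookkeeping around the inexact reference gradient: one has to be careful that the decomposition keeps the reference-error term ($\le \mathbbm{1}_{\{B^s<n\}}\sigma^2/B^s$) separate from the path-dependent correction term, and that the ``triangle inequality along the iterate path'' step is valid — i.e. that all intermediate iterates $x_i^s$ lie in the totally retractive / totally normal neighbourhood $\mathcal{X}$ so that the distance-to-inverse-retraction comparisons of Assumption~\ref{relation_distance_assump} apply at every leg — which is exactly what Assumption~\ref{compact_neighbourhood_assump} and \ref{total_normal_assump} are there to guarantee.
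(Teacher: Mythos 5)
Your proposal follows essentially the same route as the paper's proof: condition on $\mathcal{F}_t^s$ so the cross term between the zero-conditional-mean mini-batch fluctuation and the reference error vanishes, apply the $1/b$ with-replacement variance reduction together with Lemma~\ref{vec_trans_lemma} and Assumption~\ref{relation_distance_assump} to get $\frac{1}{b}(L_l+\theta G)^2\mu^2 d^2(x_t^s,x_0^s)$, telescope the distance along the iterate path and apply Cauchy--Schwarz to produce the factor $t\nu^2\eta^2\sum_{i=0}^{t-1}\|v_i^s\|^2$, and bound the reference error via Lemma~\ref{variance_bound_wors}. One bookkeeping slip worth noting: $v_0^s$ is \emph{not} $\mathcal{F}_0^s$-measurable (the draw of $\mathcal{B}^s$ occurs after $\mathcal{F}_0^s$), which is exactly why $\mathbb{E}[\|v_0^s-\text{grad}f(x_0^s)\|^2\,|\,\mathcal{F}_0^s]\leq \mathbbm{1}_{\{B^s<n\}}\sigma^2/B^s$ makes sense, and the martingale orthogonality you worry about is in fact not broken, since conditional on $\mathcal{F}_t^s$ (for $t\geq 1$) the reference error is fixed while the mini-batch term has zero conditional mean.
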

Proof of this Lemma is contained in Appendix \ref{converg_r_adasvrg_appendix}. This suggests that conditional on $\mathcal{F}_0^s$, the deviation of $v_t^s$ to the full gradient is bounded by sum of all previous modified gradient norm in current epoch plus some deviation from full gradient at reference point. When choosing $B^s = n$ as in non-adaptive R-SVRG, the second term vanishes and we therefore obtain a better bound on accuracy of the modified gradient $v_t^s$. Next, we present convergence and complexity bounds for R-AbaSVRG and R-SVRG as follows. 

\begin{theorem}[Convergence analysis of R-AbaSVRG]
\label{convergence_radasvrg}
    Let $x^* \in \mathcal{M}$ be an optimal point of $f$ and suppose Assumptions \ref{common_assump} and \ref{assumption_svrg} hold. Consider Algorithm \ref{RadaSVRG_algorithm} with a fixed step size $\eta \leq \frac{2 - \frac{2}{\alpha}}{L + \sqrt{L^2 + 4(1 - \frac{1}{\alpha})\frac{ (L_l + \theta G)^2 \mu^2 \nu^2 m^2}{b} }}$ with $\alpha \geq 4$. Then under both finite-sum and online setting, output $\Tilde{x}$ after running $T = Sm$ iterations satisfies
    \begin{equation}
        \mathbb{E}\| \emph{grad}f(\Tilde{x}) \|^2 \leq \frac{2\Delta}{T \eta} + \frac{\epsilon^2}{2}, \nonumber
    \end{equation}
    where $\Delta := f(\Tilde{x}^0) - f(x^*)$ and $\epsilon$ is the desired accuracy. 
\end{theorem}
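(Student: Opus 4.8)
The plan is to run a direct descent argument that routes all of the Riemannian geometry through Lemma \ref{lemma_adasvrg}, leaving an essentially Euclidean telescoping. First I would apply retraction $L$-smoothness (Assumption \ref{retraction_smooth_assump}) with $\xi = -\eta v_t^s$, giving $f(x_{t+1}^s) \le f(x_t^s) - \eta \langle \text{grad}f(x_t^s), v_t^s \rangle + \frac{L\eta^2}{2}\|v_t^s\|^2$, and then rewrite the cross term by the polarization identity $-\eta\langle a,b\rangle = -\frac{\eta}{2}\|a\|^2 - \frac{\eta}{2}\|b\|^2 + \frac{\eta}{2}\|a-b\|^2$ with $a = \text{grad}f(x_t^s)$, $b = v_t^s$. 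The point of this route is that it needs \emph{no} unbiasedness of $v_t^s$: the bias coming from the inexact snapshot gradient $v_0^s$ is entirely absorbed into $\|v_t^s - \text{grad}f(x_t^s)\|^2$, which is exactly the quantity Lemma \ref{lemma_adasvrg} bounds. This produces the one-step bound $f(x_{t+1}^s) \le f(x_t^s) - \frac{\eta}{2}\|\text{grad}f(x_t^s)\|^2 - \big(\frac{\eta}{2} - \frac{L\eta^2}{2}\big)\|v_t^s\|^2 + \frac{\eta}{2}\|v_t^s - \text{grad}f(x_t^s)\|^2$.

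Next I would take conditional expectations given $\mathcal{F}_0^s$, sum over the inner loop $t = 0, \dots, m-1$, and insert Lemma \ref{lemma_adasvrg}; using the crude estimate $\sum_{t=0}^{m-1} \frac{t}{b}\sum_{i=0}^{t-1}\mathbb{E}[\|v_i^s\|^2 \mid \mathcal{F}_0^s] \le \frac{m^2}{b}\sum_{t=0}^{m-1}\mathbb{E}[\|v_t^s\|^2 \mid \mathcal{F}_0^s]$, the accumulated deviation is at most $C\eta^2 \sum_t \mathbb{E}[\|v_t^s\|^2\mid\mathcal{F}_0^s] + m\,\mathbbm{1}_{\{B^s < n\}}\sigma^2/B^s$ with $C := (L_l + \theta G)^2 \mu^2 \nu^2 m^2 / b$. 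The coefficient then multiplying $\sum_t \mathbb{E}\|v_t^s\|^2$ is $-\frac{\eta}{2}(1 - L\eta - C\eta^2)$, and rationalizing the stated step-size bound shows $L\eta + C\eta^2 \le 1 - \frac1\alpha$, hence $1 - L\eta - C\eta^2 \ge \frac1\alpha$, so a strictly negative multiple $-\frac{\eta}{2\alpha}\sum_t \mathbb{E}\|v_t^s\|^2$ survives. Writing $G_s := \sum_t \mathbb{E}\|\text{grad}f(x_t^s)\|^2$, $V_s := \sum_t \mathbb{E}\|v_t^s\|^2$, and $P_s := \mathbb{E}[f(x_0^s)]$ (so $P_{s+1} = \mathbb{E}[f(x_m^s)]$ because $x_0^{s+1} = x_m^s$), and taking total expectation, the epoch recursion reads $P_{s+1} \le P_s - \frac{\eta}{2}G_s - \frac{\eta}{2\alpha}V_s + \frac{m\eta}{2}\,\mathbb{E}\big[\mathbbm{1}_{\{B^s<n\}}\sigma^2/B^s\big]$.

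The crux is handling the batch-size feedback. By the definition of $B^s$, whenever $B^s < n$ we have $\sigma^2/B^s \le \beta_s/\alpha_1$ (finite-sum) or $\sigma^2/B^s \le \beta_s/\alpha_1 + \epsilon^2/\alpha_2$ (online), and $\mathbb{E}[\beta_s] = V_{s-1}/m$ for $s \ge 2$ while $\beta_1$ is the input constant for $s = 1$; hence the error term is at most $\frac{\eta}{2\alpha_1}V_{s-1}$ ($+\,\frac{m\eta\epsilon^2}{2\alpha_2}$ in the online case). Summing over $s = 1, \dots, S$ and reindexing $\sum_s V_{s-1} \le V_0 + \sum_s V_s$, the choice $\alpha_1 \ge \alpha$ makes $-\frac{\eta}{2\alpha}\sum_s V_s + \frac{\eta}{2\alpha_1}\sum_s V_s \le 0$, so the $V_s$ terms cancel and only $\Delta = P_1 - f(x^*) \ge P_1 - P_{S+1}$, a constant $\frac{\eta}{2\alpha_1}V_0$, and (online) $\frac{T\eta\epsilon^2}{2\alpha_2}$ remain. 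Dividing by $\frac{\eta T}{2}$ and using that $\tilde x$ is sampled uniformly from the $T = Sm$ iterates, $\mathbb{E}\|\text{grad}f(\tilde x)\|^2 \le \frac{2\Delta}{\eta T} + \frac{V_0}{\alpha_1 T} + \frac{\epsilon^2}{\alpha_2}$; taking $\alpha_2 \ge 4$ and $\beta_1$ (equivalently $S$) large enough that $\frac{V_0}{\alpha_1 T} \le \frac{\epsilon^2}{4}$ yields the claimed $\frac{2\Delta}{\eta T} + \frac{\epsilon^2}{2}$.

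I expect the main obstacle to be precisely this last telescoping bookkeeping: since $B^s$ must be fixed before epoch $s$ starts, the algorithm substitutes the previous epoch's gradients $\sum_t \|v_t^{s-1}\|^2$ as a proxy, which injects a destabilizing feedback term into the epoch recursion, and the whole argument hinges on that term being dominated exactly by the $-\frac{\eta}{2\alpha}V_s$ margin left over from the descent step under the stated step-size condition. Everything Riemannian — the curvature-, transport-, and retraction-dependent constants $L_l, \theta, G, \mu, \nu$ — is already sealed inside Lemma \ref{lemma_adasvrg}, so the remainder is elementary; this is the sense in which the analysis is ``much simpler'' than the Lyapunov-function route of Theorem \ref{RSVRG_original_theorem}.
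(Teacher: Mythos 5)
Your proposal is correct and follows essentially the same route as the paper's proof: retraction $L$-smoothness plus the polarization identity, insertion of Lemma \ref{lemma_adasvrg} with the crude $m^2/b$ bound on the accumulated deviation, telescoping over epochs, and absorbing the batch-size feedback term $\beta_s \le \frac{1}{m}\sum_t \|v_t^{s-1}\|^2$ into the negative margin left by the step-size condition $1-L\eta-C\eta^2 \ge \frac{1}{\alpha}$ (with the initialization handled via the implicit condition $\beta_1 \lesssim \epsilon^2 S$, which the paper likewise uses). The only differences are cosmetic bookkeeping — you cancel the feedback per epoch against an explicit $-\frac{\eta}{2\alpha}V_s$ margin while the paper collects the total coefficient at the end — and a harmless slip where you write ``$\beta_1$ large enough'' for a condition that actually requires $\beta_1$ small (or $S$ large).
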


\begin{corollary}[IFO complexity of R-AbaSVRG]
\label{complexity_corollary_adasvrg}
    With same Assumptions in Theorem \ref{convergence_radasvrg}, choose $b = m^2, \alpha = 4$, $\eta = \frac{3}{2L + 2\sqrt{L^2 + 3(L_l + \theta G)^2 \mu^2\nu^2}}$. Set $m = \lfloor n^{1/3} \rfloor$ under finite-sum setting and $m = (\frac{\sigma}{\epsilon})^{2/3}$ under online setting. The IFO complexity of Algorithm \ref{RadaSVRG_algorithm} to achieve $\epsilon$-accurate solution is given by
    $$\begin{cases} \mathcal{O}\big( \Tilde{B} + \frac{\Theta_1 \Tilde{B}}{n^{1/3} \epsilon^2} +  \frac{\Theta_1 n^{2/3} }{\epsilon^2} \big), & \text{ (finite-sum) }\\
    \mathcal{O}\big( \frac{\Theta_1 \Tilde{B}}{\sigma^{2/3} \epsilon^{4/3}} + \frac{\Theta_1 \sigma^{4/3}}{\epsilon^{10/3}} \big), & \text{ (online) }\end{cases}$$
    where $\Theta_1 := L + \sqrt{L^2 + \varrho_1 (L_l + \theta G)^2 \mu^2 \nu^2}$ with $\varrho_1 > 0$ being a constant that does not depend on any parameter. $\Tilde{B}$ is the average batch size defined as follows. $\Tilde{B} := \frac{1}{S}\sum_{s=1}^{S} \min\{ {\alpha_1 \sigma^2}/{\beta}_s , n\}$ under finite-sum setting and $\Tilde{B} := \frac{1}{S}\sum_{s=1}^{S} \min\{ {\alpha_1 \sigma^2}/{\beta}_s , {\alpha_2 \sigma^2}/{\epsilon^2}\}$ under online setting.
\end{corollary}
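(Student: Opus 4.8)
The plan is to start from the per-iteration guarantee of Theorem \ref{convergence_radasvrg}, namely $\mathbb{E}\|\text{grad}f(\Tilde{x})\|^2 \le \frac{2\Delta}{T\eta} + \frac{\epsilon^2}{2}$, and force the right-hand side below $\epsilon^2$. The second term is already $\epsilon^2/2$ by construction, so it suffices to make $\frac{2\Delta}{T\eta} \le \frac{\epsilon^2}{2}$, i.e. $T = Sm \ge \frac{4\Delta}{\eta \epsilon^2}$. First I would verify that the prescribed step size $\eta = \frac{3}{2L + 2\sqrt{L^2 + 3(L_l+\theta G)^2\mu^2\nu^2}}$ is admissible: substituting $b = m^2$ and $\alpha = 4$ into the bound $\eta \le \frac{2 - 2/\alpha}{L + \sqrt{L^2 + 4(1-1/\alpha)(L_l+\theta G)^2\mu^2\nu^2 m^2/b}}$ collapses the $m^2/b$ factor to $1$ and gives exactly $\eta \le \frac{3/2}{L + \sqrt{L^2 + 3(L_l+\theta G)^2\mu^2\nu^2}}$, so the choice is valid and $1/\eta = \Theta(\Theta_1)$ with $\Theta_1 := L + \sqrt{L^2 + \varrho_1(L_l+\theta G)^2\mu^2\nu^2}$ for a suitable absolute constant $\varrho_1$. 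Hence the number of outer epochs needed is $S = \mathcal{O}\big(\frac{\Theta_1}{m\epsilon^2}\big)$ (using $T = Sm$).

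Next I would tally the IFO calls. Each epoch $s$ costs $B^s$ oracle calls for the reference gradient $v_0^s$ plus $2mb$ calls across the inner loop (two mini-batch gradients of size $b$ per inner step), so the total is $\sum_{s=1}^S B^s + 2Smb = S\Tilde{B} + 2Smb$ with $\Tilde{B} := \frac{1}{S}\sum_{s=1}^S B^s$ exactly the average batch size in the statement. With $b = m^2$ the inner-loop cost per epoch is $2m^3$, and plugging $S = \mathcal{O}(\Theta_1/(m\epsilon^2))$ gives total complexity $\mathcal{O}\big(\frac{\Theta_1 \Tilde{B}}{m\epsilon^2} + \frac{\Theta_1 m^2}{\epsilon^2}\big)$. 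The two cases then follow by the stated choice of $m$: in the finite-sum case $m = \lfloor n^{1/3}\rfloor$ yields $\mathcal{O}\big(\frac{\Theta_1\Tilde{B}}{n^{1/3}\epsilon^2} + \frac{\Theta_1 n^{2/3}}{\epsilon^2}\big)$, and since one always pays at least one full pass of $\Tilde{B}$ to initialize, we absorb an additive $\Tilde{B}$ to match the table; in the online case $m = (\sigma/\epsilon)^{2/3}$ gives $\frac{\Theta_1\Tilde{B}}{(\sigma/\epsilon)^{2/3}\epsilon^2} = \frac{\Theta_1\Tilde{B}}{\sigma^{2/3}\epsilon^{4/3}}$ and $\frac{\Theta_1(\sigma/\epsilon)^{4/3}}{\epsilon^2} = \frac{\Theta_1\sigma^{4/3}}{\epsilon^{10/3}}$, as claimed.

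The one point that needs a little care — and what I expect to be the main obstacle — is justifying that $\Tilde{B}$ can be treated as a genuine (data-dependent but bounded) quantity rather than something that blows up: in the online case $B^s \le \alpha_2\sigma^2/\epsilon^2$ always, and in the finite-sum case $B^s \le n$, so $\Tilde{B}$ is uniformly bounded by the "worst-case" batch size, which recovers the vanilla R-SVRG complexities $\mathcal{O}(n + \Theta_1 n^{2/3}/\epsilon^2)$ and $\mathcal{O}(\Theta_1\sigma^{4/3}/\epsilon^{10/3})$; the adaptive bound is an improvement precisely when $\beta_s$ (the averaged squared modified-gradient norm of the previous epoch) stays large, keeping $\alpha_1\sigma^2/\beta_s$ small. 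I would also double-check that the initial epoch, where no previous $\beta_s$ is available, is handled by the input parameter $\beta_1$ and contributes only an additive $\mathcal{O}(\Tilde{B})$ (or $\mathcal{O}(n)$) term, which is why the finite-sum bound carries a standalone $\Tilde{B}$. Beyond this bookkeeping the argument is just substitution of the chosen parameters into the two expressions $\frac{\Theta_1\Tilde{B}}{m\epsilon^2}$ and $\frac{\Theta_1 m^2}{\epsilon^2}$.
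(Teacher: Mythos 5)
Your proposal is correct and follows essentially the same route as the paper's proof: invoke Theorem \ref{convergence_radasvrg} to get the required epoch count $S = \frac{4\Delta}{m\eta\epsilon^2} = \mathcal{O}\big(\frac{\Theta_1}{m\epsilon^2}\big)$, count $B^s + 2mb$ IFO calls per epoch so the total is $\mathcal{O}\big(S\Tilde{B} + Sm^3\big) = \mathcal{O}\big(\frac{\Theta_1\Tilde{B}}{m\epsilon^2} + \frac{\Theta_1 m^2}{\epsilon^2}\big)$, and substitute the stated choices of $m$. Your extra checks (that $b=m^2$, $\alpha=4$ make the prescribed $\eta$ exactly the admissible threshold, and that the standalone $\Tilde{B}$ term comes from running at least one epoch) are consistent with, and slightly more explicit than, what the paper writes.
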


\begin{corollary}[Convergence and IFO complexity of R-SVRG under new analysis]
\label{conver_complexity_svrg_new}
    With the same assumptions as in Theorem \ref{convergence_radasvrg} and consider Algorithm \ref{RadaSVRG_algorithm} with fixed batch size $B^s = B$ for $s=1,..., S$. Choose a fixed step size $\eta \leq \frac{2}{L + \sqrt{L^2 + 4 \frac{(L_l + \theta G)^2\mu^2 \nu^2 m^2}{b}}}$. Output $\Tilde{x}$ after running $T = Sm$ iterations satisfies
    $$\mathbb{E}\| \emph{grad}f(\Tilde{x}) \|^2 \leq \frac{2\Delta}{T\eta} + \mathbbm{1}_{\{B < n\}} \frac{\sigma^2}{B}.$$
    If we further choose $b = m^2, \eta = \frac{2}{L + \sqrt{L^2 + 4 {(L_l + \theta G)^2\mu^2 \nu^2 }}}$ and the following parameters
    \begin{align*}
        B = n, \quad m = \lfloor n^{1/3} \rfloor \quad &\text{ (finite-sum) } \\
        B = \frac{2\sigma^2}{\epsilon^2}, \quad m = (\frac{\sigma}{\epsilon})^{2/3} \quad &\text{ (online) }
    \end{align*}
    IFO complexity to obtain $\epsilon$-accurate solution is 
    \begin{equation*}
        \begin{cases} \mathcal{O}\big( n + \frac{\Theta_1 n^{2/3} }{\epsilon^2} \big), & \text{ (finite-sum) }\\
    \mathcal{O}\big( \frac{\Theta_1 \sigma^{4/3}}{\epsilon^{10/3}} \big), & \text{ (online) }\end{cases}
    \end{equation*}
\end{corollary}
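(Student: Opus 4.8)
The plan is to obtain this statement as a specialization of the proof of Theorem~\ref{convergence_radasvrg} with the batch size frozen at $B^s \equiv B$: once $B$ is fixed there is no longer any need for the epoch-telescoping device used in the adaptive analysis, so one may use the full step-size range (the $\alpha \to \infty$ limit of the bound in Theorem~\ref{convergence_radasvrg}) and the residual term carried through the argument is simply $\mathbbm{1}_{\{B<n\}}\sigma^2/B$ rather than $\epsilon^2/2$.

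First I would apply retraction $L$-smoothness (Assumption~\ref{retraction_smooth_assump}) to the update $x_{t+1}^s = R_{x_t^s}(-\eta v_t^s)$ and split the inner product with $-\langle a,b\rangle = \tfrac12(\|a-b\|^2-\|a\|^2-\|b\|^2)$, which gives the per-step inequality
\[
f(x_{t+1}^s) \le f(x_t^s) - \tfrac{\eta}{2}\|\text{grad}f(x_t^s)\|^2 - \tfrac{\eta}{2}(1-L\eta)\|v_t^s\|^2 + \tfrac{\eta}{2}\|v_t^s-\text{grad}f(x_t^s)\|^2 .
\]
Taking $\mathbb{E}[\cdot\,|\,\mathcal{F}_0^s]$, summing over $t=0,\dots,m-1$, inserting Lemma~\ref{lemma_adasvrg} with $B^s=B$, and bounding $\sum_{t<m}t \le m^2$ turns the estimation-error contribution into $\tfrac{(L_l+\theta G)^2\mu^2\nu^2\eta^2 m^2}{b}\sum_{i<m}\mathbb{E}[\|v_i^s\|^2\,|\,\mathcal{F}_0^s] + m\,\mathbbm{1}_{\{B<n\}}\tfrac{\sigma^2}{B}$. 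The combined coefficient of $\sum_{i<m}\mathbb{E}[\|v_i^s\|^2\,|\,\mathcal{F}_0^s]$ is then $-\tfrac{\eta}{2}\bigl(1-L\eta-\tfrac{\eta^2(L_l+\theta G)^2\mu^2\nu^2 m^2}{b}\bigr)$, and the declared step-size ceiling is exactly the positive root of $1-L\eta-a\eta^2\ge 0$ with $a=(L_l+\theta G)^2\mu^2\nu^2 m^2/b$ (which, after rationalizing, equals $2/(L+\sqrt{L^2+4a})$), so those terms can be dropped. Taking full expectation and using that $x_0^s$ is $\mathcal{F}_0^s$-measurable leaves, per epoch, $\tfrac{\eta}{2}\sum_{t<m}\mathbb{E}\|\text{grad}f(x_t^s)\|^2 \le \mathbb{E}[f(x_0^s)] - \mathbb{E}[f(x_m^s)] + \tfrac{m\eta}{2}\mathbbm{1}_{\{B<n\}}\tfrac{\sigma^2}{B}$.

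The clean part is the telescoping: since the algorithm sets $x_0^s = \Tilde{x}^{s-1} = x_m^{s-1}$, summing the last bound over $s=1,\dots,S$ collapses the function-value differences to $f(\Tilde{x}^0)-\mathbb{E}[f(\Tilde{x}^S)]\le\Delta$ with no need to reserve any of the $\|v_t^s\|^2$ budget for future batch sizes — which is precisely why the $\alpha\ge4$ restriction disappears here. Dividing by $\tfrac{\eta T}{2}$ with $T=Sm$ and using that $\Tilde{x}$ is drawn uniformly from the iterates yields the first claim; a single argument covers both finite-sum and online settings because the indicator term in Lemma~\ref{lemma_adasvrg} already distinguishes them.

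For the complexity I would set $b=m^2$ so that $m^2/b=1$ and $\eta=\Theta(1/\Theta_1)$, observe that one epoch costs $B+2bm$ IFO calls ($B$ for $v_0^s$, and $2b$ per inner step for $\text{grad}f_{\mathcal{I}_t^s}(x_t^s)$ and $\text{grad}f_{\mathcal{I}_t^s}(x_0^s)$), and run $S=T/m$ epochs. In the finite-sum case $B=n$ zeroes the indicator term, so $T=O(\Delta/(\eta\epsilon^2))$ iterations give $\mathbb{E}\|\text{grad}f(\Tilde{x})\|\le\epsilon$ by Jensen; with $m=\lfloor n^{1/3}\rfloor$ each epoch costs $O(n)$ and $S=O(\Theta_1\Delta/(n^{1/3}\epsilon^2))$, hence $O(n+\Theta_1 n^{2/3}/\epsilon^2)$. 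In the online case $B=2\sigma^2/\epsilon^2$ makes the indicator term equal $\epsilon^2/2$, so $T=O(\Delta/(\eta\epsilon^2))$ again suffices; with $m=(\sigma/\epsilon)^{2/3}$ each epoch costs $O(\sigma^2/\epsilon^2)$ and $S=O(\Theta_1\Delta/(\sigma^{2/3}\epsilon^{4/3}))$, hence $O(\Theta_1\sigma^{4/3}/\epsilon^{10/3})$. The only step that requires genuine care rather than bookkeeping is checking that the declared step-size range forces the $\|v_t^s\|^2$ coefficient nonpositive; everything else is a direct restriction of the Theorem~\ref{convergence_radasvrg} argument to constant $B^s$, which is exactly why it is stated as a corollary.
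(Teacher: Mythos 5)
Your proposal is correct and follows essentially the same route as the paper: the paper's own proof of this corollary starts from the epoch-wise inequality already derived in the proof of Theorem~\ref{convergence_radasvrg} (its equation for $\sum_t\mathbb{E}[\|\mathrm{grad}f(x_t^s)\|^2\mid\mathcal{F}_0^s]$), observes that the stated step size is exactly the positive root making the $\|v_t^s\|^2$ coefficient nonpositive, telescopes over epochs, and then does the same $S(B+2mb)$ bookkeeping you describe. Your added remark about why the $\alpha$-dependence disappears (no $\|v_t^s\|^2$ budget needs to be reserved to control $1/B^{s+1}$ when $B$ is fixed) accurately explains the only substantive difference from the adaptive analysis.
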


Proof of all these results are deferred to Appendix \ref{converg_r_adasvrg_appendix}. We first draw a comparison between IFO complexities of vanilla R-SVRG under two analysis frameworks. From Theorem \ref{RSVRG_original_theorem}, the complexity is $\mathcal{O}(n + \frac{(L_l + \theta G) (\zeta \nu^2 + 2 c_R D)^{1/2} n^{2/3} }{\epsilon^2})$, which is the same as $\mathcal{O}(n + \frac{\Theta_1 n^{2/3} }{\epsilon^2})$ in Corollary \ref{conver_complexity_svrg_new} up to a constant. Under standard analysis of Lyapunov function, the complexity is further controlled by curvature constant $\zeta$ and diameter $D$. Rather, we adopt a proof idea similar as in \cite{JiAdaSPIDERboost2019}, which is to bound function value difference by accumulated sum of modified gradient norm. This analysis is much simpler without requiring trigonometric distance bound and in turn avoids compactness and bounded curvature assumptions. Additionally, our proof slightly differs from \cite{JiAdaSPIDERboost2019} where we apply $f(x_{t+1}^s) - f(x_t^s) \leq -\frac{\eta}{2}\| \text{grad}f(x_{t}^s) \|^2 + \frac{\eta}{2} \| v_{t}^s - \text{grad}f(x_{t}^s) \|^2 - (\frac{\eta}{2} - \frac{L\eta^2}{2}) \| v_{t}^s \|^2$ rather than $f(x_{t+1}^s) - f(x_t^s) \leq \frac{\eta}{2} \| v_{t}^s - \text{grad}f(x_{t}^s) \|^2 - (\frac{\eta}{2} - \frac{L\eta^2}{2}) \| v_{t}^s \|^2$. The former statement is stronger than the latter and therefore we can directly bound $\| \text{grad}f(x_t^s) \|^2$ based on this inequality, which yields an even simpler proof. Note the new complexity in Corollary \ref{conver_complexity_svrg_new} still depends on parameters $\mu$ and $\nu$ that describe the difference between exponential map and retraction. This is unavoidable as we need to relate distances between iterates and the reference point to norm of modified gradient. 

Comparing with R-SD that requires a complexity of $\mathcal{O}(n + \frac{n}{\epsilon^2})$, R-SVRG is superior with complexity lower by a factor of $\mathcal{O}(n^{1/3})$. The new analysis also provides a complexity of $\mathcal{O}\big(\frac{\Theta_1 \sigma^{4/3}}{\epsilon^{10/3}} \big)$ under online setting, which is the first online complexity established on SVRG-type methods over Riemannian manifold. This corresponds to the best known rate $\mathcal{O}\big(\frac{1}{\epsilon^{10/3}} \big)$ for SVRG-based algorithms on Euclidean space, such as SCSG \cite{LeiSCSG2017} and ProxSVRG$+$ \cite{LiProxSVRG2018}. Given that the complexity of R-SGD is $\mathcal{O}(\frac{1}{\epsilon^4})$, R-SVRG under online setting outperforms R-SGD by a factor of $\mathcal{O}\big(\frac{1}{\epsilon^{2/3}}\big)$.

From Theorem \ref{convergence_radasvrg}, we also note that R-AbaSVRG obtains the same convergence rate as non-adaptive R-SVRG. This suggests that an identical iteration complexity is required to achieve $\epsilon$-accurate solution. Therefore under same choices of parameters, R-AbaSVRG obtains $\mathcal{O}\big( \Tilde{B} + \frac{\Theta_1 \Tilde{B}}{n^{1/3} \epsilon^2} +  \frac{\Theta_1 n^{2/3} }{\epsilon^2} \big)$ under finite-sum setting and $\mathcal{O}\big( \frac{\Theta_1 \Tilde{B}}{\sigma^{2/3} \epsilon^{4/3}} + \frac{\Theta_1 \sigma^{4/3}}{\epsilon^{10/3}} \big)$ under online setting. These complexities can be theoretically much lower than R-SVRG from the definition of $\Tilde{B}$. That is, because $\Tilde{B} \leq n$ under finite-sum setting, the complexity of R-AbaSVRG is at most $\mathcal{O}\big( n + \frac{\Theta_1 n^{2/3}}{\epsilon^2}\big)$, which matches the complexity of R-SVRG. Similar argument holds for online setting.   

Finally, we make one additional comment on the choice of parameters. Theorem \ref{RSVRG_original_theorem} suggests a choice of $m = \mathcal{O}(n/b)$ with $b \leq n^{2/3}$ while both Corollary \ref{complexity_corollary_adasvrg} and \ref{conver_complexity_svrg_new} simply select $b = m^2 =n^{2/3}$. Similar to \cite{JiAdaSPIDERboost2019}, our new analysis does not easily allow more flexible choices of $b$ and $m$ as we do not construct any nontrivial auxiliary variable to achieve this purpose.

\section{Riemannian AbaSRG} \label{R_abasrg_section}

The key step of R-AbaSRG in Algorithm \ref{RAdaSRG_algorithm} is nearly identical to R-AbaSVRG except that the modified gradient $v_t^s$ is constructed recursively from $v_{t-1}^s$. We first similarly present a gradient estimation bound in the following Lemma with $\mathcal{F}_t^s$ representing the same sigma algebras as in the analysis of R-AbaSVRG. 

\begin{algorithm}[!t]
 \caption{R-AbaSRG}
 \label{RAdaSRG_algorithm}
 \begin{algorithmic}[1]
  \STATE \textbf{Input:} Step size $\eta$, epoch length $S$, inner loop size $m$, mini-batch size $b$, adaptive batch size parameters $\alpha_1, \alpha_2, \beta_1$, initialization $\Tilde{x}^0$, desired accuracy $\epsilon$.
  \FOR {$s = 1,...,S$}
  \STATE $x_0^{s} = \Tilde{x}^{s-1}$.
  \STATE $B^s = \begin{cases} \min \{\alpha_1 \sigma^2/\beta_s , n\}, & \text{  (finite-sum)}\\
  \min\{\alpha_1 \sigma^2/\beta_s , \alpha_2 \sigma^2/\epsilon^2 \}, & \text{ (online) } \end{cases}$
  \STATE Draw a sample $\mathcal{B}^s$ from $[n]$ of size $B^s$ without replacement.
  \STATE $v_0^{s} = \text{grad}f_{\mathcal{B}^s}({x}_0^s)$. 
  \STATE $x_1^s = R_{x_0^s}(- \eta v_0^s)$.
  \STATE $\beta_{s+1} = \| v_0^s \|^2/m$. 
  \FOR {{$t = 1,...,m-1$}}
  \STATE Draw a sample $\mathcal{I}_t^s$ from $[n]$ of size $b$ with replacement.
  \STATE $v_t^{s} = \text{grad}f_{\mathcal{I}_t^s}(x_{t}^{s}) -  \mathcal{T}_{{x}_{t-1}^s}^{x_t^{s}} \big( \text{grad}f_{\mathcal{I}_t^s}(x_{t-1}^s) - v_{t-1}^{s} \big)$. \label{modify_grad_rsrg_algo}
  \STATE $x_{t+1}^{s} = R_{x_{t}^{s}} ( -\eta v_{t}^{s} )$.
  \STATE $\beta_{s+1} = \beta_{s+1} + \| v_{t}^s \|^2/m$.
  \ENDFOR
  \STATE $\Tilde{x}^{s} = x_m^{s}$.
  \ENDFOR
  \STATE \textbf{Output:} ${\Tilde{x}}$ uniformly selected at random from $\{\{x_t^s\}_{t=0}^{m-1}\}_{s=1}^{S}$.
 \end{algorithmic} 
\end{algorithm}

\begin{lemma}[Gradient estimation error bound for R-AbaSRG]
\label{lemma1_RSRG}
Suppose Assumption \ref{common_assump} hold and consider Algorithm \ref{RAdaSRG_algorithm}. Then we can similarly bound estimation error of modified gradient $v_t^s$ to the full gradient $\text{grad}f(x_t^s)$ as 
\begin{align}
    &\mathbb{E}[\| v_t^s - \emph{grad}f(x_t^s) \|^2 | \mathcal{F}_0^s] \nonumber\\
    &\leq \frac{(L_l + \theta G)^2\eta^2}{b}  \sum_{i=0}^{t} \mathbb{E}[\| v_i^s \|^2| \mathcal{F}_0^s] + \mathbbm{1}_{\{ B^s < n\}} \frac{\sigma^2}{B^s}. \nonumber
\end{align}
\end{lemma}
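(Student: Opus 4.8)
The plan is to mirror the classical variance analysis of the SARAH/SRG recursive estimator, the only genuinely Riemannian ingredients being linearity and isometry of the vector transport and the transport-gap bound of Assumption \ref{diff_vec_par_bound}. Set $e_t := v_t^s - \text{grad}f(x_t^s)$ and abbreviate $\mathcal{T}\equiv\mathcal{T}_{x_{t-1}^s}^{x_t^s}$. Subtracting $\text{grad}f(x_t^s)$ from the recursive update for $v_t^s$ in Algorithm~\ref{RAdaSRG_algorithm} and inserting $\pm\,\mathcal{T}\,\text{grad}f(x_{t-1}^s)$ yields, for $1\le t\le m-1$,
\[
e_t = \mathcal{T} e_{t-1} + u_t,\qquad u_t := \big(\text{grad}f_{\mathcal{I}_t^s}(x_t^s) - \mathcal{T}\,\text{grad}f_{\mathcal{I}_t^s}(x_{t-1}^s)\big) - \big(\text{grad}f(x_t^s) - \mathcal{T}\,\text{grad}f(x_{t-1}^s)\big).
\]
Since $x_{t-1}^s,x_t^s,v_{t-1}^s$ and hence $e_{t-1}$ are $\mathcal{F}_t^s$-measurable while $\mathcal{I}_t^s$ is a fresh uniform sample independent of $\mathcal{F}_t^s$, linearity of $\mathcal{T}$ gives $\mathbb{E}[u_t\mid\mathcal{F}_t^s]=0$; together with isometry of $\mathcal{T}$ this kills the cross term, so $\mathbb{E}[\|e_t\|^2\mid\mathcal{F}_t^s] = \|e_{t-1}\|^2 + \mathbb{E}[\|u_t\|^2\mid\mathcal{F}_t^s]$.

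Next I would bound $\mathbb{E}[\|u_t\|^2\mid\mathcal{F}_t^s]$. Because $\mathcal{I}_t^s$ is drawn with replacement, $u_t$ is an average of $b$ i.i.d.\ mean-zero terms, so $\mathbb{E}[\|u_t\|^2\mid\mathcal{F}_t^s]\le \tfrac1b\,\mathbb{E}_i\|\text{grad}f_i(x_t^s) - \mathcal{T}\,\text{grad}f_i(x_{t-1}^s)\|^2$ by $\mathbb{E}\|X-\mathbb{E}X\|^2\le\mathbb{E}\|X\|^2$. Inserting $\pm\,P_{x_{t-1}^s}^{x_t^s}\text{grad}f_i(x_{t-1}^s)$ and using the triangle inequality splits the inner norm into the parallel-transport term controlled by Assumption \ref{retaction_Ll_assump} and the $\mathcal{T}$-versus-$P$ discrepancy controlled by Assumption \ref{diff_vec_par_bound} together with $\|\text{grad}f_i\|\le G$ (Assumption \ref{bounded_norm_assump}); since $x_t^s = R_{x_{t-1}^s}(-\eta v_{t-1}^s)$, the relevant tangent direction has norm $\eta\|v_{t-1}^s\|$, giving $\mathbb{E}_i\|\text{grad}f_i(x_t^s) - \mathcal{T}\,\text{grad}f_i(x_{t-1}^s)\|^2 \le (L_l+\theta G)^2\eta^2\|v_{t-1}^s\|^2$. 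Hence $\mathbb{E}[\|e_t\|^2\mid\mathcal{F}_t^s]\le\|e_{t-1}\|^2 + \tfrac{(L_l+\theta G)^2\eta^2}{b}\|v_{t-1}^s\|^2$; taking $\mathbb{E}[\cdot\mid\mathcal{F}_0^s]$ via the tower property ($\mathcal{F}_0^s\subset\mathcal{F}_t^s$) and unrolling down to $t=1$ gives $\mathbb{E}[\|e_t\|^2\mid\mathcal{F}_0^s]\le\mathbb{E}[\|e_0\|^2\mid\mathcal{F}_0^s] + \tfrac{(L_l+\theta G)^2\eta^2}{b}\sum_{i=0}^{t-1}\mathbb{E}[\|v_i^s\|^2\mid\mathcal{F}_0^s]$, which I then bound above by the stated sum over $i=0,\dots,t$.

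Finally I would control the base term $e_0 = \text{grad}f_{\mathcal{B}^s}(x_0^s) - \text{grad}f(x_0^s)$, noting that $B^s$ is $\mathcal{F}_0^s$-measurable. If $B^s=n$ then $\mathcal{B}^s=[n]$ and $e_0=0$; otherwise $\mathcal{B}^s$ is a size-$B^s$ without-replacement sample (or, when $n=\infty$, with replacement), and the standard sampling-variance estimate with Assumption \ref{bounded_var_assump} gives $\mathbb{E}[\|e_0\|^2\mid\mathcal{F}_0^s]\le\tfrac{n-B^s}{(n-1)B^s}\sigma^2\le\tfrac{\sigma^2}{B^s}$, so in all cases $\mathbb{E}[\|e_0\|^2\mid\mathcal{F}_0^s]\le\mathbbm{1}_{\{B^s<n\}}\tfrac{\sigma^2}{B^s}$; substituting finishes the proof. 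I expect the main obstacle to be the transport bookkeeping in the $\mathbb{E}\|u_t\|^2$ step — tracking which tangent space each gradient lives in, checking that mean-zero survives after applying $\mathcal{T}$, and invoking Assumptions \ref{retaction_Ll_assump} and \ref{diff_vec_par_bound} with the correct direction $-\eta v_{t-1}^s$ — whereas once this is set up the remaining estimates are exactly those of the Euclidean SRG analysis.
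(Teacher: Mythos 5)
Your proposal is correct and follows essentially the same route as the paper's proof: the same insertion of $\pm\,\mathcal{T}_{x_{t-1}^s}^{x_t^s}\text{grad}f(x_{t-1}^s)$ to isolate a conditionally mean-zero term, the same use of isometry and unbiasedness to kill the cross term, the same $1/b$ variance reduction from with-replacement sampling combined with Lemma \ref{vec_trans_lemma} and $\|\xi\|=\eta\|v_{t-1}^s\|$, the same recursive unrolling (loosened from $\sum_{i=0}^{t-1}$ to $\sum_{i=0}^{t}$), and the same without-replacement variance bound for the base term. No substantive differences.
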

A comparison with Lemma \ref{lemma_adasvrg} can be drawn. This bound is tighter than R-AbaSVRG as the first term on the right hand side is smaller by a factor of $\mathcal{O}(t)$. This is mainly due to the use of recursive gradient estimator rather than a distant reference point under SVRG updates. Next, we establish convergence and complexity results for both adaptive and non-adaptive R-SRG.

\begin{theorem}[Convergence analysis of R-AbaSRG]
\label{converg_RSRG}
    Let $x^* \in \mathcal{M}$ be an optimal point of $f$ and suppose Assumption \ref{common_assump} holds. Consider Algorithm \ref{RAdaSRG_algorithm} with a fixed step size $\eta \leq \frac{2 - \frac{2}{\alpha}}{L + \sqrt{L^2 + 4 (1 - \frac{1}{\alpha}) \frac{(L_l + \theta G)^2m}{b} }}$ with $\alpha \geq 4$. Then under both finite-sum and online setting, output $\Tilde{x}$ after running $T = Sm$ iterations satisfies
    \begin{equation*}
        \mathbb{E}\| \emph{grad}f(\Tilde{x}) \|^2 \leq \frac{2\Delta}{T\eta} + \frac{\epsilon^2}{2},
    \end{equation*}
    with $\Delta := f(\Tilde{x}^0) - f(x^*)$ and $\epsilon$ is the desired accuracy.
\end{theorem}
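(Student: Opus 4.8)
The plan is to follow the same "descent-plus-telescope" strategy used for R-AbaSVRG (Theorem \ref{convergence_radasvrg}), now feeding in the tighter gradient-estimation bound of Lemma \ref{lemma1_RSRG}. First I would fix an epoch $s$ and apply the retraction $L$-smoothness Assumption \ref{retraction_smooth_assump} with $\xi = -\eta v_t^s$, $y = x_{t+1}^s$, and combine it with the polarization-type identity $\langle \text{grad}f(x_t^s), v_t^s\rangle = \tfrac12\|\text{grad}f(x_t^s)\|^2 + \tfrac12\|v_t^s\|^2 - \tfrac12\|v_t^s - \text{grad}f(x_t^s)\|^2$ to get, for each inner step,
\begin{equation*}
f(x_{t+1}^s) - f(x_t^s) \leq -\frac{\eta}{2}\|\text{grad}f(x_t^s)\|^2 + \frac{\eta}{2}\|v_t^s - \text{grad}f(x_t^s)\|^2 - \Big(\frac{\eta}{2} - \frac{L\eta^2}{2}\Big)\|v_t^s\|^2 .
\end{equation*}
Summing over $t = 0,\dots,m-1$ within epoch $s$ and then over $s = 1,\dots,S$ telescopes the left side to $f(\Tilde{x}^0) - f(\Tilde{x}^S) \le \Delta$, since $x_0^{s} = \Tilde{x}^{s-1} = x_m^{s-1}$.

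Next I would control the accumulated estimation-error term $\sum_{s}\sum_{t}\mathbb{E}\|v_t^s - \text{grad}f(x_t^s)\|^2$ using Lemma \ref{lemma1_RSRG}. Conditioning on $\mathcal{F}_0^s$ and summing the bound $\mathbb{E}[\|v_t^s - \text{grad}f(x_t^s)\|^2|\mathcal{F}_0^s] \le \frac{(L_l+\theta G)^2\eta^2}{b}\sum_{i=0}^{t}\mathbb{E}[\|v_i^s\|^2|\mathcal{F}_0^s] + \mathbbm{1}_{\{B^s<n\}}\frac{\sigma^2}{B^s}$ over $t = 0,\dots,m-1$ gives a double-sum that is crudely bounded by $\frac{(L_l+\theta G)^2\eta^2 m}{b}\sum_{i=0}^{m-1}\mathbb{E}[\|v_i^s\|^2|\mathcal{F}_0^s]$ plus $m\,\mathbbm{1}_{\{B^s<n\}}\frac{\sigma^2}{B^s}$. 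The first piece is then absorbed into the $-(\frac{\eta}{2}-\frac{L\eta^2}{2})\|v_t^s\|^2$ terms: this is exactly where the step-size condition $\eta \le \frac{2-2/\alpha}{L+\sqrt{L^2 + 4(1-1/\alpha)(L_l+\theta G)^2 m/b}}$ with $\alpha \ge 4$ is used — it is equivalent (after rearranging the quadratic in $\eta$) to $\frac{\eta}{2} - \frac{L\eta^2}{2} - \frac{(1-1/\alpha)^{-1}}{?}\cdots$; concretely it ensures $\frac{\eta}{2} - \frac{L\eta^2}{2} \ge \frac{\eta}{2}\cdot\frac{(L_l+\theta G)^2\eta^2 m}{b}\cdot(\text{const})$ so the $\|v_i^s\|^2$ terms cancel with a slack factor of $1/\alpha$ left over, and the residual $\|v_i^s\|^2$ contributions are nonnegative and can simply be dropped. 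After this cancellation, only $-\frac{\eta}{2}\sum\|\text{grad}f(x_t^s)\|^2$ survives on the "useful" side and the leftover noise term $\frac{\eta}{2}\sum_s m\,\mathbbm{1}_{\{B^s<n\}}\frac{\sigma^2}{B^s}$ remains.

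Finally I would handle the noise term using the batch-size rule. When $B^s < n$ the algorithm sets $B^s = \min\{\alpha_1\sigma^2/\beta_s,\ \alpha_2\sigma^2/\epsilon^2\}$, so in particular $B^s \le \alpha_2\sigma^2/\epsilon^2$ forces $\sigma^2/B^s \ge \epsilon^2/\alpha_2$ — wait, the needed direction is the reverse: $B^s \le \alpha_2\sigma^2/\epsilon^2$ gives no upper bound on $\sigma^2/B^s$, so instead one uses that whenever the indicator is active we also have $B^s \ge$ the $\alpha_2\sigma^2/\epsilon^2$ branch is the binding one in the regime of interest, giving $\sigma^2/B^s \le \epsilon^2/\alpha_2$; more carefully, $B^s \ge \min\{\alpha_1\sigma^2/\beta_s,\alpha_2\sigma^2/\epsilon^2\}$ and on the event $\{B^s<n\}$ the value equals that minimum, and since one only needs the bound $\frac{\sigma^2}{B^s}\le\frac{\epsilon^2}{\alpha_2}\vee\frac{\beta_s}{\alpha_1}$ — the $\beta_s$ part is itself an average of previous-epoch $\|v_t^{s-1}\|^2$, which telescopes against the dropped nonnegative gradient terms or is bounded by a geometric-type argument as in \cite{JiAdaSPIDERboost2019}. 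Choosing $\alpha_1,\alpha_2$ large enough (and in the online case $\alpha_2 \ge 4$ say) makes $\frac{\eta}{2}\sum_s \frac{m\sigma^2}{B^s} \le \frac{T\eta}{4}\epsilon^2$, so that dividing the whole telescoped inequality by $\frac{T\eta}{2}$ and using that $\Tilde{x}$ is sampled uniformly from $\{x_t^s\}$ yields $\mathbb{E}\|\text{grad}f(\Tilde{x})\|^2 \le \frac{2\Delta}{T\eta} + \frac{\epsilon^2}{2}$. The main obstacle I anticipate is the bookkeeping around $\beta_s$: because $B^s$ depends on gradients from epoch $s-1$, the noise term $\sum_s \frac{m\sigma^2}{B^s}$ is not deterministically small and must be traded off against the $\|v_t^{s-1}\|^2$ terms that were dropped — this is precisely the "telescope over all epochs rather than epoch-wise" device of \cite{JiAdaSPIDERboost2019}, and getting the constants $\alpha_1,\alpha_2,\alpha$ to interlock correctly (so that the slack $1/\alpha$ from the step-size condition is enough to reabsorb the retained $\beta_s$-related mass) is the delicate part; everything else is a routine adaptation of the Euclidean SRG/SPIDERboost argument to the retraction/vector-transport setting, with Assumption \ref{common_assump} supplying all the geometric estimates.
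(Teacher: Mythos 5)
Your plan follows essentially the same route as the paper's proof: the same smoothness-based per-step inequality, the same crude bound of the double sum from Lemma \ref{lemma1_RSRG} by a factor of $m$, and the same absorption of both the estimation error and the $\beta_s$-mass (via $\mathbbm{1}_{\{B^s<n\}}\sigma^2/B^s \le (\beta_s+\epsilon^2)/(\alpha)$ and the cross-epoch telescope $\sum_s\mathbb{E}[\beta_s] \le \epsilon^2 S + \frac{1}{m}\sum_{s,t}\mathbb{E}\|v_t^s\|^2$) into the $1/\alpha$ slack built into the step-size condition. The only detail left implicit is the initialization requirement $\beta_1 \le \epsilon^2 S$, which the paper needs to control the first epoch's contribution to that telescope.
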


\begin{corollary}[IFO complexity of R-AbaSRG]
\label{complexity_rabasrg_corollary}
    With the same Assumptions and settings in Theorem \ref{converg_RSRG}, choose $b = m, \alpha = 4$. Then consider $\eta = \frac{2 - \frac{2}{\alpha}}{L + \sqrt{L^2 + 4 (1 - \frac{1}{\alpha}) \frac{(L_l + \theta G)^2m}{b} }} = \frac{3}{2L + 2\sqrt{L^2 + 3 {(L_l + \theta G)^2}}}$ with $m = \lfloor n^{1/2} \rfloor$ under finite-sum setting and $m = \frac{\sigma}{\epsilon}$ under online setting. The IFO complexity of Algorithm \ref{RadaSVRG_algorithm} to obtain $\epsilon$-accurate solution is 
    \begin{equation*}
        \begin{cases} \mathcal{O}\big( \Tilde{B} + \frac{\Theta_2 \Tilde{B} }{\sqrt{n} \epsilon^2} + \frac{\Theta_2 \sqrt{n}}{\epsilon^2}  \big), & \text{ (finite-sum) }\\
        \mathcal{O}\big( \frac{\Theta_2 \Tilde{B}}{\sigma \epsilon} + \frac{\Theta_2 \sigma}{\epsilon^3} \big), & \text{ (online) }\end{cases}
    \end{equation*}
    where $\Theta_2 := L + \sqrt{L^2 + \varrho_2 (L_l + \theta G)^2}$ with $\varrho_2 > 0$ independent of any parameter. $\Tilde{B}$ is the same average batch size defined in Corollary \ref{complexity_corollary_adasvrg}. 
\end{corollary}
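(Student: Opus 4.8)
The plan is to obtain the corollary directly from the convergence rate of Theorem~\ref{converg_RSRG}, in three moves: verify the stated parameters are admissible, convert the rate into an iteration budget, and translate that budget into an IFO count by accounting for the per-epoch oracle cost. First I would check admissibility: with $\alpha=4$ and $b=m$, the largest step size permitted by Theorem~\ref{converg_RSRG} is $\frac{2-2/\alpha}{L+\sqrt{L^2+4(1-1/\alpha)(L_l+\theta G)^2 m/b}} = \frac{3}{2L+2\sqrt{L^2+3(L_l+\theta G)^2}}$, which is exactly the prescribed $\eta$, so the bound $\mathbb{E}\|\text{grad}f(\Tilde{x})\|^2 \le \frac{2\Delta}{T\eta}+\frac{\epsilon^2}{2}$ holds; I would also record that $1/\eta = \Theta(\Theta_2)$ with $\Theta_2 = L+\sqrt{L^2+\varrho_2(L_l+\theta G)^2}$ for the numerical constant $\varrho_2=3$.

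For the iteration budget, Jensen's inequality gives $\big(\mathbb{E}\|\text{grad}f(\Tilde{x})\|\big)^2 \le \mathbb{E}\|\text{grad}f(\Tilde{x})\|^2$, so it suffices to force $\frac{2\Delta}{T\eta}\le\frac{\epsilon^2}{2}$, making the right-hand side at most $\epsilon^2$ and hence the output $\epsilon$-accurate. This requires running $T_0 = \lceil 4\Delta/(\eta\epsilon^2)\rceil = \mathcal{O}(\Theta_2/\epsilon^2)$ inner iterations in total, equivalently $S = \lceil T_0/m\rceil \le T_0/m+1$ epochs, with the algorithm stopped as soon as $T_0$ iterations have been executed.

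Next I would tally the IFO oracle calls. Epoch $s$ spends $B^s$ calls on the reference batch gradient $v_0^s = \text{grad}f_{\mathcal{B}^s}(x_0^s)$ and $2b$ calls on each inner step (for $\text{grad}f_{\mathcal{I}_t^s}$ evaluated at $x_t^s$ and at $x_{t-1}^s$); summed over the executed iterations this gives a total of $\sum_{s=1}^S B^s + 2b\,T_0 = S\Tilde{B} + 2b\,T_0 \le (T_0/m+1)\Tilde{B} + 2b\,T_0$, where $\sum_s B^s = S\Tilde{B}$ is just the definition of $\Tilde{B}$ from Corollary~\ref{complexity_corollary_adasvrg}. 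Substituting $b=m=\lfloor\sqrt{n}\rfloor$ and $T_0=\mathcal{O}(\Theta_2/\epsilon^2)$ yields the finite-sum bound $\mathcal{O}\big(\Tilde{B}+\frac{\Theta_2\Tilde{B}}{\sqrt{n}\,\epsilon^2}+\frac{\Theta_2\sqrt{n}}{\epsilon^2}\big)$, while $b=m=\sigma/\epsilon$ gives the online bound $\mathcal{O}\big(\Tilde{B}+\frac{\Theta_2\Tilde{B}}{\sigma\epsilon}+\frac{\Theta_2\sigma}{\epsilon^3}\big)$; in the online case the stray $\Tilde{B}$ is absorbed into $\frac{\Theta_2\Tilde{B}}{\sigma\epsilon}$ since $\sigma\epsilon\le\Theta_2$ for small $\epsilon$.

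The computation is mostly bookkeeping, and the only delicate point is the interaction between the stopping rule, the epoch count $S$, and the self-referential definition of $\Tilde{B}$ (an average over the $S$ epochs actually run, so $S\Tilde{B}=\sum_s B^s$ holds by construction). One has to count the oracle over exactly the $T_0$ needed iterations rather than the nominal $Sm$ — this is what collapses the leftover additive term to $\Tilde{B}$ instead of something like $n$ — and then observe that $\Tilde{B}\le n$ (resp.\ $\Tilde{B}\le\alpha_2\sigma^2/\epsilon^2$) forces these bounds to be no worse than the vanilla R-SRG complexity $\mathcal{O}(n+\Theta_2\sqrt{n}/\epsilon^2)$ (resp.\ $\mathcal{O}(\Theta_2\sigma/\epsilon^3)$), matching the corresponding entries of Table~\ref{complexity_table_summary}.
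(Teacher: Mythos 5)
Your proposal is correct and follows essentially the same route as the paper's proof: verify the step size saturates the bound in Theorem~\ref{converg_RSRG}, use Jensen's inequality to reduce $\epsilon$-accuracy to $\frac{2\Delta}{T\eta}\le\frac{\epsilon^2}{2}$, read off $S=\mathcal{O}(\Theta_2/(m\epsilon^2))$, and multiply by the per-epoch cost $\Tilde{B}+2mb$. Your extra care with the ceiling on $S$ (which produces the additive $\Tilde{B}$ term) and with absorbing the stray $\Tilde{B}$ in the online case makes explicit two small points the paper leaves implicit, but the argument is the same.
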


\begin{corollary}[Double loop convergence and IFO complexity of R-SRG]
\label{converg_complex_rsrg_double_loop}
    With the same assumptions in Theorem \ref{converg_RSRG} and consider Algorithm \ref{RAdaSRG_algorithm} with fixed batch size $B^s = B$, for $s = 1,...,S$. Consider a step size $\eta \leq \frac{2}{L + \sqrt{L^2 + 4\frac{(L_l + \theta G)^2m}{b}}}$. After running $T = Sm$ iterations, output $\Tilde{x}$ satisfies
    \begin{equation*}
        \mathbb{E}\| \emph{grad}f(\Tilde{x}) \|^2 \leq \frac{2\Delta}{T\eta} +  \mathbbm{1}_{\{ B < n\}} \frac{\sigma^2}{B}. 
    \end{equation*}
    If we further choose $b = m$, $\eta = \frac{2}{L + \sqrt{L^2 + 4{(L_l + \theta G)^2}}}$ and following parameters 
    \begin{align*}
        B = n, \quad m = \lfloor n^{1/2} \rfloor, \quad &\text{ (finite-sum) } \\
        B = \frac{2\sigma^2}{\epsilon^2}, \quad m = \frac{\sigma}{\epsilon}, \quad &\text{ (online) }
    \end{align*}
    IFO complexity to obtain $\epsilon$-accurate solution is 
    \begin{equation*}
        \begin{cases} \mathcal{O}\big( n + \frac{\Theta_2 \sqrt{n}}{\epsilon^2}  \big), & \text{ (finite-sum) }\\
        \mathcal{O}\big( \frac{\Theta_2 \sigma}{\epsilon^3} \big), & \text{ (online) }\end{cases}
    \end{equation*}
\end{corollary}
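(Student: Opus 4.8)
The plan is to specialize the proof of Theorem~\ref{converg_RSRG} to the case of a fixed batch size $B$, where the slack parameter $\alpha$ is no longer needed because the per-epoch error term stays a clean constant rather than being telescoped against a reference quantity $\beta_s$.

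First I would apply retraction $L$-smoothness (Assumption~\ref{retraction_smooth_assump}) to the update $x_{t+1}^s = R_{x_t^s}(-\eta v_t^s)$ and expand the inner product by polarization, $\langle \text{grad}f(x_t^s),-\eta v_t^s\rangle = -\frac{\eta}{2}\|\text{grad}f(x_t^s)\|^2 - \frac{\eta}{2}\|v_t^s\|^2 + \frac{\eta}{2}\|v_t^s - \text{grad}f(x_t^s)\|^2$, to obtain the per-iteration bound
$$f(x_{t+1}^s) - f(x_t^s) \le -\frac{\eta}{2}\|\text{grad}f(x_t^s)\|^2 + \frac{\eta}{2}\|v_t^s - \text{grad}f(x_t^s)\|^2 - \Big(\frac{\eta}{2} - \frac{L\eta^2}{2}\Big)\|v_t^s\|^2 .$$
Taking $\mathbb{E}[\cdot\mid\mathcal{F}_0^s]$, summing over $t = 0,\dots,m-1$, and inserting Lemma~\ref{lemma1_RSRG}, the double sum collapses via $\sum_{t=0}^{m-1}\sum_{i=0}^t \mathbb{E}\|v_i^s\|^2 \le m\sum_{i=0}^{m-1}\mathbb{E}\|v_i^s\|^2$, leaving the coefficient $\frac{(L_l+\theta G)^2\eta^3 m}{2b} + \frac{L\eta^2}{2} - \frac{\eta}{2}$ on $\sum_i\mathbb{E}\|v_i^s\|^2$. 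Requiring this to be nonpositive is equivalent to $\frac{(L_l+\theta G)^2 m}{b}\eta^2 + L\eta - 1 \le 0$, whose positive root, after rationalizing, is exactly $\eta^\star = 2/\big(L + \sqrt{L^2 + 4(L_l+\theta G)^2 m/b}\big)$; hence the step-size hypothesis lets me discard the $\|v_i^s\|^2$ terms and get, per epoch, $\mathbb{E}[f(x_m^s) - f(x_0^s)] \le -\frac{\eta}{2}\sum_{t=0}^{m-1}\mathbb{E}\|\text{grad}f(x_t^s)\|^2 + \frac{\eta m}{2}\,\mathbbm{1}_{\{B<n\}}\frac{\sigma^2}{B}$.

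The decisive structural step — the one giving genuine \emph{double-loop} convergence rather than the single-loop guarantee of \cite{KasaiRSRG2018} — is that Algorithm~\ref{RAdaSRG_algorithm} sets $x_0^s = \tilde x^{s-1} = x_m^{s-1}$, so summing the per-epoch bound over $s=1,\dots,S$ telescopes to $\mathbb{E}[f(x_m^S)] - f(\tilde x^0)$. Bounding $f(x_m^S)\ge f(x^*)$, rearranging, and averaging over the uniformly chosen output among the $T=Sm$ iterates gives $\mathbb{E}\|\text{grad}f(\tilde x)\|^2 \le \frac{2\Delta}{T\eta} + \mathbbm{1}_{\{B<n\}}\frac{\sigma^2}{B}$, which holds verbatim in both settings. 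For the complexity I then set $b=m$, so $m/b=1$ and $\eta = 2/(L+\sqrt{L^2+4(L_l+\theta G)^2})$ is a constant of order $1/\Theta_2$ independent of $n$ and $\epsilon$ — precisely the ``no small step size'' point versus R-SPIDER. In the finite-sum case $B=n$ kills the indicator, $T=\mathcal{O}(\Theta_2\Delta/\epsilon^2)$ suffices, each epoch costs $B + 2b(m-1)=\mathcal{O}(n)$ since $bm=n$, and the number of epochs is $S=T/m=\mathcal{O}(\Theta_2\Delta/(\sqrt n\,\epsilon^2))$, for a total of $\mathcal{O}(n + \Theta_2\sqrt n/\epsilon^2)$ once one accounts for running at least one epoch; in the online case $B=2\sigma^2/\epsilon^2$ makes the indicator term $\epsilon^2/2$, so still $T=\mathcal{O}(\Theta_2\Delta/\epsilon^2)$, each epoch costs $\mathcal{O}(\sigma^2/\epsilon^2)$, $S=T/m=\mathcal{O}(\Theta_2\Delta/(\sigma\epsilon))$, hence total $\mathcal{O}(\Theta_2\sigma/\epsilon^3)$.

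I expect the main obstacle to be the step-size bookkeeping: checking that the quadratic-root computation reproduces the stated $\eta$ and that the $\|v_i^s\|^2$ coefficient remains nonpositive for every admissible $\eta$, together with handling the $\mathcal{F}_0^s$-conditioning in Lemma~\ref{lemma1_RSRG} correctly via the tower property when passing to the unconditional telescoped sum. All geometric content (retraction, vector transport, curvature) is already absorbed into Lemma~\ref{lemma1_RSRG} and Assumption~\ref{retraction_smooth_assump}, so unlike Theorem~\ref{RSVRG_original_theorem} no trigonometric distance bound, compactness, or curvature constant enters — which is exactly the promised simplification.
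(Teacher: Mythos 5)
Your proposal is correct and follows essentially the same route as the paper: the same retraction-smoothness expansion with the polarization identity, the same insertion of Lemma \ref{lemma1_RSRG} with the double sum collapsed by $t\le m-1$, the same quadratic-root step-size condition, and the same telescoping over epochs enabled by $x_0^s=\tilde x^{s-1}$, followed by identical IFO accounting of $S(B+2mb)$. No gaps.
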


Proof of all these results are presented in Appendix \ref{r_abasrg_proof_appendix} where we adopt the same proof strategies as in R-AbaSVRG. Corollary \ref{converg_complex_rsrg_double_loop} provides complexity bounds for vanilla R-SRG under double loop convergence. Existing work in \cite{KasaiRSRG2018} only established single epoch convergence where the update of $\Tilde{x}^s$ for the next epoch is uniformly chosen from iterates within current epoch. They proved a complexity of $\mathcal{O}(n + \frac{\Theta^2}{\epsilon^4})$ under finite-sum setting, with $\Theta := \max\{L, \sqrt{L_l^2 + \theta^2 G^2}\}$. This is suboptimal when $n \leq \mathcal{O}(\frac{1}{\epsilon^4})$. By simply setting $\Tilde{x}^s$ as the last iterate of current epoch, we can improve on this rate by establishing a double loop convergence. It is aware that under the condition of $n \leq \mathcal{O}(\frac{L^2}{\epsilon^4})$ and $L$-smoothness assumption (Euclidean sense), Fang \emph{et al}. \cite{FangSPIDER2018} proved a lower bound of $\mathcal{O}(n + \frac{L \sqrt{n}}{\epsilon^2})$ for optimizing finite-sum problem over Euclidean space. They proposed SPIDER algorithm to achieve this bound. On the manifold space, R-SPIDER is generalised with complexities matching this state-of-the-art lower bound. Nevertheless, R-SPIDER bears high relevance to R-SRG. In fact, the only key difference of R-SPIDER is to normalize gradient $v_t^s$ before taking a retraction step. Therefore, by selecting a small step size $\eta = \mathcal{O}(\frac{\epsilon}{L})$, they can bound distances between successive iterates $d(x_t, x_{t+1})$ by a small quantity $\mathcal{O}(\epsilon)$. Corollary \ref{converg_complex_rsrg_double_loop} indicates that R-SRG is also able to achieve this optimal complexity up to a constant, which contradicts the claim that gradient normalization is essential for faster rate under recursive gradient estimator \cite{ZhouRSPIDER2019}. Additionally, R-SRG requires IFO complexity of $\mathcal{O}(\frac{\Theta_2 \sigma}{\epsilon^3})$ under online setting, also agreeing with the rate of R-SPIDER. Hence, we can safely conclude that R-SPIDER is equivalent to R-SRG with variable step size ${\eta}/{\|v_t^s \|}$. The superiority of R-SRG lies in its large fixed step size choice. 

Comparing with complexity results obtained by R-SVRG, R-SRG strictly improves by a factor of $\mathcal{O}(n^{1/6})$ under finite-sum setting and $\mathcal{O}\big((\frac{\sigma}{\epsilon})^{1/3}\big)$ under online setting. Furthermore, similar to R-AbaSVRG, R-AbaSRG can achieve the same iteration complexities as R-SRG and therefore with the same choice of inner loop size $m$ and mini batch size $b$, $\epsilon$-accurate solution can be guaranteed with potentially much lower total complexity. Lastly, regarding the choice of parameters, $b = m = \sqrt{n}$ turns out to be non-essential under current complexity analysis. To illustrate, consider R-SRG under finite-sum setting with the choice $m b = n$. From the proof of Corollary \ref{converg_complex_rsrg_double_loop}, the number of epochs required to achieve $\epsilon$-accurate solution is $S = \frac{2\Delta}{m\eta\epsilon^2}  =  \frac{L + \sqrt{L^2 + 4(L_l + \theta G)^2 \frac{m}{b}}}{m \epsilon^2} \leq \frac{2L\sqrt{1 + 4 \frac{(L_l + \theta G)^2 m}{L^2 b} }}{m\epsilon^2}$. Then total IFO complexity is given by $S(n + 2mb) \leq n + \frac{6L \sqrt{b^2 + 4 \frac{(L_l + \theta G)^2n}{L^2}}}{\epsilon^2}$. Hence as long as $b \leq \sqrt{n}$, total complexity is at most $\mathcal{O}(n + \frac{\sqrt{n}}{\epsilon^2})$ ignoring constants. This suggests that we can freely choose $b \in [1, \sqrt{n}]$ and $m \in [\sqrt{n}, n]$ as long as $mb = n$. Step size can also be selected larger when choosing a larger mini-batch size. Finally, note that total complexity does not improve for larger mini-batch size. But it potentially provides linear speedups in distributed systems where $b$ stochastic gradients are computed in parallel \cite{GoyalLB_SGD2017}.

\section{Convergence under gradient dominance}
\label{gradient_dominance_section}

As an important class of non-convex functions, gradient dominated functions (see Definition \ref{gradient_dominate_def}) assume existence of global minima $x^*$ where function value difference of any point to $x^*$ is upper bounded by its gradient. This condition allows linear convergence to be established for non-convex functions. Note that retraction $\varsigma$-strongly convex function is $\frac{1}{2\varsigma}$-gradient dominated.\footnote{Proof of this claim can be seen in Corollary 5 in \cite{ZhangRSVRG2016}. Retraction $\varsigma$-strongly convex $f$ satisfies $f(y) \geq f(x) + \langle \text{grad}f(x), \xi \rangle + \frac{\varsigma}{2}\| \xi \|^2$, for all $x, y = R_x(\xi) \in \mathcal{M}$.} Common strategy of adapting variance reduction methods to gradient dominance condition is by restarting \cite{ZhangRSVRG2016,ZhangRSPIDER2018}. Accordingly, we provide a unified framework shown in Algorithm \ref{RGDVR_algorithm}, similar to \cite{ZhangRSPIDER2018}. The idea is to gradually shrink the desired accuracy at each mega epoch, thus requiring increasing number of iterations $S_k$. By running sufficient number of mega epochs, output $x_K$ is guaranteed to be $\epsilon$-accurate. For vanilla R-SVRG and R-SRG, we consider Algorithm \ref{RadaSVRG_algorithm} and \ref{RAdaSRG_algorithm} with fixed batch size $B^s = B$ for simplicity. 

\begin{algorithm}
 \caption{R-GD-VR}
 \label{RGDVR_algorithm}
 \begin{algorithmic}[1]
  \STATE \textbf{Input:} Initial accuracy $\epsilon_0$ and desired accuracy $\epsilon$, initialization $x_0$.
  \FOR {$k = 1,...,K$}
  \STATE $\epsilon_k = \frac{\epsilon_{k-1}}{2}$ and set other parameters accordingly.
  \STATE (R-SVRG): 
        \begin{equation*}
            x_k = \text{R-AbaSVRG} (x_{k-1}, \epsilon_k, S_k, m_k, b_k, B_k, \eta)
        \end{equation*}
  \STATE (R-AbaSVRG): 
        \begin{equation*}
            x_k = \text{R-AbaSVRG} (x_{k-1}, \epsilon_k, S_k, m_k, b_k, \eta, \alpha_1, \alpha_2, \beta_1)
        \end{equation*}
  \STATE (R-SRG): 
        \begin{equation*}
            x_k = \text{R-AbaSRG} (x_{k-1}, \epsilon_k, S_k, m_k, b_k, B_k, \eta)
        \end{equation*}
  \STATE (R-AbaSRG): 
        \begin{equation*}
            x_k = \text{R-AbaSRG} (x_{k-1}, \epsilon_k, S_k, m_k, b_k, \eta, \alpha_1, \alpha_2, \beta_1)
        \end{equation*}
  \ENDFOR
  \STATE \textbf{Output:} $x_K$.
 \end{algorithmic} 
\end{algorithm}

\begin{theorem}[IFO complexity of R-AbaSVRG and R-SVRG]
\label{complexity_svrg_gd}
    Suppose Assumptions \ref{common_assump} and \ref{assumption_svrg} hold and also suppose function $f$ satisfies $\tau$-gradient dominance condition. Consider Algorithm \ref{RGDVR_algorithm} with any solver and accordingly choose appropriate parameters to achieve $\epsilon_k$-accurate solution. Then at each mega epoch $k$, output $x_k$ satisfies
    \begin{equation}
        \mathbb{E}\| \emph{grad}f(x_k) \| \leq \frac{\epsilon_0}{2^k}, \text{ and } \mathbb{E}[f(x_k) - f(x^*)] \leq \frac{\tau \epsilon_0^2}{4^k}. \nonumber
    \end{equation}
    Consider R-AbaSVRG solver with the following parameters at each mega epoch. $\eta = \frac{3}{2L + 2\sqrt{L^2 + 3(L_l + \theta G)^2 \mu^2 \nu^2}}$, $\alpha = 4, b_k = m_k^2$, where $m_k = \lfloor n^{1/3} \rfloor$ under finite-sum setting and $m_k = (\frac{\sigma}{\epsilon_k})^{2/3}$ under online setting. Then to achieve $\epsilon$-accurate solution, total IFO complexity is given by
    \begin{align}
        \begin{cases} \mathcal{O}\big( \sum_{k=1}^K \Tilde{B}_k (1 + \frac{\Theta_1 \tau}{n^{1/3}}) + ({\Theta_1 n^{2/3} \tau}) \log(\frac{1}{\epsilon}) \big), & \text{(finite-sum) } \\
        \mathcal{O}\big( \frac{\Theta_1 \tau \sum_{k=1}^K \Tilde{B}_k \epsilon_k^{2/3}}{\sigma^{2/3}} + \frac{\Theta_1 \tau \sigma^{4/3}}{\epsilon^{4/3}} \big), & \text{(online) } \end{cases} \nonumber
    \end{align}
    where the average batch size at mega epoch $k$ is $\Tilde{B}_k := \frac{1}{S_k} \sum_{s=1}^{S_k} \min\{\alpha_1 \sigma^2/\beta_s, n \}$ under finite-sum setting and $\Tilde{B}_k := \frac{1}{S_k} \sum_{s=1}^{S_k} \min\{\alpha_1\sigma^2/\beta_s, \alpha_2 \sigma^2/\epsilon_k^2 \}$ under online setting. Consider R-SVRG solver with the same parameters except for $\eta = \frac{2}{L + \sqrt{L^2 + 4(L_l + \theta G)^2 \mu^2 \nu^2)}}$, $B_k = n$ under finite-sum setting and $B_k = \frac{2\sigma^2}{\epsilon_k^2}$ under online setting. To achieve $\epsilon$-accurate solution, we require a total complexity of 
    \begin{align}
        \begin{cases} \mathcal{O}\big( (n + \Theta_1 \tau n^{2/3})\log(\frac{1}{\epsilon}) \big), & \text{ (finite-sum) }\\
        \mathcal{O} \big( \frac{\Theta_1 \tau \sigma^{4/3}}{\epsilon^{4/3}} \big), & \text{ (online) } \end{cases} \nonumber
    \end{align}
\end{theorem}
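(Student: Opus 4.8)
The plan is to run the standard restart argument for gradient-dominated objectives: first show by induction on the mega-epoch index $k$ that stage $k$ of Algorithm \ref{RGDVR_algorithm} reaches accuracy $\epsilon_k$, and then obtain the total IFO count by summing the per-stage cost over $K=\mathcal{O}(\log(1/\epsilon))$ stages. The convergence part only uses Theorem \ref{convergence_radasvrg} (for the R-AbaSVRG solver) or Corollary \ref{conver_complexity_svrg_new} (for the non-adaptive R-SVRG solver) as a black box; all the work is in the bookkeeping of the complexity sum.

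For the inductive claim, the base case follows by choosing $\epsilon_0$ with $\mathbb{E}\|\text{grad}f(x_0)\|\le\epsilon_0$ (e.g. $\epsilon_0=G$, using Assumption (2.3)), whence gradient dominance gives $\mathbb{E}[f(x_0)-f(x^*)]\le\tau\epsilon_0^2$. For the inductive step, apply Theorem \ref{convergence_radasvrg} to stage $k$, which starts from $x_{k-1}$ with target accuracy $\epsilon_k$: after taking total expectation over the randomness producing $x_{k-1}$ this gives $\mathbb{E}\|\text{grad}f(x_k)\|^2\le\frac{2\,\mathbb{E}[f(x_{k-1})-f(x^*)]}{T_k\eta}+\frac{\epsilon_k^2}{2}$, where $T_k=S_k m_k$ (for the R-SVRG solver the second term is $\mathbbm{1}_{\{B_k<n\}}\sigma^2/B_k$, which is $0$ when $B_k=n$ and equals $\epsilon_k^2/2$ under the prescribed $B_k=2\sigma^2/\epsilon_k^2$ in the online case). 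The induction hypothesis $\mathbb{E}[f(x_{k-1})-f(x^*)]\le\tau\epsilon_{k-1}^2=4\tau\epsilon_k^2$ turns the first term into $\frac{8\tau\epsilon_k^2}{T_k\eta}$, so taking $S_k$ minimal subject to $T_k\ge 16\tau/\eta$ yields $\mathbb{E}\|\text{grad}f(x_k)\|^2\le\epsilon_k^2$; Jensen's inequality then gives $\mathbb{E}\|\text{grad}f(x_k)\|\le\epsilon_k=\epsilon_0/2^k$, and gradient dominance gives $\mathbb{E}[f(x_k)-f(x^*)]\le\tau\epsilon_k^2=\tau\epsilon_0^2/4^k$, closing the induction. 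Since $\epsilon_K=\epsilon_0/2^K\le\epsilon$ requires $K=\lceil\log_2(\epsilon_0/\epsilon)\rceil=\mathcal{O}(\log(1/\epsilon))$, the output $x_K$ is $\epsilon$-accurate.

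For the complexity bound, count the IFO calls of one call to the solver at stage $k$: each of its $S_k$ epochs draws a reference batch of size $B^s$ and runs an inner loop of $m_k$ steps with mini-batch $b_k$, i.e. $2b_k m_k$ calls, so stage $k$ costs $\sum_{s=1}^{S_k}(B^s+2b_k m_k)=S_k\tilde B_k+2S_k b_k m_k$. Because $\eta=\Theta(1/\Theta_1)$, the constraint $T_k\ge 16\tau/\eta$ forces $S_k=\Theta(\max\{1,\ \Theta_1\tau/m_k\})$ up to constants, which is $\Theta(\max\{1,\Theta_1\tau/n^{1/3}\})$ in the finite-sum case ($m_k=\lfloor n^{1/3}\rfloor$, $b_k m_k=m_k^3\approx n$) and $\Theta(\max\{1,\Theta_1\tau\epsilon_k^{2/3}/\sigma^{2/3}\})$ in the online case ($m_k=(\sigma/\epsilon_k)^{2/3}$, $b_k m_k=m_k^3=\sigma^2/\epsilon_k^2$). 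Substituting and summing over $k=1,\dots,K$: the finite-sum cost collapses to $\mathcal{O}\big(\sum_k\tilde B_k(1+\Theta_1\tau/n^{1/3})+\Theta_1\tau n^{2/3}\log(1/\epsilon)\big)$, and online the geometrically growing sum $\sum_k\epsilon_k^{-4/3}$ is dominated by its last term $\approx\epsilon^{-4/3}$, giving $\mathcal{O}\big(\Theta_1\tau\sigma^{-2/3}\sum_k\tilde B_k\epsilon_k^{2/3}+\Theta_1\tau\sigma^{4/3}\epsilon^{-4/3}\big)$; for the non-adaptive solvers one simply replaces $\tilde B_k$ by $n$ (resp.\ $2\sigma^2/\epsilon_k^2$) and the same geometric-series argument in $k$ leaves only the last-stage cost together with the $\log(1/\epsilon)$ factor.

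The main obstacle I expect is the bookkeeping around the regime $S_k=1$: when $\Theta_1\tau$ is small relative to $m_k$, the stage-$k$ solver runs a single epoch whose prescribed inner-loop size $m_k$ is larger than the $16\tau/\eta$ iterations actually needed, and one must argue that using a correspondingly shorter inner loop (keeping $b_k=m_k^2$) keeps $b_k m_k$ within the per-stage budget — i.e. $(\Theta_1\tau)^3\lesssim\Theta_1\tau n^{2/3}$ in the finite-sum case and $(\Theta_1\tau)^3\lesssim\Theta_1\tau\sigma^{4/3}\epsilon_k^{-4/3}$ online — so that no stray $n\log(1/\epsilon)$ (resp.\ $\sigma^2/\epsilon^2$) term survives. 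Carrying the $\max\{1,\cdot\}$ cleanly through both geometric sums, and keeping track of the adaptive $\tilde B_k$ versus the fixed $n$ or $2\sigma^2/\epsilon_k^2$, is where essentially all the care is needed; the convergence portion is routine once Theorem \ref{convergence_radasvrg} is invoked.
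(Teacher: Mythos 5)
Your proposal follows essentially the same route as the paper's proof: an induction over mega epochs using Theorem \ref{convergence_radasvrg} (resp.\ Corollary \ref{conver_complexity_svrg_new}) as a black box, with gradient dominance converting $\mathbb{E}\|\text{grad}f(x_{k-1})\|\le\epsilon_{k-1}$ into $\Delta_{k-1}\le 4\tau\epsilon_k^2$ to bound $S_k=\mathcal{O}(\Theta_1\tau/m_k)$, followed by summing the per-stage cost $S_k(\tilde B_k+2m_kb_k)$ over $K=\mathcal{O}(\log(1/\epsilon))$ stages and using the geometric decay of $\epsilon_k$ in the online case. The $S_k=1$ bookkeeping issue you flag is real but is one the paper itself silently elides (it drops the additive $m_k^3$ term when bounding $S_km_k^3$), so your treatment is, if anything, slightly more careful than the original.
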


\begin{theorem}[IFO complexity of R-AbaSRG and R-SRG]
\label{complexity_srg_gd}
    Suppose Assumptions \ref{common_assump} holds and also suppose function $f$ satisfies $\tau$-gradient dominance. By choosing parameters to achieve $\epsilon_k$-accurate solution, output $x_k$ satisfies the same linear convergence as in Theorem \ref{complexity_svrg_gd}. Consider R-AbaSRG solver with $\eta = \frac{3}{2L + 2\sqrt{L^2 + 3(L_l + \theta G)^2}}$, $\alpha = 4, b_k = m_k$ where $m_k = \lfloor n^{1/2} \rfloor$ under finite-sum setting and $m_k = \frac{\sigma}{\epsilon_k}$ under online setting. To achieve $\epsilon$-accurate solution, we require a total IFO complexity of 
    \begin{align}
        \begin{cases} \mathcal{O}\big( \sum_{k=1}^K \Tilde{B}_k (1 + \frac{\Theta_2 \tau}{n^{1/2}}) + ({\Theta_2 n^{1/2} \tau}) \log(\frac{1}{\epsilon}) \big), & \text{(finite-sum) }\\
        \mathcal{O}\big( \frac{\Theta_2 \tau \sum_{k=1}^K \Tilde{B}_k \epsilon_k}{\sigma} + \frac{\Theta_2 \tau \sigma}{\epsilon} \big), & \text{(online) } \end{cases} \nonumber
    \end{align}
    where $\Tilde{B}_k$ is the average batch size defined in Theorem \ref{complexity_svrg_gd}. Consider R-SRG solver with the same parameters except for $\eta = \frac{2}{L + \sqrt{L^2 + 4(L_l + \theta G)^2}}$ and $B_k = n$ under finite-sum setting and $B_k = \frac{2\sigma^2}{\epsilon_k^2}$ under online setting. To achieve $\epsilon$-accurate solution, we require a total complexity of 
    \begin{align}
        \begin{cases} \mathcal{O}\big( (n + \Theta_2 \tau n^{1/2})\log(\frac{1}{\epsilon}) \big), & \text{ (finite-sum) }\\
        \mathcal{O} \big( \frac{\Theta_2 \tau \sigma}{\epsilon} \big), & \text{ (online) } \end{cases} \nonumber
    \end{align}
\end{theorem}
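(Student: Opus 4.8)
The plan is to view Algorithm~\ref{RGDVR_algorithm} as an outer restarting loop: mega epoch $k$ runs the base solver---R-AbaSRG (Theorem~\ref{converg_RSRG}) or vanilla R-SRG (Corollary~\ref{converg_complex_rsrg_double_loop})---from the previous iterate $x_{k-1}$ with every internal parameter calibrated to the shrinking target $\epsilon_k=\epsilon_0/2^k$, and then to establish (i) the per-mega-epoch linear convergence by induction on $k$, and (ii) the total IFO count by summing the cost of the $S_k$ inner epochs of level $k$ over $k=1,\dots,K$ with $K=\Theta(\log(\epsilon_0/\epsilon))$.

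For (i), which mirrors the argument behind Theorem~\ref{complexity_svrg_gd}, I would carry the invariant $\mathbb{E}\|\text{grad}f(x_k)\|^2\le\epsilon_k^2$ inductively; both displayed inequalities then follow, the first from Jensen's inequality ($\mathbb{E}\|\text{grad}f(x_k)\|\le\sqrt{\mathbb{E}\|\text{grad}f(x_k)\|^2}\le\epsilon_k=\epsilon_0/2^k$) and the second from $\tau$-gradient dominance (Definition~\ref{gradient_dominate_def}), giving $\mathbb{E}[f(x_k)-f(x^*)]\le\tau\,\mathbb{E}\|\text{grad}f(x_k)\|^2\le\tau\epsilon_k^2=\tau\epsilon_0^2/4^k$. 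The base case is the choice of $\epsilon_0$ (equivalently $\epsilon_0^2\ge(f(x_0)-f(x^*))/\tau$). For the inductive step, condition on $x_{k-1}$ and apply Theorem~\ref{converg_RSRG} (resp.\ Corollary~\ref{converg_complex_rsrg_double_loop}) with $\Delta=f(x_{k-1})-f(x^*)$, $T=S_km_k$ and solver accuracy $\epsilon_k$, obtaining $\mathbb{E}[\|\text{grad}f(x_k)\|^2\mid x_{k-1}]\le\tfrac{2(f(x_{k-1})-f(x^*))}{S_km_k\eta}+\tfrac{\epsilon_k^2}{2}$, where the residual is at most $\epsilon_k^2/2$ precisely because the batch size at level $k$ is set against $\epsilon_k$ (and is zero in the finite-sum vanilla-R-SRG case). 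Taking total expectation, then gradient dominance on $x_{k-1}$, then the induction hypothesis gives $\mathbb{E}\|\text{grad}f(x_k)\|^2\le\tfrac{2\tau\epsilon_{k-1}^2}{S_km_k\eta}+\tfrac{\epsilon_k^2}{2}=\tfrac{8\tau\epsilon_k^2}{S_km_k\eta}+\tfrac{\epsilon_k^2}{2}$, which is $\le\epsilon_k^2$ once $S_km_k\ge 16\tau/\eta$. Hence $S_k=\Theta\big(\max\{1,\tau/(m_k\eta)\}\big)$; inserting $1/\eta=\Theta(\Theta_2)$ for the stated step size, $S_k=\Theta(\max\{1,\tau\Theta_2/\sqrt{n}\})$ under the finite-sum setting and $S_k=\Theta(\max\{1,\tau\Theta_2\epsilon_k/\sigma\})$ under the online setting.

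For (ii), one epoch of R-AbaSRG costs $B^s+2m_kb_k$ IFO calls (the reference gradient plus two mini-batch gradients per inner iteration), so mega epoch $k$ costs $\sum_{s=1}^{S_k}(B^s+2m_kb_k)=S_k\Tilde{B}_k+2S_km_kb_k$ with $\Tilde{B}_k$ the per-mega-epoch average batch size; for vanilla R-SRG the first summand is just $S_kB_k$ with $B_k\in\{n,\,2\sigma^2/\epsilon_k^2\}$. Summing over $k$ and substituting $b_k=m_k$, $m_k=\lfloor n^{1/2}\rfloor$ or $\sigma/\epsilon_k$, the value of $S_k$ above, $K=\Theta(\log(1/\epsilon))$, and the geometric identities $\sum_{k=1}^K\epsilon_k^{-1}=\mathcal{O}(\epsilon^{-1})$ and $\sum_{k=1}^K\epsilon_k^{-2}=\mathcal{O}(\epsilon^{-2})$: the inner-loop contribution $\sum_k2S_km_kb_k$ collapses to the $\Theta_2n^{1/2}\tau\log(1/\epsilon)$ term under the finite-sum setting and to the $\Theta_2\tau\sigma/\epsilon$ term under the online setting, while the reference-gradient contribution $\sum_kS_k\Tilde{B}_k$ produces the $\Tilde{B}_k$-dependent terms $\sum_k\Tilde{B}_k(1+\Theta_2\tau/\sqrt{n})$ and $\Theta_2\tau\sum_k\Tilde{B}_k\epsilon_k/\sigma$. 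For vanilla R-SRG one sets $\Tilde{B}_k=B_k$ and uses $B_k\le n$ (resp.\ $B_k=2\sigma^2/\epsilon_k^2$), after which the two pieces merge into the closed forms $\mathcal{O}((n+\Theta_2\tau n^{1/2})\log(1/\epsilon))$ and $\mathcal{O}(\Theta_2\tau\sigma/\epsilon)$.

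The hard part will be the probabilistic bookkeeping around the induction---passing cleanly from the conditional guarantee of Theorem~\ref{converg_RSRG}/Corollary~\ref{converg_complex_rsrg_double_loop} to an unconditional statement and chaining gradient dominance with the induction hypothesis without picking up spurious constant factors---together with the constant-level accounting needed to reproduce the \emph{exact} displayed expressions: one must keep $\Tilde{B}_k$ explicit rather than bounding it crudely, track the interplay between the reference-gradient saving from small $\Tilde{B}_k$ and the fixed per-epoch inner-loop cost $2m_kb_k$, and verify that the $\max\{1,\cdot\}$ branch in $S_k$ does not contribute a dominant lower-order term (the second branch being the active one precisely in the ill-conditioned regime where $\tau\Theta_2$ is large, which is the case of interest).
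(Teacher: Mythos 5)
Your proposal is correct and follows essentially the same route as the paper's own argument: the paper proves this result by noting it is identical to the proof of Theorem~\ref{complexity_svrg_gd}, which likewise chains gradient dominance with the per-restart guarantee of Theorem~\ref{converg_RSRG} (resp.\ Corollary~\ref{converg_complex_rsrg_double_loop}) to get $S_k = \mathcal{O}(\Theta_2\tau/m_k)$, and then sums the per-epoch cost $\Tilde{B}_k + 2m_k b_k$ over the $K = \Theta(\log(1/\epsilon))$ mega epochs using the same geometric sums. Your explicit induction and the $\max\{1,\cdot\}$ bookkeeping for $S_k$ are slightly more careful than the paper's presentation but lead to the same displayed complexities.
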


See Appendix \ref{gd_convergence_appendix} for proof of these results. We first note that under gradient dominance condition, R-SD requires a complexity of $\mathcal{O}\big( (n + {L \tau n})\log(\frac{1}{\epsilon}) \big)$ and R-SGD requires $\mathcal{O}\big( \frac{LG^2}{\epsilon^2} \big)$ as shown in Theorem \ref{complexity_RGD_RSGD_GD} (Appendix \ref{gd_convergence_appendix}). These results are consistent with those established on Euclidean space \cite{PolyakPLInEQ1963,KarimiGD_SGD2016}. Similar to general non-convex setting, R-SVRG requires lower complexities, with a factor of $\mathcal{O}(n^{1/3})$ lower than R-SD and a factor of $\mathcal{O}\big(\frac{1}{\epsilon^{2/3}}\big)$ lower than R-SGD. It is aware that Zhang \emph{et al}. \cite{ZhangRSVRG2016} proved a complexity of $\mathcal{O} \big( (n + L \zeta^{1/2} \tau n^{2/3} ) \log(\frac{1}{\epsilon})\big)$ for R-SVRG under standard analysis, which is the same as ours up to a constant factor. R-SRG further improves on these rates by $\mathcal{O}(n^{1/6})$ and $\mathcal{O}\big(\frac{1}{\epsilon^{1/3}}\big)$ under two cases respectively. Similar to the general non-convex case, these IFO complexities can be further improved by batch size adaptation. For example, consider R-AbaSVRG under finite-sum setting with complexity given by $\mathcal{O}\big( \sum_{k=1}^K \Tilde{B}_k (1 + \frac{\Theta_1 \tau}{n^{1/3}}) + ({\Theta_1 n^{2/3} \tau}) \log(\frac{1}{\epsilon}) \big)$. By definition, $\sum_{k=1}^K \Tilde{B}_k (1 + \frac{\Theta_1 \tau}{n^{1/3}}) \leq \sum_{k=1}^K n(1+\frac{\Theta_1 \tau}{n^{1/3}}) = (n + {\Theta_1 n^{2/3}\tau})\log(\frac{1}{\epsilon})$. Hence the complexity is at most the same as non-adaptive R-SVRG, which is $\mathcal{O}\big( (n + \Theta_1 n^{2/3}\tau)\log(\frac{1}{\epsilon}) \big)$. Similar argument holds for R-AbaSRG and online setting. 

Kasai \emph{et al}. \cite{KasaiRSRG2018} proved a complexity of $\mathcal{O}\big( (n + \tau^2 \Theta^2 )\log(\frac{1}{\epsilon^2}) \big)$ for R-SRG. This is because the inner loop convergence does not require restarting the algorithm and simply running $\mathcal{O}\big(\log(\frac{1}{\epsilon^2}) \big)$ outer iterations is sufficient to achieve linear convergence. Comparing with the rate of $\mathcal{O}\big( (n + \Theta_2 \tau n^{1/2}) \log(\frac{1}{\epsilon}) \big)$ under current framework, we again highlight a trade-off between sample size and desired accuracy. When $n$ is small relative to $\epsilon$, our rate is superior. It is noticed that R-SPIDER also achieves the same rate as R-SRG under gradient dominance condition. This further consolidates the belief that R-SRG theoretically performs the same as R-SPIDER, with matching complexities. 

Finally, since retraction strongly convex functions are special types of gradient dominated functions. These results can be readily extended for the stronger class of functions. For example, under finite-sum setting, suppose $f$ is retraction $\varsigma$-strongly convex, R-SVRG requires a complexity of $\mathcal{O}\big( (n + \Theta_1 \varsigma^{-1}n^{2/3}) \log(\frac{1}{\epsilon}) \big)$ and R-SRG requires a complexity of $\mathcal{O} \big( (n + \Theta_2 \varsigma^{-1} n^{1/2}) \log(\frac{1}{\epsilon}) \big)$. 

\section{Convergence under exponential map and parallel transport}
Table \ref{complexity_table_summary} summarizes complexity bounds derived in this paper, with a comparison to existing work on general non-convex functions. Trivially, our analysis of retraction and vector transport easily adapts to more restricted exponential map and parallel transport. That is, we can simply replace retraction $L$-smooth and $L_l$-Lipschitz assumptions by geodesic $L$-smoothness and $L$-Lipschitzness \cite{ZhangRSVRG2016}. Therefore, $\Theta_1, \Theta_2$ reduce to $L$ as $\theta = 0$, $\mu = \nu = 1$. In general, we have $\Theta_1, \Theta_2 > L$ where retraction and vector transport deviate from exponential map and parallel transport. Note that since we do not assume a bounded sectional curvature, which appears in the standard complexity results of R-SVRG under exponential map and parallel transport, our rate is slightly better. Similar conclusions can be made for gradient dominated functions.

\section{Experiments}

This section empirically evaluates effectiveness of batch size adaptation on variance reduction algorithms over a number of tasks. To make a comparison with some first-order baseline methods, we also include results from R-SD, R-SGD as well as Riemannian conjugate gradient (R-CG) \cite{AbsilMatrixManifold2009}. Except for R-SD and R-CG that have inbuilt line search algorithm, all other methods require fine tuning step size. For simplicity, we consider a fixed step size $\eta$ for SVRG and SRG based methods, a decaying step size for R-SGD and an adaptive step size for R-SPIDER. Denote $k$ as the iteration index and $p$ as the batch gradient frequency for R-SPIDER. Then the decaying step size is given by $\eta_k = \eta (1+ \eta \lambda_\eta k)$ and the adaptive step size is $\eta_k = \alpha_\eta^{\lfloor k/p \rfloor} \cdot \beta_\eta$, as suggested in \cite{KasaiRSRG2018,ZhouRSPIDER2019}. Particularly, convergence theory of R-SPIDER requires a small step size proportional to desired accuracy, which hampers convergence speed for initial epochs. The adaptive step size generally performs better. 

Some global parameter settings are as follows. For variance reduction methods and their adaptive batch size versions, we set inner loop size $m$, mini-batch size $b$ and batch gradient frequency $p$ to be $\sqrt{n}$, which agrees with convergence theories. We set $\lambda_\eta = 0.01$ for R-SGD and select $\alpha_\eta$ from $\{0.1, 0.2, ..., 0.8, 0.85, 0.9, 0.95, 0.99 \}$ and $\beta_\eta$ from $\{0.001, 0.005, 0.01, 0.05, 0.1, 0.5\}$ for R-SPIDER. This search grid is more extensive than that in \cite{ZhouRSPIDER2019} as we found for some applications, a smaller search grid is unable to ensure convergence. We set adaptive batch size $B^s = \min\{n, c_\beta/\beta_s \}, s >1$. Initial batch size $B^1$ is set to be $50$ and therefore we only need to tune $c_\beta$. It is noticed that on manifold space, due to error caused by vector transport operator, inexact batch gradients at initial stages can further deviate when $m$ is large. Hence practically, we set inner loop size $m_s = \min \{B^s, m\}$. Also, mini-batch size is set as $b_s = \min \{B^s, b\}$ since it is unreasonable for batch gradient to be less exact than mini-batch gradients. To achieve fairness in comparisons, we first tune step size $\eta$ on vanilla variance reduction methods. Then the best tuned $\eta$ is fixed for their adaptive versions, where $c_\beta$ is tuned accordingly. We select $\eta$ from $\{ 1, 2,...,9 \} \times 10^q$ and $c_\beta$ from $\{1, 3, ..., 15 \} \times 10^l$, where $q, l$ are to be determined for each problem. All results presented are coded in Matlab on a i5-8600 3.1GHz CPU processor.

\begin{figure*}
\captionsetup{justification=centering}
    \centering
    \subfloat[\label{pca_optimiality_ifo_synthetic_figure} Optimiality gap vs. IFO (Synthetic) ]{\includegraphics[width = 0.28\textwidth, height = 0.21\textwidth]{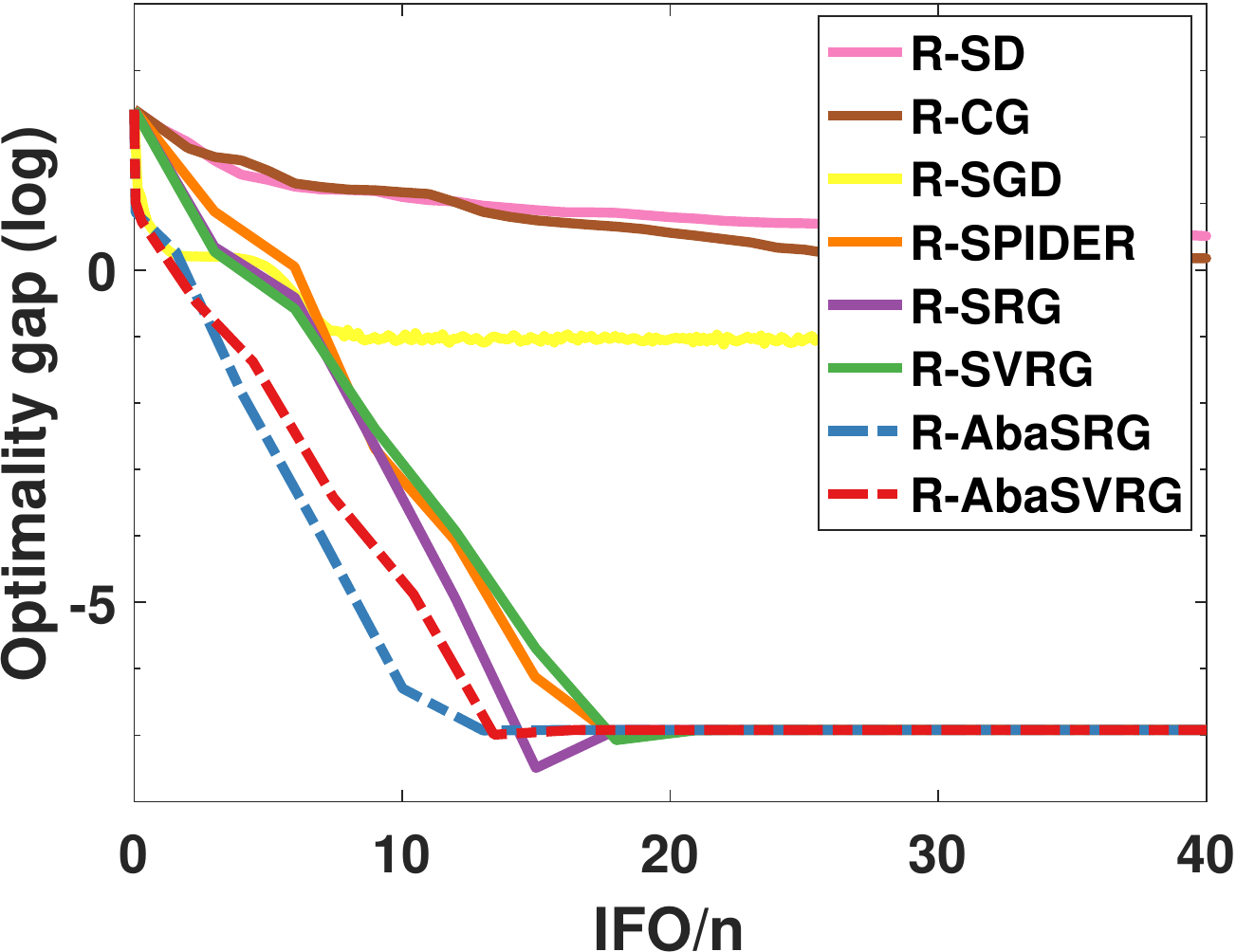}}
    \hspace{0.05in}
    \subfloat[Optimality gap vs. IFO (Mnist)]{\includegraphics[width = 0.28\textwidth, height = 0.21\textwidth]{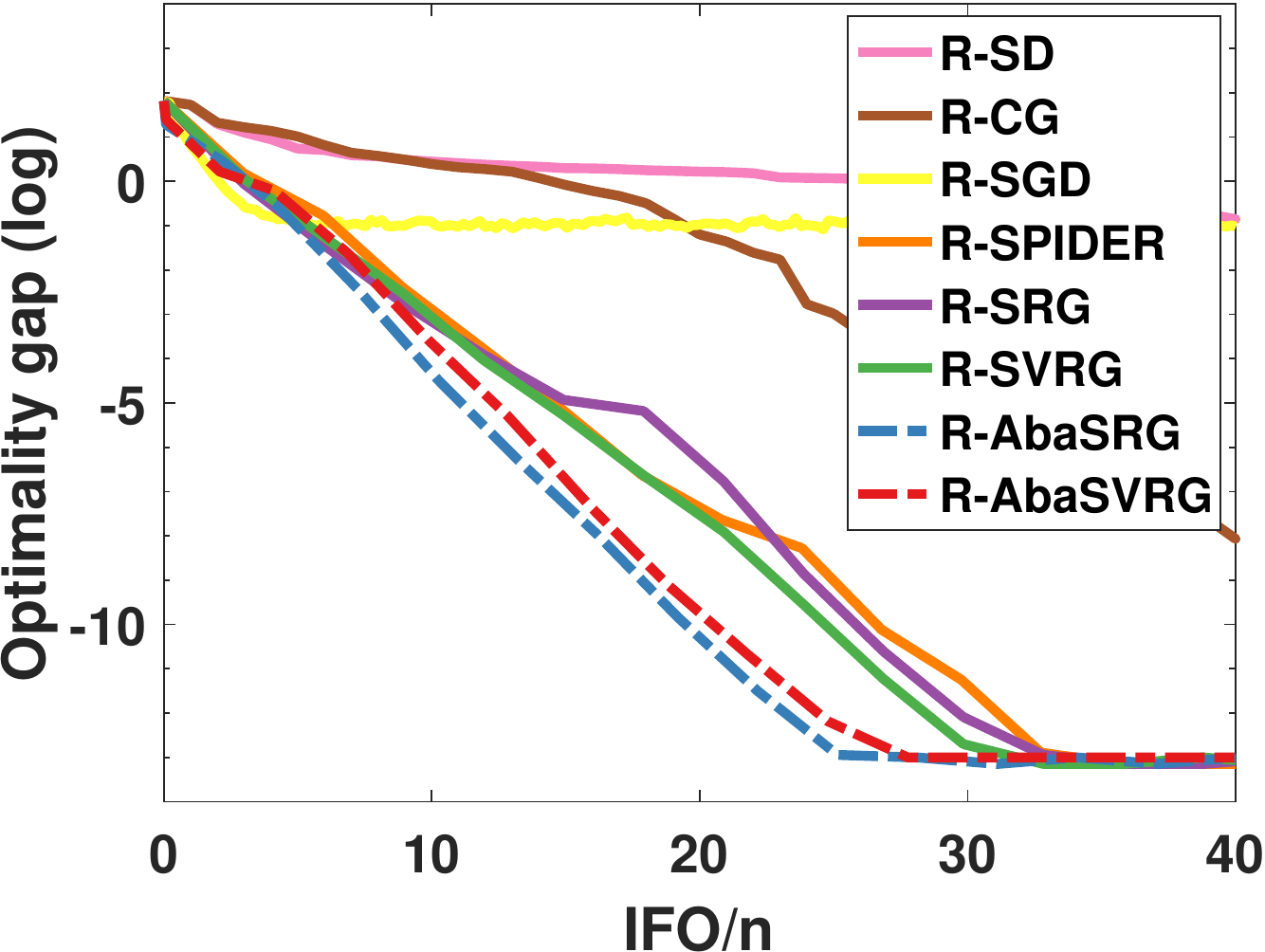}}
    \hspace{0.05in}
    \subfloat[Optimality gap vs. IFO (Ijcnn)]{\includegraphics[width = 0.28\textwidth, height = 0.21\textwidth]{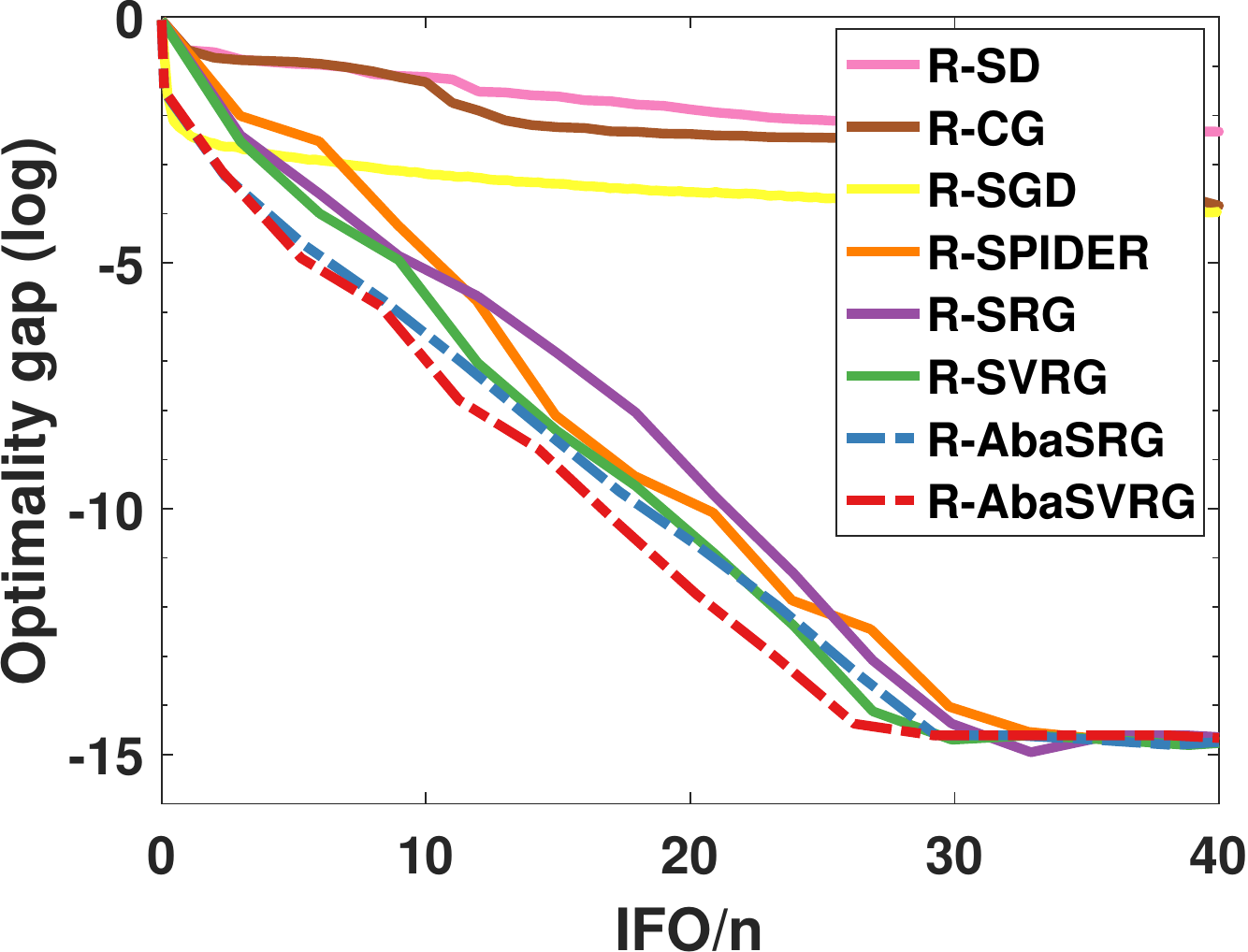}}\\
    \subfloat[Gradient norm vs. IFO (Synthetic)]{\includegraphics[width = 0.28\textwidth, height = 0.21\textwidth]{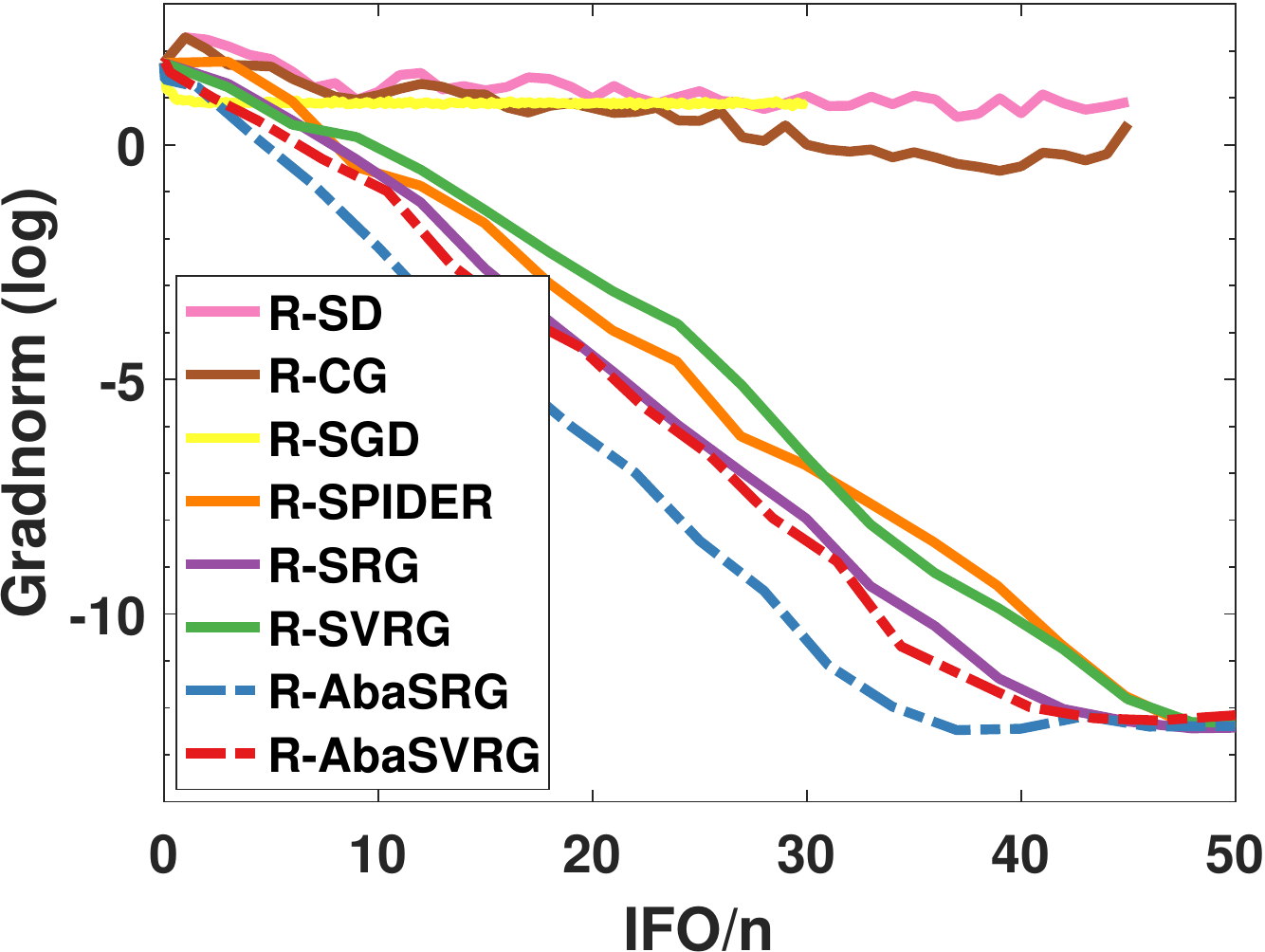}}
    \hspace{0.05in}
    \subfloat[Gradient norm vs. IFO (Mnist)]{\includegraphics[width = 0.28\textwidth, height = 0.21\textwidth]{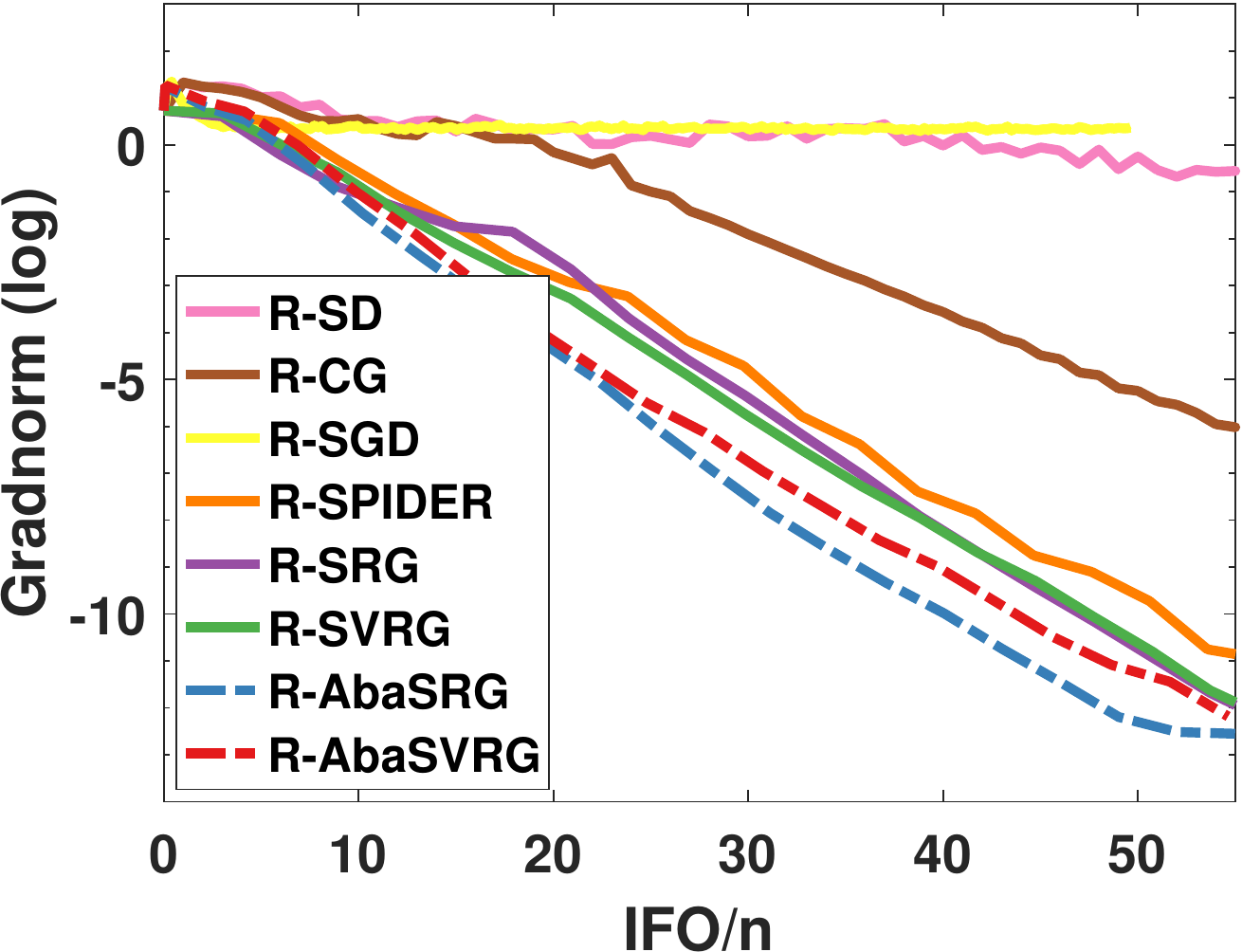}}
    \hspace{0.05in}
    \subfloat[Gradient norm vs. IFO (Ijcnn)]{\includegraphics[width = 0.28\textwidth, height = 0.21\textwidth]{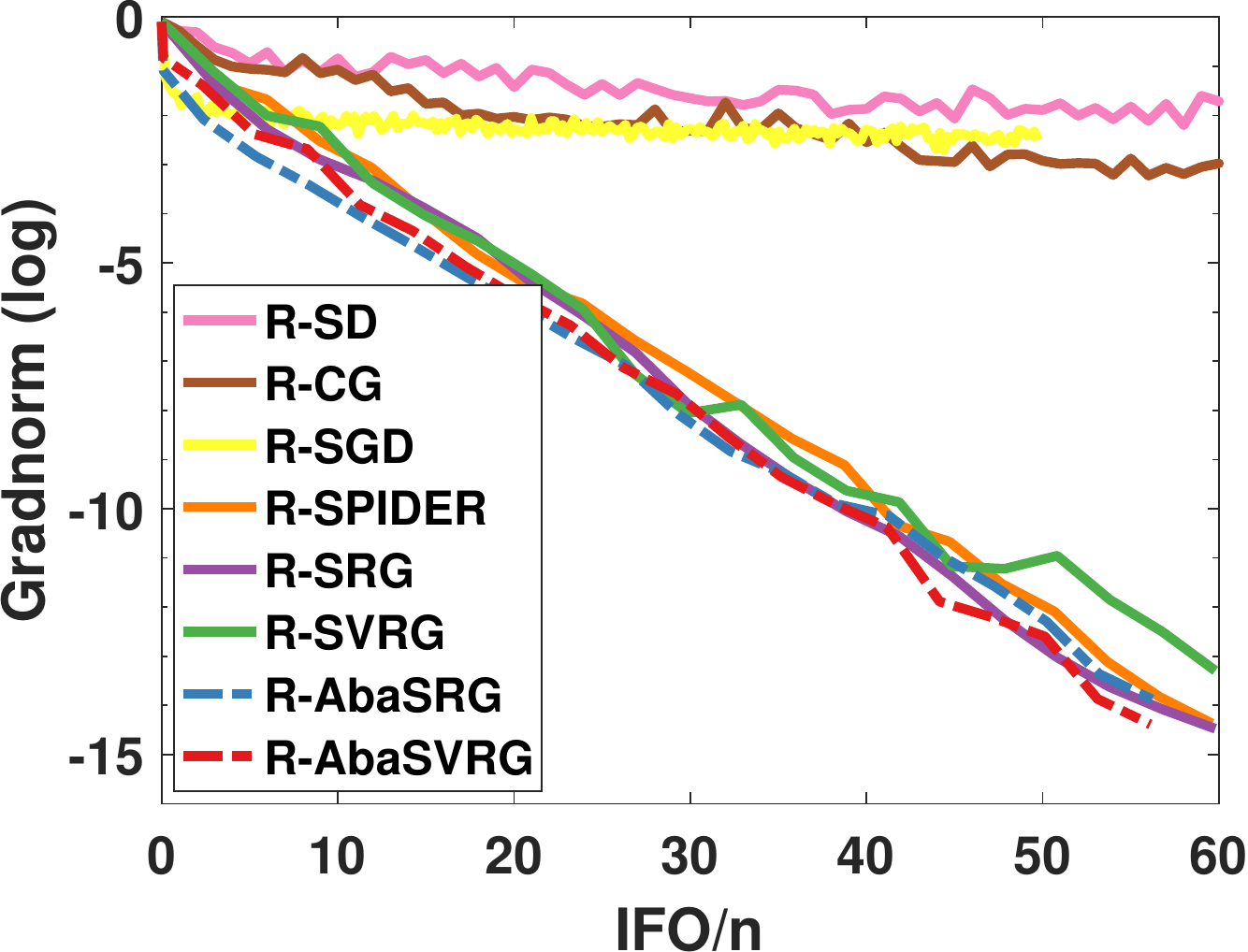}}
    \caption{PCA problem on Grassmann manifold}
    \label{pca_main_figure}
\end{figure*}

\begin{figure*}
\captionsetup{justification=centering}
    \centering
    \subfloat[Optimality gap vs. runtime (Synthetic) \label{pca_optimality_time_figure}]{\includegraphics[width = 0.28\textwidth, height = 0.21\textwidth]{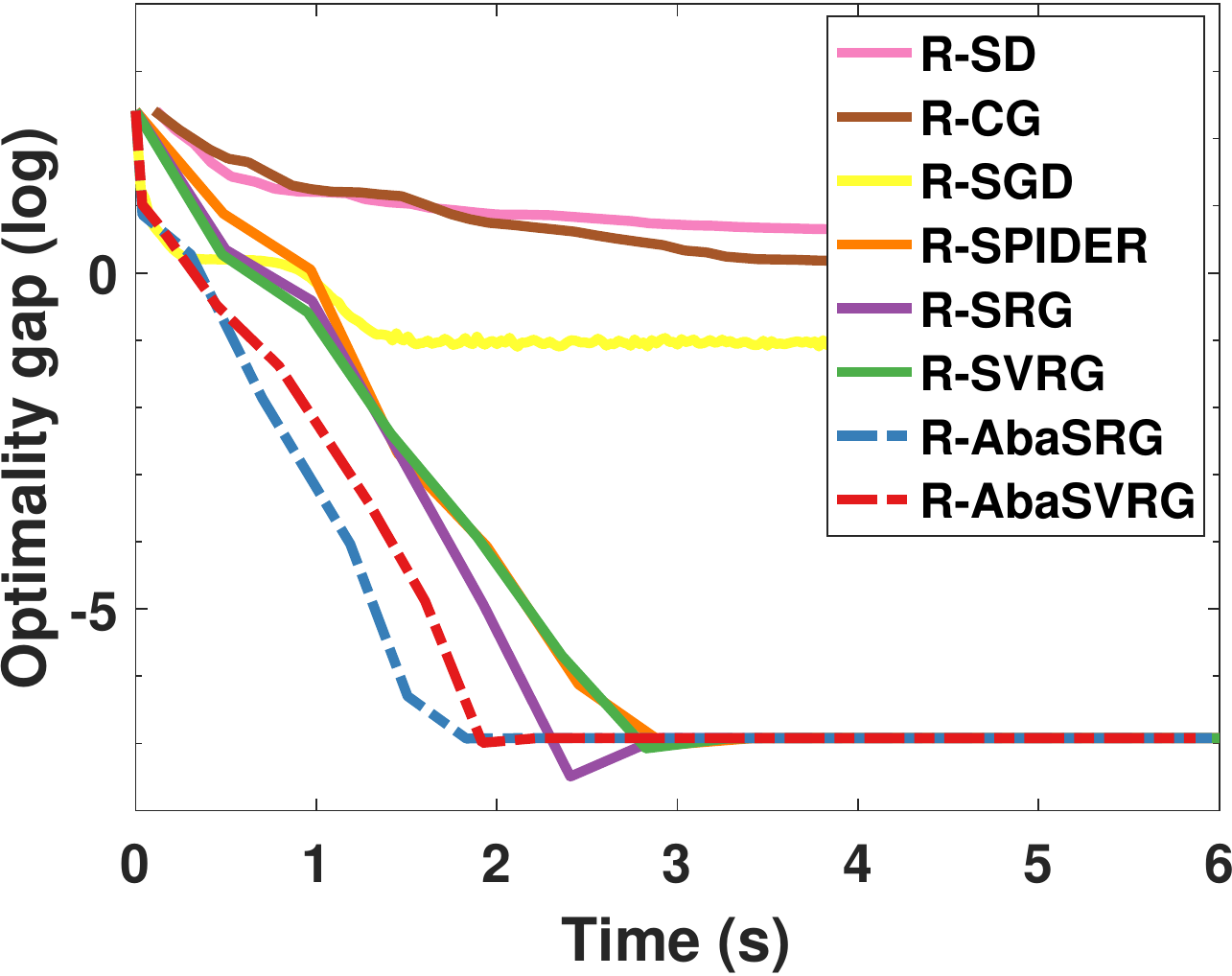}} 
    \hspace{0.05in}
    \subfloat[Sensitivity of R-AbaSVRG to $c_\beta$]{\includegraphics[width = 0.28\textwidth, height = 0.21\textwidth]{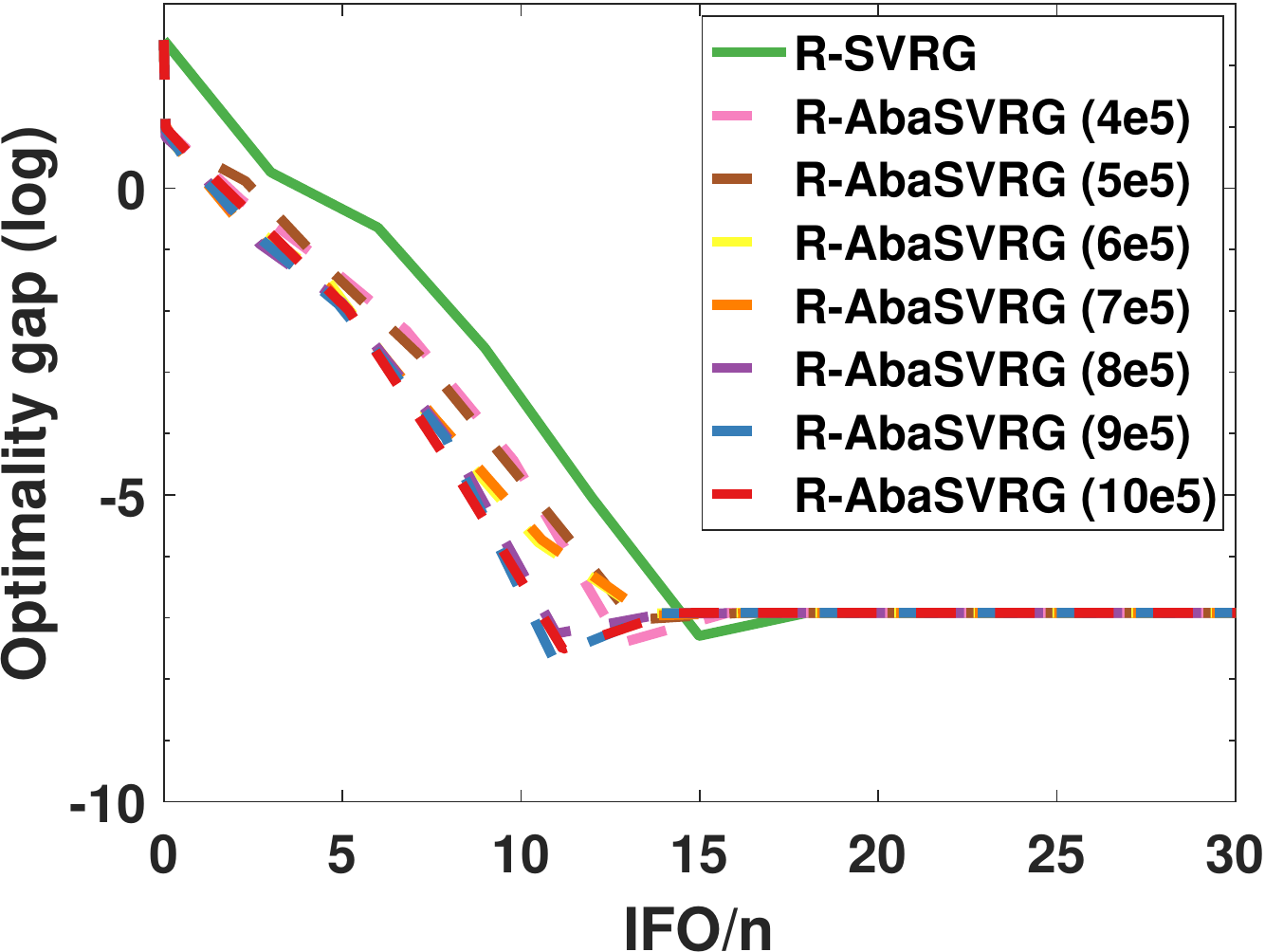}}
    \hspace{0.05in}
    \subfloat[Sensitivity of R-AbaSRG to $c_\beta$]{\includegraphics[width = 0.28\textwidth, height = 0.21\textwidth]{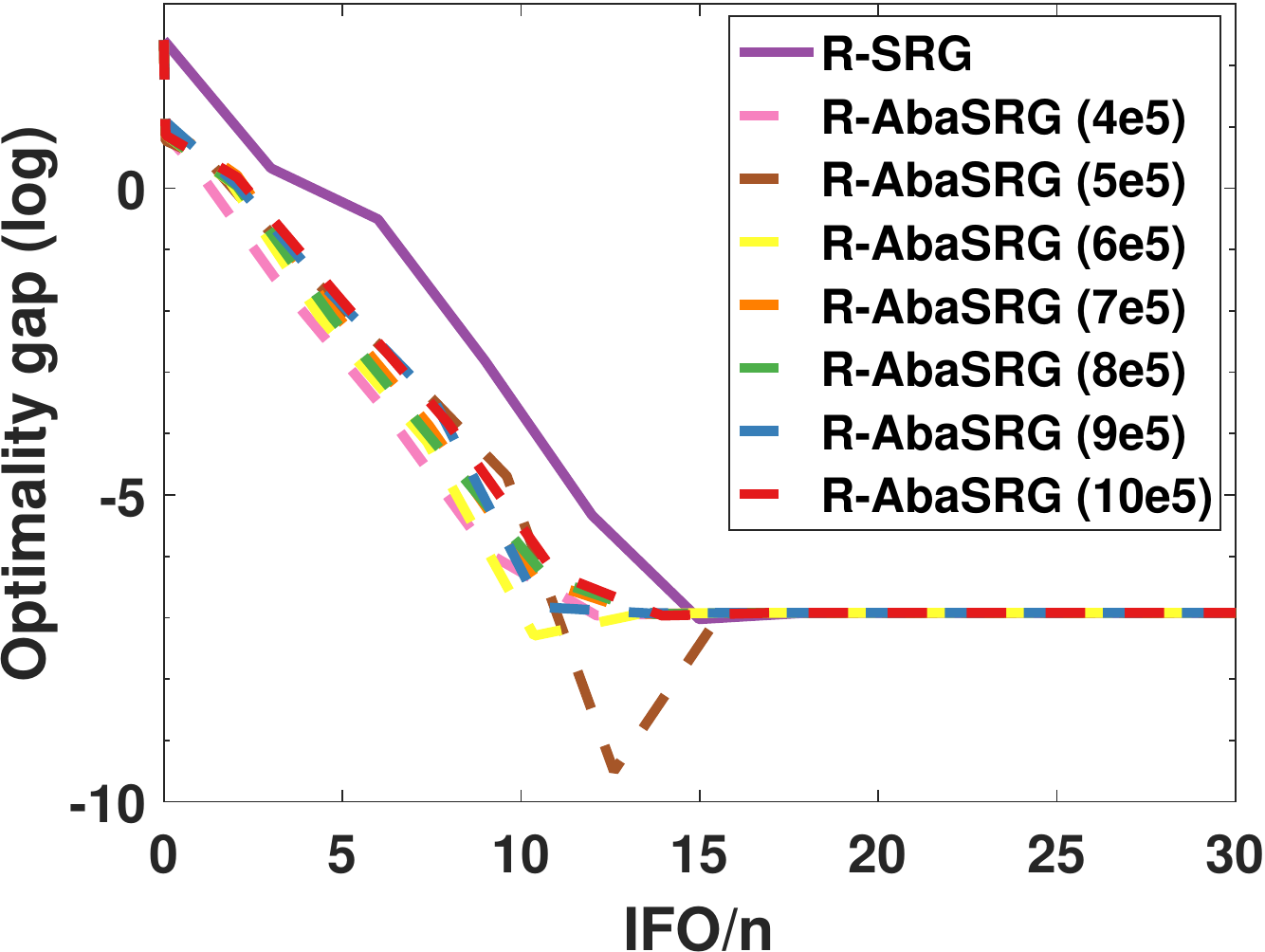}}
    \caption{Additional PCA results on synthetic dataset}
    \label{additional_pca_synthetic}
\end{figure*}

\subsection{PCA and LRMC on Grassmann manifold}
We first consider principal component analysis (PCA) and low rank matrix completion (LRMC) on Grassmann manifold $\mathcal{G}(r, d)$, which consists of $r$-dimensional subspaces in $\mathbb{R}^d$ ($r \leq d$). Points on Grassmann manifold are equivalence classes of column orthonormal matrices under the orthogonal group $ {O}(r)$. That is, a point on Grassmann manifold can be represented by a column orthonormal matrix $\mathbf U \in \mathbb{R}^{d\times r}$ such that $\mathbf U^T \mathbf U = \mathbf I_r$ and any point is deemed equivalent to $\mathbf U$ if they can be represented as $\mathbf {UR}$ for any $\mathbf R \in {O}(r)$. Recall Stiefel manifold $\text{St}(r, d)$ is the set of column orthonormal matrices in $\mathbb{R}^{d\times r}$. Grassmann manifold can also be defined as a quotient manifold of Stiefel manifold, written as $\text{St}(r,d)/ {O}(r)$.

\textbf{The PCA problem.} The PCA problem considers minimizing reconstruction error between projected and original samples over the set of orthonormal projection matrix $\mathbf U \in \text{St}(r,d)$, which is $\min_{ {\mathbf U \in \text{St}(r, d)}} \frac{1}{n} \sum_{i=1}^n \| \mathbf x_i - \mathbf{UU}^T \mathbf x_i \|^2$, where $\mathbf x_i \in \mathbb{R}^d, i = 1,...,n$ represent data samples. Note the objective function is invariant under the action of orthogonal group. That is, $f(\mathbf U) = f(\mathbf {UR})$ for $\mathbf R \in \mathcal{O}(r)$. Thus, the optimization search space is Grassmann manifold and the problem is equivalent to $\min_{ {\mathbf U \in \mathcal{G}(r, d)}} - \frac{1}{n} \sum_{i=1}^n \mathbf x_i^T \mathbf{UU}^T \mathbf x_i$.

We first consider a synthetic dataset with $(n, d, r) = (10^5, 200, 5)$, which is generated by a random normal matrix in $\mathbb{R}^{n \times d}$ with $r$ significant columns. Then we also conduct evaluations on two practical datasets, MNIST hand written digits \cite{LecunMNIST1998} with $(n, d, r) = (60000, 784, 5)$ and ijcnn1 dataset from LibSVM \cite{ChangLibsvm2011} with $(n, d, r) = (49990, 22, 5)$. We set $q = -3, l = 5$ for synthetic and MNIST datasets and $q = -1, l =2$ for ijcnn. Fig. \ref{pca_main_figure} presents convergence results for the PCA problem in terms of both optimality gap and gradient norm. Optimality gap is defined as the function value difference between iterates to the optimal point, pre-calculated by \textsc{pca} function in Matlab. From the figures, it is clear that variance reduction with batch size adaptation outperforms their full batch size versions, especially on large datasets like synthetic and MNIST. Due to small batch size in the first few epochs, R-AbaSVRG and R-AbaSRG behaves similarly to R-SGD with rapid function value decrease, while still maintaining fast convergence owing to variance reduction in the following training phases. A similar observation can be made in terms of gradient norm decrease. Fig. \ref{additional_pca_synthetic} presents additional results on synthetic dataset. Specifically, Fig. \ref{pca_optimality_time_figure} illustrates how optimality gap decreases with algorithm runtime, which aligns closely with Fig. \ref{pca_optimiality_ifo_synthetic_figure}. This suggests the extra cost of tracing gradient norm within each epoch is negligible. Also, we find that performance of R-AbaSVRG and R-AbaSRG is insensitive to parameter $c_\beta$ as long as it is sufficiently large.

\textbf{The LRMC problem.} Given a matrix $\mathbf A \in \mathbb{R}^{d \times n}$ with largely missing entries, the LRMC problem aims to recover the full matrix by assuming a low rank structure. Denote $\Omega$ as an index set corresponding to observed entries and $\mathcal{P}_\Omega$ as an operator that projects known entries while setting unknown entries to zero. Formally, $\Omega := \{ (i,j) \, | \, A_{ij} \text{ is observed} \, \}$. $\mathcal{P}_\Omega (A_{ij}) = A_{ij}$ if $(i, j) \in \Omega$ and $\mathcal{P}_\Omega (A_{ij}) = 0$ otherwise. Then the problem is to $\min_{\mathbf U, \mathbf V} \|\mathcal{P}_\Omega (\mathbf A) - \mathcal{P}_\Omega(\mathbf{UV}) \|^2$, with $\mathbf U \in \mathbb{R}^{d\times r}, \mathbf V \in \mathbb{R}^{r\times n}$. Since the factorization into $\mathbf U, \mathbf V$ is not unique and depends only on column space of $\mathbf U$, the problem is defined on Grassmann manifold $\mathcal{G}(r,d)$. Denote $\mathbf a_1, ..., \mathbf a_n$ as column vectors of $\mathbf A$ and $\mathcal{P}_{\Omega_i}, i = 1,...,n$ as the corresponding projection for the $i$-th column. We can reformulate LRMC into $\min_{\mathbf U \in \mathcal{G}(r,d), \mathbf v_i \in \mathbb{R}^r} \frac{1}{n} \sum_{i=1}^n \|\mathcal{P}_{\Omega_i} (\mathbf a_i) - \mathcal{P}_{\Omega_i}(\mathbf{U} \mathbf v_i) \|^2$. Note given $\mathbf U$, $\mathbf v_i$ has a closed form solution given by least square fit. 

A baseline synthetic dataset with $n = 20000, d = 100, r = 5$ is generated by a similar procedure in \cite{KasaiRQNVR2017}. We set condition number of the generated matrix as $\text{cn} = 50$, which is the ratio of the largest to smallest singular value. Oversampling ratio is set as $\text{os} = 8$, which determines the number of known entries given by $\text{os}\times (n + d - r)r$. The known entries are subsequently perturbed by injecting Gaussian noise with a noise level $\varepsilon = 10^{-10}$. In general, the larger the condition number, the smaller the oversampling ratio, the higher the noise level, the more difficult the LRMC problem is. In addition, we consider two movie recommendation datasets as follows. Netflix prize \cite{BennettNetflix2007} contains over $100$ million movie ratings, which are integers from $1$ to $5$. We first choose a random subset of $10$ million instances and subsequently include movies and users with more than $100$ observed entries. This leaves $1372$ movies ($n$) rated by $13088$ users $(d)$. Movielens-1M \cite{HarperMovielens2015} is a dataset with $6040$ users $(d)$ and $3706$ movies ($n$). For these two datasets, we randomly extract $20$ ratings per user as test sets, which results in $15\%$ and $12\%$ of total observed entries for testing. We set $q = -2, -5, -5$, $l = 2, 8, 8$ for synthetic, Netflix and Movielens datasets respectively. Fig. \ref{LRMC_main_results} presents test mean square error (MSE) on three datasets. We include training MSE results in Appendix \ref{LRMC_appendix}, which display similar patterns. From Fig. \ref{LRMC_main_results}, we conclude that batch size adaptation accelerates variance reduction methods particularly for the first few epochs and thus perform no worse than their non-adaptive versions. 

\begin{figure*}[!t]
\captionsetup{justification=centering}
    \centering
    \subfloat[Test MSE vs. IFO (Synthetic)]{\includegraphics[width = 0.28\textwidth, height = 0.21\textwidth]{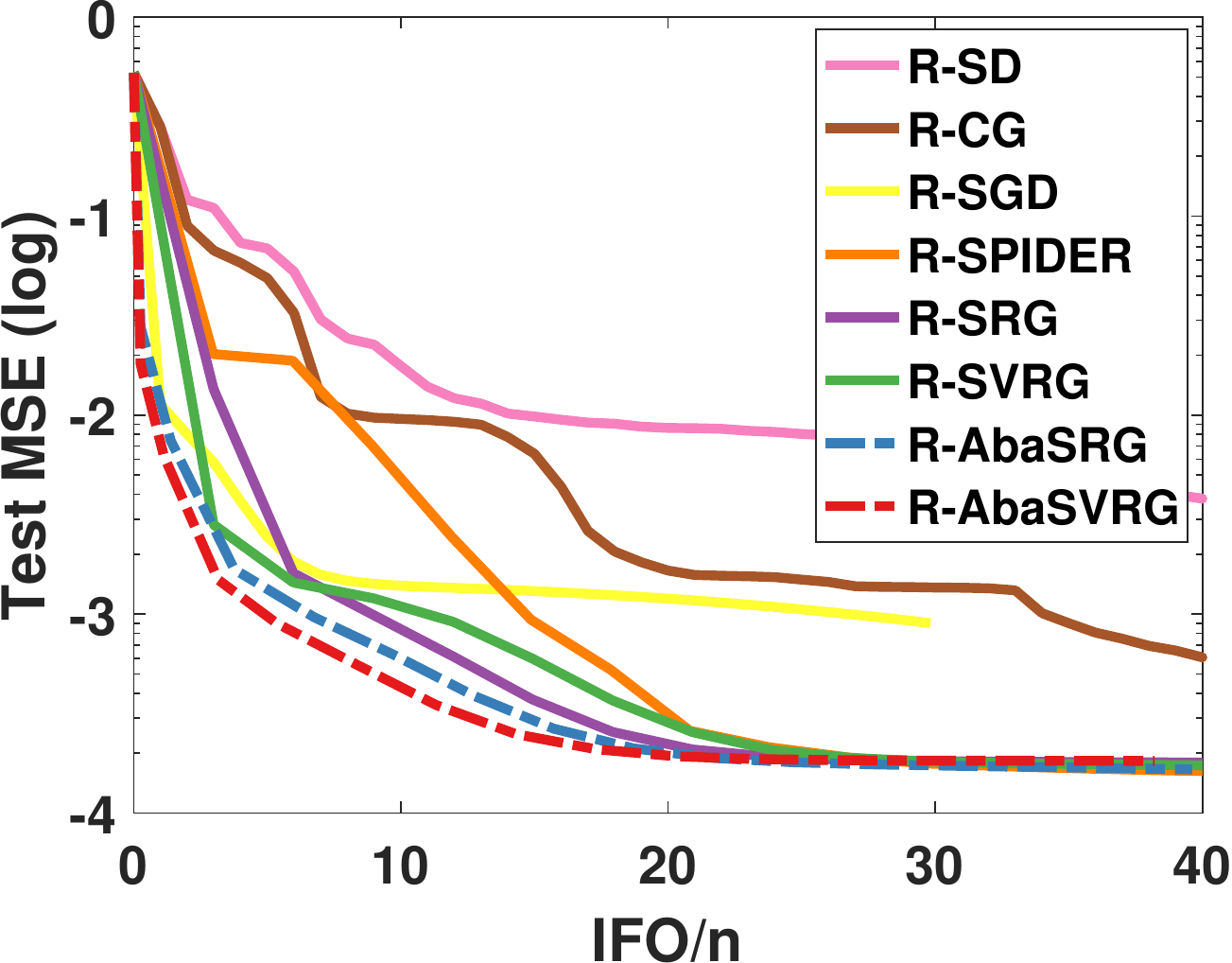}}
    \hspace{0.05in}
    \subfloat[Test MSE vs. IFO (Netflix)]{\includegraphics[width = 0.28\textwidth, height = 0.21\textwidth]{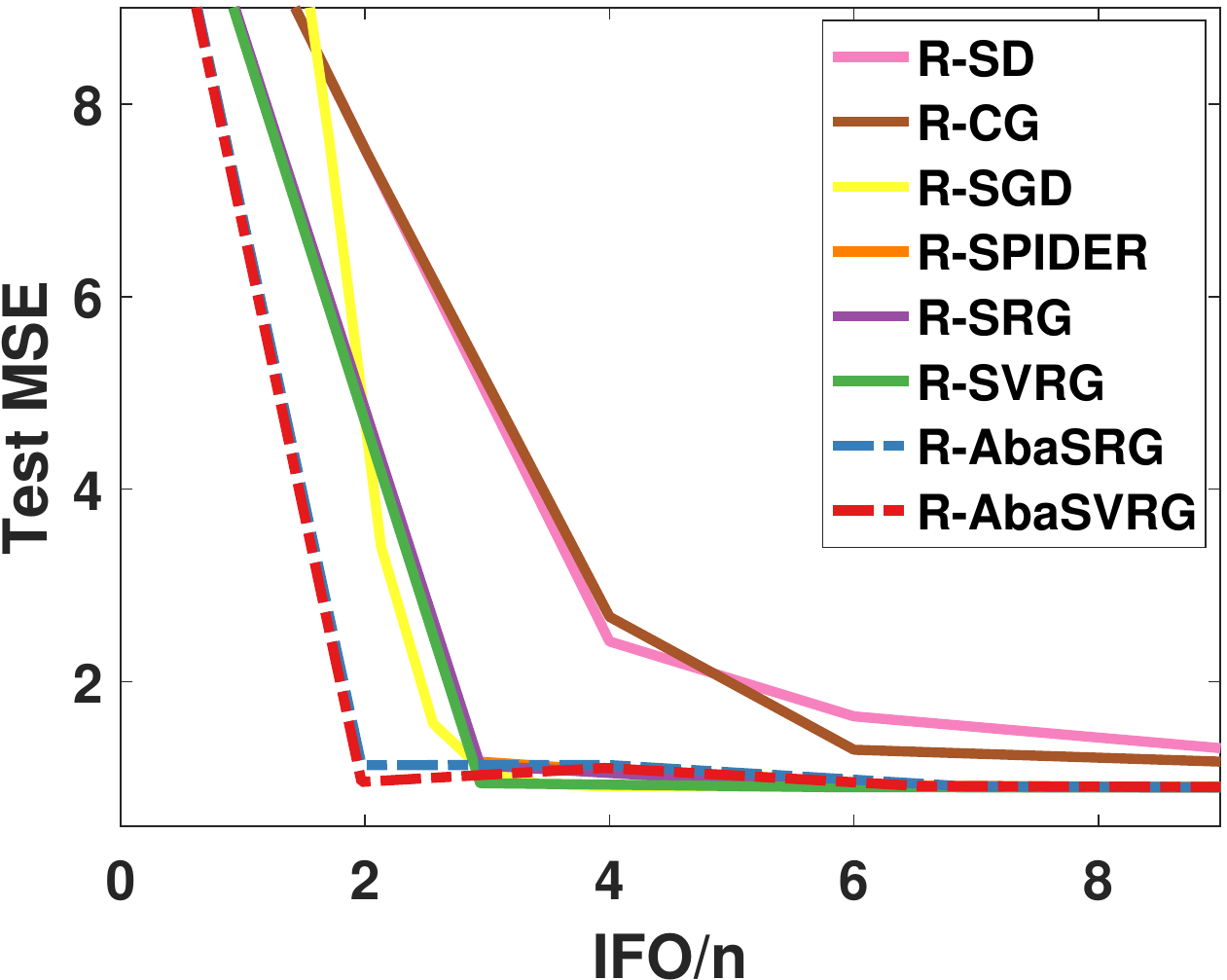}}
    \hspace{0.05in}
    \subfloat[Test MSE vs. IFO (Movielens)]{\includegraphics[width = 0.28\textwidth, height = 0.21\textwidth]{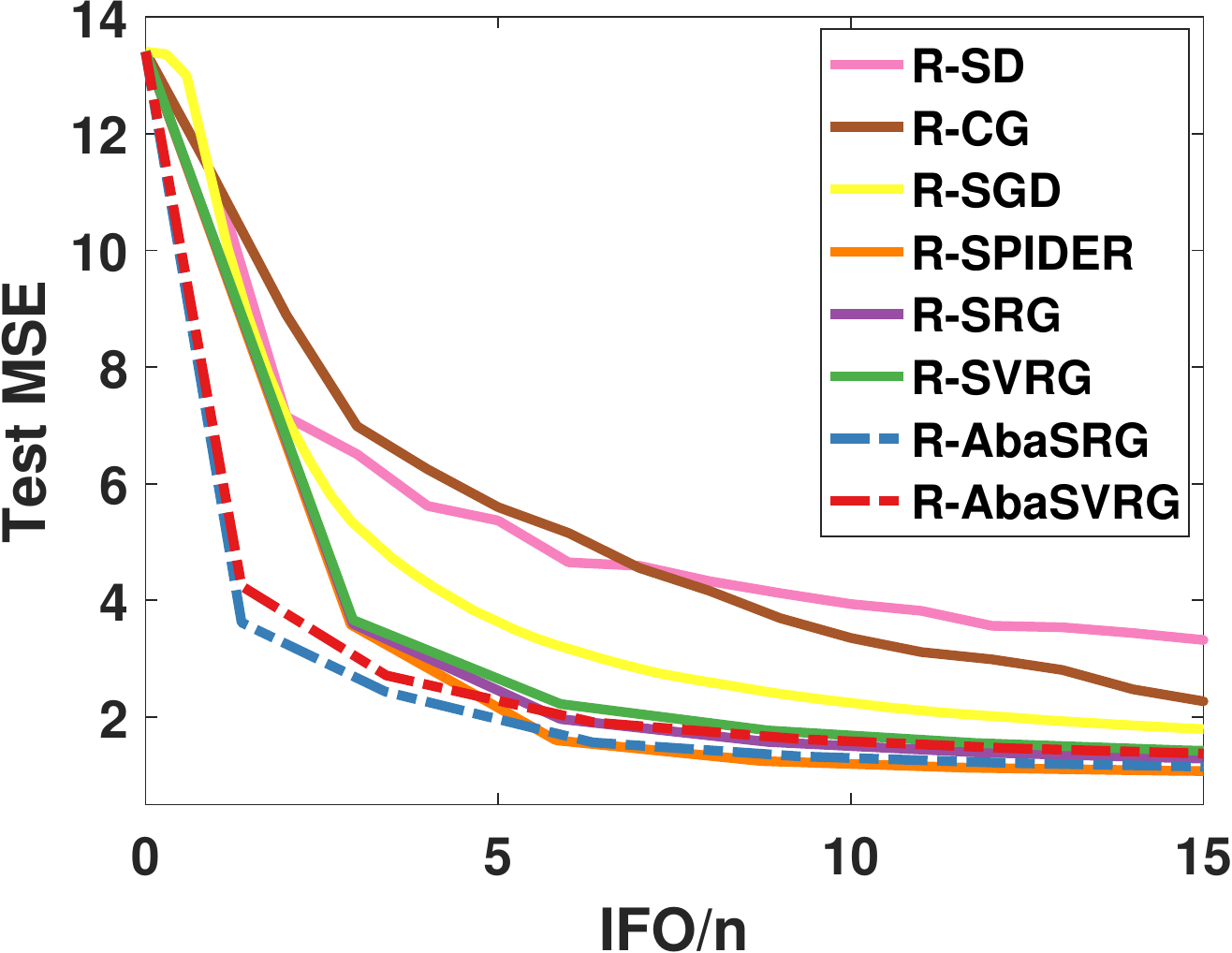}}
    \caption{LRMC problem on Grassmann manifold}
    \label{LRMC_main_results}
\end{figure*}

\begin{figure*}[!t]
\captionsetup{justification=centering}
    \centering
    \subfloat[Optimality gap vs. IFO (Synthetic)]{\includegraphics[width = 0.28\textwidth, height = 0.21\textwidth]{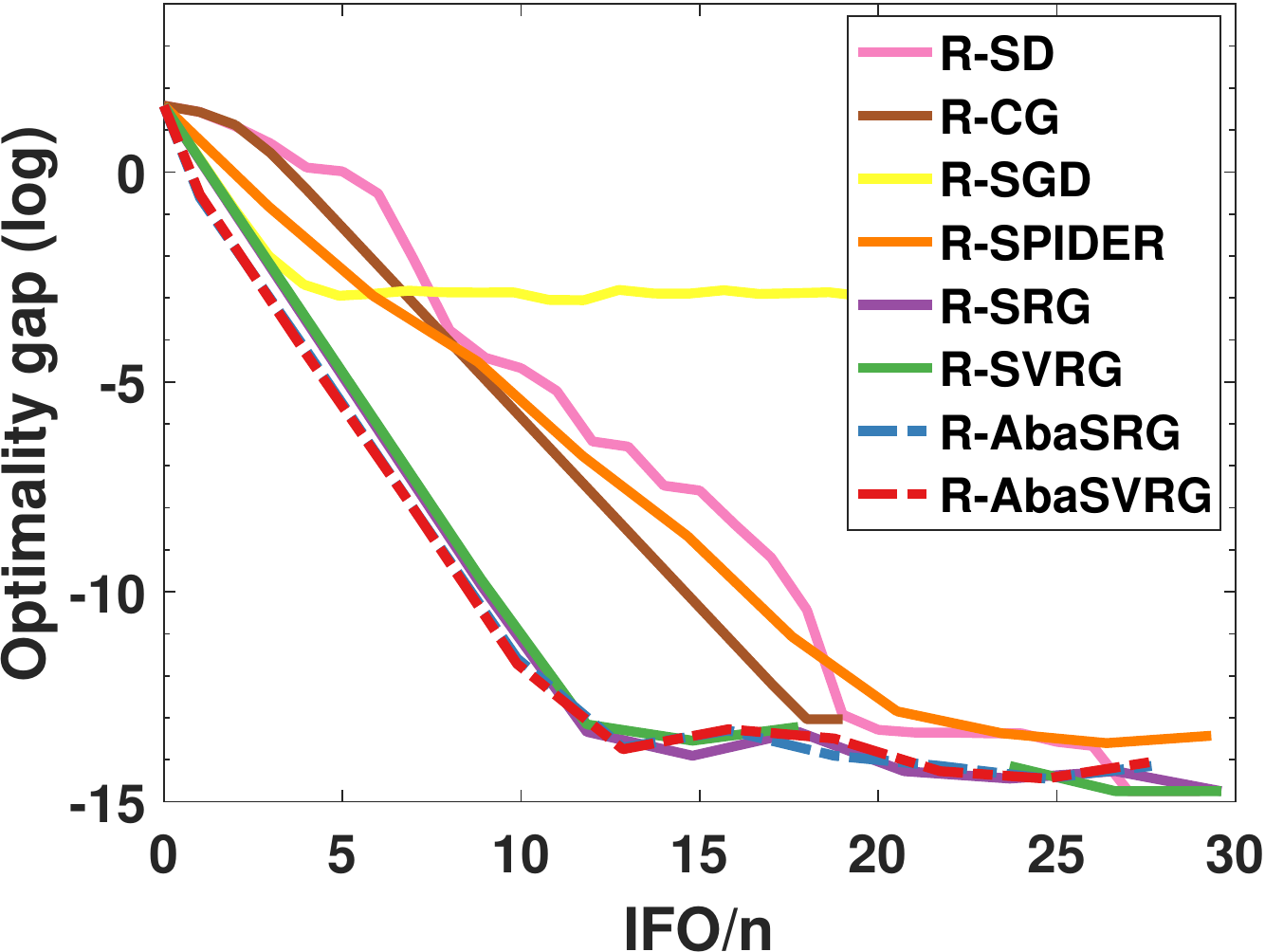}}
    \hspace{0.05in}
    \subfloat[Optimality gap vs. IFO (YaleB)]{\includegraphics[width = 0.28\textwidth, height = 0.21\textwidth]{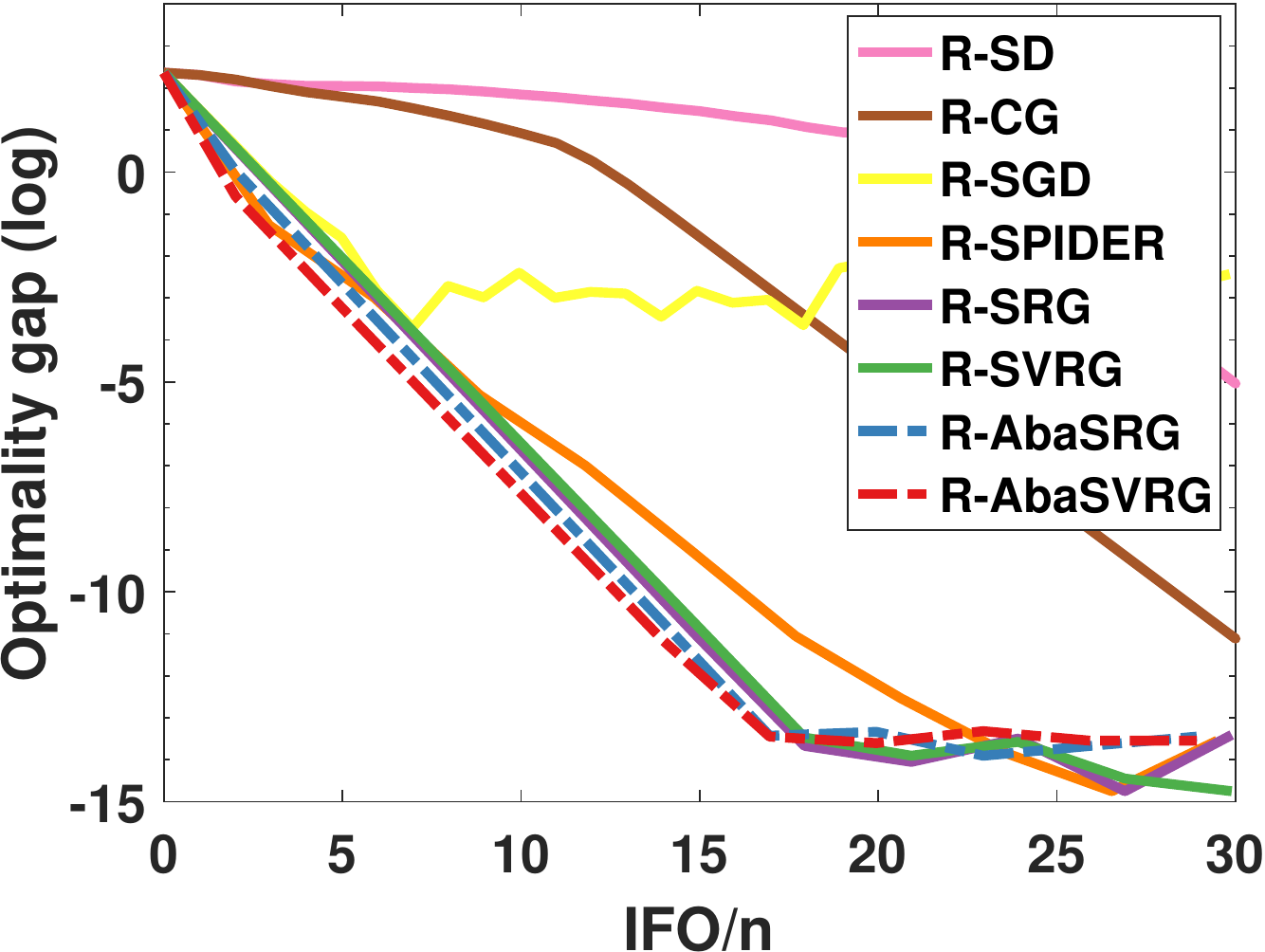}}
    \hspace{0.05in}
    \subfloat[Optimality gap vs. IFO (Kylberg)]{\includegraphics[width = 0.28\textwidth, height = 0.21\textwidth]{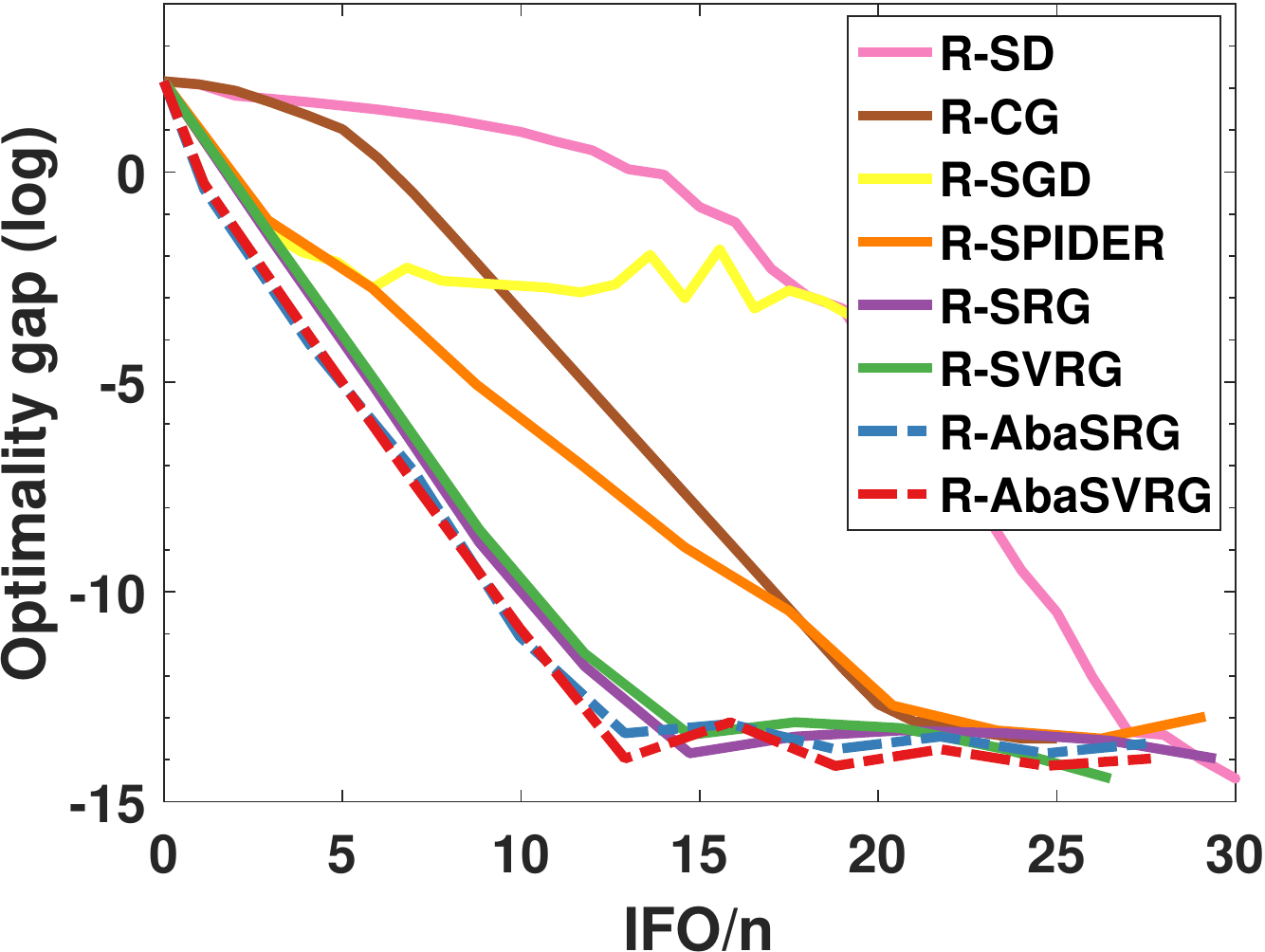}}
    \caption{RKM problem on SPD manifold}
    \label{RKM_main_results}
\end{figure*}

\subsection{RKM on SPD manifold}
We also consider computing Riemannian Karcher mean (RKM) on $d \times d$ symmetric positive definite (SPD) manifold $\mathcal{S}_{++}^d$. Given $n$ sample points $\mathbf X_1, ..., \mathbf X_n \in \mathcal{S}_{++}^d$, Riemannian Karcher mean with respect to affine-invariant Riemannian metric (AIRM) \cite{PennecAIRM2006}, is calculated by solving $\min_{\mathbf C \in  \mathcal{S}_{++}^d} \frac{1}{n} \sum_{i=1}^n \| \log(\mathbf C^{-1/2} \mathbf X_i \mathbf C^{-1/2}) \|_F^2$, where $\log(\cdot)$ represents the principal matrix logarithm. We first test on a synthetic dataset with $(n, d, \text{cn}) = (5000, 10, 20)$ generated as in \cite{BiniKM2013}. In addition, we compare algorithms on Extended Yale B dataset \cite{WrightFaceB2008} that collects $2414$ $(n)$ frontal face images of $38$ individuals under various lighting conditions and Kylberg dataset \cite{KylbergData2011} that contains $4480$ $(n)$ images of $28$ different texture classes. Original images are resized to $32 \times 32$ pixels and region covariance descriptors are constructed for each image. Particularly, we generate 8-dimensional feature vectors consisting of pixel locations, intensity, first- and second-order pixel gradients and edge orientation at each pixel location \cite{PangGabor2008}. As a result, we obtain $n$ $8 \times 8$ SPD matrices for which we calculate Riemannian Karcher mean. For all datasets, we set $q = -2, l = 5$. The optimal solution is obtained by relaxed Richardson iteration \cite{BiniKM2013}. From Fig. \ref{RKM_main_results}, we observe that R-AbaSVRG and R-AbaSRG still perform better compared to R-SVRG and R-SRG. The improvement is not as significant as in PCA and LRMC problem because all methods converge rapidly and therefore batch size adaptation only takes place in the first epoch.

\subsection{Additional remarks}
To further evaluate sensitivity of batch size adaptation, we also include results on synthetic datasets with different characteristics in Appendix \ref{additional_experiment_res_appendix} for all three applications, such as large-scale, high-dimension, high-rank, ill-conditioning. We find in general, R-AbaSVRG and R-AbaSRG are insensitive when characteristics of dataset vary and perform comparatively better across all methods considered. At last, we make some comments on R-SRG and R-SPIDER with matching complexities. We notice a similar performance for PCA and LRMC problem while R-SPIDER fails on RKM problem. {This suggests that the search grid might not be extensive enough to reflect the best performance of R-SPIDER}. For more difficult LRMC problems, we find that R-SPIDER can converge faster near optimal point (Appendix \ref{LRMC_appendix}). This is reasonable as gradient normalization allows magnitude of each step to be dictated precisely by the adaptive step size, which gives more flexibility than fix step size. However, it also requires more effort in tuning two step size parameters $\alpha_\eta, \beta_\eta$, which poses difficulty for large datasets in high dimensions.

\section{Conclusions}
In this paper, we propose R-AbaSVRG and R-AbaSRG by adapting outter loop batch size of state-of-the-art variance reduction methods R-SVRG and R-SRG for Riemannian optimization. Our formulation focuses on more general retraction and vector transport as well as mini-batch stochastic gradients. We prove that batch size adaptation maintains the same iteration complexities while requiring lower per-iteration complexities. This results in lower total complexities compared to non-adaptive methods for both general non-convex and gradient dominated functions under finite-sum and online settings. In addition, the new convergence analysis can be readily applied to non-adaptive variance reduction methods, which yields much simpler proof for R-SVRG and better complexity bounds for R-SRG under double loop convergence. Experiment results over a number of applications validate superiority of batch size adaptation.

\bibliographystyle{IEEEtran}

\newpage
\onecolumn

\par\noindent\rule{\textwidth}{1pt}
\begin{center}
    \Large \textbf{Supplementary Material}
\end{center}
\par\noindent\rule{\textwidth}{1pt}

\appendices
\section{Useful Lemmas}
\begin{lemma}[Variance bound for sampling without replacement]
\label{variance_bound_wors}
    Consider a set of population vectors $\{ \mathbf x_1,..., \mathbf x_N \}$ in $\mathbb{R}^D$ with $\sum_{i=1}^N \mathbf x_i = 0$, and a subset $\mathcal{I}$ of cardinality $b$, which is uniformly drawn at random from $[N]$ without replacement. Then 
    \begin{equation}
        \mathbb{E}_{\mathcal{I}} \| \frac{1}{b}\sum_{i\in \mathcal{I}} x_i \|^2 \leq \frac{1}{Nb} \frac{N-b}{N-1} \sum_{i=1}^N \| x_i\|^2. \nonumber
    \end{equation}
\end{lemma}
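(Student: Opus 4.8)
The plan is to expand the squared norm using sampling indicators, reduce everything to the first and second moments of those indicators, and then exploit the hypothesis $\sum_{i=1}^N x_i = 0$ to collapse the cross terms. (In fact the inequality will come out as an equality.)

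First I would write $\frac{1}{b}\sum_{i\in\mathcal{I}} x_i = \frac{1}{b}\sum_{i=1}^N w_i x_i$, where $w_i := \mathbbm{1}_{\{i \in \mathcal{I}\}}$, and expand bilinearly:
\begin{equation*}
\mathbb{E}_{\mathcal{I}} \Big\| \frac{1}{b}\sum_{i\in\mathcal{I}} x_i \Big\|^2 = \frac{1}{b^2} \sum_{i=1}^N \sum_{j=1}^N \mathbb{E}[w_i w_j] \langle x_i, x_j \rangle.
\end{equation*}
Next I would compute the required moments of the without-replacement indicators. Since $\mathcal{I}$ is uniform over the $b$-subsets of $[N]$, we have $\mathbb{P}(i \in \mathcal{I}) = b/N$, hence $\mathbb{E}[w_i] = \mathbb{E}[w_i^2] = b/N$; and for $i \neq j$, $\mathbb{P}(i,j \in \mathcal{I}) = \binom{N-2}{b-2}/\binom{N}{b} = \frac{b(b-1)}{N(N-1)}$ by a short counting argument.

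Then I would split the double sum into its diagonal ($i=j$) and off-diagonal ($i\neq j$) parts. The diagonal contributes $\frac{1}{b^2}\cdot\frac{b}{N}\sum_{i=1}^N \|x_i\|^2$, and the off-diagonal contributes $\frac{1}{b^2}\cdot\frac{b(b-1)}{N(N-1)}\sum_{i\neq j}\langle x_i, x_j\rangle$. The key step is the identity $\sum_{i\neq j}\langle x_i, x_j\rangle = \big\|\sum_{i=1}^N x_i\big\|^2 - \sum_{i=1}^N \|x_i\|^2 = -\sum_{i=1}^N \|x_i\|^2$, which uses the zero-mean hypothesis. Combining the two parts gives $\frac{1}{b}\big(\frac{1}{N} - \frac{b-1}{N(N-1)}\big)\sum_{i=1}^N\|x_i\|^2$, and simplifying $\frac{1}{N} - \frac{b-1}{N(N-1)} = \frac{N-b}{N(N-1)}$ yields precisely the claimed bound.

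The argument is essentially a routine second-moment computation, so I do not expect a genuine obstacle; the only points needing a little care are the second-moment formula for the without-replacement indicators and the degenerate cases (for instance $b=N$, where both sides vanish, or $N=1$), which can be checked directly or simply noted. The one place where insight is needed is recognizing that the zero-mean hypothesis is exactly what converts the off-diagonal sum into $-\sum_i \|x_i\|^2$ and thereby produces the finite-population correction factor $\frac{N-b}{N-1}$.
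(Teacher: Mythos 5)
Your proof is correct: the indicator moments $\mathbb{E}[w_i]=b/N$ and $\mathbb{E}[w_iw_j]=\frac{b(b-1)}{N(N-1)}$ are right, the zero-mean hypothesis does turn the off-diagonal sum into $-\sum_i\|x_i\|^2$, and the algebra yields the stated bound with equality. The paper itself gives no argument here---it simply cites Lemma A.1 of \cite{LeiSCSG2017}---so your computation is a self-contained version of the standard finite-population variance calculation underlying that cited result, and there is no substantive difference in approach to report.
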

\begin{proof}
    See Lemma A.1 in \cite{LeiSCSG2017}.
\end{proof}

\begin{lemma}[Retraction Lipschitzness with vector transport]
\label{vec_trans_lemma}
    Suppose $f$ is average retraction $L_l$-Lipschitz as in Assumption \ref{retaction_Ll_assump} and norm of gradient is bounded by $G$. Also suppose difference between parallel transport $P^x_y$ and vector transport $\mathcal{T}^x_y$ under same retraction is bounded as in Assumption \ref{diff_vec_par_bound}. Then for all $x, y = R_x(\xi) \in \mathcal{X}$, 
    $$ \mathbb{E}\| \emph{grad}f_i(x) - \mathcal{T}_y^x \emph{grad}f_i(y) \| \leq (L_l + \theta G) \|\xi \|,$$
    where expectation is taken with respect to index $i$ and $\theta$ is parameter defined in Assumption \ref{diff_vec_par_bound}.
\end{lemma}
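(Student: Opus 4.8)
The plan is to interpolate through the parallel transport of $\text{grad}f_i(y)$ and apply the triangle inequality. Fixing $x$ and $y = R_x(\xi)$, and letting $P_y^x$ denote parallel transport from $y$ to $x$ along the retraction curve $c(t) = R_x(t\xi)$ (exactly as in Assumption \ref{retaction_Ll_assump}), I would write
\begin{equation*}
\|\text{grad}f_i(x) - \mathcal{T}_y^x \text{grad}f_i(y)\| \le \|\text{grad}f_i(x) - P_y^x \text{grad}f_i(y)\| + \|P_y^x \text{grad}f_i(y) - \mathcal{T}_y^x \text{grad}f_i(y)\|,
\end{equation*}
and then handle the two terms separately before taking expectation over the component index $i$.

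For the first term, this is precisely the quantity controlled by the average retraction $L_l$-Lipschitz condition: by Assumption \ref{retaction_Ll_assump}, $\mathbb{E}\|\text{grad}f_i(x) - P_y^x \text{grad}f_i(y)\| \le L_l \|\xi\|$. For the second term, I would bound the per-index discrepancy between parallel and vector transport via Assumption \ref{diff_vec_par_bound}, together with the bounded-gradient Assumption \ref{bounded_norm_assump}. Some care is needed here because Assumption \ref{diff_vec_par_bound} is stated for transporting a vector at $x$ \emph{forward} to $y$, whereas here a vector at $y$ is transported \emph{back} to $x$. This is reconciled using that both $\mathcal{T}$ (by the standing isometry assumption) and $P$ preserve norms and that $P_y^x$, $\mathcal{T}_y^x$ invert $P_x^y$, $\mathcal{T}_x^y$ respectively: setting $w := \mathcal{T}_y^x \text{grad}f_i(y) \in T_x\mathcal{M}$ with $\|w\| = \|\text{grad}f_i(y)\|$, so that $\text{grad}f_i(y) = \mathcal{T}_x^y w$, one obtains $\|P_y^x\text{grad}f_i(y) - \mathcal{T}_y^x\text{grad}f_i(y)\|_x = \|\mathcal{T}_x^y w - P_x^y w\|_y \le \theta\|\xi\|\,\|w\| = \theta\|\xi\|\,\|\text{grad}f_i(y)\| \le \theta G \|\xi\|$, uniformly in $i$.

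Combining the two estimates and taking expectation over $i$ yields $\mathbb{E}\|\text{grad}f_i(x) - \mathcal{T}_y^x \text{grad}f_i(y)\| \le (L_l + \theta G)\|\xi\|$, which is the claim. The only genuinely delicate step is the second one: matching the direction of transport in Assumption \ref{diff_vec_par_bound} to the direction actually needed while keeping the displacement norm as $\|\xi\|$ (rather than passing through Assumption \ref{relation_distance_assump} and picking up spurious $\mu\nu$ factors). Everything else is a routine application of the triangle inequality and Assumptions \ref{bounded_norm_assump} and \ref{retaction_Ll_assump}.
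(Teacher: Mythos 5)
Your proposal is correct and follows essentially the same route as the paper's proof: decompose via the triangle inequality through $P_y^x\,\mathrm{grad}f_i(y)$, bound the first term by Assumption \ref{retaction_Ll_assump} and the second by Assumption \ref{diff_vec_par_bound} together with the gradient bound. The only difference is that you explicitly justify reversing the transport direction in Assumption \ref{diff_vec_par_bound} via isometry, a point the paper's proof applies without comment; this is a careful refinement of the same step rather than a different argument.
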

\begin{proof}
    \begin{align}
        \mathbb{E}\| \text{grad}f_i(x) - \mathcal{T}_y^x \text{grad}f_i(y) \| &= \mathbb{E}\| \text{grad}f_i(x) - P_y^x \text{grad}f_i(y) + P_y^x \text{grad}f_i(y) - \mathcal{T}_y^x \text{grad}f_i(y)  \| \nonumber\\
        &\leq \mathbb{E}\|\text{grad}f_i(x) - P_y^x \text{grad}f_i(y) \| + \mathbb{E}\|P_y^x \text{grad}f_i(y) - \mathcal{T}_y^x \text{grad}f_i(y)  \| \nonumber\\
        &\leq L_l \| \xi\| + \theta \| \xi\| \mathbb{E}\| \text{grad}f_i(y)\| \nonumber\\
        &\leq (L_l + \theta G) \| \xi\|,
    \end{align}
    where the first inequality is by triangle inequality and the last two inequalities follow from Assumptions \ref{retaction_Ll_assump} and \ref{diff_vec_par_bound} and the bounded gradient.
\end{proof}

\newpage

\section{Proof of Theorem \ref{RSVRG_original_theorem}}
\label{proof_of_theorem_chapter}

The idea of the proof is mainly based on \cite{ReddiSVRG2016, ZhangRSVRG2016}. We first present a trigonometric distance bound \cite{ZhangFOGeodesicConvex2016} that extends law of cosines on Euclidean space to Riemannian manifold with bounded sectional curvature. Next we show that the norm of gradient is bounded by difference in a properly constructed Lyapunov function. Then telescoping this result completes the proof.

\begin{lemma}[Trigonometric distance bound]
\label{trig_distance_bound_lemma}
If $a,b,c$ are side lengths of a geodesic triangle in a length space with curvature lower bounded by $\kappa$, and $\theta$ is the angle between sides $b$ and $c$,
    $$a^2 \leq \frac{\sqrt{|\kappa|} c}{\tanh(\sqrt{|\kappa|}c)} b^2 + c^2 - 2bc \cos(\theta).$$
Assume Assumption  \ref{R_svrg_additional_assump} holds and define the following curvature constant
\begin{equation}
    \zeta := \begin{cases} \frac{\sqrt{|\kappa|} D}{\tanh(\sqrt{|\kappa|}D)}, & \text{ if } \kappa < 0\\
    1, & \text{ if } \kappa \geq 0 \end{cases} \nonumber
\end{equation}
where $D$ is the diameter of compact set $\mathcal{X}$. Then for $a,b,c$ as side lengths of a geodesic triangle in $\mathcal{X}$,
$$a^2 \leq \zeta b^2 + c^2 - 2bc \cos(\theta).$$
\end{lemma}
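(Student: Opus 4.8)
The plan is to obtain the first inequality by citation and then reduce the second to an elementary monotonicity estimate. For the first inequality I would invoke the trigonometric distance bound of \cite{ZhangFOGeodesicConvex2016}, which extends the Euclidean law of cosines to length (Alexandrov) spaces with curvature bounded below. Its proof compares the geodesic triangle with side lengths $a, b, c$ against the comparison triangle in the two-dimensional model space of constant curvature $-|\kappa|$: Toponogov's theorem gives an angle comparison $\theta \ge \theta'$ at the vertex where the sides $b$ and $c$ meet, hence $\cos\theta \le \cos\theta'$, and a direct estimate of the model-space side length produces the curvature prefactor $\sqrt{|\kappa|}c/\tanh(\sqrt{|\kappa|}c)$ in front of $b^2$. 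I would take this as a cited fact and not reprove it.

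For the second inequality I would specialize to $\mathcal{X}$ under Assumption \ref{R_svrg_additional_assump} and bound the curvature-dependent prefactor uniformly, splitting into two cases. If $\kappa \ge 0$, the curvature is a fortiori bounded below by any $\kappa' < 0$; applying the first inequality for such $\kappa'$ and letting $\kappa' \to 0^-$, using $\lim_{x \to 0^+} x/\tanh x = 1$, gives $a^2 \le b^2 + c^2 - 2bc\cos\theta$, which is the claim since $\zeta = 1$ in this regime. If $\kappa < 0$, then since $\mathcal{X}$ has diameter at most $D$, every side of a geodesic triangle contained in $\mathcal{X}$ has length at most $D$; in particular $c \le D$. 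The function $g(x) := x/\tanh x$ is nondecreasing on $(0,\infty)$, because $g'(x) = (\cosh x \sinh x - x)/\sinh^2 x \ge 0$ (using $\cosh x \sinh x = \tfrac12 \sinh 2x \ge x$ for $x \ge 0$), so $\frac{\sqrt{|\kappa|}c}{\tanh(\sqrt{|\kappa|}c)} = g(\sqrt{|\kappa|}c) \le g(\sqrt{|\kappa|}D) = \zeta$. Since this prefactor multiplies $b^2 \ge 0$, substituting the bound into the first inequality yields $a^2 \le \zeta b^2 + c^2 - 2bc\cos\theta$.

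The only genuine obstacle is the first inequality, which rests on comparison geometry (Toponogov/Alexandrov theory) rather than on elementary manipulation; I would treat it as an external ingredient. Given that, the remaining work is just the monotonicity of $g(x) = x/\tanh x$ together with its limit at $0$, which is precisely what licenses replacing the $c$-dependent prefactor by the uniform constant $\zeta$ over the compact set $\mathcal{X}$. I would make sure the monotonicity claim is stated cleanly, since it is the lemma's actual contribution beyond the cited bound.
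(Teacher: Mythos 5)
Your proposal is correct and takes essentially the same route as the paper: the paper's entire proof of this lemma is the citation ``See Lemma 5 in \cite{ZhangFOGeodesicConvex2016},'' which is exactly the external comparison-geometry ingredient you invoke for the first inequality. The reduction you supply for the second inequality --- $c \leq D$ from the diameter bound together with the monotonicity of $g(x) = x/\tanh x$ (and the limit $g(x) \to 1$ as $x \to 0^+$ for the $\kappa \geq 0$ case) --- is the standard and correct argument, which the paper leaves implicit in the citation.
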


\begin{proof}
    See Lemma 5 in \cite{ZhangFOGeodesicConvex2016}. 
\end{proof}

\begin{lemma}
\label{prelim_lemma_svrg_1}
Suppose Assumptions \ref{common_assump}, \ref{assumption_svrg} and \ref{R_svrg_additional_assump} hold. Let
\begin{align}
    c_t &= c_{t+1} + c_{t+1}\eta \lambda + c_{t+1}\frac{(\zeta \nu^2 + 2c_R D)(L_l + \theta G)^2\mu^2 \eta^2}{b} + \frac{L (L_l + \theta G)^2 \mu^2 \eta^2}{2b}, \nonumber\\
    \delta_t &= \eta - \frac{c_{t+1}\eta}{\lambda} - \frac{L \eta^2}{2} - c_{t+1}(\zeta \nu^2 + 2c_R D) \eta^2. \nonumber
\end{align}
Suppose we choose $\{ c_t \}, \eta$ and $\lambda > 0$ such that $\delta_t >0$. Then iterate sequence $\{ x_t^s \}$ produced by Algorithm \ref{RadaSVRG_algorithm} with full batch gradient $B^s = n$ satisfies 
\begin{equation}
    \| \emph{grad}f(x_t^s) \|^2 \leq \frac{\mathbb{E}[ R_t^s - R^s_{t+1} |\mathcal{F}_t^s]}{\delta_t}, \nonumber
\end{equation}
with $R_t^s := f(x_t^s) + c_t d^2(x_t^s, x_0^s)$, for $s = 1,..., S, t = 0,...,m-1$. 
\end{lemma}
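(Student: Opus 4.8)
The plan is to follow the classical SVRG Lyapunov argument of \cite{ReddiSVRG2016,ZhangRSVRG2016}: derive a one-step inequality for the function value and a one-step inequality for the squared distance $d^2(x_t^s,x_0^s)$ to the reference point, then combine them with the weights $c_t$ so that the distance terms collapse into exactly the recursion defining $c_t$, leaving $\|\text{grad}f(x_t^s)\|^2$ controlled by $-\delta_t$. First I would apply retraction $L$-smoothness (Assumption \ref{retraction_smooth_assump}) to the update $x_{t+1}^s=R_{x_t^s}(-\eta v_t^s)$ to obtain $f(x_{t+1}^s)\le f(x_t^s)-\eta\langle\text{grad}f(x_t^s),v_t^s\rangle+\frac{L\eta^2}{2}\|v_t^s\|^2$. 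Taking $\mathbb{E}[\,\cdot\,|\,\mathcal{F}_t^s]$, the crucial point is that with $B^s=n$ the snapshot gradient $v_0^s=\text{grad}f(x_0^s)$ is exact, so the vector-transported correction in \eqref{SVRG_estimator} has zero conditional mean and $\mathbb{E}[v_t^s|\mathcal{F}_t^s]=\text{grad}f(x_t^s)$; hence the cross term becomes $-\eta\|\text{grad}f(x_t^s)\|^2$ and we get $\mathbb{E}[f(x_{t+1}^s)|\mathcal{F}_t^s]\le f(x_t^s)-\eta\|\text{grad}f(x_t^s)\|^2+\frac{L\eta^2}{2}\mathbb{E}[\|v_t^s\|^2|\mathcal{F}_t^s]$.

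Next, for the distance recursion I would apply the trigonometric distance bound (Lemma \ref{trig_distance_bound_lemma}) to the geodesic triangle with vertices $x_0^s,x_t^s,x_{t+1}^s$, assigning the ``step'' side $d(x_t^s,x_{t+1}^s)$ the role that carries the curvature factor $\zeta$ and $d(x_t^s,x_0^s)$ the role of the base; this gives $d^2(x_{t+1}^s,x_0^s)\le\zeta\, d^2(x_t^s,x_{t+1}^s)+d^2(x_t^s,x_0^s)-2\langle\text{Exp}_{x_t^s}^{-1}(x_{t+1}^s),\text{Exp}_{x_t^s}^{-1}(x_0^s)\rangle$. I would rewrite the inner product using $R_{x_t^s}^{-1}(x_{t+1}^s)=-\eta v_t^s$ together with the retraction--exponential gap (Assumption \ref{close_retract_exp_assump}) and $d(x_t^s,x_0^s)\le D$, which produces the $2c_RD$ contribution, and bound $d^2(x_t^s,x_{t+1}^s)\le\nu^2\eta^2\|v_t^s\|^2$ via Assumption \ref{relation_distance_assump}. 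Taking conditional expectation (unbiasedness again) and applying Young's inequality with parameter $\lambda$ to $2\eta\langle\text{grad}f(x_t^s),\text{Exp}_{x_t^s}^{-1}(x_0^s)\rangle$ yields $\mathbb{E}[d^2(x_{t+1}^s,x_0^s)|\mathcal{F}_t^s]\le(1+\eta\lambda)d^2(x_t^s,x_0^s)+(\zeta\nu^2+2c_RD)\eta^2\mathbb{E}[\|v_t^s\|^2|\mathcal{F}_t^s]+\frac{\eta}{\lambda}\|\text{grad}f(x_t^s)\|^2$.

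To finish, I would control $\mathbb{E}[\|v_t^s\|^2|\mathcal{F}_t^s]$ by the bias--variance split $\mathbb{E}[\|v_t^s\|^2|\mathcal{F}_t^s]=\|\text{grad}f(x_t^s)\|^2+\mathbb{E}[\|v_t^s-\text{grad}f(x_t^s)\|^2|\mathcal{F}_t^s]$, and bound the variance using with-replacement mini-batch sampling (gaining the factor $1/b$) together with the vector-transport Lipschitz estimate (Lemma \ref{vec_trans_lemma}) and Assumption \ref{relation_distance_assump} to get $\mathbb{E}[\|v_t^s-\text{grad}f(x_t^s)\|^2|\mathcal{F}_t^s]\le\frac{(L_l+\theta G)^2\mu^2}{b}d^2(x_t^s,x_0^s)$. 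Substituting this into both one-step inequalities, forming $R_{t+1}^s=f(x_{t+1}^s)+c_{t+1}d^2(x_{t+1}^s,x_0^s)$ and collecting terms, the coefficient multiplying $d^2(x_t^s,x_0^s)$ is precisely the right-hand side of the recursion defining $c_t$, while the coefficient of $\|\text{grad}f(x_t^s)\|^2$ is exactly $-\delta_t$. This gives $\mathbb{E}[R_{t+1}^s|\mathcal{F}_t^s]\le R_t^s-\delta_t\|\text{grad}f(x_t^s)\|^2$, and since $\delta_t>0$ by hypothesis and $R_t^s$ is $\mathcal{F}_t^s$-measurable, dividing by $\delta_t$ yields the stated bound.

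I expect the trigonometric-bound step to be the main obstacle: one must pick the triangle and the roles of its three sides so that the curvature constant $\zeta$ lands on the step side, and then track the first-order retraction/exponential discrepancy carefully enough that exactly the constant $\zeta\nu^2+2c_RD$ appears with no stray error terms. A secondary subtlety is verifying that every cross term is conditionally unbiased, which is the precise place where the choice $B^s=n$ is essential.
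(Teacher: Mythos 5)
Your proposal is correct and follows essentially the same route as the paper's proof: retraction $L$-smoothness with unbiasedness from $B^s=n$, the trigonometric distance bound on the triangle $\triangle x_{t+1}^s x_t^s x_0^s$ with the curvature factor $\zeta$ on the step side, the $R^{-1}$ versus $\mathrm{Exp}^{-1}$ correction producing the $2c_RD$ term, Young's inequality with parameter $\lambda$, and the bias--variance bound $\mathbb{E}[\|v_t^s\|^2\,|\,\mathcal{F}_t^s]\le \frac{(L_l+\theta G)^2\mu^2}{b}d^2(x_t^s,x_0^s)+\|\mathrm{grad}f(x_t^s)\|^2$ before assembling the Lyapunov function. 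The coefficient bookkeeping you describe is exactly how the paper arrives at the recursion for $c_t$ and the factor $-\delta_t$.
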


\begin{proof}
By retraction $L$-smoothness and taking expectation with respect to $\mathcal{F}_t^s$, we have
\begin{align}
    \mathbb{E}[f(x_{t+1}^s) | \mathcal{F}_t^s] &\leq  f(x_t^s) -\eta \langle \text{grad}f(x_t^s), \mathbb{E}[v_t^s | \mathcal{F}_t^s] \rangle + \frac{L \eta^2}{2}\mathbb{E}[\| v_t^s \|^2 | \mathcal{F}_t^s] \nonumber\\
    &= f(x_t^s) - \eta \| \text{grad}f(x_t^s) \|^2 + \frac{L\eta^2}{2}\mathbb{E}[\| v_t^s \|^2 | \mathcal{F}_t^s]
\end{align}
We first establish a bound on norm of modified gradient $v_t^s$. 
\begin{align}
    &\mathbb{E}[ \| v_t^s\|^2 | \mathcal{F}_t^s] \nonumber\\
    &= \mathbb{E}[ \| \text{grad}f_{\mathcal{I}_t^s}(x_{t}^{s}) - \mathcal{T}_{{x}_0^s}^{x_{t}^{s}} \big( \text{grad}f_{\mathcal{I}_t^s}(x_0^s) - v_0^s \big) \|^2 | \mathcal{F}_t^s] \nonumber\\
    &= \mathbb{E}[\|\text{grad}f_{\mathcal{I}_t^s}(x_{t}^{s}) - \mathcal{T}_{{x}_0^s}^{x_{t}^{s}} \text{grad}f_{\mathcal{I}_t^s}(x_0^s) - \text{grad}f(x_t^s) + \mathcal{T}_{{x}_0^s}^{x_{t}^{s}} \text{grad}f(x_0^s) + \text{grad}f(x_t^s) \|^2 | \mathcal{F}_t^s] \nonumber\\
    &= \mathbb{E}[ \| \text{grad}f_{\mathcal{I}_t^s}(x_{t}^{s}) - \mathcal{T}_{{x}_0^s}^{x_{t}^{s}} \text{grad}f_{\mathcal{I}_t^s}(x_0^s) - \text{grad}f(x_t^s) + \mathcal{T}_{{x}_0^s}^{x_{t}^{s}} \text{grad}f(x_0^s) \|^2 | \mathcal{F}_t^s] + \|\text{grad}f(x_t^s) \|^2 \nonumber\\
    &\leq \mathbb{E}[ \| \text{grad}f_{\mathcal{I}_t^s}(x_{t}^{s}) - \mathcal{T}_{{x}_0^s}^{x_{t}^{s}} \text{grad}f_{\mathcal{I}_t^s}(x_0^s) \|^2 | \mathcal{F}_t^s] + \| \text{grad}f(x_t^s) \|^2 \nonumber\\
    &\leq \frac{(L_l + \theta G)^2\mu^2}{b} d^2(x_t^s, x_0^s) + \| \text{grad}f(x_t^s) \|^2,
\end{align}
where third equality is due to unbiasedness of stochastic gradient. The first inequality holds due to $\mathbb{E}\|x - \mathbb{E}[x] \|^2 \leq \mathbb{E}\|x \|^2$ and the last inequality is by Lemma \ref{vec_trans_lemma} ans Assumption \ref{relation_distance_assump}. Then we use Lemma \ref{trig_distance_bound_lemma} to bound distance $d^2(x_{t+1}^s, x_0^s)$. For a geodesic triangle $\triangle x_{t+1}^s x_t^s x_0^s$, we have
\begin{align}
    \mathbb{E}[d^2(x_{t+1}^s, x_0^s)|\mathcal{F}_t^s] &\leq \mathbb{E}[\zeta d^2(x_{t+1}^s, x_t^s) + d^2(x_t^s, x_0^s) - 2\langle \text{Exp}^{-1}_{x_t^s}(x_{t+1}^s), \text{Exp}^{-1}_{x_t^s} (x_0^s) \rangle | \mathcal{F}_t^s] \nonumber\\
    &\leq \mathbb{E}[\zeta \nu^2 \eta^2 \| v_t^s\|^2 + d^2(x_t^s, x_0^s) - 2\langle \text{Exp}^{-1}_{x_t^s}(x_{t+1}^s), \text{Exp}^{-1}_{x_t^s} (x_0^s) \rangle  | \mathcal{F}_t^s], \label{temp_trig_bound}
\end{align}
where the second inequality is by Assumption \ref{relation_distance_assump}. Also note that 
\begin{align}
    -2\langle  \text{Exp}^{-1}_{x_t^s}(x_{t+1}^s), \text{Exp}^{-1}_{x_t^s}(x_0^s) \rangle &= 2\langle R^{-1}_{x_t^s}(x_{t+1}^s) - \text{Exp}^{-1}_{x_t^s}(x_{t+1}^s), \text{Exp}^{-1}_{x_t^s}(x_0^s) \rangle - 2\langle  R^{-1}_{x_t^s}(x_{t+1}^s) , \text{Exp}^{-1}_{x_t^s}(x_0^s) \rangle \nonumber\\
    &\leq 2\| R^{-1}_{x_t^s}(x_{t+1}^s) - \text{Exp}^{-1}_{x_t^s}(x_{t+1}^s)\| \| \text{Exp}^{-1}_{x_t^s}(x_0^s)\| + 2 \eta \langle v_t^s, \text{Exp}^{-1}_{x_t^s}(x_0^s) \rangle \nonumber\\
    &\leq 2c_R D \eta^2 \| v_t^s\|^2 + 2 \eta \langle v_t^s, \text{Exp}^{-1}_{x_t^s}(x_0^s) \rangle,
\end{align}
where the last inequality uses Assumption \ref{compact_manifold_assump} and \ref{close_retract_exp_assump} with $\| \text{Exp}^{-1}_{x_t^s}(x_0^s) \| = d(x_t^s, x_0^s) \leq D$. Substitute this result back to \eqref{temp_trig_bound} gives
\begin{align}
    \mathbb{E}[d^2(x_{t+1}^s, x_0^s)|\mathcal{F}_t^s] &\leq \mathbb{E}[ (\zeta \nu^2 + 2c_R D) \eta^2 \| v_t^s\|^2 + d^2(x_t^s, x_0^s) + 2 \eta \langle v_t^s, \text{Exp}^{-1}_{x_t^s}(x_0^s) \rangle | \mathcal{F}_t^s] \nonumber\\
    &= (\zeta \nu^2 + 2c_R D) \eta^2 \mathbb{E}[\| v_t^s\|^2 | \mathcal{F}_t^s ] + d^2(x_t^s, x_0^s) + 2\eta \langle \text{grad}f(x_t^s), \text{Exp}^{-1}_{x_t^s}(x_0^s) \rangle \nonumber\\
    &\leq (\zeta \nu^2 + 2c_R D) \eta^2 \mathbb{E}[\| v_t^s\|^2 | \mathcal{F}_t^s ] + d^2(x_t^s, x_0^s) + 2\eta( \frac{1}{2\lambda} \| \text{grad}f(x_t^s) \|^2 + \frac{\lambda}{2} \|\text{Exp}^{-1}_{x_t^s}(x_0^s) \|^2 ) \nonumber\\
    &= (\zeta \nu^2 + 2c_R D) \eta^2 \mathbb{E}[\| v_t^s\|^2 | \mathcal{F}_t^s ]  + (1+ \eta\lambda) d^2(x_t^s, x_0^s) + \frac{\eta}{\lambda} \| \text{grad}f(x_t^s) \|^2.
\end{align}
The second inequality is due to Young's inequality $\langle a, b \rangle \leq \frac{1}{2\lambda} \|b \|^2 + \frac{\lambda}{2} \|a\|^2$ with parameter $\lambda > 0$. Now construct a Lyapunov function $R_t^s := f(x_t^s) + c_t d^2(x_t^s, x_0^s)$. Then, 
\begin{align}
    \mathbb{E}[ R^s_{t+1} | \mathcal{F}_t^s] &= \mathbb{E}[ f(x_{t+1}^s) + c_{t+1} d^2(x_{t+1}^s, x_0^s) | \mathcal{F}_t^s] \nonumber\\
    &\leq f(x_t^s) - \eta \| \text{grad}f(x_t^s) \|^2 + \frac{L\eta^2}{2}\mathbb{E}[\| v_t^s \|^2 | \mathcal{F}_t^s] \nonumber\\
    &+ c_{t+1} \big( (\zeta \nu^2 + 2c_R D) \eta^2 \mathbb{E}[\| v_t^s\|^2 | \mathcal{F}_t^s ]  + (1+ \eta\lambda) d^2(x_t^s, x_0^s) + \frac{\eta}{\lambda} \| \text{grad}f(x_t^s) \|^2 \big) \nonumber\\
    &= f(x_t^s) - (\eta - \frac{c_{t+1}\eta}{\lambda}) \| \text{grad}f(x_t^s) \|^2 + (c_{t+1} + c_{t+1}\eta \lambda) d^2(x_t^s, x_0^s) \nonumber\\
    &+ \big( \frac{L \eta^2}{2} + c_{t+1}(\zeta \nu^2 + 2c_R D) \eta^2  \big) \mathbb{E}[\| v_t^s\|^2 | \mathcal{F}_t^s ] \nonumber\\
    &\leq f(x_t^s) - (\eta - \frac{c_{t+1}\eta}{\lambda}) \| \text{grad}f(x_t^s) \|^2 + (c_{t+1} + c_{t+1}\eta \lambda) d^2(x_t^s, x_0^s) \nonumber\\
    &+ \big( \frac{L \eta^2}{2} + c_{t+1}(\zeta \nu^2 + 2c_R D) \eta^2  \big) \big(  \frac{(L_l + \theta G)^2\mu^2}{b} d^2(x_t^s, x_0^s) + \| \text{grad}f(x_t^s) \|^2 \big) \nonumber\\
    &= f(x_t^s) - \big( \eta - \frac{c_{t+1}\eta}{\lambda} - \frac{L \eta^2}{2} - c_{t+1}(\zeta \nu^2 + 2c_R D) \eta^2 \big) \| \text{grad}f(x_t^s) \|^2 \nonumber\\
    &+ \big( c_{t+1} + c_{t+1}\eta \lambda + c_{t+1}\frac{(\zeta \nu^2 + 2c_R D)(L_l + \theta G)^2\mu^2 \eta^2}{b} + \frac{L (L_l + \theta G)^2 \mu^2 \eta^2}{2b} \big) d^2(x_t^s, x_0^s) \nonumber\\
    &= R_t^s - \delta_t \| \text{grad}f(x_t^s) \|^2,
\end{align}
with $c_t = c_{t+1} + c_{t+1}\eta \lambda + c_{t+1}\frac{(\zeta \nu^2 + 2c_R D)(L_l + \theta G)^2\mu^2 \eta^2}{b} + \frac{L (L_l + \theta G)^2 \mu^2 \eta^2}{2b}$ and $\delta_t := \eta - \frac{c_{t+1}\eta}{\lambda} - \frac{L \eta^2}{2} - c_{t+1}(\zeta \nu^2 + 2c_R D) \eta^2$. Suppose we choose parameters such that $\delta_t > 0$. Then, we have the desired result. 
\end{proof}

\begin{lemma}
\label{prelim_lemma_svrg_2}
    With the same assumptions and settings in Lemma \ref{prelim_lemma_svrg_1}, choose $c_m = 0$ and define $\Tilde{\delta} := \min_{0 \leq t \leq m-1} \delta_t$. Denote $T = Sm$ as the total number of iterations and $\Delta = f(\Tilde{x}^0) - f(x^*)$. Then output $\Tilde{x}$ from Algorithm \ref{RadaSVRG_algorithm} with full batch gradient $B^s = n$ satisfies
    $$\mathbb{E}\| \emph{grad}f(\Tilde{x}) \|^2 \leq \frac{\Delta}{T\Tilde{\delta}}.$$
\end{lemma}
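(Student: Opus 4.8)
The plan is to start from the per-iteration bound in Lemma \ref{prelim_lemma_svrg_1} and telescope it, first within an epoch and then across epochs. From that lemma, for every $s = 1,\dots,S$ and every $t \in \{0,\dots,m-1\}$ we have $\delta_t \|\text{grad}f(x_t^s)\|^2 \le \mathbb{E}[R_t^s - R_{t+1}^s \mid \mathcal{F}_t^s]$. Since $\tilde\delta = \min_{0 \le t \le m-1}\delta_t \le \delta_t$ and all $\delta_t > 0$, this gives $\tilde\delta\,\|\text{grad}f(x_t^s)\|^2 \le \mathbb{E}[R_t^s - R_{t+1}^s \mid \mathcal{F}_t^s]$. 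Taking total expectation and invoking the tower property of conditional expectation yields $\tilde\delta\, \mathbb{E}\|\text{grad}f(x_t^s)\|^2 \le \mathbb{E}[R_t^s] - \mathbb{E}[R_{t+1}^s]$.

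Next I would sum over $t = 0,\dots,m-1$ within a fixed epoch $s$; the right-hand side telescopes to $\mathbb{E}[R_0^s] - \mathbb{E}[R_m^s]$. Here I use the boundary conditions on the Lyapunov function: $R_0^s = f(x_0^s) + c_0\, d^2(x_0^s, x_0^s) = f(x_0^s)$ because $d(x_0^s, x_0^s) = 0$, and $R_m^s = f(x_m^s) + c_m\, d^2(x_m^s, x_0^s) = f(x_m^s)$ because $c_m = 0$. Combining this with the algorithm's update rule $x_0^s = \tilde x^{s-1}$ and $\tilde x^s = x_m^s$ gives $\tilde\delta \sum_{t=0}^{m-1} \mathbb{E}\|\text{grad}f(x_t^s)\|^2 \le \mathbb{E}[f(\tilde x^{s-1})] - \mathbb{E}[f(\tilde x^s)]$.

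Then I would sum over $s = 1,\dots,S$; this telescopes again, so that $\tilde\delta \sum_{s=1}^S\sum_{t=0}^{m-1}\mathbb{E}\|\text{grad}f(x_t^s)\|^2 \le f(\tilde x^0) - \mathbb{E}[f(\tilde x^S)] \le f(\tilde x^0) - f(x^*) = \Delta$, where the last inequality uses $f(\tilde x^S) \ge f(x^*)$. Finally, since the output $\tilde x$ is drawn uniformly at random from the $T = Sm$ iterates $\{x_t^s\}_{0 \le t \le m-1,\, 1 \le s \le S}$, we have $\mathbb{E}\|\text{grad}f(\tilde x)\|^2 = \frac{1}{T}\sum_{s=1}^S\sum_{t=0}^{m-1}\mathbb{E}\|\text{grad}f(x_t^s)\|^2 \le \frac{\Delta}{T\tilde\delta}$, which is the claimed bound.

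The argument is essentially bookkeeping, and I do not anticipate a genuine analytic obstacle: all the substantial work (the Lyapunov descent step, the trigonometric distance bound, and the modified-gradient norm estimate) is already packaged into Lemma \ref{prelim_lemma_svrg_1}. The only points that require a little care are verifying that the telescoping boundary terms cancel correctly — which relies precisely on $c_m = 0$ and on $d(x_0^s, x_0^s) = 0$ annihilating the $c_0$ term — passing cleanly from the conditional inequality to an unconditional one via the tower property, and observing that $\tilde\delta$ is an epoch- and iteration-independent constant so the double sum collapses uniformly.
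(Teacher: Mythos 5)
Your proposal is correct and follows essentially the same route as the paper's proof: telescope the bound from Lemma \ref{prelim_lemma_svrg_1} over $t$ within an epoch (using $R_0^s = f(x_0^s)$ and $R_m^s = f(x_m^s)$ since $c_m = 0$), then over $s$, bound the final term by $f(x^*)$, and use the uniform selection of the output. The only cosmetic difference is that the paper first conditions on $\mathcal{F}_0^s$ before taking full expectation, while you pass directly to total expectation via the tower property; the two are equivalent.
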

\begin{proof}
    Summing over result over $t = 0,...,m-1$ from Lemma \ref{prelim_lemma_svrg_1} and taking expectation with respect to $\mathcal{F}_0^s$ yields
    \begin{equation}
        \sum_{t=0}^{m-1} \mathbb{E}[\| \text{grad}f(x_t^s) \|^2 | \mathcal{F}_0^s] \leq \frac{\mathbb{E}[ R_0^s - R_m^s | \mathcal{F}_0^s]}{\Tilde{\delta}} = \frac{\mathbb{E}[ f(x_0^s) - f(x_0^{s+1}) | \mathcal{F}_0^s]}{\Tilde{\delta}},
    \end{equation}
    where we note that $R_0^s = f(x_0^s)$ and $R_m^s = f(x_m^s) = f(x_0^{s+1})$ for $c_m = 0$. Telescoping this inequality from $s = 1,..., S$ and taking full expectation, we have
    \begin{equation}
        \frac{1}{T} \sum_{s=1}^S \sum_{t=0}^{m-1} \mathbb{E}\| \text{grad}f(x_t^s) \|^2 \leq \frac{ f(\Tilde{x}^0) - \mathbb{E}[f(x_m^S)] }{T\Tilde{\delta}} \leq \frac{\Delta}{T\Tilde{\delta}}. 
    \end{equation}
    Finally, by noting that output $\Tilde{x}$ is uniformly drawn at random from all iterates and thus $\mathbb{E}\| \text{grad}f(\Tilde{x}) \|^2 = \frac{1}{T}\sum_{s=1}^S \sum_{t=0}^{m-1} \mathbb{E}\| \text{grad}f(x_t^s) \|^2$, the proof is complete.
\end{proof}

\raggedright \textbf{Theorem \ref{RSVRG_original_theorem} (Convergence and complexity of R-SVRG under standard analysis).}
Suppose Assumptions \ref{common_assump}, \ref{assumption_svrg} and \ref{R_svrg_additional_assump} hold and consider Algorithm \ref{RadaSVRG_algorithm} with full batch gradient $B^s = n$. Choose step size $\eta = \frac{\mu_0 b}{(L_l + \theta G)\mu n^{a_1}(\zeta \nu^2 + 2 c_R D)^{a_2}}$, $m = \lfloor n^{3/2a_1}/ 2b\mu_0 (\zeta \nu^2 + 2 c_R D)^{1-2a_2} \rfloor$, $b \leq n^{a_1}$, where $a_1, \mu_0 \in (0,1), a_2 \in (0,2)$. Then for a constant $\psi > 0$ such that
\begin{align}
    \psi &\leq \frac{\mu_0}{\mu} \Big( 1 - \frac{L \mu_0 (e-1)}{2(L_l + \theta G)(\zeta \nu^2 + 2c_R D)^{2-a_2}\mu} - \frac{L \mu_0 b}{2(L_l + \theta G)(\zeta \nu^2 + 2c_R D)^{a_2} \mu n^{a_1}} - \frac{L\mu_0^2 (e-1) b}{2 (L_l + \theta G)(\zeta \nu^2 + 2c_R D)^{a_2}\mu n^{3/2 a_1}} \Big),
\end{align}
the output $\Tilde{x}$ after running $T= Sm$ iterations satisfies
\begin{equation}
    \mathbb{E}\| \text{grad}f(\Tilde{x}) \|^2 \leq \frac{(L_l + \theta G) n^{a_1} (\zeta \nu^2 + 2c_R D)^{a_2} \Delta}{bT \psi},
\end{equation}
where $\Delta := f(\Tilde{x}^0) - f(x^*)$. By choosing $a_1 = 2/3, a_2 = 1/2$, the total IFO complexity to achieve $\epsilon$-accurate solution is $\mathcal{O}\Big( n + \frac{ (L_l + \theta G) n^{2/3} (\zeta \nu^2 + 2 c_R D)^{1/2}}{\epsilon^2} \Big)$.

\begin{proof}
First note that $c_t = c_{t+1} (1 + \eta \lambda + \frac{(\zeta \nu^2 + 2c_R D)(L_l + \theta G)^2\mu^2 \eta^2}{b}) + \frac{L (L_l + \theta G)^2 \mu^2 \eta^2}{2b} = c_{t+1}(1+ \phi) +  \frac{L (L_l + \theta G)^2 \mu^2 \eta^2}{2b}$, where $\phi := \eta \lambda + \frac{(\zeta \nu^2 + 2c_R D)(L_l + \theta G)^2\mu^2 \eta^2}{b}$. Choose $\eta = \frac{\mu_0 b}{(L_l + \theta G)\mu n^{a_1}(\zeta \nu^2 + 2 c_R D)^{a_2}}, \mu_0 \in (0,1)$ and $\lambda = \frac{(L_l + \theta G) \mu (\zeta \nu^2 + 2 c_R D)^{1-a_2}}{n^{a_1/2}}$ gives
\begin{align}
    c_t = (1+ \phi) c_{t+1} + \frac{L\mu_0^2 b}{2n^{2a_1}(\zeta \nu^2 + 2 c_R D)^{2a_2}}, \label{recursive_ct}
\end{align}
Applying \eqref{recursive_ct} recursively to $c_0$ and noting $c_m = 0$, we have
\begin{align}
    c_0 = \frac{L\mu_0^2 b}{2n^{2a_1}(\zeta \nu^2 + 2 c_R D)^{2a_2}} \frac{(1+\phi)^m - 1}{\phi}. \label{c0_formula}
\end{align}
It is noted that the sequence $\{c_t\}_{t=0}^{m-1}$ is a decreasing sequence and achieves its maximum at $c_0$. Therefore we derive a bound on $c_0$. Note that
\begin{align}
    \phi &= \frac{\mu_0 b (\zeta \nu^2 + 2 c_R D)^{1-2a_2}}{n^{3/2a_1}} + \frac{\mu_0^2 b (\zeta \nu^2 + 2 c_R D)^{1-2a_2}}{n^{2a_1}} \nonumber\\
    &\in \Big(  \frac{\mu_0 b (\zeta \nu^2 + 2 c_R D)^{1-2a_2}}{n^{3/2a_1}}, \frac{2\mu_0 b (\zeta \nu^2 + 2 c_R D)^{1-2a_2}}{n^{3/2a_1}} \Big). \label{phi_bound_1}
\end{align}
Choosing $m = \lfloor n^{3/2a_1}/ 2b\mu_0 (\zeta \nu^2 + 2 c_R D)^{1-2a_2} \rfloor$ suggests 
\begin{align}
    \phi \leq \frac{2\mu_0 b (\zeta \nu^2 + 2 c_R D)^{1-2a_2}}{n^{3/2a_1}} \leq \frac{1}{m}, \, \text{ and } (1+\phi)^m \leq e,  \label{phi_bound_2}
\end{align}
where $e$ is the Euler's constant. Note for the second inequality, we loosely use $\leq$ instead of $<$ for consistency. Then applying \eqref{phi_bound_1} and \eqref{phi_bound_2} into \eqref{c0_formula}, we have
\begin{align}
    c_0 \leq \frac{L\mu_0^2 b}{2n^{2a_1}(\zeta \nu^2 + 2 c_R D)^{2a_2}} \times \frac{n^{3/2a_1} (e-1)}{\mu_0 b (\zeta \nu^2 + 2 c_R D)^{1-2a_2}} = \frac{L\mu_0 (e-1)}{2n^{1/2a_1} (\zeta \nu^2 + 2 c_R D)}.
\end{align}
Next we consider a lower bound on $\Tilde{\delta}$.
\begin{align}
    \Tilde{\delta} &= \min_t \Big(  \eta - \frac{c_{t+1}\eta}{\lambda} - \frac{L \eta^2}{2} - c_{t+1}(\zeta \nu^2 + 2c_R D) \eta^2 \Big) \nonumber\\
    &\geq \Big(  \eta - \frac{c_{0}\eta}{\lambda} - \frac{L \eta^2}{2} - c_{0}(\zeta \nu^2 + 2c_R D) \eta^2 \Big) \nonumber\\
    &\geq \eta \Big( 1 - \frac{L \mu_0 (e-1)}{2(L_l + \theta G)(\zeta \nu^2 + 2c_R D)^{2-a_2}\mu} - \frac{L \mu_0 b}{2(L_l + \theta G)(\zeta \nu^2 + 2c_R D)^{a_2} \mu n^{a_1}} \nonumber\\ 
    &- \frac{L\mu_0^2 (e-1) b}{2 (L_l + \theta G)(\zeta \nu^2 + 2c_R D)^{a_2}\mu n^{3/2 a_1}} \Big) \nonumber\\
    &\geq \frac{b \psi}{(L_l + \theta G)n^{a_1}(\zeta \nu^2 + 2c_R D)^{a_2}},
\end{align}
where $\psi > 0$ is a constant such that the last inequality holds. That is, we choose $\psi$ satisfying
\begin{align}
     0 < \psi &\leq \frac{\mu_0}{\mu} \Big( 1 - \frac{L \mu_0 (e-1)}{2(L_l + \theta G)(\zeta \nu^2 + 2c_R D)^{2-a_2}\mu} - \frac{L \mu_0 b}{2(L_l + \theta G)(\zeta \nu^2 + 2c_R D)^{a_2} \mu n^{a_1}} \nonumber\\
     &- \frac{L\mu_0^2 (e-1) b}{2 (L_l + \theta G)(\zeta \nu^2 + 2c_R D)^{a_2}\mu n^{3/2 a_1}} \Big).
\end{align}
This condition holds by setting a sufficiently small $\mu_0 \in (0,1)$ and also $b \leq n^{a_1}$. The requirement on $b$ is to ensure the third term and fourth term do not increase with $n$. Therefore, combining this result with Lemma \ref{prelim_lemma_svrg_2} yields
\begin{equation}
    \mathbb{E}\| \text{grad}f(\Tilde{x}) \|^2 \leq \frac{(L_l + \theta G) n^{a_1} (\zeta \nu^2 + 2c_R D)^{a_2} \Delta}{bT \psi}.
\end{equation}
To achieve $\epsilon$-accurate solution, it is sufficient to require $\mathbb{E}\| \text{grad}f(\Tilde{x}) \|^2 \leq \epsilon^2$. That is, $\mathbb{E}\| \text{grad}f(\Tilde{x}) \| \leq \sqrt{\mathbb{E}\| \text{grad}f(\Tilde{x}) \|^2} \leq \epsilon$ by Jensen's inequality. Therefore, we require at least
    \begin{align}
        S = \frac{(L_l + \theta G)n^{a_1}(\zeta \nu^2 + 2c_R D)^{a_2}}{bm \psi \epsilon^2} &= \lceil \frac{2\mu_0 (L_l + \theta G)(\zeta \nu^2 + 2c_R D)^{1-a_2} n^{-a_1/2} }{\psi \epsilon^2} \rceil \nonumber\\
        &= \mathcal{O}\Big( 1 + \frac{(\zeta \nu^2 + 2c_R D)^{1-a_2} n^{-a_1/2}}{\epsilon^2} \Big)
    \end{align}
    number of epochs. Each epoch requires $n + 2mb$ IFO calls, which is $n + \lfloor n^{3/2a_1}/ \mu_0 (\zeta \nu^2 + 2 c_R D)^{1-2a_2} \rfloor = \mathcal{O}\big( n + n^{3/2a_1} (\zeta \nu^2 + 2 c_R D)^{2a_2-1} \big)$. Hence the total complexity is given by
    \begin{align}
        &\mathcal{O} \Big( \big( 1 + \frac{(\zeta \nu^2 + 2c_R D)^{1-a_2} n^{-a_1/2}}{\epsilon^2} \big)  \big(  n + n^{3/2a_1} (\zeta \nu^2 + 2 c_R D)^{2a_2-1} \big) \Big) \nonumber\\
        &= \mathcal{O} \Big( n + \frac{n^{a_1} (\zeta \nu^2 + 2 c_R D)^{a_2}}{\epsilon^2} + \frac{(\zeta \nu^2 + 2c_R D)^{1-a_2} n^{1-a_1/2}}{\epsilon^2} + n^{3/2a_1} (\zeta \nu^2 + 2 c_R D)^{2a_2-1} \big) \Big). 
    \end{align}
    With the standard choice of $\alpha_1 = \frac{2}{3}$ and $\alpha_2 = \frac{1}{2}$, we obtain the desired result.
\end{proof}

\newpage
\section{Convergence Analysis for R-AbaSVRG}
\label{converg_r_adasvrg_appendix}

\textbf{Lemma \ref{lemma_adasvrg} (Gradient estimation error bound for R-AbaSVRG).}
Suppose Assumptions \ref{common_assump} and \ref{assumption_svrg} hold and consider Algorithm \ref{RadaSVRG_algorithm}. We can bound estimation error of modified gradient $v_t^s$ to the full gradient $\text{grad}f(x_t^s)$ as 
\begin{equation}
    \mathbb{E}[\| v_t^{s} - \text{grad}f(x_{t}^s) \|^2 | \mathcal{F}_0^s] \leq \frac{t}{b} (L_l + \theta G)^2 \mu^2 \nu^2 \eta^2 \sum_{i=0}^{t-1} \mathbb{E}[\| v_i^s \|^2 | \mathcal{F}_0^s] + \mathbbm{1}_{\{ B^s < n\}} \frac{\sigma^2}{B^s}. 
\end{equation}

\begin{proof}
First note that $\mathcal{F}_0^s \subseteq \mathcal{F}_t^s$, for $0 \leq t \leq m-1$ and therefore it holds that $\mathbb{E}[\| v_t^{s} - \text{grad}f(x_{t}^s) \|^2 | \mathcal{F}_0^s] = \mathbb{E}[ \mathbb{E}[ \| v_t^{s} - \text{grad}f(x_{t}^s) \|^2 | \mathcal{F}_t^s] | \mathcal{F}_0^s]$. Hence we first consider bounding $\mathbb{E}[\| v_t^{s} - \text{grad}f(x_{t}^s) \|^2 | \mathcal{F}_t^s]$ as  
    \begin{align}
        &\mathbb{E}[\| v_t^{s} - \text{grad}f(x_{t}^s) \|^2 | \mathcal{F}_t^s] \nonumber\\
        &= \mathbb{E} [\| \text{grad}f_{\mathcal{I}_t^s} (x_{t}^s) - \mathcal{T}_{{x}_0^s}^{x_{t}^{s}} \big( \text{grad}f_{\mathcal{I}_t^s}(x_0^s) - v_0^{s} \big) - \text{grad}f(x_{t}^s) \|^2 | \mathcal{F}_t^s] \nonumber\\ 
        &= \mathbb{E} [\| \text{grad}f_{\mathcal{I}_t^s} (x_{t}^s) - \mathcal{T}_{{x}_0^s}^{x_{t}^{s}} \text{grad}f_{\mathcal{I}_t^s}(x_0^s) - \text{grad}f(x_{t}^s) + \mathcal{T}_{{x}_0^s}^{x_{t}^{s}} \text{grad}f(x_0^s) + \mathcal{T}_{{x}_0^s}^{x_{t}^{s}} v_0^s - \mathcal{T}_{{x}_0^s}^{x_{t}^{s}} \text{grad}f(x_0^s)\|^2 |\mathcal{F}_t^s] \nonumber\\
        &= \mathbb{E} [\| \text{grad}f_{\mathcal{I}_t^s} (x_{t}^s) - \mathcal{T}_{{x}_0^s}^{x_{t}^{s}} \text{grad}f_{\mathcal{I}_t^s}(x_0^s) - \text{grad}f(x_{t}^s) + \mathcal{T}_{{x}_0^s}^{x_{t}^{s}} \text{grad}f(x_0^s) \|^2 | \mathcal{F}_t^s] + \| v_0^s - \text{grad}f(x_0^s)\|^2 \nonumber\\
        &+ \mathbb{E}[\langle \text{grad}f_{\mathcal{I}_t^s} (x_{t}^s) - \mathcal{T}_{{x}_0^s}^{x_{t}^{s}} \text{grad}f_{\mathcal{I}_t^s}(x_0^s) - \text{grad}f(x_{t}^s) + \mathcal{T}_{{x}_0^s}^{x_{t}^{s}} \text{grad}f(x_0^s),  v_0^s - \text{grad}f(x_0^s) \rangle | \mathcal{F}_t^s] \nonumber\\
        &= \mathbb{E} [\| \text{grad}f_{\mathcal{I}_t^s} (x_{t}^s) - \mathcal{T}_{{x}_0^s}^{x_{t}^{s}} \text{grad}f_{\mathcal{I}_t^s}(x_0^s) - \text{grad}f(x_{t}^s) + \mathcal{T}_{{x}_0^s}^{x_{t}^{s}} \text{grad}f(x_0^s) \|^2 | \mathcal{F}_t^s] + \| v_0^s - \text{grad}f(x_0^s)\|^2 \nonumber\\
        &\leq \mathbb{E} [\| \text{grad}f_{\mathcal{I}_t^s}(x^s_{t}) - \mathcal{T}_{{x}_0^s}^{x_{t}^{s}} \text{grad}f_{\mathcal{I}_t^s}(x_0^s) \|^2 | \mathcal{F}_t^s] + \| v_0^s - \text{grad}f(x_0^s)\|^2 \nonumber\\
        &\leq \frac{1}{b} \mathbb{E} [\| \text{grad}f_i(x_{t}^s) - \mathcal{T}_{{x}_0^s}^{x_{t}^{s}} \text{grad}f_{i}(x_0^s)\|^2 | \mathcal{F}_t^s] + \| v_0^s - \text{grad}f(x_0^s)\|^2 \nonumber\\
        &\leq \frac{1}{b} (L_l + \theta G)^2 \| R^{-1}_{x_0^s}(x_{t}^s) \|^2 + \| v_0^s - \text{grad}f(x_0^s)\|^2 \nonumber\\
        &\leq \frac{1}{b} (L_l + \theta G)^2 \mu^2 d^2(x_{t}^s, x_0^s) + \| v_0^s - \text{grad}f(x_0^s)\|^2. \label{bound_on_diff_vt}
    \end{align}
    The fourth equality is based on the facts that $\text{grad}f_{\mathcal{I}_t^s}(x)$ is unbiased estimator of $\text{grad}f(x)$ and also the isometric property of vector transport $\mathcal{T}_{{x}_0^s}^{x_{t}^{s}}$. Note that $\mathcal{T}_{{x}_0^s}^{x_{t}^{s}}$ depends on both $x_0^s$ and $x_t^s$, which are measurable in $\mathcal{F}_t^s$. Therefore $\mathbb{E}[\mathcal{T}_{{x}_0^s}^{x_{t-1}^{s}} \text{grad}f_{\mathcal{I}_t^s}(x_0^s) |\mathcal{F}_t^s] = \mathcal{T}_{{x}_0^s}^{x_{t-1}^{s}} \mathbb{E}[\text{grad}f_{\mathcal{I}_t^s}(x_0^s)|\mathcal{F}_t^s] = \mathcal{T}_{{x}_0^s}^{x_{t-1}^{s}} \text{grad}f(x_0^s)$. The first inequality is due to $\mathbb{E}\|x - \mathbb{E}[x]\|^2 \leq \mathbb{E}\|x\|^2$ and the second inequality is due to independence of with replacement sampling. The last two inequalities are from Assumption \ref{relation_distance_assump} and Lemma \ref{vec_trans_lemma}. Taking expectation with respect to $\mathcal{F}_0^s$ gives $\mathbb{E}[\| v_t^{s} - \text{grad}f(x_{t}^s) \|^2 | \mathcal{F}_0^s] \leq \frac{1}{b} (L_l + \theta G)^2 \mu^2 \mathbb{E}[d^2(x_{t}^s, x_0^s)|\mathcal{F}_0^s] + \mathbb{E}[\| v_0^s - \text{grad}f(x_0^s)\|^2|\mathcal{F}_0^s]$. Next, we further simplify \eqref{bound_on_diff_vt} by telescoping iterates within epoch $s$. Note that by triangle inequality and Assumption \ref{relation_distance_assump},
    \begin{align}
        d^2(x_{t}^s, x_0^s) &\leq \big( d(x_{t}^s, x_{t-1}^s) + d(x_{t-1}^s, x_{t-2}^s) + \cdots + d(x_{1}^s, x_0^s) \big)^2 \nonumber\\
        &\leq \nu^2 \eta^2 \big( \| v_{t-1}^s \| + \cdots + \| v_0^s \| \big)^2 \leq \nu^2 \eta^2 t \sum_{i=0}^{t-1}  \| v_i^s\|^2, \label{distance_telescop_eq}
    \end{align}
    where the last inequality follows from $\| \sum_{i=1}^d w_i \|^2 \leq d \sum_{i=1}^d \|w_i \|^2$. On the other hand, by Lemma \ref{variance_bound_wors} and variance bound assumption \ref{bounded_var_assump}, we have
    \begin{align}
        \mathbb{E}[ \|v_0^s - \text{grad}f(x_0^s) \|^2 | \mathcal{F}_0^s] &= \mathbb{E}[\| \text{grad}f_{\mathcal{B}^s}(x_0^s) - \text{grad}f(x_0^s) \|^2 | \mathcal{F}_0^s] \nonumber\\ 
        &= \mathbb{E}[ \frac{1}{B^s}\sum_{i \in \mathcal{B}^s} \text{grad}f_{i}(x_0^s) - \text{grad}f(x_0^s) | \mathcal{F}_0^s] \nonumber\\
        &\leq \frac{n - B^s}{n-1}\frac{1}{n B^s} \sum_{i=1}^n\| \text{grad}f_i(x_0^s) - \text{grad}f(x_0^s) \| \nonumber\\
        &\leq \frac{n - B^s}{n-1}\frac{\sigma^2}{ B^s} \leq \mathbbm{1}_{\{ B^s < n \}} \frac{\sigma^2}{B^s}. \label{v_0_s_gradient_diff_bound}
    \end{align}
    Note if $\mathcal{B}^s$ is chosen from $[n]$ with replacement or under online setting where $n$ approaches infinity, we simply have $\mathbb{E}[\| v_0^s - \text{grad}f(x_0^s) \|^2 | \mathcal{F}_0^s] = \frac{1}{B^s} \mathbb{E}[\|\text{grad}f_i(x_0^s) - \text{grad}f(x_0^s) \|^2 | \mathcal{F}_0^s ] \leq \frac{\sigma^2}{B^s}$, which does not vanish when $B^s = n$. Substituting \eqref{distance_telescop_eq} and \eqref{v_0_s_gradient_diff_bound} back to \eqref{bound_on_diff_vt} gives the desired result.
\end{proof}

\textbf{Theorem \ref{convergence_radasvrg} (Convergence of R-AbaSVRG).}
Let $x^* \in \mathcal{M}$ be an optimal point of $f$ and suppose Assumptions \ref{common_assump} and \ref{assumption_svrg} hold. Consider Algorithm \ref{RadaSVRG_algorithm} with a fixed step size $\eta \leq \frac{2 - \frac{2}{\alpha}}{L + \sqrt{L^2 + 4(1 - \frac{1}{\alpha})\frac{ (L_l + \theta G)^2 \mu^2 \nu^2 m^2}{b} }}$ with $\alpha \geq 4$. Under both finite-sum and online settings, output $\Tilde{x}$ after running $T = Sm$ iterations satisfies
\begin{equation}
    \mathbb{E}\| \text{grad}f(\Tilde{x}) \|^2 \leq \frac{2\Delta}{T \eta} + \frac{\epsilon^2}{2},
\end{equation}
where $\Delta := f(\Tilde{x}^0) - f(x^*)$ and $\epsilon$ is the desired accuracy. 

\begin{proof}
    By retraction $L$-smoothness in Assumption \ref{retraction_smooth_assump},
    \begin{align}
        f(x_{t+1}^s) - f(x_{t}^s) &\leq - \eta \langle \text{grad}f(x_{t}^s), v_{t}^s\rangle + \frac{L \eta^2}{2}\| v_t^s \|^2 \nonumber\\
        &= - \frac{\eta}{2}\| \text{grad}f(x_{t}^s) \|^2 - \frac{\eta}{2}\|v_t^s \|^2 + \frac{\eta}{2}\| v_t^s - \text{grad}f(x^s_t) \|^2 + \frac{L \eta^2}{2}\|v_t^s \|^2 \nonumber \\
        &= - \frac{\eta}{2}\| \text{grad}f(x_{t}^s) \|^2 + \frac{\eta}{2}\| v_t^s - \text{grad}f(x^s_t) \|^2 - (\frac{\eta}{2} - \frac{L \eta^2}{2}) \| v_t^s \|^2. 
    \end{align}
    Rearranging the term and taking expectation with respect to $\mathcal{F}_0^s$ yields
    \begin{align}
        \mathbb{E}[\| \text{grad}f(x_t^s) \|^2 | \mathcal{F}_0^s] &\leq \frac{2}{\eta} \mathbb{E}[ f(x_t^s) - f(x_{t+1}^s) | \mathcal{F}_0^s] + \mathbb{E}[ | v_t^s - \text{grad}f(x^s_t) \|^2|\mathcal{F}_0^s ] - (1 - L \eta) \mathbb{E}[\| v_t^s \|^2 | \mathcal{F}_0^s] \nonumber\\
        &\leq \frac{2}{\eta} \mathbb{E}[ f(x_t^s) - f(x_{t+1}^s) | \mathcal{F}_0^s] + \frac{t}{b} (L_l + \theta G)^2 \mu^2 \nu^2 \eta^2 \sum_{i=0}^{t-1} \mathbb{E}[\| v_i^s \|^2 | \mathcal{F}_0^s] + \mathbbm{1}_{\{ B^s < n \}} \frac{\sigma^2}{B^s} \nonumber\\
        &- (1 - L \eta) \mathbb{E}[\| v_t^s \|^2 | \mathcal{F}_t^s]. \label{temp_ref_svrg}
    \end{align}
    Summing this result over $t = 0, ..., m-1$ gives
    \begin{align}
        \sum_{t=0}^{m-1} \mathbb{E}[\| \text{grad}f(x_t^s) \|^2 | \mathcal{F}_0^s] &\leq \frac{2}{\eta} \mathbb{E}[ f(x_0^s) - f(x_m^s) | \mathcal{F}_0^s] + \frac{(L_l + \theta G)^2\mu^2\nu^2 \eta^2}{b} \sum_{t=0}^{m-1} t\sum_{i=0}^{t} \mathbb{E}[\| v_i^s \|^2 | \mathcal{F}_0^s] + \mathbbm{1}_{\{ B^s < n \}} \frac{m\sigma^2}{B^s} \nonumber\\
        &- (1 - L \eta) \sum_{t=0}^{m-1} \mathbb{E}[\| v_t^s \|^2 | \mathcal{F}_0^s] \nonumber\\
        &\leq \frac{2}{\eta} \mathbb{E}[ f(x_0^s) - f(x_m^s) | \mathcal{F}_0^s] + \frac{(L_l + \theta G)^2\mu^2\nu^2 \eta^2 m^2}{b} \sum_{t=0}^{m-1} \mathbb{E}[\| v_t^s \|^2 | \mathcal{F}_0^s] + \mathbbm{1}_{\{ B^s < n \}} \frac{m\sigma^2}{B^s} \nonumber\\
        &- (1 - L \eta) \sum_{t=0}^{m-1} \mathbb{E}[\| v_t^s \|^2 | \mathcal{F}_0^s] \nonumber\\
        &= \frac{2}{\eta} \mathbb{E}[ f(x_0^s) - f(x_m^s) | \mathcal{F}_0^s] - (1 - L \eta - \frac{(L_l + \theta G)^2\mu^2\nu^2 \eta^2 m^2}{b}) \sum_{t=0}^{m-1} \mathbb{E}[\| v_t^s \|^2 | \mathcal{F}_0^s] + \mathbbm{1}_{\{ B^s < n \}} \frac{m\sigma^2}{B^s}. \label{temp_result_fdfkd}
    \end{align}
    The second inequality uses the fact $t \leq m-1$. 
     Telescoping \eqref{temp_result_fdfkd} from $s = 1, ..., S$ and taking expectation over all randomness gives
    \begin{align}
        \sum_{s=1}^S\sum_{t=0}^{m-1} \mathbb{E}\| \text{grad}f(x_t^s) \|^2 &\leq - (1 - L \eta - \frac{(L_l + \theta G)^2\mu^2\nu^2 \eta^2 m^2}{b}) \sum_{s=1}^S \sum_{t=0}^{m-1} \mathbb{E}\| v_t^s \|^2 \nonumber\\
        &+ \frac{2}{\eta} \mathbb{E}[f(\Tilde{x}^0) - f(x_m^S)] + \sum_{s=1}^S \mathbb{E}[\mathbbm{1}_{\{ B^s < n \}} \frac{m\sigma^2}{B^s}] \nonumber\\
        &\leq \frac{2\Delta}{\eta} - (1 - L \eta - \frac{(L_l + \theta G)^2\mu^2\nu^2 \eta^2 m^2}{b}) \sum_{s=1}^S \sum_{t=0}^{m-1} \mathbb{E}\| v_t^s \|^2 \nonumber\\
        &+ \sum_{s=1}^S \mathbb{E}[\mathbbm{1}_{\{ B^s < n \}} \frac{m\sigma^2}{B^s}], \label{poieruueir}
    \end{align}
    where $\Delta := f(\Tilde{x}^0) - f(x^*)$ and we use the fact that $\mathbb{E}[f(x_m^S)] \geq f(x^*)$. Since $\mathcal{B}^s$ depends on whether finite-sum or online setting is considered, we consider these two cases separately. (1) Under finite-sum setting, 
    \begin{align}
        \mathbbm{1}_{\{ B^s < n \}} \frac{1}{B^s} = \frac{1}{\min \{\alpha_1 \sigma^2/\beta_s , n\}} \leq \frac{\beta_s}{\alpha_1 \sigma^2} \leq \frac{\beta_s}{\alpha \sigma^2},
    \end{align}
    where we choose $\alpha_1 \geq \alpha$. Note also from the definition of $\beta_s$ and the choice of $\beta_1 \leq \epsilon^2 S$, we have
    \begin{align}
        \sum_{s=1}^S \mathbb{E}[\beta_s] = \beta_1 + \frac{1}{m} \sum_{s=1}^{S-1} \sum_{t=0}^{m-1} \mathbb{E}\| v_t^s \|^2 \leq \epsilon^2 S + \frac{1}{m} \sum_{s=1}^S \sum_{t=0}^{m-1} \mathbb{E}\| v_t^s \|^2, \label{beta_s_sum_bound}
    \end{align}
    Combining these two results and substituting into \eqref{poieruueir} gives
    \begin{align}
        \sum_{s=1}^S\sum_{t=0}^{m-1} \mathbb{E}\| \text{grad}f(x_t^s) \|^2 &\leq - (1 - L \eta - \frac{(L_l + \theta G)^2\mu^2\nu^2 \eta^2 m^2}{b}) \sum_{s=1}^S \sum_{t=0}^{m-1} \mathbb{E}\| v_t^s \|^2 + \frac{2 \Delta}{\eta} \nonumber\\
        &+ \frac{m}{\alpha} [\epsilon^2 S + \frac{1}{m}\sum_{s=1}^S \sum_{t=0}^{m-1} \mathbb{E}\| v_t^s \|^2 ] \nonumber\\
        &= - (1 - L \eta - \frac{(L_l + \theta G)^2\mu^2\nu^2 \eta^2 m^2}{b} - \frac{1}{\alpha}) \sum_{s=1}^S \sum_{t=0}^{m-1} \mathbb{E}\| v_t^s \|^2 \nonumber\\
        &+ \frac{2\Delta}{\eta} + \frac{\epsilon^2 m S}{\alpha}. \label{ertyety}
    \end{align}
    Let $\eta \leq \frac{2 - \frac{2}{\alpha}}{L + \sqrt{L^2 + 4(1 - \frac{1}{\alpha})\frac{ (L_l + \theta G)^2 \mu^2 \nu^2 m^2}{b} }}$, which is the larger root of $1 - L \eta - \frac{(L_l + \theta G)^2\mu^2\nu^2 \eta^2 m^2}{b} - \frac{1}{\alpha} = 0$. The other root is smaller than zero. Therefore, this choice of $\eta$ can ensure coefficients before $\mathbb{E}\|v_t^s \|^2$ is smaller than zero. Then dividing \eqref{ertyety} by $T = Sm$ yields, 
    \begin{align}
        \mathbb{E}\| \text{grad}f(\Tilde{x}) \|^2 = \frac{1}{T} \sum_{s=1}^S\sum_{t=0}^{m-1} \mathbb{E}\| \text{grad}f(x_t^s) \|^2 \leq \frac{2\Delta}{T\eta} + \frac{\epsilon^2}{\alpha},
    \end{align}
    where we note that output $\Tilde{x}$ is uniformly drawn at random from $\{\{x_t^s\}_{t=0}^{m-1}\}_{s=1}^{S}$. (2) Similarly, under online setting, 
    \begin{equation}
         \mathbbm{1}_{\{ B^s < n \}} \frac{1}{B^s} = \frac{1}{\min\{\alpha_1 \sigma^2/\beta_s , \alpha_2 \sigma^2/\epsilon^2 \}} = \max \{ \frac{\beta_s}{\alpha_1 \sigma^2}, \frac{\epsilon^2}{\alpha_2 \sigma^2} \} \leq \frac{\beta_s + \epsilon^2}{\alpha \sigma^2}, \label{online_bs_bound}
    \end{equation}
    where the last inequality uses the fact that $\max \{ a, b \} \leq a+b$ and $\alpha_1, \alpha_2 \geq \alpha$. Following the same procedure and choice of $\eta$, we have 
    \begin{align}
        \mathbb{E}\| \text{grad}f(\Tilde{x}) \|^2 = \frac{1}{T} \sum_{s=1}^S\sum_{t=0}^{m-1} \mathbb{E}\| \text{grad}f(x_t^s) \|^2 \leq \frac{2\Delta}{T\eta} + \frac{2\epsilon^2}{\alpha}. 
    \end{align}
    Hence, by choosing $\alpha \geq 2$ for finite-sum setting and $\alpha \geq 4$ for online setting, we have 
    \begin{equation}
        \mathbb{E}\| \text{grad}f(\Tilde{x}) \|^2 \leq \frac{2\Delta}{T \eta} + \frac{\epsilon^2}{2}. 
    \end{equation}
    For simplicity, we consider $\alpha \geq 4$ for both cases. 
\end{proof}

\textbf{Corollary \ref{complexity_corollary_adasvrg} (IFO complexity of R-AbaSVRG).} 
With same Assumptions in Theorem \ref{convergence_radasvrg}, choose $b = m^2, \alpha = 4$, $\eta = \frac{3}{2L + 2\sqrt{L^2 + 3(L_l + \theta G)^2 \mu^2\nu^2}}$. Set $m = \lfloor n^{1/3} \rfloor$ under finite-sum setting and $m = (\frac{\sigma}{\epsilon})^{2/3}$ under online setting. The IFO complexity of Algorithm \ref{RadaSVRG_algorithm} to achieve $\epsilon$-accurate solution is given by
    $$\begin{cases} \mathcal{O}\big( \Tilde{B} + \frac{\Theta_1 \Tilde{B}}{n^{1/3} \epsilon^2} +  \frac{\Theta_1 n^{2/3} }{\epsilon^2} \big), & \text{ (finite-sum) }\\
    \mathcal{O}\big( \frac{\Theta_1 \Tilde{B}}{\sigma^{2/3} \epsilon^{4/3}} + \frac{\Theta_1 \sigma^{4/3}}{\epsilon^{10/3}} \big), & \text{ (online) }\end{cases}$$
where $\Theta_1 := L + \sqrt{L^2 + \varrho_1 (L_l + \theta G)^2 \mu^2 \nu^2}$ with $\varrho_1 > 0$ being a constant that does not depend on any parameter. $\Tilde{B}$ is the average batch size defined as follows. $\Tilde{B} := \frac{1}{S}\sum_{s=1}^{S} \min\{ {\alpha_1 \sigma^2}/{\beta}_s , n\}$ under finite-sum setting and $\Tilde{B} := \frac{1}{S}\sum_{s=1}^{S} \min\{ {\alpha_1 \sigma^2}/{\beta}_s , {\alpha_2 \sigma^2}/{\epsilon^2}\}$ under online setting.
\begin{proof}
Consider the parameter setting $b = m^2$, $\alpha = 4$ and $\eta = \frac{3}{2L + 2\sqrt{L^2 + 3(L_l + \theta G)^2 \mu^2\nu^2}}$. To obtain $\epsilon$-accurate solution, we require at least 
\begin{equation}
    S = \frac{4\Delta}{\epsilon^2 m \eta} = \frac{8\Delta}{3\epsilon^2 m} \big( L + \sqrt{L^2 + 3(L_l + \theta G)^2 \mu^2 \nu^2} \big) = \mathcal{O}\big( \frac{\Theta_1 }{m\epsilon^2} \big)
\end{equation}
where $\Theta_1 := L + \sqrt{L^2 + \varrho_1 (L_l + \theta G)^2 \mu^2 \nu^2}$, where $\varrho_1 > 0$ is a constant that does not depend on any parameter. Define average batch size $\Tilde{B}$ as
\begin{align}
    \Tilde{B} := \frac{1}{S} \sum_{s=1}^S B^s = \begin{cases} \frac{1}{S}\sum_{s=1}^{S} \min\{ {\alpha_1 \sigma^2}/{\beta}_s , n\}, & \text{ (finite-sum) }\\
\frac{1}{S}\sum_{s=1}^{S} \min\{ {\alpha_1 \sigma^2}/{\beta}_s , {\alpha_2 \sigma^2}/{\epsilon^2}\}, & \text{ (online) }\end{cases} \label{batch_size_def}
\end{align}
Then one epoch requires $\Tilde{B} + 2mb = \mathcal{O}(\Tilde{B} + m^3)$ IFO calls. Choosing $m = \lfloor n^{1/3} \rfloor$ under finite-sum setting and $m = (\frac{\sigma}{\epsilon})^{2/3}$ under online setting, the total IFO complexity is given by 
\begin{align*}
    \mathcal{O}\big( S ( \Tilde{B} + m^3) \big) = \mathcal{O}\big( S\Tilde{B} + Sm^3 \big) = \mathcal{O}\big( \frac{\Theta_1  \Tilde{B}}{m \epsilon^2} + \frac{\Theta_1  m^2}{\epsilon^2} \big) = \begin{cases} \mathcal{O}\big( \Tilde{B} + \frac{\Theta_1 \Tilde{B}}{n^{1/3} \epsilon^2} +  \frac{\Theta_1 n^{2/3} }{\epsilon^2} \big), & \text{ (finite-sum) }\\
    \mathcal{O}\big( \frac{\Theta_1 \Tilde{B}}{\sigma^{2/3} \epsilon^{4/3}} + \frac{\Theta_1 \sigma^{4/3}}{\epsilon^{10/3}} \big), & \text{ (online) }\end{cases}
\end{align*}

\end{proof}

\textbf{Corollary \ref{conver_complexity_svrg_new} (Convergence and complexity of R-SVRG under new analysis)}
With the same assumptions as in Theorem \ref{convergence_radasvrg} and consider Algorithm \ref{RadaSVRG_algorithm} with fixed batch size $B^s = B$ for $s=1,..., S$. Then choose a fixed step size $\eta \leq \frac{2}{L + \sqrt{L^2 + 4 \frac{(L_l + \theta G)^2\mu^2 \nu^2 m^2}{b}}}$. Output $\Tilde{x}$ after running $T = Sm$ iterations satisfies
    $$\mathbb{E}\| \text{grad}f(\Tilde{x}) \|^2 \leq \frac{2\Delta}{T\eta} + \mathbbm{1}_{\{B < n\}} \frac{\sigma^2}{B}.$$
    If we further choose $b = m^2, \eta = \frac{2}{L + \sqrt{L^2 + 4 {(L_l + \theta G)^2\mu^2 \nu^2 }}}$ and the following parameters
    \begin{align*}
        B = n, \quad m = \lfloor n^{1/3} \rfloor \quad &\text{ (finite-sum) } \\
        B = \frac{2\sigma^2}{\epsilon^2}, \quad m = (\frac{\sigma}{\epsilon})^{2/3} \quad &\text{ (online) }
    \end{align*}
    IFO complexity to obtain $\epsilon$-accurate solution is 
    \begin{equation*}
        \begin{cases} \mathcal{O}\big( n + \frac{\Theta_1 n^{2/3} }{\epsilon^2} \big), & \text{ (finite-sum) }\\
    \mathcal{O}\big( \frac{\Theta_1 \sigma^{4/3}}{\epsilon^{10/3}} \big), & \text{ (online) }\end{cases}
    \end{equation*}
    
\begin{proof}
    The proof is nearly identical to the proof of Theorem \ref{convergence_radasvrg}. From \eqref{temp_result_fdfkd}, 
    \begin{align}
        &\sum_{t=0}^{m-1} \mathbb{E}[\| \text{grad}f(x_t^s) \|^2 | \mathcal{F}_0^s] \nonumber\\
        &\leq \frac{2}{\eta} \mathbb{E}[ f(x_0^s) - f(x_m^s) | \mathcal{F}_0^s] - (1 - L \eta - \frac{(L_l + \theta G)^2\mu^2\nu^2 \eta^2 m^2}{b}) \sum_{t=0}^{m-1} \mathbb{E}[\| v_t^s \|^2 | \mathcal{F}_0^s] + \mathbbm{1}_{\{ B < n \}} \frac{m\sigma^2}{B}. \label{dkfkdfjiir}
    \end{align}
    Choosing a fixed step size $\eta \leq \frac{2}{L + \sqrt{L^2 + 4 \frac{(L_l + \theta G)^2\mu^2 \nu^2 m^2}{b}}}$, which ensures $1 - L \eta - \frac{(L_l + \theta G)^2\mu^2\nu^2 \eta^2 m^2}{b} \geq 0$. Telescoping this \eqref{dkfkdfjiir} from $s = 1,...,S$ and dividing by $T = Sm$ gives 
    \begin{align}
        \frac{1}{T} \sum_{s=1}^S\sum_{t=0}^{m-1} \mathbb{E}\| \text{grad}f(x_t^s) \|^2 \leq \frac{2\Delta}{T\eta} + \mathbbm{1}_{\{ B < n \}} \frac{\sigma^2}{B}.
    \end{align}
    Note that output $\Tilde{x}$ satisfies $\mathbb{E}\| \text{grad}f(\Tilde{x}) \|^2 = \frac{1}{T} \sum_{s=1}^S\sum_{t=0}^{m-1} \mathbb{E}\| \text{grad}f(x_t^s) \|^2$. Under finite-sum setting where $B = n$, $\mathbbm{1}_{\{ B < n \}} \frac{m\sigma^2}{B} = 0$, we have $\mathbb{E}\| \text{grad}f(\Tilde{x}) \|^2 \leq \frac{2\Delta}{T\eta}$. Under online setting where $B = \frac{2\sigma^2}{\epsilon^2}$, $\mathbbm{1}_{\{ B < n \}} \frac{\sigma^2}{B} = \frac{\epsilon^2}{2}$, we have $\mathbb{E}\| \text{grad}f(\Tilde{x}) \|^2 \leq \frac{2\Delta}{T\eta} + \frac{\epsilon^2}{2}$. Given $b = m^2$ and $\eta = \frac{2}{L + \sqrt{L^2 + 4 {(L_l + \theta G)^2\mu^2 \nu^2}}}$, under both finite-sum and online settings, to obtain $\epsilon$-accurate solution, we require at least
    \begin{equation}
        S = \mathcal{O} \Big( \frac{\Delta}{m \eta \epsilon^2} \Big) = \mathcal{O} \Big( \frac{\Delta}{m\epsilon^2} \big( L + \sqrt{L^2 + 4(L_l + \theta G)^2 \mu^2\nu^2} \big) \Big) = \mathcal{O} \big( \frac{\Theta_1}{m\epsilon^2} \big).
    \end{equation}
    
    Hence, we obtain the same iteration complexity as adaptive batch size version. Note for one epoch, we require $B + 2mb = \mathcal{O}(B + m^3)$ IFO calls. With the same choice of $m = \lfloor n^{1/3} \rfloor$ under finite-sum setting and $m = (\frac{\sigma}{\epsilon})^{2/3}$ under online setting, total IFO complexity is given by
    \begin{equation}
        \mathcal{O}\big( S ( B + m^3) \big) = \begin{cases} \mathcal{O}\big( n + \frac{\Theta_1 n^{2/3} }{\epsilon^2} \big), & \text{ (finite-sum) }\\
    \mathcal{O}\big( \frac{\Theta_1 \sigma^{4/3}}{\epsilon^{10/3}} \big), & \text{ (online) }\end{cases}
    \end{equation}
\end{proof}

\newpage
\section{Convergence Analysis for R-AbaSRG}
\label{r_abasrg_proof_appendix}
\textbf{Lemma \ref{lemma1_RSRG} (Gradient estimation error bound for R-AbaSRG).}
With $\mathcal{F}_t^s$ denoting the same sigma algebras as in R-AbaSVRG. Suppose Assumption \ref{common_assump} hold and consider Algorithm \ref{RAdaSRG_algorithm}. Then we can similarly bound estimation error of the modified gradient $v_t^s$ to the full gradient $\text{grad}f(x_t^s)$ as 
\begin{equation}
    \mathbb{E}[\| v_t^s - \text{grad}f(x_t^s) \|^2 | \mathcal{F}_0^s] \leq \frac{(L_l + \theta G)^2\eta^2}{b}  \sum_{i=0}^{t} \mathbb{E}[\| v_i^s \|^2| \mathcal{F}_0^s] + \mathbbm{1}_{\{ B^s < n\}} \frac{\sigma^2}{B^s}. \nonumber
\end{equation}

\begin{proof}
Note similarly, we have $\mathbb{E}[\| v_t^s - \text{grad}f(x_t^s) \|^2 | \mathcal{F}_0^s] = \mathbb{E}[ \mathbb{E}[\| v_t^s - \text{grad}f(x_t^s) \|^2 | \mathcal{F}_t^s]  | \mathcal{F}_0^s]$ and we first derive a bound on $\mathbb{E}[\| v_t^s - \text{grad}f(x_t^s) \|^2 | \mathcal{F}_t^s]$.
    \begin{align}
        \mathbb{E}[\| v_t^s - \text{grad}f(x_t^s) \|^2 | \mathcal{F}_t^s] &= \mathbb{E}[\| \text{grad}f_{\mathcal{I}_t^s}(x_{t}^{s}) -  \mathcal{T}_{{x}_{t-1}^s}^{x_t^{s}} \text{grad}f_{\mathcal{I}_t^s}(x_{t-1}^s) + \mathcal{T}_{{x}_{t-1}^s}^{x_t^{s}} v_{t-1}^{s} - \text{grad}f(x_t^s) \|^2 | \mathcal{F}_t^s] \nonumber\\
        &= \mathbb{E}[\| \text{grad}f_{\mathcal{I}_t^s}(x_{t}^{s}) -  \mathcal{T}_{{x}_{t-1}^s}^{x_t^{s}} \text{grad}f_{\mathcal{I}_t^s}(x_{t-1}^s) - \text{grad}f(x_t^s) + \mathcal{T}_{{x}_{t-1}^s}^{x_t^{s}} \text{grad}f(x_{t-1}^s) \nonumber\\
        &+ \mathcal{T}_{{x}_{t-1}^s}^{x_t^{s}} v_{t-1}^s - \mathcal{T}_{{x}_{t-1}^s}^{x_t^{s}} \text{grad}f(x_{t-1}^s)  \|^2 | \mathcal{F}_t^s ]\nonumber\\
        &= \mathbb{E}[ \| \text{grad}f_{\mathcal{I}_t^s}(x_{t}^{s}) -  \mathcal{T}_{{x}_{t-1}^s}^{x_t^{s}} \text{grad}f_{\mathcal{I}_t^s}(x_{t-1}^s) - \text{grad}f(x_t^s) + \mathcal{T}_{{x}_{t-1}^s}^{x_t^{s}} \text{grad}f(x_{t-1}^s) \|^2 | \mathcal{F}_t^s] \nonumber\\
        &+ \mathbb{E}[ \| v_{t-1}^s - \text{grad}f(x^s_{t-1}) \|^2 | \mathcal{F}_t^s] \nonumber\\
        &\leq \mathbb{E}[ \| \text{grad}f_{\mathcal{I}_t^s}(x_{t}^{s}) -  \mathcal{T}_{{x}_{t-1}^s}^{x_t^{s}} \text{grad}f_{\mathcal{I}_t^s}(x_{t-1}^s) \|^2 |\mathcal{F}_t^s] + \mathbb{E}[ \| v_{t-1}^s - \text{grad}f(x^s_{t-1}) \|^2 | \mathcal{F}_t^s]\nonumber\\
        &= \frac{1}{b} \mathbb{E}[ \| \text{grad}f_{i}(x_{t}^{s}) -  \mathcal{T}_{{x}_{t-1}^s}^{x_t^{s}} \text{grad}f_{i}(x_{t-1}^s) \|^2 |\mathcal{F}_t^s] + \mathbb{E}[ \| v_{t-1}^s - \text{grad}f(x^s_{t-1}) \|^2 | \mathcal{F}_t^s] \nonumber\\
        &\leq \frac{1}{b} (L_l + \theta G)^2 \eta^2 \| v_{t-1}^s \|^2  + \| v_{t-1}^s - \text{grad}f(x^s_{t-1}) \|^2. \label{jierieriep}
    \end{align}
    Note the expectation is taken with respect to randomness of sample $\mathcal{I}_t^s$ where both $x_{t-1}^s$ and $x_t^s$ are measurable. The vector transport $\mathcal{T}_{x_{t-1}^s}^{x_t^s}$ is therefore fixed conditional on $\mathcal{F}_t^s$. Hence, the third equality holds due to unbiasedness. The first inequality is due to $\mathbb{E}\| x - \mathbb{E}[x] \|^2 \leq \mathbb{E}\|x\|^2$ and the last inequality is from Lemma \ref{vec_trans_lemma}. Therefore we have $\mathbb{E}[\| v_t^s - \text{grad}f(x_t^s) \|^2 | \mathcal{F}_0^s] \leq \frac{1}{b} (L_l + \theta G)^2 \eta^2 \mathbb{E}[\| v_{t-1}^s \|^2 | \mathcal{F}_0^s]  + \mathbb{E}[\| v_{t-1}^s - \text{grad}f(x^s_{t-1}) \|^2 | \mathcal{F}_0^s]$. Recursively applying this inequality gives
    \begin{align}
        \mathbb{E}[\| v_t^s - \text{grad}f(x_t^s) \|^2 | \mathcal{F}_0^s] 
        &\leq \frac{(L_l + \theta G)^2\eta^2}{b}  \sum_{i=0}^{t-1} \mathbb{E}[\| v_i^s \|^2| \mathcal{F}_0^s] + \mathbb{E}[ \| v_0^s - \text{grad}f(x_0^s) \|^2 | \mathcal{F}_0^s] \nonumber\\
        &\leq \frac{(L_l + \theta G)^2\eta^2}{b}  \sum_{i=0}^{t} \mathbb{E}[\| v_i^s \|^2| \mathcal{F}_0^s] + \mathbb{E}[ \| v_0^s - \text{grad}f(x_0^s) \|^2 | \mathcal{F}_0^s], \label{32_bound_norm}
    \end{align}
    Note that $\mathbb{E}[\| v_0^s - \text{grad}f(x_0^s) \|^2 | \mathcal{F}_0^s] \leq \mathbbm{1}_{\{ B < n\}} \frac{\sigma^2}{B}$ by similar argument in \eqref{poieruueir}. Combining this inequality with \eqref{32_bound_norm} completes the proof.
\end{proof}

\textbf{Theorem \ref{converg_RSRG} (Convergence analysis of R-AbaSRG).}
let $x^* \in \mathcal{M}$ be an optimal point of $f$ and suppose Assumption \ref{common_assump} holds. Consider Algorithm \ref{RAdaSRG_algorithm} with a step size $\eta \leq \frac{2 - \frac{2}{\alpha}}{L + \sqrt{L^2 + 4 (1 - \frac{1}{\alpha}) \frac{(L_l + \theta G)^2m}{b} }}$ and $\alpha \geq 4$. Then under both finite-sum and online setting, output $\Tilde{x}$ after running $T = Sm$ iterations satisfies
\begin{equation*}
    \mathbb{E}\| \text{grad}f(\Tilde{x}) \|^2 \leq \frac{2\Delta}{T\eta} + \frac{\epsilon^2}{2},
\end{equation*}
with $\Delta := f(\Tilde{x}^0) - f(x^*)$ and $\epsilon$ is the desired accuracy.

\begin{proof}
    Here we adopt a similar procedure as the proof of R-AbaSVRG. By retraction $L$-smoothness, we have
    \begin{align}
        f(x_{t+1}^s) - f(x_{t}^s) &\leq - \eta \langle \text{grad}f(x^s_{t}), v_{t}^s \rangle + \frac{L \eta^2}{2} \| v_{t}^s \|^2 \nonumber\\
        &= - \frac{\eta}{2} \| \text{grad}f(x_{t}^s)\|^2 - \frac{\eta}{2}\| v_{t}^s \|^2 + \frac{\eta}{2} \| v_{t}^s - \text{grad}f(x_{t}^s) \|^2 + \frac{L \eta^2}{2}\| v_{t}^s\|^2 \nonumber\\
        &= -\frac{\eta}{2}\| \text{grad}f(x_{t}^s) \|^2 + \frac{\eta}{2} \| v_{t}^s - \text{grad}f(x_{t}^s) \|^2 - (\frac{\eta}{2} - \frac{L\eta^2}{2}) \| v_{t}^s \|^2. 
    \end{align}
    Taking expectation of this inequality with respect to $\mathcal{F}_0^s$ and summing over $t= 0, ..., m-1$ gives
    \begin{align}
        \sum_{t=0}^{m-1} \mathbb{E} [\| \text{grad}f(x_{t}^s) \|^2 | \mathcal{F}_0^s ] &\leq \frac{2}{\eta} \mathbb{E}[  f(x_{0}^s) -f(x_m^s) | \mathcal{F}_0^s] + \sum_{t=0}^{m-1} \mathbb{E}[\| v_{t}^s - \text{grad}f(x_{t}^s) \|^2 | \mathcal{F}_0^s] - (1 - L \eta) \sum_{t=0}^{m-1} \mathbb{E}[\| v_{t}^s \|^2 | \mathcal{F}_0^s] \nonumber\\
        &\leq \frac{2}{\eta} \mathbb{E}[  f(x_{0}^s) -f(x_m^s) | \mathcal{F}_0^s] - (1 - L \eta) \sum_{t=0}^{m-1} \mathbb{E}[\| v_{t}^s \|^2 | \mathcal{F}_0^s] + \frac{(L_l + \theta G)^2\eta^2 }{b} \sum_{t=0}^{m-1} \sum_{i=0}^{t} \mathbb{E}[\| v_i^s \|^2| \mathcal{F}_0^s] \nonumber\\
        &+ \mathbbm{1}_{\{ B^s < n\}} \frac{m \sigma^2}{B^s} \nonumber\\
        &\leq  \frac{2}{\eta} \mathbb{E}[  f(x_{0}^s) -f(x_m^s) | \mathcal{F}_0^s] - (1 - L \eta) \sum_{t=0}^{m-1} \mathbb{E}[\| v_{t}^s \|^2 | \mathcal{F}_0^s] + \frac{(L_l + \theta G)^2 \eta^2 m}{b} \sum_{t=0}^{m-1} \mathbb{E}[\| v_t^s \|^2| \mathcal{F}_0^s] \nonumber\\
        &+ \mathbbm{1}_{\{ B^s < n\}} \frac{m \sigma^2}{B^s} \nonumber\\
        &= \frac{2}{\eta} \mathbb{E}[  f(x_{0}^s) -f(x_m^s) | \mathcal{F}_0^s] - \Big( 1 - L \eta - \frac{(L_l + \theta G)^2 \eta^2 m}{b} \Big) \sum_{t=0}^{m-1} \mathbb{E}[\| v_t^s \|^2| \mathcal{F}_0^s] + \mathbbm{1}_{\{ B^s < n\}} \frac{m \sigma^2}{B^s}, \label{hhhhhhhhh}
    \end{align}
    where the second first inequality is by Lemma \ref{lemma1_RSRG} and the third inequality is due to the fact that $t \leq m-1$. Summing this inequality over $s = 1,..., S$ and taking full expectation, we have
    \begin{align}
        &\sum_{s=1}^S \sum_{t=0}^{m-1} \mathbb{E}\| \text{grad}f(x_{t}^s) \|^2 \nonumber\\
        &\leq \frac{2 \Delta}{\eta} - \Big( 1 - L \eta - \frac{(L_l + \theta G)^2 \eta^2 m}{b} \Big) \sum_{s=1}^S\sum_{t=0}^{m-1} \mathbb{E}\| v_t^s \|^2 + \sum_{s=1}^S \mathbb{E}[\mathbbm{1}_{\{ B^s < n\}} \frac{ m\sigma^2}{B^s}], \label{efefefefe}
    \end{align}
    where $\Delta := f(\Tilde{x}^0) - f(x^*)$. Same as in \eqref{beta_s_sum_bound}, we have $\sum_{s=1}^S \mathbb{E}[\beta_s] \leq \epsilon^2 S + \frac{1}{m} \sum_{s=1}^{S}\sum_{t=0}^{m-1} \mathbb{E}\|  v_t^s\|^2$, with the choice $\beta_1 \leq \epsilon^2 S$. (1) Under finite-sum setting, $\mathbbm{1}_{\{ B^s < n\}} \frac{1}{B^s} \leq \frac{\beta_s}{c_\beta \sigma^2}\leq \frac{\beta_s}{\alpha \sigma^2}$ where we choose $\alpha_1 > \alpha$. This gives 
    \begin{align}
        \sum_{s=1}^S \sum_{t=0}^{m-1} \mathbb{E}\| \text{grad}f(x_{t}^s) \|^2 &\leq \frac{2\Delta}{\eta} - \Big( 1 - L \eta - \frac{(L_l + \theta G)^2 \eta^2 m}{b} \Big) \sum_{s=1}^S\sum_{t=0}^{m-1} \mathbb{E}\| v_t^s \|^2 + \frac{m}{\alpha} \sum_{s=1}^S \mathbb{E}[\beta_s] \nonumber\\
        &\leq \frac{2\Delta}{\eta}  - \Big( 1 - \frac{1}{\alpha} - L \eta -  \frac{(L_l + \theta G)^2 \eta^2 m}{b} \Big) \sum_{s=1}^S\sum_{t=0}^{m-1} \mathbb{E}\| v_t^s \|^2 + \frac{Sm\epsilon^2}{\alpha}. 
    \end{align}
    Let $\eta \leq \frac{2 - \frac{2}{\alpha}}{L + \sqrt{L^2 + 4 (1 - \frac{1}{\alpha}) \frac{(L_l + \theta G)^2m}{b} }}$, which is the larger root of $1 - \frac{1}{\alpha} - L \eta -  \frac{(L_l + \theta G)^2 \eta^2 m}{b} = 0$. Dividing both sides by $T = Sm$ gives
    \begin{equation}
        \mathbb{E}\| \text{grad}f(\Tilde{x}) \|^2 = \frac{1}{T} \sum_{s=1}^S \sum_{t=0}^{m-1} \mathbb{E}\| \text{grad}f(x_{t}^s) \|^2 \leq \frac{2\Delta}{T\eta} + \frac{\epsilon^2}{\alpha}, 
    \end{equation}
    where $\Tilde{x}$ is uniformly selected at random from $\{\{x_t^s\}_{t=0}^{m-1}\}_{s=1}^{S}$. (2) Under online setting, from \eqref{online_bs_bound}, we have $\mathbbm{1}_{\{ B^s < n \}} \frac{1}{B^s} \leq \frac{\beta_s + \epsilon^2}{\alpha \sigma^2}$, where we choose $\alpha_1, \alpha_2 \geq \alpha$. This results in
    \begin{align}
        \sum_{s=1}^S \sum_{t=0}^{m-1} \mathbb{E}\| \text{grad}f(x_{t}^s) \|^2 &\leq \frac{2\Delta}{\eta} - \Big( 1 - \frac{1}{\alpha} - L\eta - \frac{(L_l + \theta G)^2 \eta^2 m}{b} \Big) \sum_{s=1}^S\sum_{t=0}^{m-1} \mathbb{E}\| v_t^s \|^2 + \frac{2Sm\epsilon^2}{\alpha}. 
    \end{align}
    Choose the same $\eta \leq \frac{2 - \frac{2}{\alpha}}{L + \sqrt{L^2 + 4 (1 - \frac{1}{\alpha}) \frac{(L_l + \theta G)^2m}{b} }}$, we have 
    \begin{equation}
        \mathbb{E}\| \text{grad}f(\Tilde{x}) \|^2 = \frac{1}{T} \sum_{s=1}^S \sum_{t=0}^{m-1} \mathbb{E}\| \text{grad}f(x_{t}^s) \|^2 \leq \frac{2 \Delta}{T\eta} + \frac{2\epsilon^2}{\alpha}.  
    \end{equation}

    By simply setting $\alpha = 4$ for both finite-sum and online setting, we can ensure 
    \begin{equation}
        \mathbb{E}\| \text{grad}f(\Tilde{x}) \|^2 \leq \frac{2\Delta}{T\eta} + \frac{\epsilon^2}{2}
    \end{equation}
\end{proof}

\textbf{Corollary \ref{complexity_rabasrg_corollary} (IFO complexity of R-AbaSRG).}
With the same Assumptions as in Theorem \ref{converg_RSRG}, choose $b = m, \alpha = 4$, $\eta = \frac{3}{2L + 2\sqrt{L^2 + 3 {(L_l + \theta G)^2}}}$. Then set $m = \lfloor n^{1/2} \rfloor$ under finite-sum setting and $m = \frac{\sigma}{\epsilon}$ under online setting. The IFO complexity of Algorithm \ref{RadaSVRG_algorithm} to obtain $\epsilon$-accurate solution is 
\begin{equation*}
    \begin{cases} \mathcal{O}\big( \Tilde{B} + \frac{\Theta_2 \Tilde{B} }{\sqrt{n} \epsilon^2} + \frac{\Theta_2 \sqrt{n}}{\epsilon^2}  \big), & \text{ (finite-sum) }\\
    \mathcal{O}\big( \frac{\Theta_2 \Tilde{B}}{\sigma \epsilon} + \frac{\Theta_2 \sigma}{\epsilon^3} \big), & \text{ (online) }\end{cases}
\end{equation*}

\begin{proof}
By choosing $\alpha = 4, b = m$, $\eta = \frac{3}{2L + 2\sqrt{L^2 + 3 {(L_l + \theta G)^2}}}$, to ensure $\mathbb{E}\| \text{grad}f(\Tilde{x}) \| \leq \epsilon$, we require at least
\begin{equation}
    S = \frac{4\Delta}{m \eta \epsilon^2} = \frac{8 \Delta }{3 m \epsilon^2} (L + \sqrt{L^2 + 3 {(L_l + \theta G)^2} }) = \mathcal{O}\big( \frac{\Theta_2}{m \epsilon^2} \big),
\end{equation}
with $\Theta_2 := L + \sqrt{L^2 + \varrho_2 (L_l + \theta G)^2}$ where $\varrho_2 > 0$ is a constant that does not depend on any parameters. Let $\Tilde{B}$ be the average batch size defined in \eqref{batch_size_def}. That is, $\Tilde{B} = \frac{1}{S}\sum_{s=1}^{S} \min\{ {\alpha_1 \sigma^2}/{\beta}_s , n\}$ under finite-sum setting and $\Tilde{B} = \frac{1}{S}\sum_{s=1}^{S} \min\{ {\alpha_1 \sigma^2}/{\beta}_s , {\alpha_2 \sigma^2}/{\epsilon^2}\}$ under online setting. Then one epoch requires $\Tilde{B} + 2mb = \mathcal{O}( \Tilde{B} + m^2)$ IFO calls. Consider the choice of $m = \lfloor {n}^{1/2} \rfloor$ and $m = \frac{\sigma}{\epsilon}$ under finite-sum and online setting respectively. The total IFO complexity is given by 
\begin{align}
    \mathcal{O} \big( S\Tilde{B} + Sm^2 \big) = \mathcal{O} \big( \frac{\Theta_2 \Tilde{B}}{m \epsilon^2} + \frac{\Theta_2 m}{\epsilon^2} \big) = \begin{cases} \mathcal{O}\big( \Tilde{B} + \frac{\Theta_2 \Tilde{B} }{\sqrt{n} \epsilon^2} + \frac{\Theta_2 \sqrt{n}}{\epsilon^2}  \big), & \text{ (finite-sum) }\\
    \mathcal{O}\big( \frac{\Theta_2 \Tilde{B}}{\sigma \epsilon} + \frac{\Theta_2 \sigma}{\epsilon^3} \big), & \text{ (online) }\end{cases}
\end{align}
\end{proof}

\textbf{Corollary \ref{converg_complex_rsrg_double_loop} (Double loop convergence and complexity of R-SRG)}
With the same assumptions in Theorem \ref{converg_RSRG} and consider Algorithm \ref{RAdaSRG_algorithm} with fixed batch size $B^s = B$, for $s = 1,...,S$. Consider a step size $\eta \leq \frac{2}{L + \sqrt{L^2 + 4\frac{(L_l + \theta G)^2m}{b}}}$. After running $T = Sm$ iterations, output $\Tilde{x}$ satisfies
\begin{equation*}
    \mathbb{E}\| \text{grad}f(\Tilde{x}) \|^2 \leq \frac{2\Delta}{T\eta} +  \mathbbm{1}_{\{ B < n\}} \frac{\sigma^2}{B}. 
\end{equation*}
If we further choose $b = m$, $\eta = \frac{2}{L + \sqrt{L^2 + 4{(L_l + \theta G)^2}}}$ and following parameters 
\begin{align*}
    B = n, \quad m = \lfloor n^{1/2} \rfloor \quad &\text{ (finite-sum) } \\
    B = \frac{2\sigma^2}{\epsilon^2}, \quad m = \frac{\sigma}{\epsilon} \quad &\text{ (online) }
\end{align*}
IFO complexity to obtain $\epsilon$-accurate solution is 
\begin{equation*}
    \begin{cases} \mathcal{O}\big( n + \frac{\Theta_2 \sqrt{n}}{\epsilon^2}  \big), & \text{ (finite-sum) }\\
    \mathcal{O}\big( \frac{\Theta_2 \sigma}{\epsilon^3} \big), & \text{ (online) }\end{cases}
\end{equation*}

\begin{proof}
    From \eqref{hhhhhhhhh}, we have 
    \begin{align}
        &\sum_{t=0}^{m-1} \mathbb{E} [\| \text{grad}f(x_{t}^s) \|^2 | \mathcal{F}_0^s ] \nonumber\\
        &\leq \frac{2}{\eta} \mathbb{E}[  f(x_{0}^s) -f(x_m^s) | \mathcal{F}_0^s] - \Big( 1 - L \eta - \frac{(L_l + \theta G)^2 \eta^2 m}{b} \Big) \sum_{t=0}^{m-1} \mathbb{E}[\| v_t^s \|^2| \mathcal{F}_0^s] + \mathbbm{1}_{\{ B < n\}} \frac{m \sigma^2}{B}. 
    \end{align}
    Consider step size choice $\eta \leq \frac{2}{L + \sqrt{L^2 + 4\frac{(L_l + \theta G)^2m}{b}}}$, which ensures $1 - L \eta - \frac{(L_l + \theta G)^2 \eta^2 m}{b} \geq 0$. Summing this result over $s = 1,..., S$ and dividing by $T = Sm$ yields
    \begin{equation}
        \mathbb{E}\| \text{grad}f(\Tilde{x}) \|^2 = \frac{1}{T} \sum_{s=1}^S \sum_{t=0}^{m-1} \mathbb{E}\| \text{grad}f(x_{t}^s) \|^2 \leq \frac{2\Delta}{T\eta} +  \mathbbm{1}_{\{ B < n\}} \frac{\sigma^2}{B}. 
    \end{equation}
    Considering the choice of $b = m$ and $\eta = \frac{2}{L + \sqrt{L^2 + 4{(L_l + \theta G)^2}}}$ and following exactly the same procedures as in proof of Corollary \ref{conver_complexity_svrg_new}, we require at least
    \begin{equation}
        S = \mathcal{O}\Big( \frac{\Delta}{m \eta \epsilon^2} \Big) = \mathcal{O} \Big( \frac{\Delta}{m\epsilon^2} \big( L + \sqrt{L^2 + 4(L_l + \theta G)^2} \big) \Big) = \mathcal{O}\Big( \frac{\Theta_2}{m\epsilon^2} \Big).
    \end{equation}
    One epoch requires $B + 2mb = \mathcal{O}(B + m^2)$ IFO complexity. With the same choice of $m = \lfloor n^{1/2} \rfloor$ under finite-sum setting and $m = \frac{\sigma}{\epsilon}$ under online setting, total IFO complexity is given by 
    \begin{equation}
        \mathcal{O}\big( S (B + m^2) \big) = \begin{cases} \mathcal{O}\big( n + \frac{\Theta_2 \sqrt{n}}{\epsilon^2}  \big), & \text{ (finite-sum) }\\
        \mathcal{O}\big( \frac{\Theta_2 \sigma}{\epsilon^3} \big), & \text{ (online) }\end{cases}
    \end{equation}
\end{proof}

\newpage
\section{Convergence under gradient dominance condition}
\label{gd_convergence_appendix}

\textbf{Theorem \ref{complexity_svrg_gd} (IFO complexity of R-AbaSVRG and R-SVRG).}
    Suppose Assumptions \ref{common_assump} and \ref{assumption_svrg} hold and also suppose function $f$ satisfies $\tau$-gradient dominance. Consider Algorithm \ref{RGDVR_algorithm} with any solver and accordingly choose appropriate parameters to achieve $\epsilon_k$-accurate solution. Then at each mega epoch $k$, output $x_k$ satisfies
    \begin{equation}
        \mathbb{E}\| \text{grad}f(x_k) \| \leq \frac{\epsilon_0}{2^k}, \text{ and } \mathbb{E}[f(x_k) - f(x^*)] \leq \frac{\tau \epsilon_0^2}{4^k}. 
    \end{equation}
    Consider R-AbaSVRG solver with the following parameters at each mega epoch. $\eta = \frac{3}{2L + 2\sqrt{L^2 + 3(L_l + \theta G)^2 \mu^2 \nu^2}}$, $\alpha = 4, b_k = m_k^2$, where $m_k = \lfloor n^{1/3} \rfloor$ under finite-sum setting and $m_k = (\frac{\sigma}{\epsilon_k})^{2/3}$ under online setting. Then to achieve $\epsilon$-accurate solution, total IFO complexity is given by
    \begin{align}
        \begin{cases} \mathcal{O}\big( \sum_{k=1}^K \Tilde{B}_k (1 + \frac{\Theta_1 \tau}{n^{1/3}}) + ({\Theta_1 n^{2/3} \tau}) \log(\frac{1}{\epsilon}) \big), & \text{ (finite-sum) }\\
        \mathcal{O}\big( \frac{\Theta_1 \tau \sum_{k=1}^K \Tilde{B}_k \epsilon_k^{2/3}}{\sigma^{2/3}} + \frac{\Theta_1 \tau \sigma^{4/3}}{\epsilon^{4/3}} \big), & \text{ (online) } \end{cases}
    \end{align}
    where the average batch size $\Tilde{B}_k := \frac{1}{S_k} \sum_{s=1}^{S_k} \min\{\alpha_1 \sigma^2/\beta_s, n \}$ under finite-sum setting and $\Tilde{B}_k := \frac{1}{S_k} \sum_{s=1}^{S_k} \min\{\alpha_1\sigma^2/\beta_s, \alpha_2 \sigma^2/\epsilon_k^2 \}$ under online cases. Consider R-SVRG solver with the same parameters except for $\eta = \frac{2}{L + \sqrt{L^2 + 4(L_l + \theta G)^2 \mu^2 \nu^2)}}$, $B_k = n$ under finite-sum setting and $B_k = \frac{2\sigma^2}{\epsilon_k^2}$ under online setting. To achieve $\epsilon$-accurate solution, we require a total complexity of 
    \begin{align}
        \begin{cases} \mathcal{O}\big( (n + \Theta_1 \tau n^{2/3})\log(\frac{1}{\epsilon}) \big), & \text{ (finite-sum) }\\
        \mathcal{O} \big( \frac{\Theta_1 \tau \sigma^{4/3}}{\epsilon^{4/3}} \big), & \text{ (online) } \end{cases}
    \end{align}

\begin{proof}
    Firstly, we establish linear convergence accompanying complexity results. At mega epoch $k$, we have 
    \begin{align}
        &\mathbb{E}\| \text{grad}f(x_k) \| \leq \epsilon_k = \frac{\epsilon_0}{2^k}, \text{ and } \\
        &\mathbb{E}[ f(x_k) - f(x^*) ] \leq \tau \mathbb{E}\| \text{grad}f(x_k) \|^2 \leq \frac{\tau \epsilon_0^2}{4^k}.
    \end{align}

    Define $\Delta_k := \mathbb{E}[f(x_k) - f(x^*)]$. At mega epoch $k$, to obtain $\epsilon_k$-accurate solution, we require number of epochs \begin{equation}
        S_k = \frac{4\Delta_{k-1}}{m_k \eta \epsilon_k^2} = \frac{8\Theta_1 \Delta_{k-1}}{3m_k\epsilon_k^2} \leq  \frac{8\Theta_1}{3m_k\epsilon_k^2} \tau \mathbb{E}\| \text{grad}f(x_{k-1})\|^2 \leq \frac{8\Theta_1 \tau}{3 m_k} \frac{\epsilon_{k-1}^2}{\epsilon_k^2} = \frac{32\Theta_1 \tau}{3m_k},
    \end{equation}
    where the first inequality uses the definition of gradient dominance and the second inequality is by the fact that $x_{t-1}$ is output from the preceding mega epoch and hence has gradient bounded by desired accuracy $\epsilon_{k-1}$. The last equality is from the choice of $\epsilon_k$. Define the average batch size $\Tilde{B}_k = \frac{1}{S_k} \sum_{s=1}^{S_k} \min\{\alpha_1 \sigma^2/\beta_s, n \}$ and $\Tilde{B}_k = \frac{1}{S_k} \sum_{s=1}^{S_k} \min\{\alpha_1\sigma^2/\beta_s, \alpha_2 \sigma^2/\epsilon_k^2 \}$ under finite-sum and online cases respectively. IFO complexity at mega epoch $k$ is
    \begin{align}
        S_k(\Tilde{B}_k + 2m_k b_k) = \mathcal{O}\big( S_k \Tilde{B} + S_k m_k^3 \big) = \begin{cases} \mathcal{O} \big( \Tilde{B}_k + \frac{\Theta_1 \Tilde{B}_k \tau}{n^{1/3}} + {\Theta_1 n^{2/3} \tau} \big), & \text{ (finite-sum) }\\
        \mathcal{O}\big( \frac{\Theta_1 \Tilde{B}_k \tau \epsilon_k^{2/3}}{\sigma^{2/3}} + \frac{\Theta_1 \sigma^{4/3} \tau}{\epsilon_k^{4/3}} \big), & \text{ (online) } \end{cases} \label{fkdfjskdririt}
    \end{align}
    To ensure $\mathbb{E}\| \text{grad}f(x_K) \|^2 \leq \epsilon^2$, it is equivalent to requiring $\epsilon_K^2 = \frac{\epsilon_0^2}{2^{2K}} \leq \epsilon^2$. Therefore, we require at least $K = \log(\frac{\epsilon_0}{\epsilon})$ mega epochs. Accordingly, under finite-sum setting, the complexity in \eqref{fkdfjskdririt} depends on mega epoch $k$ only through $\Tilde{B}_k$. So total IFO complexity after running $K$ mega epochs is simply $\mathcal{O}\big( \sum_{k=1}^K \Tilde{B}_k (1 + \frac{\Theta_1 \tau}{n^{1/3}}) + ({\Theta_1 n^{2/3} \tau}) \log(\frac{1}{\epsilon}) \big)$. Under online setting, both $\Tilde{B}_k$ and $\epsilon_k$ of its complexity depend on mega epoch $k$. Hence we need to sum this result from $k= 1,..., K = \log(\frac{\epsilon_0}{\epsilon})$. Note that $\sum_{k =1}^K \frac{1}{\epsilon_k^{4/3}} = \frac{2^{4/3}}{\epsilon_0^{4/3}} \frac{(2^k)^{4/3}- 1}{2^{4/3} -1 } \leq 2^{4/3} (\frac{2^k}{\epsilon_0})^{4/3} = \mathcal{O} \big((\frac{1}{\epsilon})^{4/3} \big)$. Hence, total IFO complexity can be written as $\mathcal{O}\big( \frac{\Theta_1 \tau \sum_{k=1}^K \Tilde{B}_k \epsilon_k^{2/3}}{\sigma^{2/3}} + \frac{\Theta_1 \tau \sigma^{4/3}}{\epsilon^{4/3}} \big)$. Similarly, R-SVRG with fixed batch size $B_k = n$ under finite-sum cases and $B_k = \frac{2\sigma^2}{\epsilon_k^2}$ under online cases requires complexities $\mathcal{O}\big( (n + \Theta_1 \tau n^{2/3})\log(\frac{1}{\epsilon}) \big)$ and $\mathcal{O} \big( \frac{\Theta_1 \tau \sigma^{4/3}}{\epsilon^{4/3}} \big)$ respectively. The proof is exactly the same except that we replace $\Tilde{B}_k$ with $B_k$. 
\end{proof}

\textbf{Theorem \ref{complexity_srg_gd} (IFO complexity of R-AbaSRG and R-SRG).}
    Suppose Assumptions \ref{common_assump} holds and also suppose function $f$ satisfies $\tau$-gradient dominance. By choosing parameters to achieve $\epsilon_k$-accurate solution, output $x_k$ satisfies the same linear convergence as in Theorem \ref{complexity_svrg_gd}. Consider R-AbaSRG solver with $\eta = \frac{3}{2L + 2\sqrt{L^2 + 3(L_l + \theta G)^2}}$, $\alpha = 4, b_k = m_k$ where $m_k = \lfloor n^{1/2} \rfloor$ under finite-sum setting and $m_k = \frac{\sigma}{\epsilon_k}$ under online setting. To achieve $\epsilon$-accurate solution, we require a total IFO complexity of 
    \begin{align}
        \begin{cases} \mathcal{O}\big( \sum_{k=1}^K \Tilde{B}_k (1 + \frac{\Theta_2 \tau}{n^{1/2}}) + ({\Theta_2 n^{1/2} \tau}) \log(\frac{1}{\epsilon}) \big), & \text{ (finite-sum) }\\
        \mathcal{O}\big( \frac{\Theta_2 \tau \sum_{k=1}^K \Tilde{B}_k \epsilon_k}{\sigma} + \frac{\Theta_2 \tau \sigma}{\epsilon} \big), & \text{ (online) } \end{cases}
    \end{align}
    Consider R-SRG solver with the same parameters except for $\eta = \frac{2}{L + \sqrt{L^2 + 4(L_l + \theta G)^2}}$ and $B_k = n$ under finite-sum setting and $B_k = \frac{2\sigma^2}{\epsilon_k^2}$ under online setting. To achieve $\epsilon$-accurate solution, we require a total complexity of 
    \begin{align}
        \begin{cases} \mathcal{O}\big( (n + \Theta_2 \tau n^{1/2})\log(\frac{1}{\epsilon}) \big), & \text{ (finite-sum) }\\
        \mathcal{O} \big( \frac{\Theta_2 \tau \sigma}{\epsilon} \big), & \text{ (online) } \end{cases}
    \end{align}
\begin{proof}
    The proof is exactly the same as that for R-AbaSVRG and R-SVRG and hence skipped.
\end{proof}

Next, we provide complexity results for R-SD and R-SGD under gradient dominance condition. We simply restart the algorithms similar to variance reduction methods.

\begin{algorithm}
 \caption{R-GD-SD/SGD}
 \label{RGDGD_algorithm}
 \begin{algorithmic}[1]
  \STATE \textbf{Input:} Initial accuracy $\epsilon_0$ and desired accuracy $\epsilon$, initialization $x_0$.
  \FOR {$k = 1,...,K$}
  \STATE $\epsilon_k = \frac{\epsilon_{k-1}}{2}$.
  \STATE Set $T_k$ sufficient to achieve $\epsilon_k$-accurate solution and choose step size $\eta$ accordingly. 
  \STATE $x_0^k = x^{k-1}$. 
  \FOR{$t = 1,...,T_k$}
    \STATE (R-SD): $x_t^k = R_{x_{t-1}^k}({-\eta \text{grad}f(x_{t-1}^k)})$.
    \STATE (R-SGD): $x_t^k = R_{x_{t-1}^k}({- \eta \text{grad}f_{i_t^k}(x_{t-1}^k)}),$ where $i_t^k \in [n]$ is a random index. 
  \ENDFOR
  \STATE $x^k$ is chosen uniformly at random from $\{ x_t^k \}_{t=0}^{T_k - 1}$.
  \ENDFOR
  \STATE \textbf{Output:} $x^K$.
 \end{algorithmic} 
\end{algorithm}

\begin{theorem}[IFO complexity of R-SD and R-SGD under gradient dominance condition]
\label{complexity_RGD_RSGD_GD}
    Suppose $f$ is retraction $L$-smooth and also $\tau$-gradient dominated. Consider Algorithm \ref{RGDGD_algorithm} with R-SD solver. Then total IFO complexity to achieve $\epsilon$-accurate solution is given by $\mathcal{O}\big( (n + L \tau n)\log(\frac{1}{\epsilon}) \big)$. Suppose additionally that $f$ has $G$-bounded gradient. That is, $\| \text{grad}f_{i}(x) \| \leq G$, with $i$ being a random index from $[n]$. Consider Algorithm \ref{RGDGD_algorithm} with R-SGD solver. Total IFO complexity to achieve $\epsilon$-accurate solution is $\mathcal{O}\big( \frac{LG^2}{\epsilon^2} \big)$.
\end{theorem}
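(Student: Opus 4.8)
\medskip
The plan is to exploit the restart structure of Algorithm~\ref{RGDGD_algorithm}: at mega epoch $k$ the target accuracy is halved, $\epsilon_k=\epsilon_{k-1}/2$, so after $K=\mathcal{O}(\log(\epsilon_0/\epsilon))$ mega epochs the output $x^K$ is $\epsilon$-accurate. The argument is an induction over $k$: with $T_k$ and the step size chosen suitably, each mega epoch returns a point with $\mathbb{E}\|\text{grad}f(x^k)\|^2\le\epsilon_k^2$, and the $\tau$-gradient dominance property then converts this into $\mathbb{E}[f(x^k)-f(x^*)]\le\tau\epsilon_k^2$, which is exactly the quantity $\Delta_{k-1}:=\mathbb{E}[f(x^{k-1})-f(x^*)]$ entering the next mega epoch. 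The base case is arranged by choosing $\epsilon_0$ (for instance $\|\text{grad}f(x_0)\|$, or any available upper bound on it) so that $f(x_0)-f(x^*)\le\tau\epsilon_0^2$ already holds.

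For R-SD I would run the retraction $L$-smoothness inequality (Assumption~\ref{retraction_smooth_assump}) with $\eta=1/L$ to get $f(x_{t+1}^k)\le f(x_t^k)-\tfrac{1}{2L}\|\text{grad}f(x_t^k)\|^2$, telescope over $t=0,\dots,T_k-1$, and use that $x^k$ is drawn uniformly from $\{x_t^k\}_{t=0}^{T_k-1}$ to obtain $\mathbb{E}\|\text{grad}f(x^k)\|^2\le 2L\,\Delta_{k-1}/T_k$. The inductive bound $\Delta_{k-1}\le 4\tau\epsilon_k^2$ then forces only $T_k=\mathcal{O}(1+L\tau)$ inner iterations to reach $\mathbb{E}\|\text{grad}f(x^k)\|^2\le\epsilon_k^2$. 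Each R-SD iteration is one full gradient, i.e. $n$ IFO calls, so mega epoch $k$ costs $\mathcal{O}(n(1+L\tau))$; summing over the $\mathcal{O}(\log(1/\epsilon))$ mega epochs gives $\mathcal{O}((n+L\tau n)\log(1/\epsilon))$.

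For R-SGD the per-step inequality, after taking conditional expectation over the random index $i_t^k$ and using $\mathbb{E}\|\text{grad}f_i(x)\|^2\le G^2$, becomes $\mathbb{E}[f(x_{t+1}^k)\mid x_t^k]\le f(x_t^k)-\eta\|\text{grad}f(x_t^k)\|^2+\tfrac{L\eta^2G^2}{2}$. Rearranging, telescoping over the inner loop, dividing by $T_k\eta$ and again invoking the uniform output gives $\mathbb{E}\|\text{grad}f(x^k)\|^2\le \Delta_{k-1}/(T_k\eta)+\tfrac{L\eta G^2}{2}$. I would balance the two terms against $\epsilon_k^2$ with $\eta=\mathcal{O}(\epsilon_k^2/(LG^2))$ and $T_k$ of order $\Delta_{k-1}LG^2/\epsilon_k^4$; the induction $\Delta_{k-1}\le 4\tau\epsilon_k^2$ collapses the latter to $T_k=\mathcal{O}(\tau LG^2/\epsilon_k^2)$. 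Each R-SGD iteration costs one IFO call, so the total is $\sum_{k=1}^K T_k=\mathcal{O}\!\big(\tau LG^2\sum_{k=1}^K \epsilon_k^{-2}\big)$, and since $\sum_{k=1}^K\epsilon_k^{-2}=\epsilon_0^{-2}\sum_{k=1}^K 4^k=\mathcal{O}(4^K/\epsilon_0^2)=\mathcal{O}(1/\epsilon^2)$ is a geometric series dominated by its last term, this yields $\mathcal{O}(LG^2/\epsilon^2)$ (with $\tau$ treated as a problem constant).

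The descent lemmas, telescoping and the arithmetic of choosing $\eta$ and $T_k$ are routine. The step that needs care---and where the whole gain over the $\mathcal{O}(1/\epsilon^4)$ rate of plain R-SGD lives---is the induction bookkeeping: what must propagate between mega epochs is the \emph{squared}-gradient accuracy $\mathbb{E}\|\text{grad}f(x^k)\|^2\le\epsilon_k^2$, so that gradient dominance is applicable and $\Delta_{k-1}$ shrinks geometrically like $\epsilon_k^2$ rather than remaining a fixed constant; this is precisely what turns $\sum_k T_k$ into a convergent geometric series in $1/\epsilon$ instead of something scaling with $1/\epsilon^4$. The R-SD case is easier, since the noise term $\tfrac{L\eta G^2}{2}$ is absent and linear convergence in function value is automatic once $\eta=1/L$ is fixed.
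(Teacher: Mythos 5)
Your proposal is correct and follows essentially the same route as the paper: restart with halved accuracies, descent lemma plus telescoping within each mega epoch, gradient dominance to convert $\mathbb{E}\|\mathrm{grad}f(x^{k-1})\|^2\le\epsilon_{k-1}^2$ into $\Delta_{k-1}\le\tau\epsilon_{k-1}^2$, and a geometric series dominated by its last term for the R-SGD total. The only difference is cosmetic: for R-SGD the paper takes $\eta=z/\sqrt{T_k}$ with $z$ optimized, whereas you fix $\eta=\mathcal{O}(\epsilon_k^2/(LG^2))$; both balance the same two terms and yield the same $T_k=\mathcal{O}(\tau LG^2/\epsilon_k^2)$.
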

\begin{proof} 
    The proof idea is similar to Theorems \ref{complexity_svrg_gd} and \ref{complexity_srg_gd}. We first consider a single epoch $k$. By retraction $L$-smoothness, 
    \begin{align}
        f(x_{t+1}^k) &\leq f(x_t^k) + \langle \text{grad}f(x_t^k) , -\eta \text{grad}f(x_{t}^k) \rangle + \frac{L}{2} \| -\eta \text{grad}f(x_t^k) \|^2 \nonumber\\
        &= f(x_t^k) - (\eta -\frac{L\eta^2}{2}) \| \text{grad}f(x_t^k) \|^2.  
    \end{align}
    Choose $\eta = \frac{1}{L}$ and summing this inequality from $t = 0,...,T_k-1$ gives
    \begin{align}
        \frac{1}{T_k}\sum_{t=0}^{T_k-1} \mathbb{E}\| \text{grad}f(x_t^k) \|^2 \leq  \frac{2L \mathbb{E}[f(x_0^k) - f(x_{T_k}^k)]}{T_k} \leq \frac{2L \Delta_{k-1}}{T_k},
    \end{align}
    where $\Delta_{k-1} := \mathbb{E}[f(x^{k-1}) - f(x^*)]$. Note the update rule of $x^k$ gives $\mathbb{E}\| \text{grad}f(x^k) \|^2 = \frac{1}{T_k}\sum_{t=0}^{T_k-1} \mathbb{E}\| \text{grad}f(x_t^k) \|^2$. Therefore, to ensure $\mathbb{E}\| \text{grad}f(x^k) \|^2 \leq \epsilon_k^2$, we require at least 
    \begin{equation}
        T_k = \frac{2L \Delta_{k-1}}{\epsilon_{k}^2} \leq \frac{2L \tau \mathbb{E}\| \text{grad}f(x^{k-1}) \|^2}{\epsilon_k^2} \leq \frac{2L \tau \epsilon_{k-1}^2}{\epsilon_{k}^2} = 8L \tau.         
    \end{equation}
    IFO complexity of a single epoch is given by $8L\tau n = \mathcal{O}(n + L \tau n)$. By similar argument, to ensure $\mathbb{E}\| \text{grad}f(x^K) \|^2 \leq \epsilon^2$, we require $\log(\frac{1}{\epsilon})$ epochs. Hence the total IFO complexity of R-SD is given as $\mathcal{O}\big( (n + L \tau n)\log(\frac{1}{\epsilon}) \big)$. This result matches the complexity of Euclidean gradient descent under gradient dominance condition (see \cite{ReddiSVRG2016,PolyakPLInEQ1963}).Similarly, for R-SGD, we have 
    \begin{align}
        \mathbb{E}[f(x_{t+1}^k)] &\leq \mathbb{E}[f(x_t^k) + \langle \text{grad}f(x_t^k) , -\eta \text{grad}f_{i_t^k}(x_{t}^k) \rangle + \frac{L}{2} \| -\eta \text{grad}f_{i_t^k}(x_t^k) \|^2] \nonumber\\
        &= \mathbb{E}[f(x_t^k)] - \eta \mathbb{E}\| \text{grad}f(x_t^k) \|^2 + \frac{L \eta^2 G^2}{2}. 
    \end{align}
    Choosing $\eta = \frac{z}{\sqrt{T_k}}$ where $z > 0$ is a constant and summing over $t = 0,...,T_k-1$, we have
    \begin{align}
        \frac{1}{T_k} \sum_{t=0}^{T_k -1} \mathbb{E}\| \text{grad}f(x_t^k) \|^2 \leq \frac{\Delta_{k-1}}{z\sqrt{T_k}} + \frac{L G^2 z}{2\sqrt{T_k}}.
    \end{align}
    Choose $z = \sqrt{\frac{2\Delta_{k-1}}{L G^2}}$ to minimize right hand side as $\frac{\sqrt{2 L G^2\Delta_{k-1}}}{\sqrt{T_k}}$. Hence to ensure $\mathbb{E}\| \text{grad}f(x^k) \|^2 \leq \epsilon_k^2$, we require at least
    \begin{equation}
        T_k = \frac{2 LG^2\Delta_{k-1}}{\epsilon_k^4} \leq \frac{2 LG^2 \epsilon_{k-1}^2}{\epsilon_k^4} = \frac{8LG^2}{\epsilon_k^2}. 
    \end{equation}
    IFO complexity of a single epoch is therefore $\mathcal{O}(\frac{LG^2}{\epsilon_k^2})$. To achieve $\epsilon$-accurate solution, we require $\log(\frac{1}{\epsilon})$ epochs and hence, the total IFO complexity of R-SGD is $\mathcal{O}\big( \frac{LG^2}{\epsilon^2} \big)$.
\end{proof}

\newpage
\section{Riemannian AbaSPIDER}
\label{r_abaspider_appendix}

Here we propose R-AbaSPIDER in Algorithm \ref{RAbaSPIDER_algorithm}, which is R-SPIDER with batch size adaptation. Let $k_0 = \lfloor k/p \rfloor \cdot p$, so that $k_0 \leq k \leq k_0 + p -1$. Define sigma algebras $\mathcal{F}_{k} := \{ \mathcal{S}_{\cdot, 0}, \mathcal{S}_{\cdot, 1}, ..., \mathcal{S}_{\cdot, k-1}\}$ and we can similarly establish a bound on the difference between $v_k$ and $\text{grad}f(x_k)$.

\begin{lemma}[Gradient estimation error bound for R-AbaSPIDER]
\label{estimation_bound_abaspider}
Suppose Assumption \ref{common_assump} holds and consider Algorithm \ref{RAbaSPIDER_algorithm}, the estimation bound of $v_k$ to full gradient is bounded as
    $$\mathbb{E}[\| v_k - \emph{grad}f(x_k) \|^2 | \mathcal{F}_{k_0}] \leq  \frac{(L_l + \theta G)^2}{S_2} \sum_{i=k_0}^{k_0+p-1} {\eta_{i}^2} + \mathbbm{1}_{\{ S_{1,k_0} < n \}} \frac{\sigma^2}{S_{1,k_0}}.$$
\end{lemma}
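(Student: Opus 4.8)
The plan is to mirror the argument for Lemma~\ref{lemma1_RSRG} (the R-AbaSRG estimate), since the only structural change in R-AbaSPIDER is that the update $x_{k+1} = R_{x_k}(-\eta_k v_k/\|v_k\|)$ normalizes the search direction, so that the tangent displacement $\xi_{k-1} := R_{x_{k-1}}^{-1}(x_k) = -\eta_{k-1} v_{k-1}/\|v_{k-1}\|$ has norm exactly $\|\xi_{k-1}\| = \eta_{k-1}$ rather than $\eta\|v_{k-1}\|$; this is precisely what replaces the per-step increment $\eta^2\|v_{k-1}\|^2/b$ by $\eta_{k-1}^2/S_2$. First, since $\mathcal{F}_{k_0}\subseteq\mathcal{F}_k$, the tower property reduces the claim to bounding $\mathbb{E}[\|v_k - \text{grad}f(x_k)\|^2 \mid \mathcal{F}_k]$, where conditional on $\mathcal{F}_k$ the points $x_{k-1}, x_k$ and hence the vector transport $\mathcal{T}_{x_{k-1}}^{x_k}$ are all measurable and the only randomness comes from the mini-batch $\mathcal{S}_{\cdot,k}$.

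Next I would expand $v_k = \text{grad}f_{\mathcal{S}_{\cdot,k}}(x_k) - \mathcal{T}_{x_{k-1}}^{x_k}(\text{grad}f_{\mathcal{S}_{\cdot,k}}(x_{k-1}) - v_{k-1})$, add and subtract $\mathcal{T}_{x_{k-1}}^{x_k}\text{grad}f(x_{k-1})$ and $\text{grad}f(x_k)$, and use unbiasedness $\mathbb{E}[\text{grad}f_{\mathcal{S}_{\cdot,k}}(\cdot)\mid\mathcal{F}_k] = \text{grad}f(\cdot)$ together with the isometry of $\mathcal{T}$ to kill the cross term, obtaining $\mathbb{E}[\|v_k - \text{grad}f(x_k)\|^2 \mid \mathcal{F}_k] = \mathbb{E}[\|\text{grad}f_{\mathcal{S}_{\cdot,k}}(x_k) - \mathcal{T}_{x_{k-1}}^{x_k}\text{grad}f_{\mathcal{S}_{\cdot,k}}(x_{k-1}) - \text{grad}f(x_k) + \mathcal{T}_{x_{k-1}}^{x_k}\text{grad}f(x_{k-1})\|^2 \mid \mathcal{F}_k] + \|v_{k-1} - \text{grad}f(x_{k-1})\|^2$. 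Applying $\mathbb{E}\|X - \mathbb{E}X\|^2 \le \mathbb{E}\|X\|^2$, then independence of with-replacement sampling to extract the factor $1/S_2$, and finally Lemma~\ref{vec_trans_lemma} with $\|\xi_{k-1}\| = \eta_{k-1}$, yields the one-step recursion $\mathbb{E}[\|v_k - \text{grad}f(x_k)\|^2 \mid \mathcal{F}_k] \le \frac{(L_l+\theta G)^2\eta_{k-1}^2}{S_2} + \|v_{k-1} - \text{grad}f(x_{k-1})\|^2$.

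Then I would unroll this recursion from index $k$ down to the reference index $k_0$, taking conditional expectation with respect to $\mathcal{F}_{k_0}$ at each stage, to get $\mathbb{E}[\|v_k - \text{grad}f(x_k)\|^2 \mid \mathcal{F}_{k_0}] \le \frac{(L_l+\theta G)^2}{S_2}\sum_{i=k_0}^{k-1}\eta_i^2 + \mathbb{E}[\|v_{k_0} - \text{grad}f(x_{k_0})\|^2 \mid \mathcal{F}_{k_0}]$; since $k_0 \le k \le k_0+p-1$, the sum is at most $\sum_{i=k_0}^{k_0+p-1}\eta_i^2$. Finally, because $v_{k_0} = \text{grad}f_{\mathcal{S}_{1,k_0}}(x_{k_0})$ is a batch gradient drawn without replacement, Lemma~\ref{variance_bound_wors} and Assumption~\ref{bounded_var_assump} give $\mathbb{E}[\|v_{k_0} - \text{grad}f(x_{k_0})\|^2 \mid \mathcal{F}_{k_0}] \le \frac{n - S_{1,k_0}}{n-1}\frac{\sigma^2}{S_{1,k_0}} \le \mathbbm{1}_{\{S_{1,k_0}<n\}}\frac{\sigma^2}{S_{1,k_0}}$ in the finite-sum case, and $\le \sigma^2/S_{1,k_0}$ trivially in the online case, exactly as in \eqref{v_0_s_gradient_diff_bound}; summing the two contributions proves the lemma.

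The only points requiring care are the bookkeeping of which step sizes $\eta_i$ enter the telescoped sum and verifying $\|R_{x_{k-1}}^{-1}(x_k)\| = \eta_{k-1}$ under normalization, for which one may assume $v_{k-1}\neq 0$ as is standard for R-SPIDER; everything else is a routine transcription of the R-AbaSRG computation, and in particular no trigonometric distance bound or relation between exponential map and retraction is needed here because the recursive estimator only compares successive iterates.
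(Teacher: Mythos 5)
Your proposal is correct and follows exactly the route the paper intends: the paper's own "proof" of this lemma is a one-line remark that the technique is identical to that of Lemma \ref{lemma1_RSRG}, and you have correctly transcribed that argument, including the key observation that the normalized update makes $\|R_{x_{k-1}}^{-1}(x_k)\| = \eta_{k-1}$ so the per-step increment becomes $\eta_{k-1}^2/S_2$, and the handling of the initial batch term via Lemma \ref{variance_bound_wors}. Your write-up is in fact more detailed than the paper's.
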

\begin{proof}
    The proof technique is exactly the same as the proof of Lemma \ref{lemma1_RSRG}. 
\end{proof}

\begin{algorithm}
 \caption{R-AbaSPIDER}
 \label{RAbaSPIDER_algorithm}
 \begin{algorithmic}[1]
  \STATE \textbf{Input:} Epoch length $K$, batch gradient frequency $p$, step size $\{\eta_k\}$, batch size $S_2$, auxiliary parameters $\alpha_1, \alpha_2, \beta_{0}$, initialization ${x}_0$.
  \FOR {$k = 0,...,K-1 $}
  \IF{mod$(k, p) = 0$}
    \STATE $S_{1,k} = \begin{cases} \min \{\alpha_1 \sigma^2/\beta_k , n\}, & \text{  (finite-sum)}\\
    \min\{\alpha_1 \sigma^2/\beta_k , \alpha_2 \sigma^2/\epsilon^2 \}, & \text{ (online) } \end{cases}$
    \STATE Draw a sample $\mathcal{S}_{1,k}$ from $[n]$ of size $S_{1,k}$ without replacement.
    \STATE $v_k = \text{grad}f_{\mathcal{S}_{1,k}}(x_k)$.
    \STATE $\beta_{k+1} = {\| v_k \|^2}/{p}$. 
  \ELSE 
    \STATE Draw a sample $\mathcal{S}_{2,k}$ from $[n]$ of size $S_2$ with replacement.
    \STATE $v_k = \text{grad}f_{\mathcal{S}_{2,k}}(x_k) -  \mathcal{T}_{x_{k-1}}^{x_k}(\text{grad}f_{\mathcal{S}_{2,k}}(x_{k-1}) - v_{k-1})$.
    \STATE $\beta_{k+1} = \beta_{k} + {\| v_k\|^2}/{p}$.
  \ENDIF
    \STATE $x_{k+1} = R_{x_k}(-\eta_k \frac{v_k}{\| v_k \|})$.
  \ENDFOR
  \STATE \textbf{Output:} $\Tilde{x}$ uniformly selected at random from $\{x_k\}_{k=0}^{K-1}$. 
 \end{algorithmic} 
\end{algorithm}

\begin{theorem}[Convergence analysis of R-AbaSPIDER]
    Let $x^* \in \mathcal{M}$ be an optimal point and suppose Assumption \ref{common_assump} holds. Consider Algorithm \ref{RAbaSPIDER_algorithm} with step size $\eta_k = \min\{ \frac{\epsilon}{\max\{L, L_l + \theta G \} n_0}, \frac{\| v_k \|}{2 \max\{L, L_l + \theta G\} n_0} \}, \alpha \geq 2$ and following parameters  
    \begin{align*}
        S_2 = \frac{\sqrt{n}}{n_0}, \quad p = n_0\sqrt{n}, \quad n_0 \in [1, \sqrt{n}], \quad &\text{ (finite-sum) }  \nonumber\\
        S_2 = \frac{2\sigma}{\epsilon n_0}, \quad p = \frac{\sigma n_0}{\epsilon}, \quad n_0 \in [1, \frac{2\sigma^2}{\epsilon}], \quad &\text{ (online) }
    \end{align*}
    where $n_0$ is a free parameter to ensure $S_2 \geq 1$. Under both finite-sum and online setting, output $\Tilde{x}$ after running $K$ iterations satisfies
    \begin{equation}
        \mathbb{E}\| \emph{grad}f(\Tilde{x}) \|^2 \leq \frac{2\Delta}{K \Tilde{\eta}}  + \frac{3}{2}\epsilon^2, \nonumber
    \end{equation}
    where $\Tilde{\eta} := \min\{ \frac{\epsilon}{\max\{L, L_l + \theta G\} n_0 G}, \frac{1}{2\max\{L, L_l + \theta G\}n_0}\}$ is an upper bound on $\eta_k$. 
\end{theorem}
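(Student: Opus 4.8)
The plan is to adapt the SPIDER-type analysis of normalized gradient descent \cite{FangSPIDER2018,ZhouRSPIDER2019} while carrying along the history-gradient quantity $\beta_k$ exactly as in the telescoping arguments for R-AbaSVRG and R-AbaSRG; write $\Lambda:=\max\{L,L_l+\theta G\}$ for brevity. First I would apply retraction $L$-smoothness (Assumption~\ref{retraction_smooth_assump}) to the normalized update $x_{k+1}=R_{x_k}(-\eta_k v_k/\|v_k\|)$ to get
\begin{equation}
    f(x_{k+1}) \leq f(x_k) - \eta_k \big\langle \text{grad}f(x_k), \tfrac{v_k}{\|v_k\|} \big\rangle + \tfrac{L\eta_k^2}{2}, \nonumber
\end{equation}
and then extract $\|\text{grad}f(x_k)\|^2$ separately in the two regimes of the $\min$ defining $\eta_k$. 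In the ``small-gradient'' regime $\eta_k=\|v_k\|/(2\Lambda n_0)$ (where $\|v_k\|\le 2\epsilon$) the polarization identity gives $\|\text{grad}f(x_k)\|^2 \le 4\Lambda n_0\,(f(x_k)-f(x_{k+1})) - \tfrac12\|v_k\|^2 + \|v_k-\text{grad}f(x_k)\|^2$. In the ``constant-step'' regime $\eta_k=\epsilon/(\Lambda n_0)$ one uses $\langle \text{grad}f(x_k),v_k/\|v_k\|\rangle \ge \|\text{grad}f(x_k)\| - 2\|v_k-\text{grad}f(x_k)\|$, Young's inequality, and the bounded modified-gradient assumption $\|v_k\|\le G$ to reach $\|\text{grad}f(x_k)\|^2 \le \tfrac{2\Lambda n_0 G}{\epsilon}(f(x_k)-f(x_{k+1})) + \mathcal{O}(\|v_k-\text{grad}f(x_k)\|^2) + \mathcal{O}(\epsilon^2)$. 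Since $\tilde\eta=\min\{\epsilon/(\Lambda n_0 G),\ 1/(2\Lambda n_0)\}$ is exactly the reciprocal, up to the factor $2$, of the larger of $4\Lambda n_0$ and $2\Lambda n_0 G/\epsilon$, both inequalities collapse to the single form $\|\text{grad}f(x_k)\|^2 \le \tfrac{2}{\tilde\eta}(f(x_k)-f(x_{k+1})) + c\,\|v_k-\text{grad}f(x_k)\|^2 - \tfrac12\,\mathbbm{1}_{\{\|v_k\|\le 2\epsilon\}}\|v_k\|^2 + \mathcal{O}(\epsilon^2)$.

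Next I would take conditional expectation given $\mathcal{F}_{k_0}$ and substitute Lemma~\ref{estimation_bound_abaspider} for $\mathbb{E}[\|v_k-\text{grad}f(x_k)\|^2\mid\mathcal{F}_{k_0}]$, namely $\tfrac{(L_l+\theta G)^2}{S_2}\sum_{i=k_0}^{k_0+p-1}\eta_i^2 + \mathbbm{1}_{\{S_{1,k_0}<n\}}\tfrac{\sigma^2}{S_{1,k_0}}$. The crude uniform bound $\eta_i \le \epsilon/(\Lambda n_0)$ gives $\sum_i \eta_i^2 \le p\,\epsilon^2/(\Lambda^2 n_0^2)$, and the stated parameter choices $S_2=\sqrt n/n_0,\ p=n_0\sqrt n$ (finite-sum) and $S_2=2\sigma/(\epsilon n_0),\ p=\sigma n_0/\epsilon$ (online) make $\tfrac{(L_l+\theta G)^2 p\,\epsilon^2}{S_2\,\Lambda^2 n_0^2}=\mathcal{O}(\epsilon^2)$, with $n_0$ a free parameter only constrained so that $S_2\ge1$. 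The snapshot term I would treat exactly as in Theorems~\ref{convergence_radasvrg} and \ref{converg_RSRG}: $\mathbbm{1}_{\{S_{1,k_0}<n\}}\sigma^2/S_{1,k_0}\le \beta_{k_0}/\alpha_1$ under finite-sum and $\le(\beta_{k_0}+\epsilon^2)/\alpha$ under online, where $\beta_{k_0}=\tfrac1p\sum_{k}\|v_k\|^2$ is accumulated over the preceding length-$p$ window.

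Then I would sum over $k=0,\dots,K-1$ (which automatically runs over all windows), telescope $\sum_k(f(x_k)-f(x_{k+1}))\le\Delta$, and collect every $\|v_k\|^2$ contribution — the $-\tfrac12\|v_k\|^2$ descent terms, the $\|v_k\|^2$ carried in by Lemma~\ref{estimation_bound_abaspider}, and the $\|v_k\|^2$ hidden in the $\beta_{k_0}$-bound. As in the R-AbaSRG argument, the choice $\alpha\ge2$ keeps the net coefficient of $\sum_k\|v_k\|^2$ nonpositive, and the $\beta$'s telescope with only a harmless $\epsilon^2 K$ surplus from the initialization $\beta_0$. Dividing by $K$, using that $\tilde x$ is drawn uniformly from $\{x_k\}_{k=0}^{K-1}$ so that $\mathbb{E}\|\text{grad}f(\tilde x)\|^2=\tfrac1K\sum_k\mathbb{E}\|\text{grad}f(x_k)\|^2$, and collecting the three $\mathcal{O}(\epsilon^2)$ pieces (recursive estimation error, $\beta_0$ surplus, constant-step residual) into $\tfrac32\epsilon^2$, gives $\mathbb{E}\|\text{grad}f(\tilde x)\|^2\le\tfrac{2\Delta}{K\tilde\eta}+\tfrac32\epsilon^2$.

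The main obstacle is the interaction of the variable step size with the two telescoping arguments: the bound $\eta_i\le\epsilon/(\Lambda n_0)$ used inside Lemma~\ref{estimation_bound_abaspider} discards the $\|v_i\|$-dependence that normalization is designed to exploit, and it is this looseness — together with the requirement that, after merging the descent term, the lemma's estimate, and the $\beta_{k_0}$-bound, the coefficient of $\sum_k\|v_k\|^2$ remain of the correct sign — that, as the text remarks, can leave R-AbaSPIDER with a total complexity worse in practice than vanilla R-SPIDER. Checking that the constant-step regime genuinely reduces to the same $\tfrac{2}{\tilde\eta}(f(x_k)-f(x_{k+1}))$ form (which is where the extra factor $G$ in $\tilde\eta$ and the bounded modified-gradient assumption are used) and nailing down the admissible ranges of $\alpha$, $n_0$, and $\beta_0$ is the delicate bookkeeping that remains.
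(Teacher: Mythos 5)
Your overall skeleton (applying Lemma \ref{estimation_bound_abaspider} with the crude bound $\eta_i \le \epsilon/(\Lambda n_0)$, the $\beta_{k_0}/\alpha$ bound on the snapshot deviation, the telescoping of the $\beta$'s with $\beta_0 \le \epsilon^2 K/p$, and the final collection of the $\mathcal{O}(\epsilon^2)$ pieces) matches the paper, but the two-regime case split at the descent-lemma stage creates a genuine gap. The paper does not split cases: writing $\Lambda := \max\{L, L_l+\theta G\}$ as you do, it views the update as $x_{k+1} = R_{x_k}(-\tilde\eta_k v_k)$ with effective step size $\tilde\eta_k := \eta_k/\|v_k\| = \min\{\tfrac{\epsilon}{\Lambda n_0\|v_k\|}, \tfrac{1}{2\Lambda n_0}\}$ and applies the polarization identity once, for every $k$. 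Since $\tilde\eta_k \le \tfrac{1}{2\Lambda n_0} \le \tfrac{1}{2L}$ always holds, the coefficient $1 - \tilde\eta_k L \ge \tfrac12$ is uniform, so $\|\text{grad}f(x_k)\|^2 \le \tfrac{2}{\tilde\eta_k}\big(f(x_k) - f(x_{k+1})\big) + \|v_k - \text{grad}f(x_k)\|^2 - \tfrac12\|v_k\|^2$ holds at \emph{every} iteration; the structure of the $\min$ enters only through the lower bound $\tilde\eta_k \ge \tilde\eta$, which follows from $\|v_k\| \le 3G$ (triangle inequality plus Assumption \ref{bounded_norm_assump} applied to the recursion, not a separate boundedness assumption on $v_k$).

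Your constant-step regime instead uses the first-order SPIDER bound $\langle \text{grad}f(x_k), v_k/\|v_k\|\rangle \ge \|\text{grad}f(x_k)\| - 2\|v_k - \text{grad}f(x_k)\|$, and your own ``collapsed'' inequality carries the descent term only as $-\tfrac12\mathbbm{1}_{\{\|v_k\|\le 2\epsilon\}}\|v_k\|^2$. This is fatal for the batch-size-adaptation bookkeeping: the bound $\sum_k \beta_{k_0} \le \epsilon^2 K + \sum_j \|v_j\|^2$ injects $+\tfrac{1}{\alpha}\|v_j\|^2$ for every $j$, including the iterations in the constant-step regime where $\|v_j\| \ge 2\epsilon$ is large, and without a matching $-\tfrac12\|v_j\|^2$ at exactly those iterations the net coefficient of $\sum_j\|v_j\|^2$ cannot be made nonpositive by any choice of $\alpha$. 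Attempting to recover a quadratic descent term from the first-order bound via $\|v_k\|^2 \le 3G\|v_k\|$ only yields a coefficient of order $\epsilon/G$, far too small to absorb $\tfrac{1}{\alpha}\|v_k\|^2$, and the leftover first-power term $\|v_k - \text{grad}f(x_k)\|$ forces a non-vanishing constant under Young's inequality. The fix is simply to abandon the case split and use the polarization identity in both regimes, which is exactly the paper's route; the small-gradient half of your argument is already identical to it.
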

\begin{proof}
    Denote $\Tilde{\eta}_k := \eta_k/\| v_k\|$ as the effective step size. By applying retraction $L$-smoothness we have,
    \begin{align}
        f(x_{k+1}) &\leq f(x_k) - \Tilde{\eta}_k \langle \text{grad}f(x_k), v_k \rangle + \frac{\Tilde{\eta}_k^2 L}{2} \|v_k \|^2 \nonumber\\
        &= f(x_k) - \frac{\Tilde{\eta}_k}{2} \| \text{grad}f(x_k) \|^2 - \frac{\Tilde{\eta}_k}{2}\| v_k \|^2 + \frac{\Tilde{\eta}_k}{2} \| v_k - \text{grad}f(x_k) \|^2 + \frac{\Tilde{\eta}_k^2 L}{2}\| v_k \|^2 \nonumber\\
        &= f(x_k )- \frac{\Tilde{\eta}_k}{2} \| \text{grad}f(x_k) \|^2 + \frac{\Tilde{\eta}_k}{2} \| v_k - \text{grad}f(x_k) \|^2 - \frac{\Tilde{\eta}_k }{2}\big( 1 - \Tilde{\eta}_k L  \big) \| v_k \|^2. 
    \end{align}
    Rearranging this inequality and taking expectation with respect to $\mathcal{F}_{k_0}$ gives
    \begin{align}
        \mathbb{E}[\| \text{grad}f(x_k) \|^2 | \mathcal{F}_{k_0}] &\leq \frac{2}{\Tilde{\eta}_k} \mathbb{E}[f(x_k) - f(x_{k+1})| \mathcal{F}_{k_0}] + \mathbb{E}[\| v_k - \text{grad}f(x_k) \|^2  | \mathcal{F}_{k_0}] - \big( 1 - \Tilde{\eta}_k L  \big) \mathbb{E} [\| v_k \|^2 | \mathcal{F}_{k_0}]. \label{jfidjidjfidj} 
    \end{align}
    Given $\Tilde{\eta}_k = \eta_k/\|v_k \| = \min\{ \frac{\epsilon}{\max\{L, L_l + \theta G\} n_0 \| v_k\|}, \frac{1}{2\max\{L, L_l + \theta G\}n_0}\}$, we have 
    \begin{align}
         \Tilde{\eta}_k &\leq \frac{1}{2\max\{ L, L_l +\theta G \} n_0} \leq \frac{1}{2\max\{ L, L_l +\theta G \} }, \\
         (1-\Tilde{\eta}_k L) \| v_k\|^2 &\geq \Big(  1 - \frac{L}{2\max\{ L, L_l +\theta G \}}\Big) \| v_k \|^2 \geq \frac{\|v_k \|^2}{2},
    \end{align}
    where we note $n_0 \geq 1$. Next, we further simplify bound on gradient estimation error. Given parameter $\eta_k = \min\{ \frac{\epsilon}{\max\{L, L_l + \theta G\} n_0}, \frac{\| v_k \|}{2\max\{L, L_l + \theta G\}n_0} \} \leq \frac{\epsilon}{\max\{L, L_l + \theta G\} n_0}$, we obtain the following bounds by Lemma \ref{estimation_bound_abaspider}. Under finite-sum setting, 
    \begin{align}
        \mathbb{E}[\| v_k - \text{grad} f(x_k)\|^2 | \mathcal{F}_{k_0}] \leq \frac{(L_l + \theta G)^2}{S_2} \sum_{t=k_0}^{k_0+p-1} \eta_t^2 + \mathbbm{1}_{\{ S_{1,k_0} < n \}} \frac{\sigma^2}{S_{1,k_0}} &\leq \frac{(L_l + \theta G)^2}{S_2} \frac{p \epsilon^2}{(\max\{L, L_l + \theta G\})^2 n_0^2} + \frac{\beta_{k_0}}{\alpha_1} \nonumber\\
        &\leq {\epsilon^2} + \frac{\beta_{k_0}}{\alpha},
    \end{align}
    where we choose $\alpha_1 \geq \alpha$. Similarly, under online setting, 
    \begin{align}
        \mathbb{E}[\| v_k - \text{grad} f(x_k)\|^2 | \mathcal{F}_{k_0}] &\leq \frac{(L_l + \theta G)^2}{S_2} \sum_{t=k_0}^{k_0+p-1} \eta_t^2 + \mathbbm{1}_{\{ S_{1,k_0} < n \}} \frac{\sigma^2}{S_{1,k_0}} \nonumber\\
        &\leq \frac{(L_l + \theta G)^2}{S_2} \frac{p \epsilon^2}{(\max\{L, L_l + \theta G\})^2 n_0^2} + \max\{ \frac{\beta_{k_0}}{\alpha}, \frac{\epsilon^2}{\alpha} \} \nonumber\\
        &\leq \frac{\epsilon^2}{2} + \frac{\beta_{k_0}}{\alpha} + \frac{\epsilon^2}{\alpha}
    \end{align}
    where we choose $\alpha_1, \alpha_2 \geq \alpha$ and use the fact that $\max\{a,b \} \leq a +b$. As long as $\alpha \geq 2$, for both finite-sum and online setting, we have 
    \begin{equation}
        \mathbb{E}[\| v_k - \text{grad} f(x_k)\|^2 | \mathcal{F}_{k_0}] \leq \epsilon^2 + \frac{\beta_{k_0}}{\alpha}. 
    \end{equation}
    Substituting these results back to \eqref{jfidjidjfidj} gives
    \begin{align}
        \mathbb{E}[\| \text{grad}f(x_k) \|^2 | \mathcal{F}_{k_0}] &\leq \frac{2}{\Tilde{\eta}_k} \mathbb{E}[f(x_k) - f(x_{k+1})| \mathcal{F}_{k_0}] +{\epsilon^2} + \frac{\beta_{k_0}}{\alpha} - \frac{1}{2} \mathbb{E} [\| v_k \|^2 | \mathcal{F}_{k_0}]. \label{fdkfjdkfde}
    \end{align}
    Note that by Assumption \ref{bounded_norm_assump} and triangle inequality, norm of $v_k$ is bounded by a constant.
    \begin{align}
        \| v_k \| &= \| \text{grad}f_{\mathcal{S}_{2,k}}(x_k) -  \mathcal{T}_{x_{k-1}}^{x_k}(\text{grad}f_{\mathcal{S}_{2,k}}(x_{k-1}) - v_{k-1}) \| \leq \|\text{grad}f_{\mathcal{S}_{2,k}}(x_k) \| + \| \text{grad}f_{\mathcal{S}_{2,k}}(x_{k-1}) \| + \|v_{k-1} \| \leq 3 G. 
    \end{align}
    Hence, we have 
    \begin{align}
        \Tilde{\eta}_k &=  \min\{ \frac{\epsilon}{\max\{L, L_l + \theta G\} n_0 \| v_k\|}, \frac{1}{2\max\{L, L_l + \theta G\}n_0}\} \nonumber\\
        &\geq \min\{ \frac{\epsilon}{\max\{L, L_l + \theta G\} n_0 G}, \frac{1}{2\max\{L, L_l + \theta G\}n_0}\} =: \Tilde{\eta}. 
    \end{align}
    Substitute this result back in \eqref{fdkfjdkfde} and telescoping from $k = 0,..., K-1$ gives 
    \begin{align}
        \frac{1}{K}\sum_{k=0}^{K-1} \mathbb{E}\| \text{grad}f(x_k) \|^2 &\leq \frac{2}{K \Tilde{\eta}} \mathbb{E}[f(x_0) - f(x_K)] + \epsilon^2 + \frac{1}{\alpha K }\sum_{k=0}^{K-1} \beta_{k_0} - \frac{1}{2K}\sum_{k=0}^{K-1}\mathbb{E}\| v_k\|^2 \nonumber\\
        &\leq \frac{2}{K \Tilde{\eta}} \Delta + \epsilon^2 + \frac{1}{\alpha K }\sum_{k=0}^{K-1} \beta_{k_0} - \frac{1}{2K}\sum_{k=0}^{K-1}\mathbb{E}\| v_k\|^2, \label{fdooppppoo}
    \end{align}
    where $\Delta := f(x_0) - f(x^*)$. By definition of $\beta_{k}$ and $k_0$, we obtain
    \begin{align}
        \sum_{k=0}^{K-1} \beta_{k_0} &= p (\beta_0 + \beta_p + \cdots + \beta_{K_0 - p}) + (K - K_0) \beta_{K_0} \nonumber\\
        &\leq p( \beta_0 + \beta_p + \cdots + \beta_{K_0}) \nonumber\\
        &= p(\beta_0 + \frac{1}{p} \sum_{j=0}^{p-1} \| v_j\|^2 + \frac{1}{p} \sum_{j=p}^{2p-1} \| v_j \|^2 + \cdots + \frac{1}{p} \sum_{K_0 - p}^{K_0 -1}\| v_j \|^2) \nonumber\\
        &= p\beta_0 + \sum_{j=0}^{K_0-1}\|v_j \|^2 \leq \epsilon^2 K + \sum_{j=0}^{K-1}\| v_j\|^2, \label{fdderrttrtrt}
    \end{align}
    where $K_0 := \lfloor K/p \rfloor \cdot p$ and we choose $\beta_0 \leq \epsilon^2K/p$. Combining \eqref{fdderrttrtrt} with \eqref{fdooppppoo} gives
    \begin{align}
        \frac{1}{K}\sum_{k=0}^{K-1} \mathbb{E}\| \text{grad}f(x_k) \|^2 &\leq \frac{2}{K \Tilde{\eta}} \Delta + \epsilon^2 + \frac{\epsilon^2}{\alpha} - (\frac{1}{2K} - \frac{1}{\alpha K}) \sum_{k=0}^{K-1}\mathbb{E}\| v_k \|^2 \nonumber\\
        &\leq \frac{2}{K \Tilde{\eta}} \Delta + \frac{3}{2}\epsilon^2.
    \end{align}
    With the choice of $\alpha \geq 2$. By noting output $\Tilde{x}$ is uniformly chosen from $\{ x_k \}_{k=1}^{K-1}$, we have 
    \begin{equation}
        \mathbb{E}\| \text{grad}f(x_k) \|^2 \leq \frac{2}{K \Tilde{\eta}} \Delta + \frac{3}{2}\epsilon^2.
    \end{equation}
    To ensure $\mathbb{E}\| \text{grad}f(\Tilde{x}) \|^2 \leq 2\epsilon^2$, we require at least 
    \begin{equation}
        K = \frac{4\Delta}{\Tilde{\eta} \epsilon^2} = \frac{4\Delta}{\epsilon^2} \max\{ \frac{\Theta n_0 G}{\epsilon}, 2\Theta n_0\} = \frac{4\Xi\Delta}{\epsilon^2} = \mathcal{O}\big( \frac{\Xi}{\epsilon^2} \big),
    \end{equation}
    where $\Xi := \max\{ \frac{\Theta n_0 G}{\epsilon}, 2\Theta n_0\}$. Denote a similar average batch size as $\Tilde{S} := \frac{1}{K} \sum_{k=0}^{K-1} S_{1,k}$. Then the total IFO complexity is given by 
    \begin{equation}
        \lceil \frac{K}{p} \rceil \Tilde{S} + K S_2 = 
        \begin{cases} \mathcal{O}\big( \Tilde{S} + \frac{\Xi \Tilde{S}}{\sqrt{n}\epsilon^2} + \frac{\Xi \sqrt{n}}{\epsilon^2} \big), & \text{ (finite-sum) }\\
        \mathcal{O} \big( \frac{\Xi \Tilde{S}}{\sigma \epsilon} + \frac{\Xi \sigma}{\epsilon^3} \big), & \text{ (online) } \end{cases} \label{spider_complexity}
    \end{equation}
    Note by definition of $\Xi$, when $\epsilon$ is small, $\Xi = \mathcal{O} \big(\frac{\Theta G}{\epsilon} \big)$ and therefore, complexities in \eqref{spider_complexity} are worse than that of R-SPIDER by a factor of $1/\epsilon$. 
\end{proof}

\newpage
\section{Additional Experiment Results}
\label{additional_experiment_res_appendix}

\subsection{PCA problem on Grassmann manifold}
We here present results on synthetic datasets by varying $n$ and $d$ and also examine result sensitivity on all datasets by conducting three independent runs.

\begin{figure}[h]
\captionsetup{justification=centering}
    \centering
    \subfloat[{Run} 1]{\includegraphics[width = 0.28\textwidth, height = 0.21\textwidth]{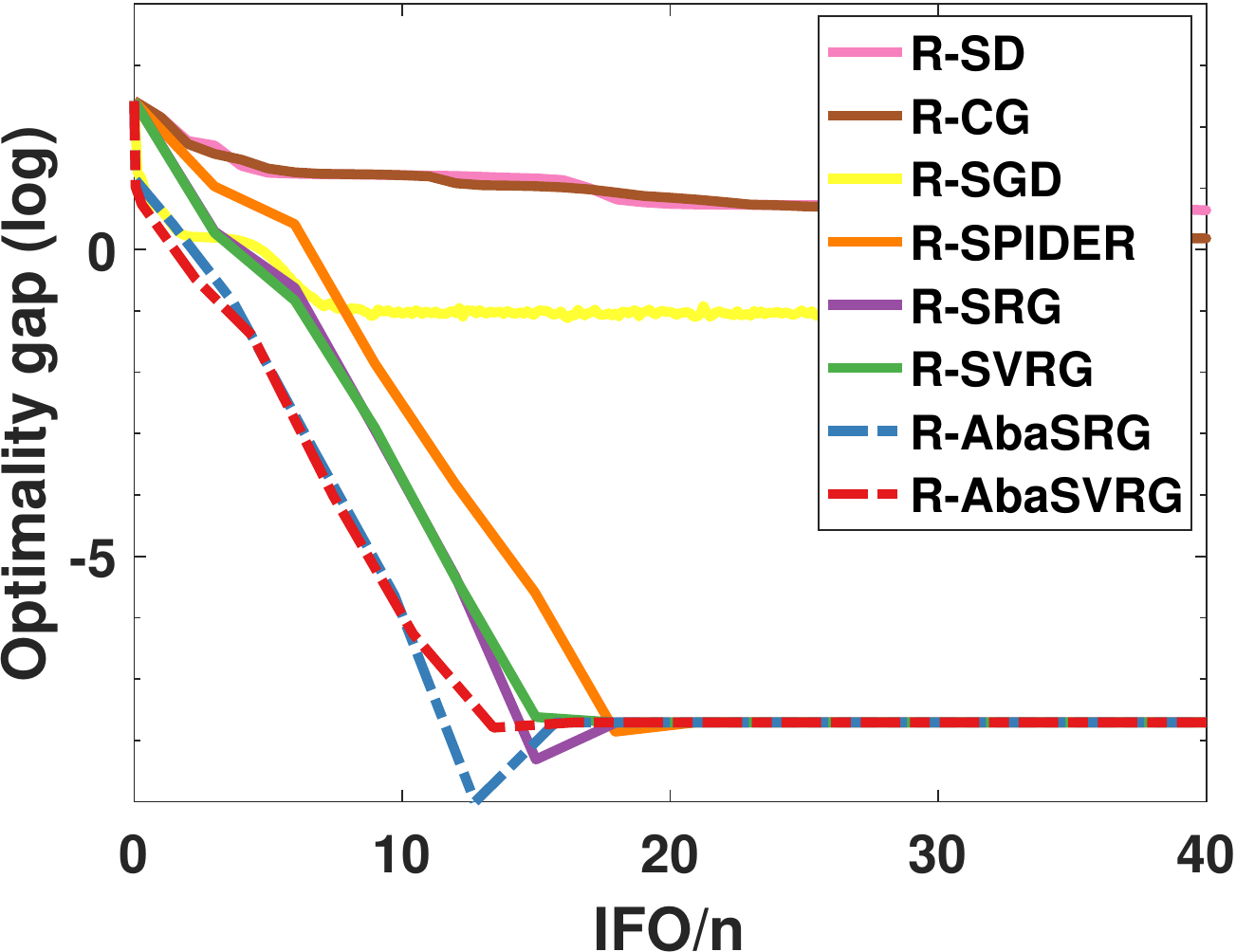}}
    \hspace{0.05in}
    \subfloat[{Run} 2]{\includegraphics[width = 0.28\textwidth, height = 0.21\textwidth]{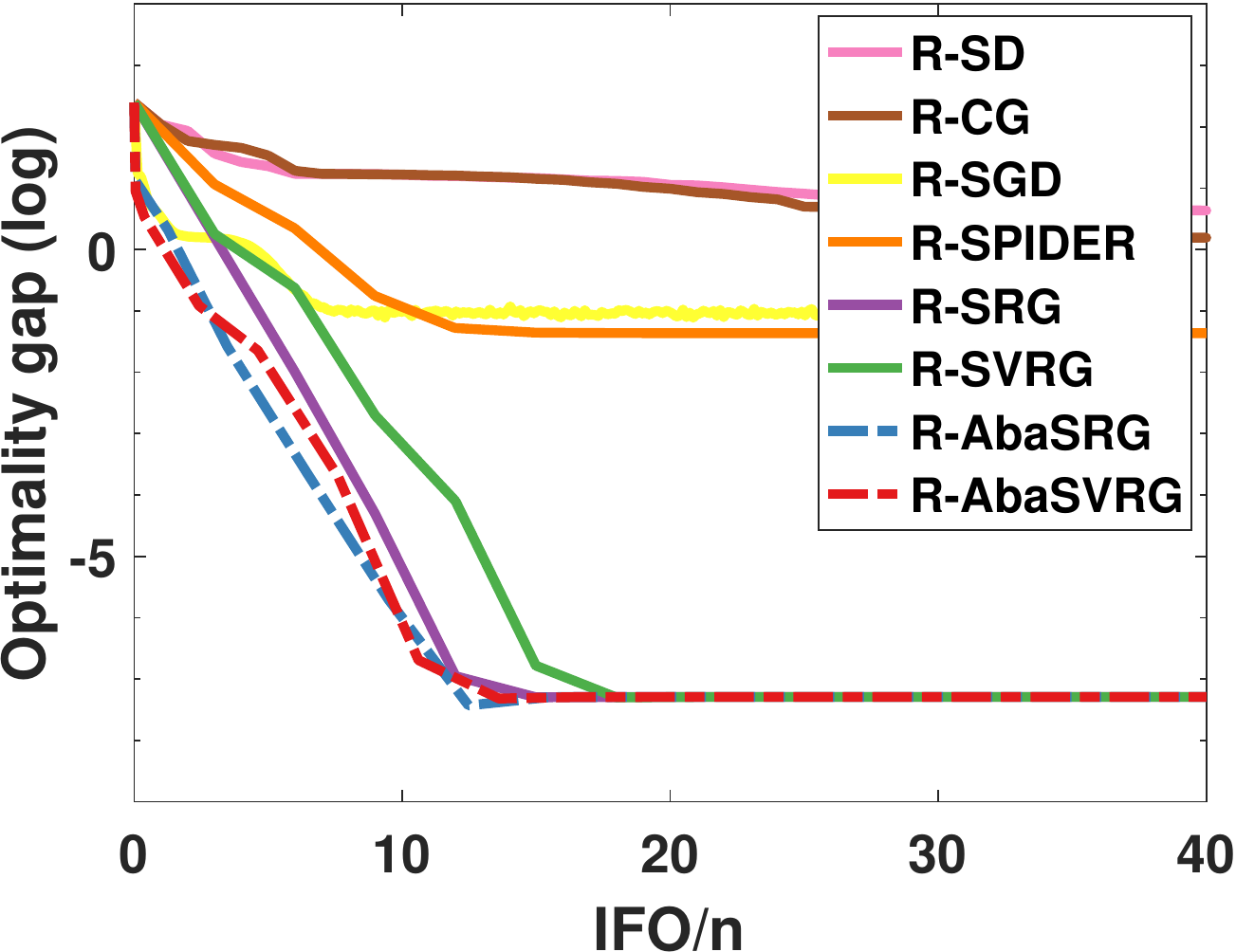}}
    \hspace{0.05in}
    \subfloat[{Run} 3]{\includegraphics[width = 0.28\textwidth, height = 0.21\textwidth]{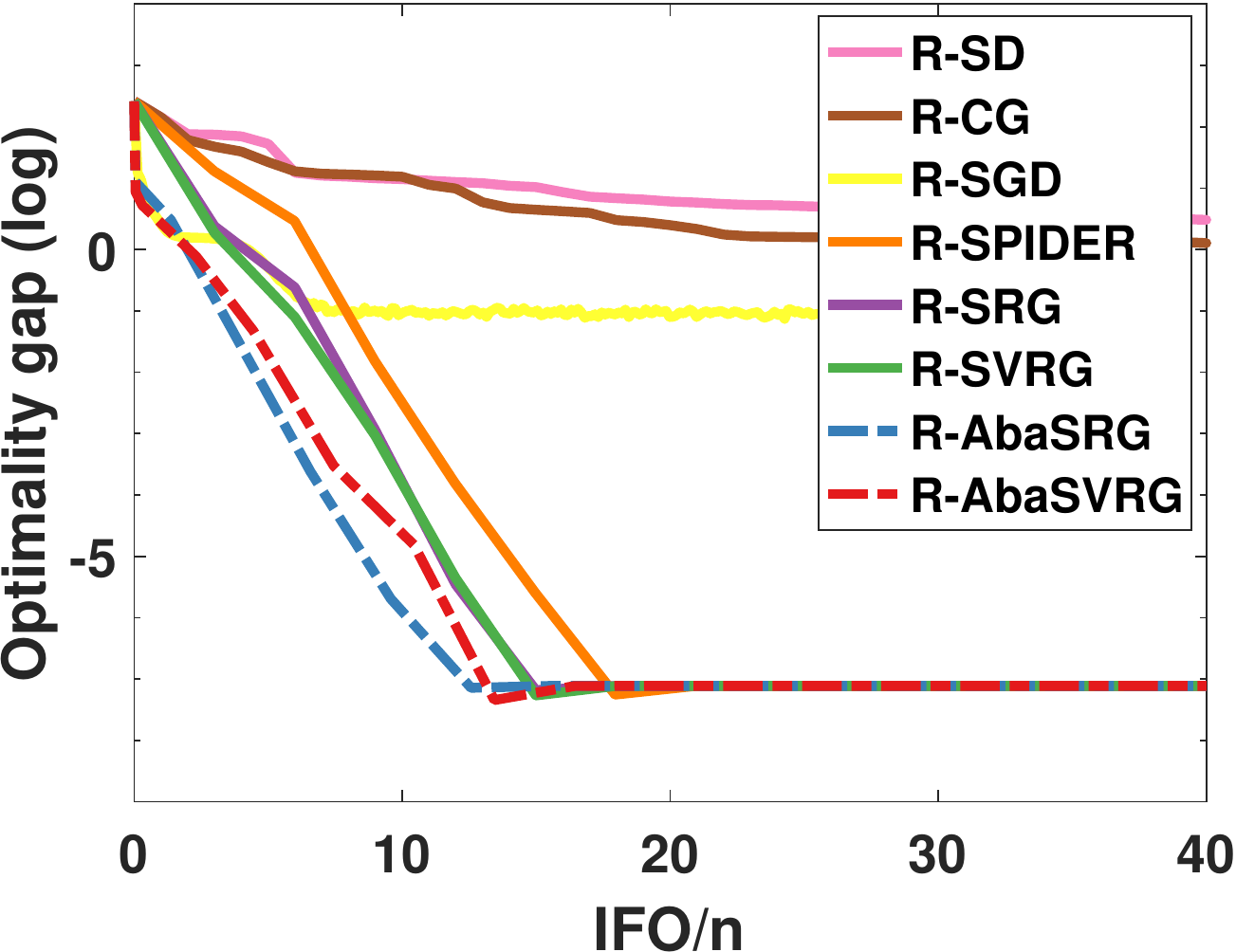}}
    \hspace{0.05in}\\
    \caption{Synthetic dataset with $n = 100000, d = 200, r = 5$.}
    \label{add_pca_1}
\end{figure}

\begin{figure}[h]
\captionsetup{justification=centering}
    \centering
    \subfloat[{Run} 1]{\includegraphics[width = 0.28\textwidth, height = 0.21\textwidth]{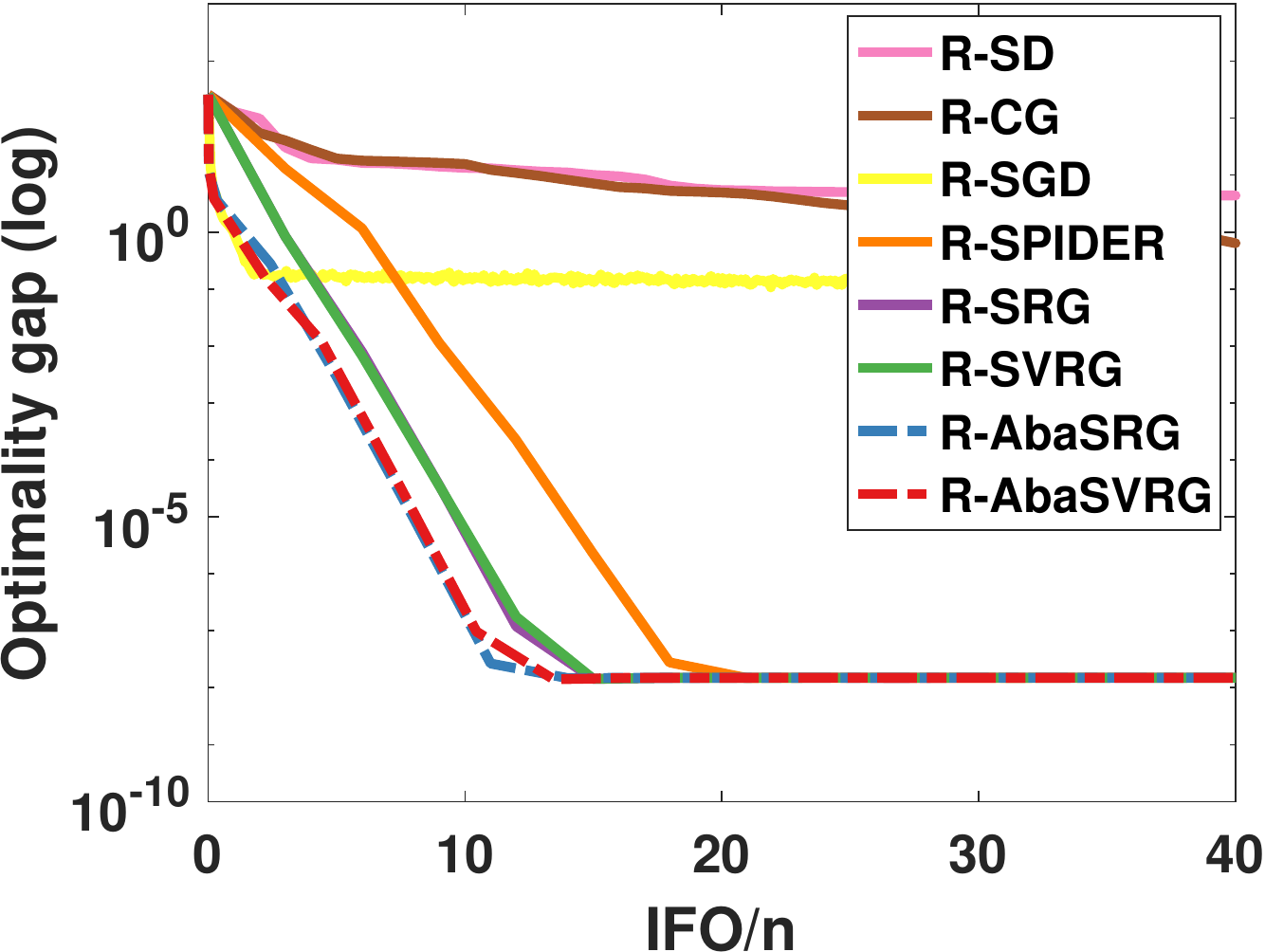}}
    \hspace{0.05in}
    \subfloat[{Run} 2]{\includegraphics[width = 0.28\textwidth, height = 0.21\textwidth]{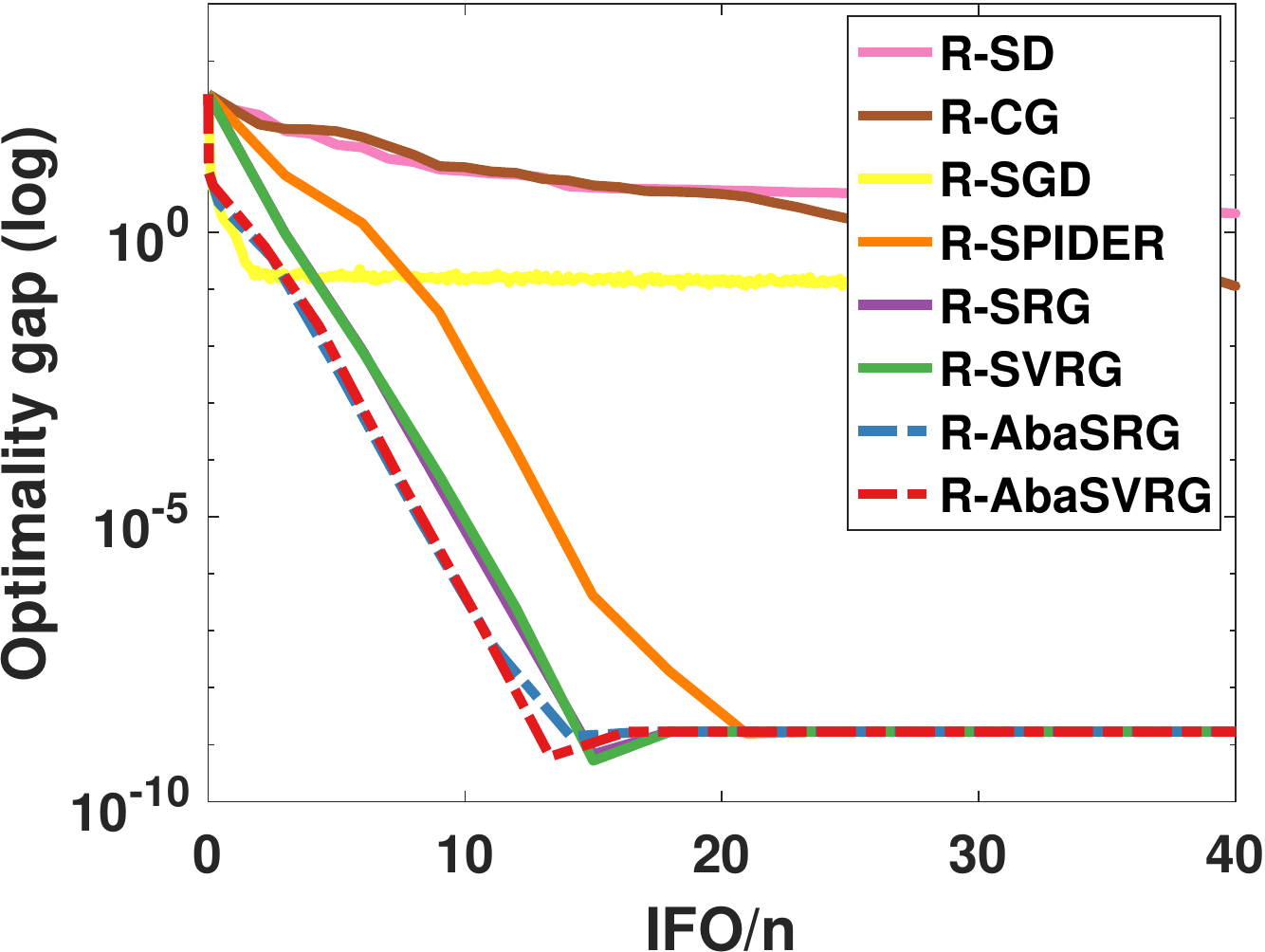}}
    \hspace{0.05in}
    \subfloat[{Run} 3]{\includegraphics[width = 0.28\textwidth, height = 0.21\textwidth]{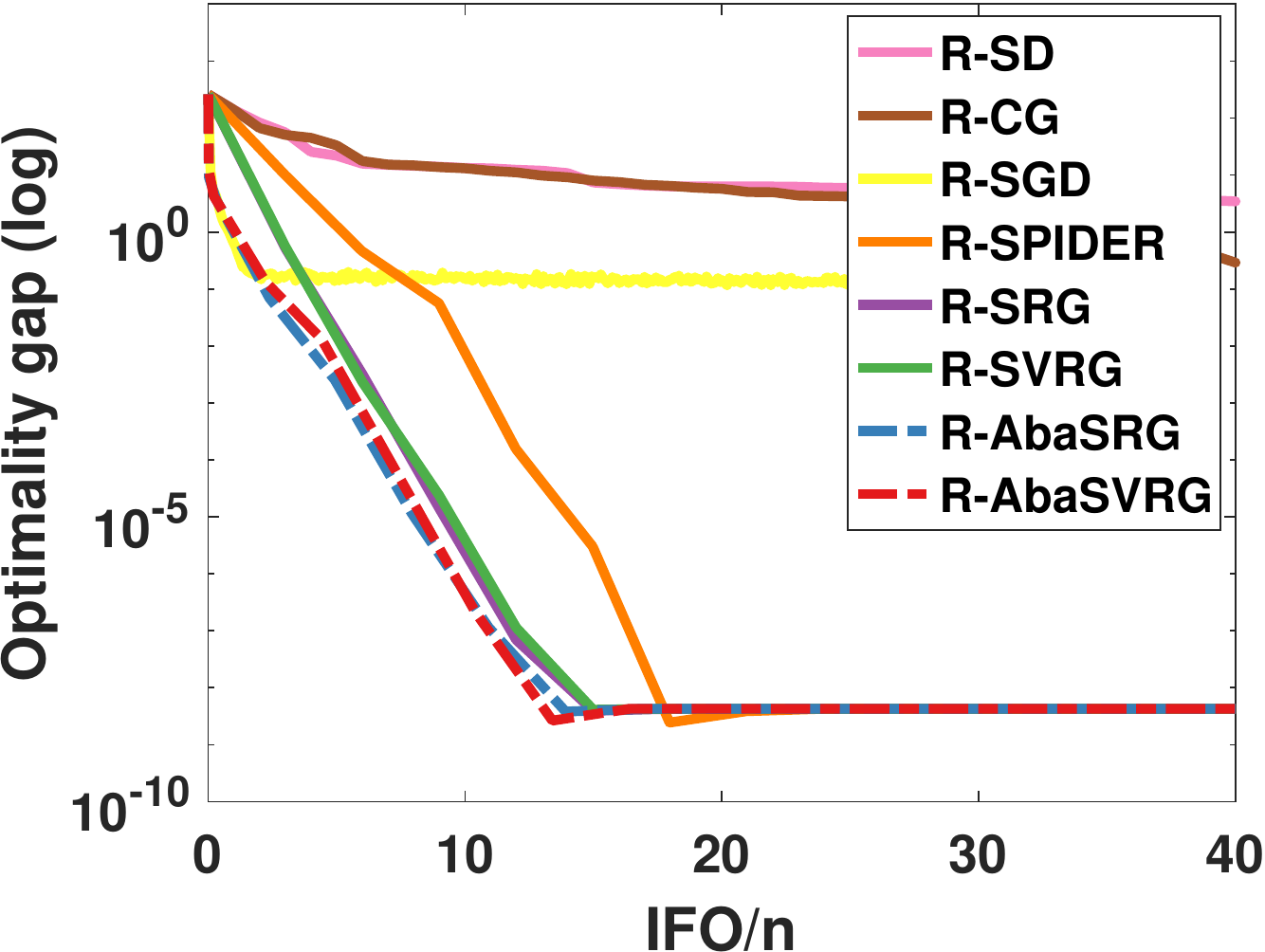}}
    \hspace{0.05in}\\
    \caption{Synthetic dataset with $n = 200000, d = 200, r = 5$.}
    \label{add_pca_2}
\end{figure}

\begin{figure}[h]
\captionsetup{justification=centering}
    \centering
    \subfloat[{Run} 1]{\includegraphics[width = 0.28\textwidth, height = 0.21\textwidth]{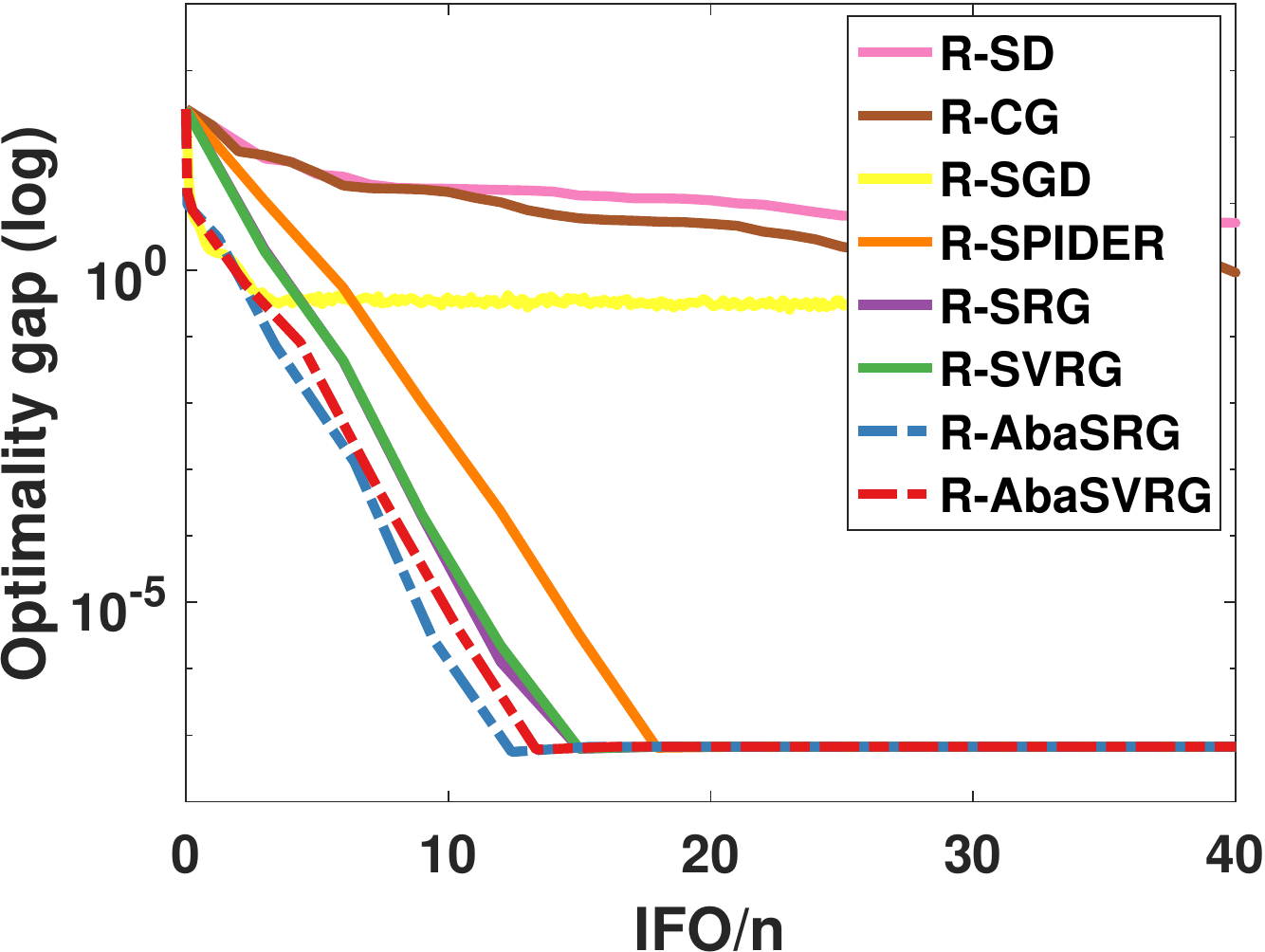}}
    \hspace{0.05in}
    \subfloat[{Run} 2]{\includegraphics[width = 0.28\textwidth, height = 0.21\textwidth]{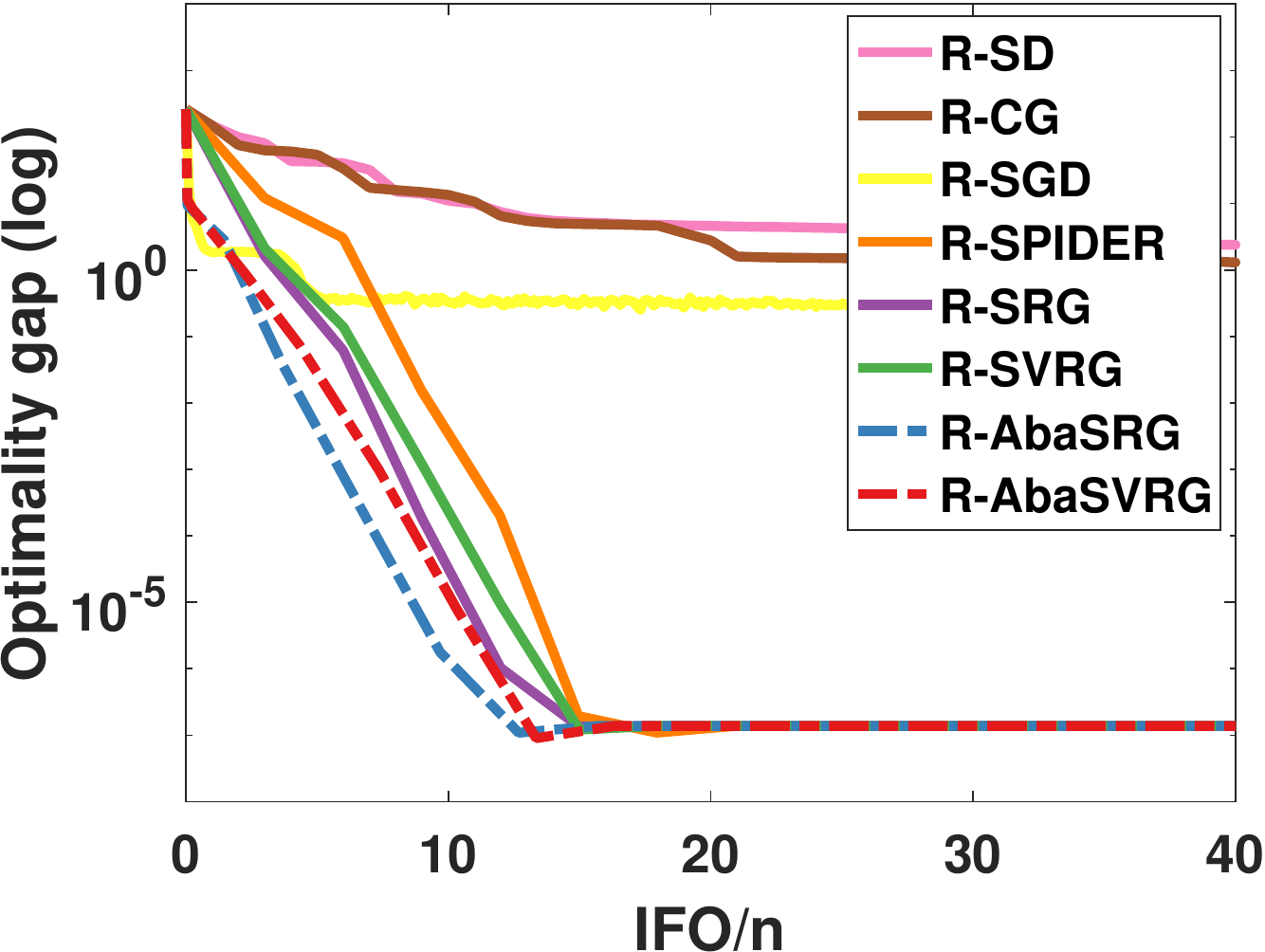}}
    \hspace{0.05in}
    \subfloat[{Run} 3]{\includegraphics[width = 0.28\textwidth, height = 0.21\textwidth]{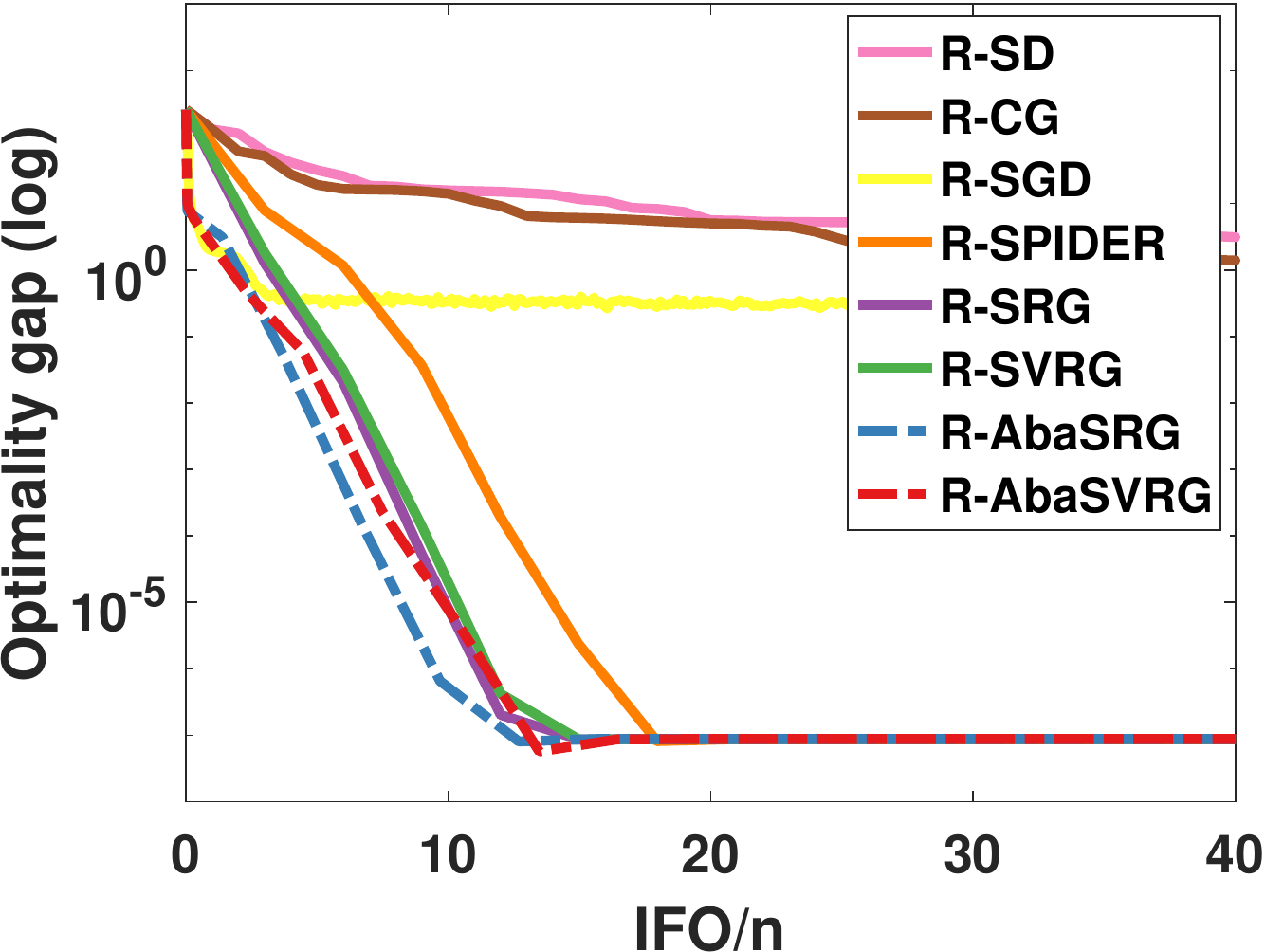}}
    \hspace{0.05in}\\
    \caption{Synthetic dataset with $n = 100000, d = 300, r = 5$.}
    \label{add_pca_3}
\end{figure}

\subsection{LRMC on Grassmann manifold}
\label{LRMC_appendix}
\textbf{Additional results on synthetic datasets.} We first present three independent runs in Fig. \ref{add_LRMC_syn_run_appendix} to test the sensitivity of batch size adaptation on baseline synthetic dataset with $n = 20000, d = 100, r = 5, \text{cn} = 50, \text{os} = 8, \varepsilon = 10^{-10}$. We also compare algorithms on datasets with different characteristics. Specifically, we consider a large-scale dataset with $n = 40000$, a high dimensional dataset with $d = 200$, a high-rank dataset with $r = 10$, an ill-conditioned dataset with $\text{cn} = 100$, a low-sampling dataset with $\text{os} = 4$ and a noisy dataset with $\varepsilon = 10^{-8}$. Test MSE results are presented in Fig. \ref{add_LRMC_syn_charcter_appendix}.

\begin{figure}[H]
\captionsetup{justification=centering}
    \centering
    \subfloat[Run 1]{\includegraphics[width = 0.28\textwidth, height = 0.21\textwidth]{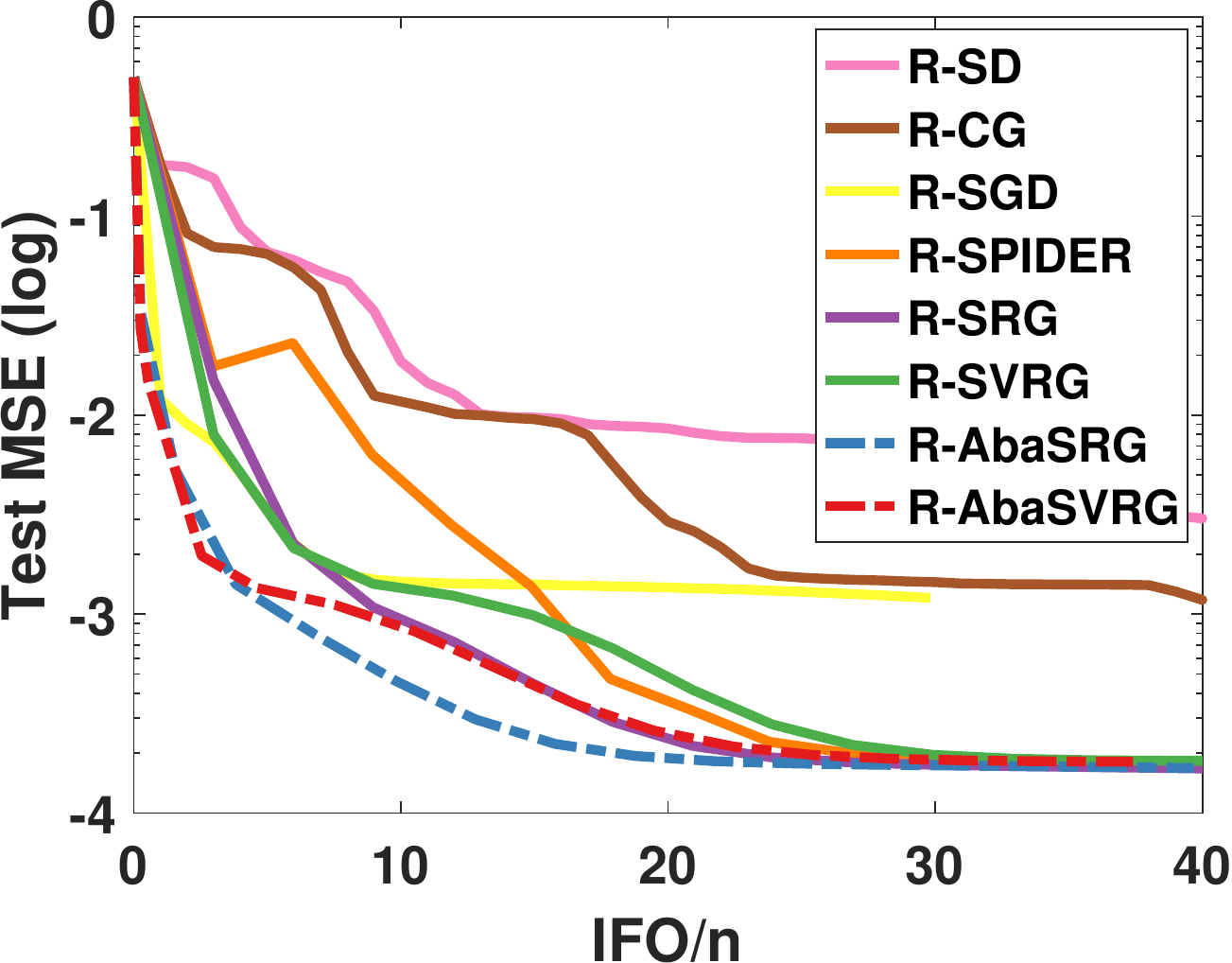}}
    \hspace{0.05in}
    \subfloat[Run 2]{\includegraphics[width = 0.28\textwidth, height = 0.21\textwidth]{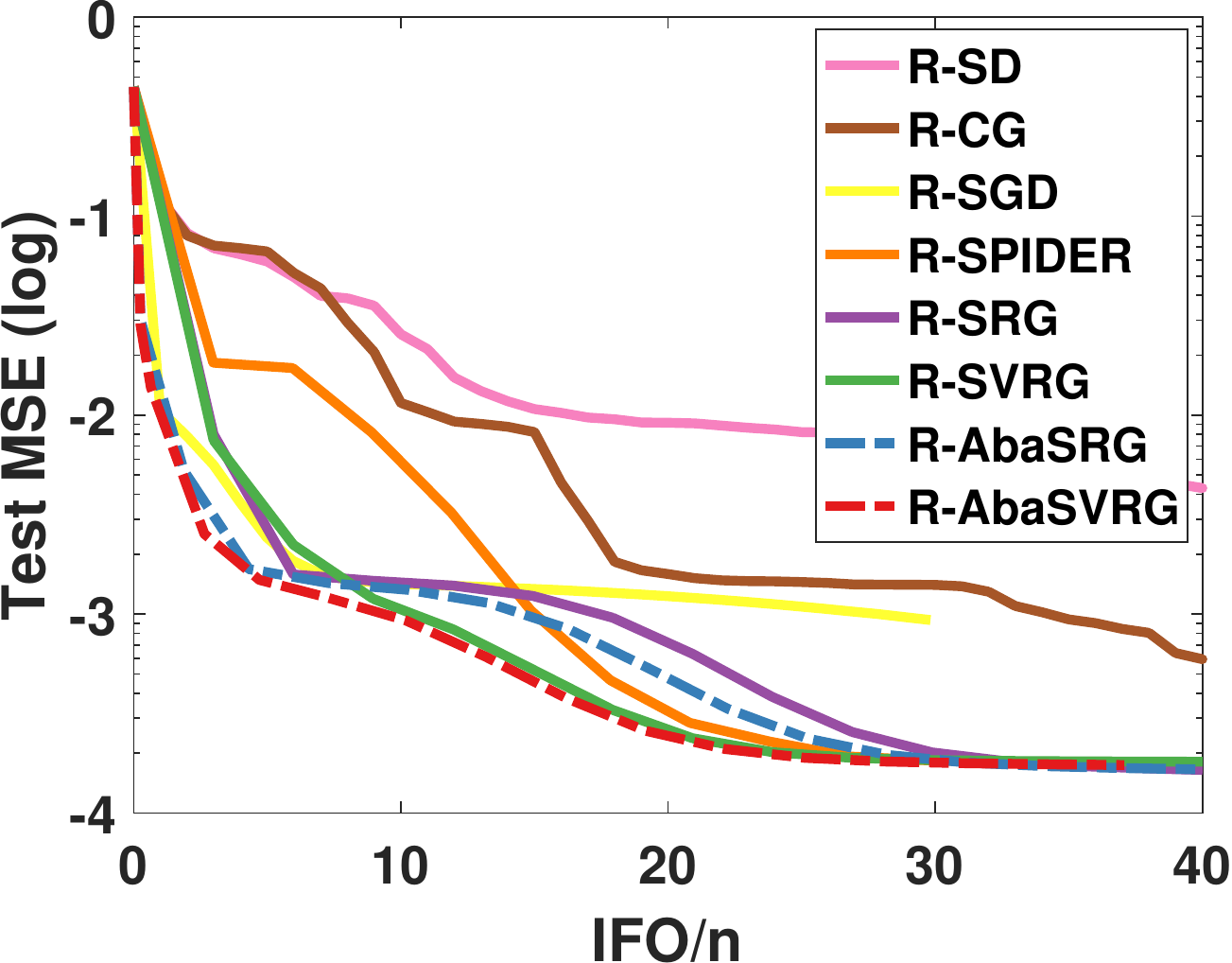}}
    \hspace{0.05in}
    \subfloat[Run 3]{\includegraphics[width = 0.28\textwidth, height = 0.21\textwidth]{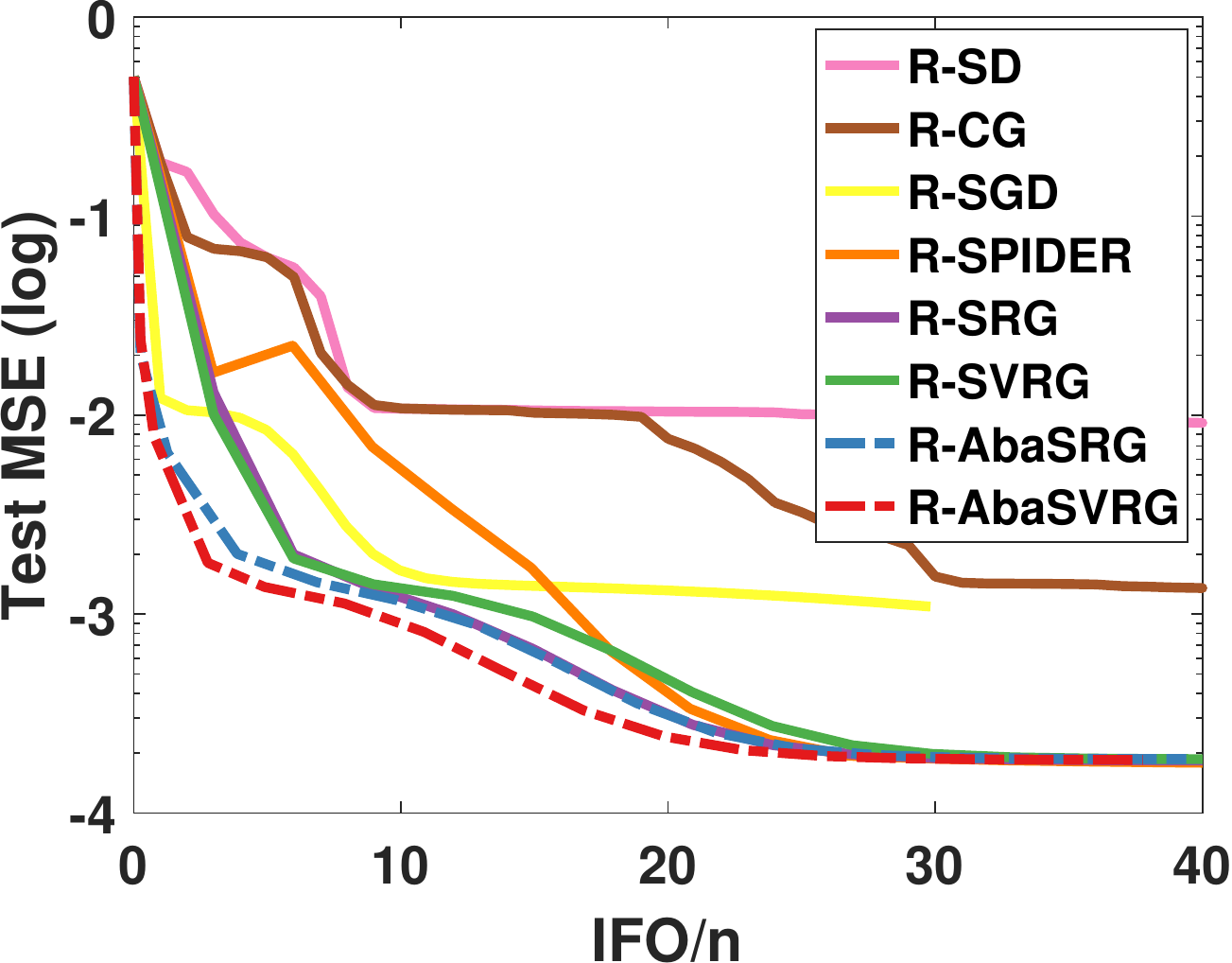}}
    \hspace{0.05in}
    \caption{LRMC Result sensitivity on baseline synthetic dataset}
    \label{add_LRMC_syn_run_appendix}
\end{figure}

\begin{figure}[H]
\captionsetup{justification=centering}
    \centering
    \subfloat[Large scale]{\includegraphics[width = 0.28\textwidth, height = 0.21\textwidth]{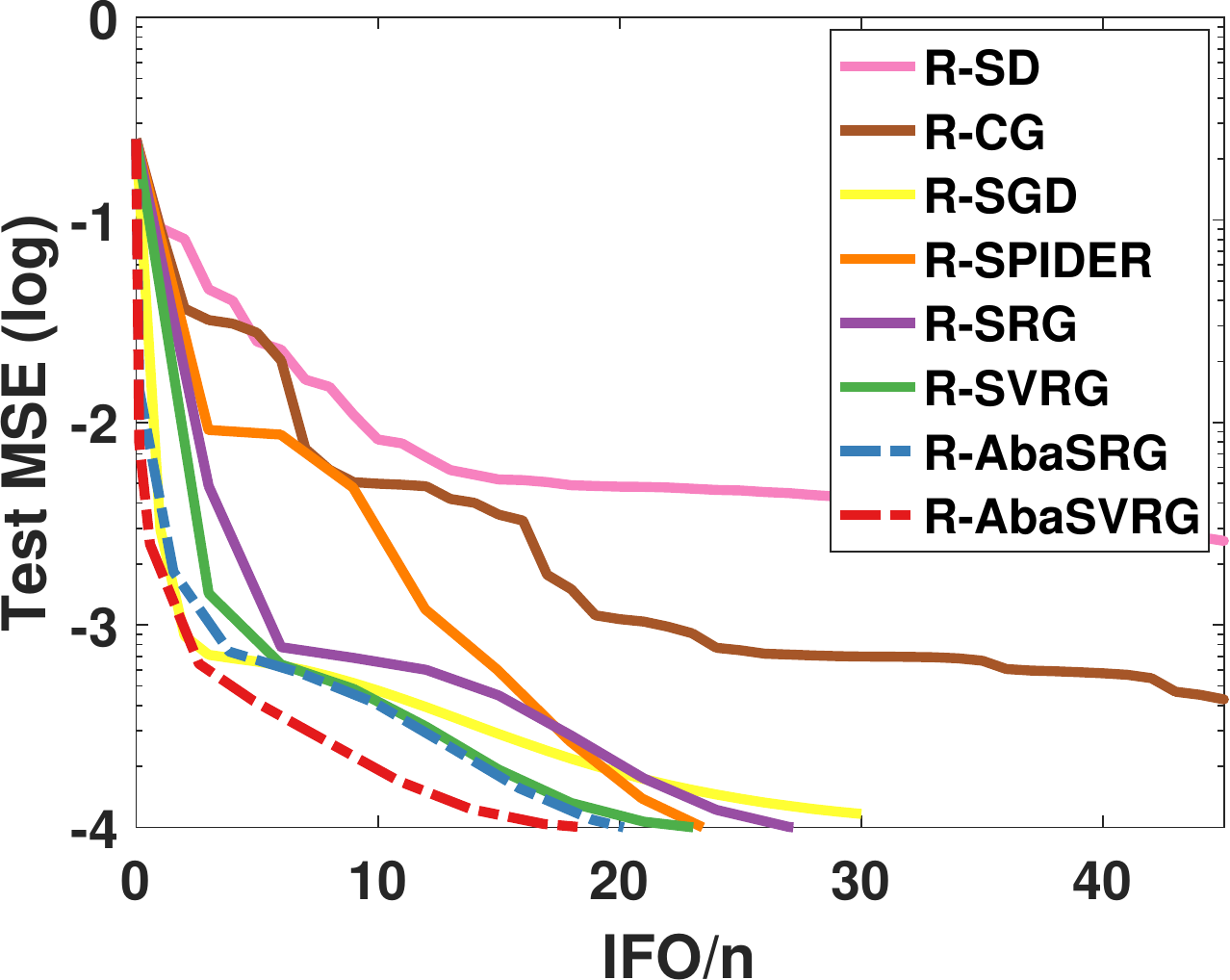}}
    \hspace{0.05in}
    \subfloat[High dimension]{\includegraphics[width = 0.28\textwidth, height = 0.21\textwidth]{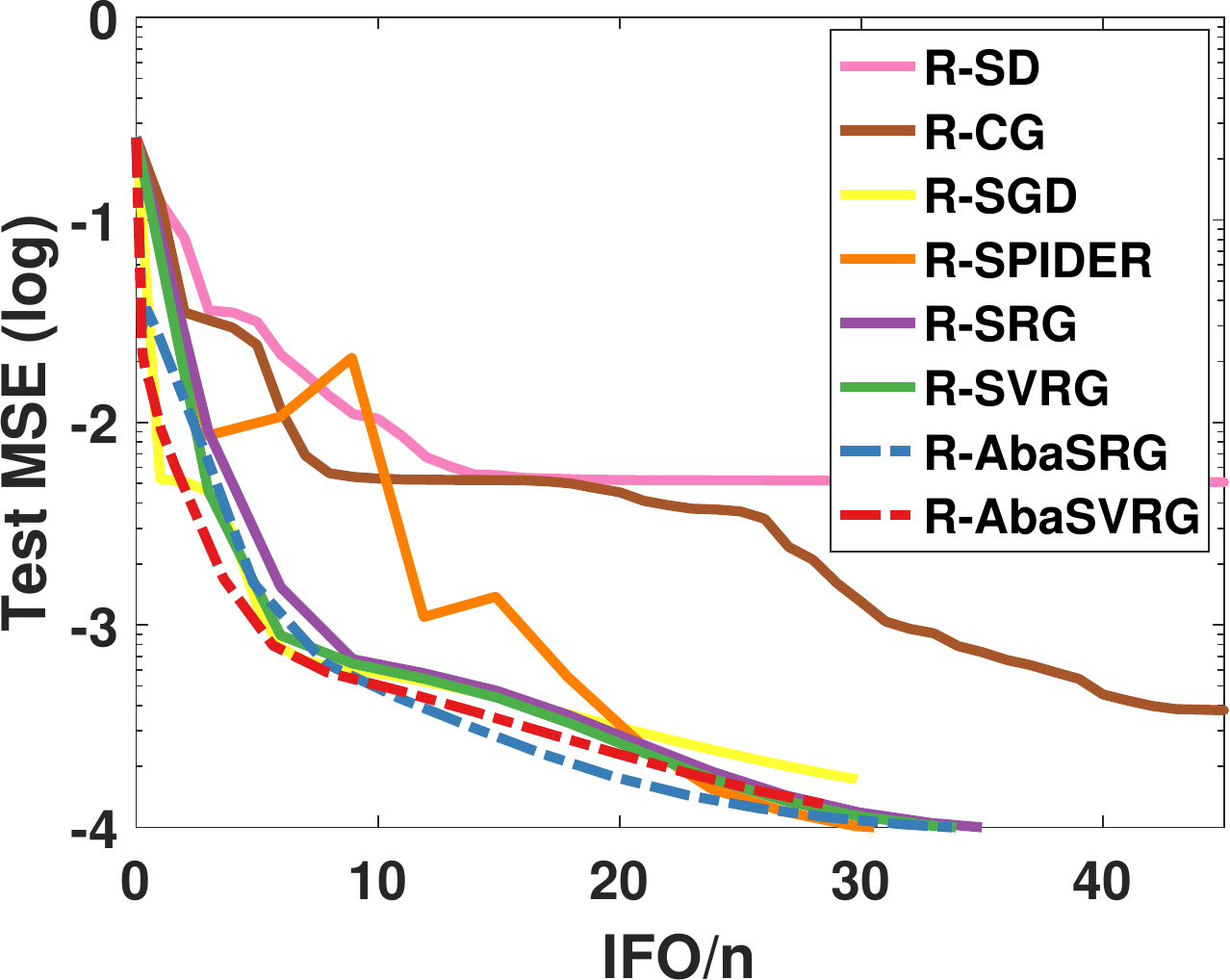}}
    \hspace{0.05in}
    \subfloat[High rank]{\includegraphics[width = 0.28\textwidth, height = 0.21\textwidth]{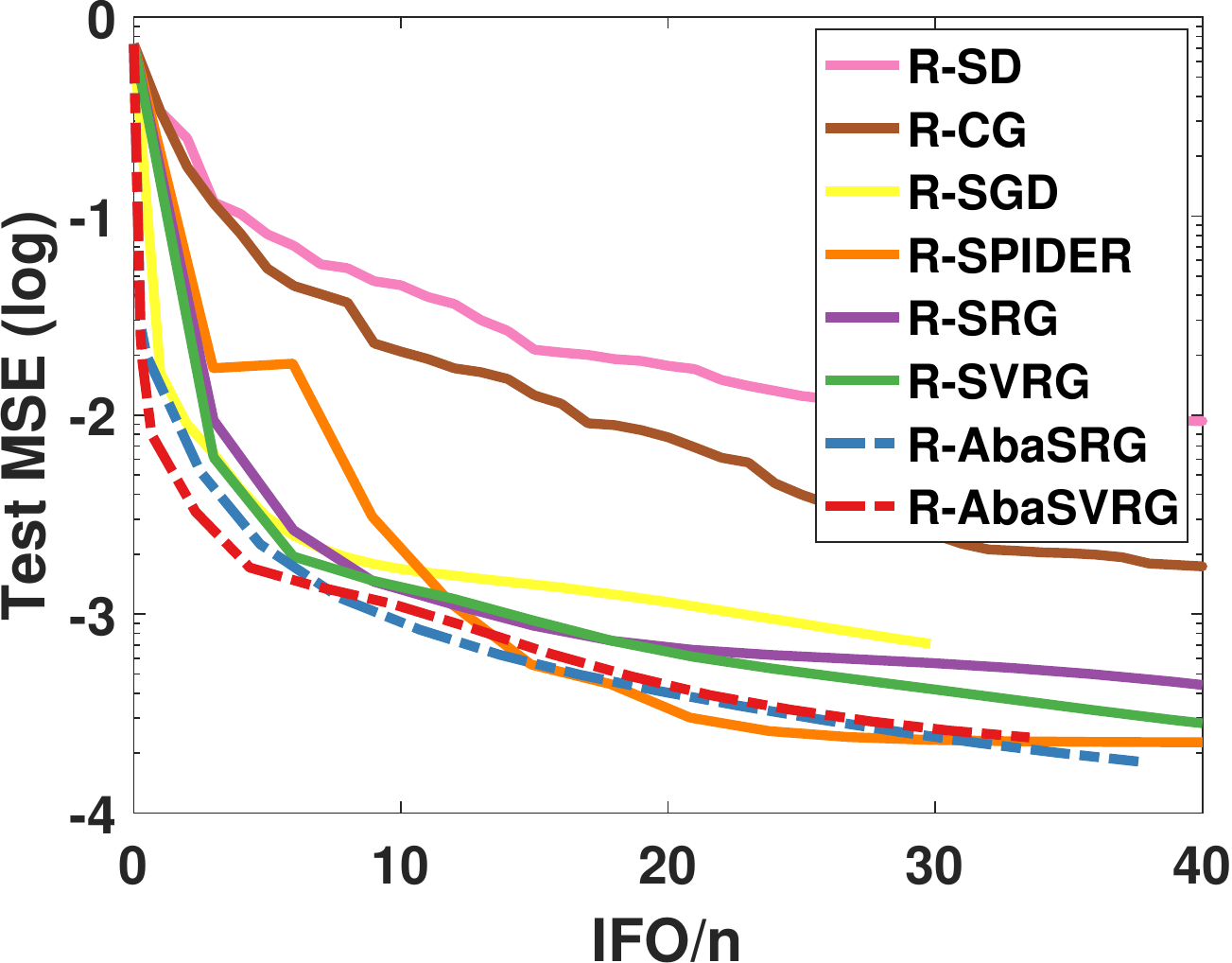}}\\    \subfloat[Ill condition]{\includegraphics[width = 0.28\textwidth, height = 0.21\textwidth]{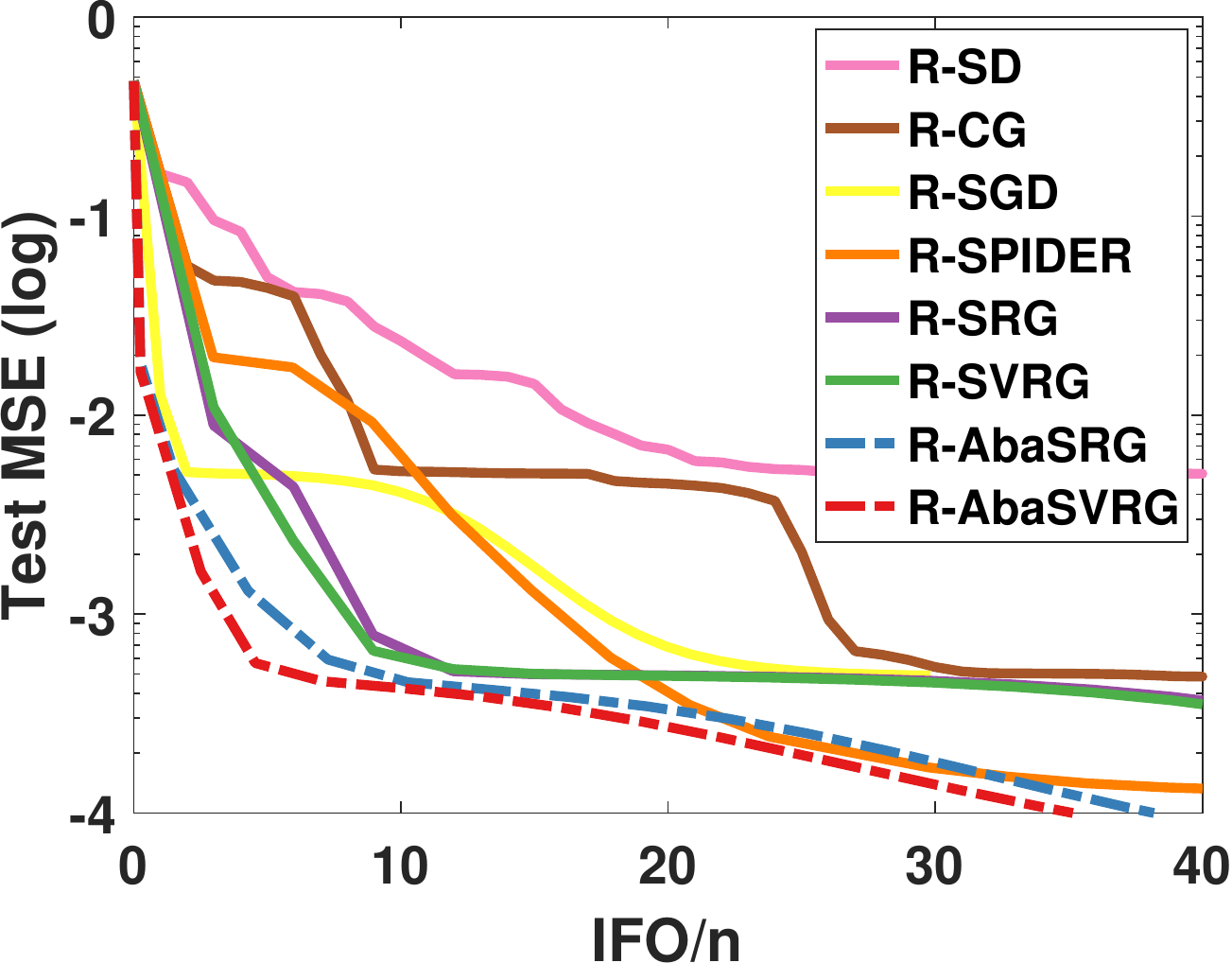}}
    \hspace{0.05in}
    \subfloat[Low sampling]{\includegraphics[width = 0.28\textwidth, height = 0.21\textwidth]{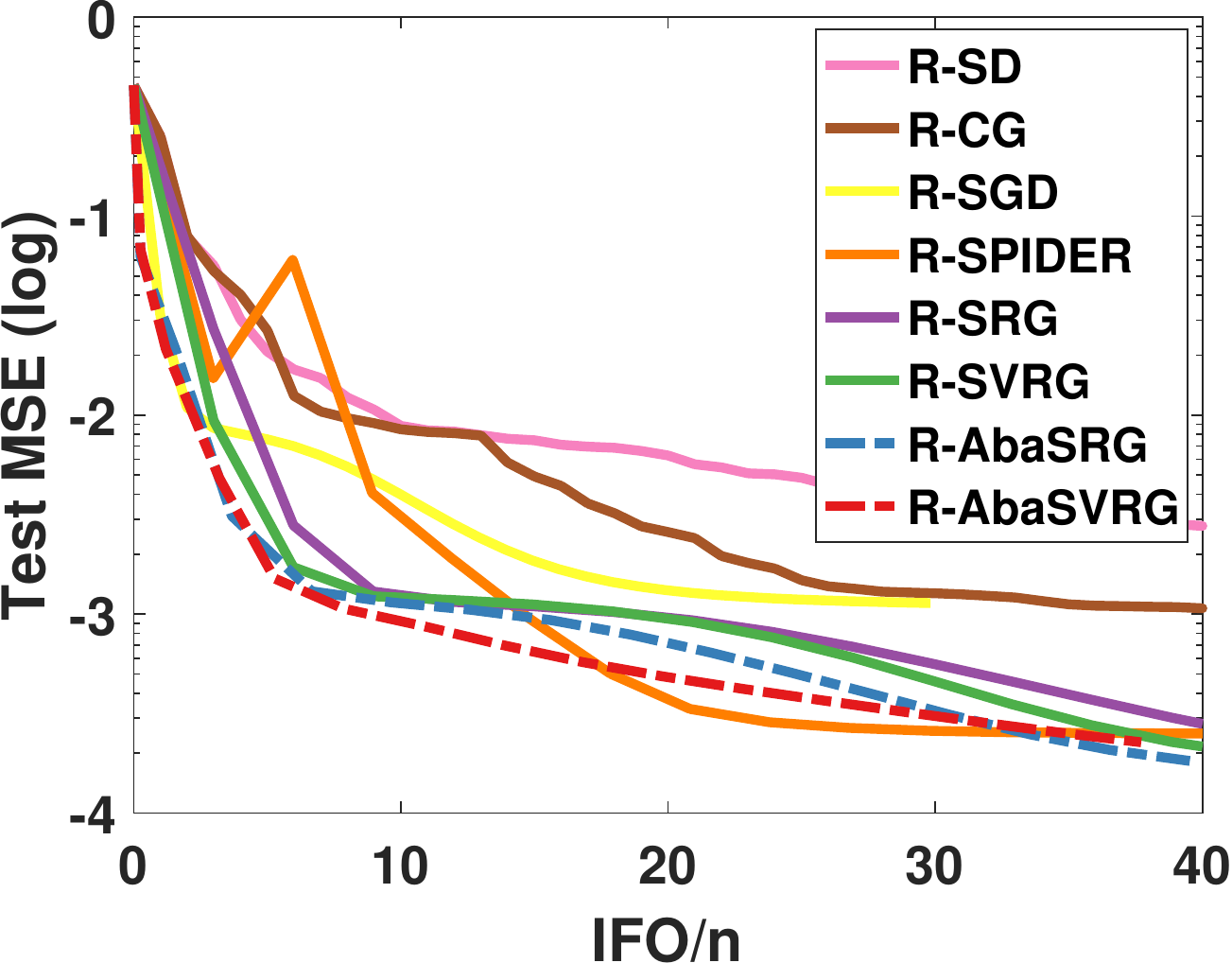}}
    \hspace{0.05in}
    \subfloat[High noise]{\includegraphics[width = 0.28\textwidth, height = 0.21\textwidth]{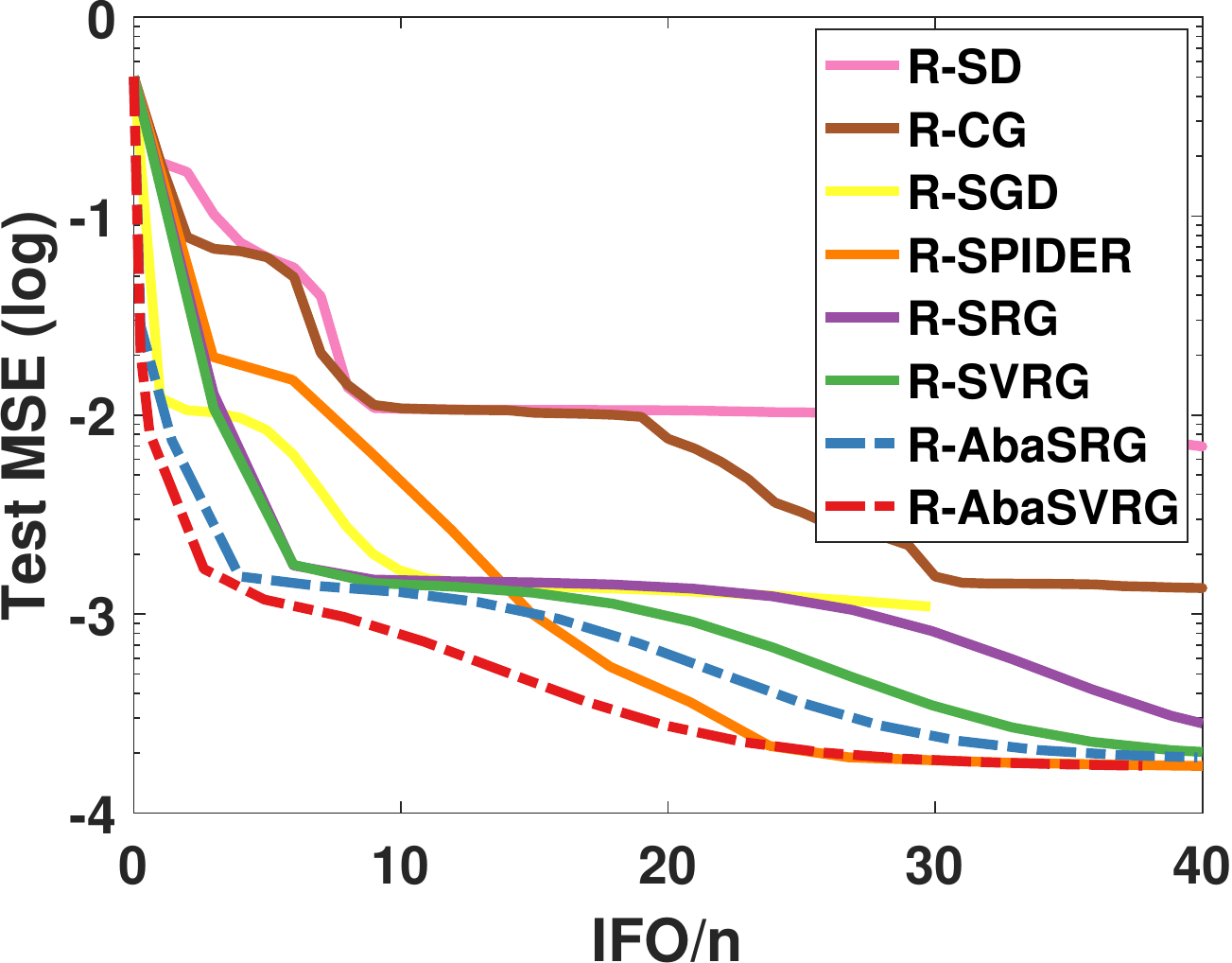}}
    \caption{LRMC results on datasets with different characteristics.}
    \label{add_LRMC_syn_charcter_appendix}
\end{figure}

\textbf{Additional results for Netflix and Movielens dataset.} We present training MSE results on Netflix and Movielens datasets accompanying test MSE results in the main text. Also, we examine sensitivity of R-AbaSVRG and R-AbaSRG to parameter $c_\beta$.

\begin{figure}[H]
\captionsetup{justification=centering}
    \centering
    \subfloat[Training MSE vs. IFO]{\includegraphics[width = 0.28\textwidth, height = 0.21\textwidth]{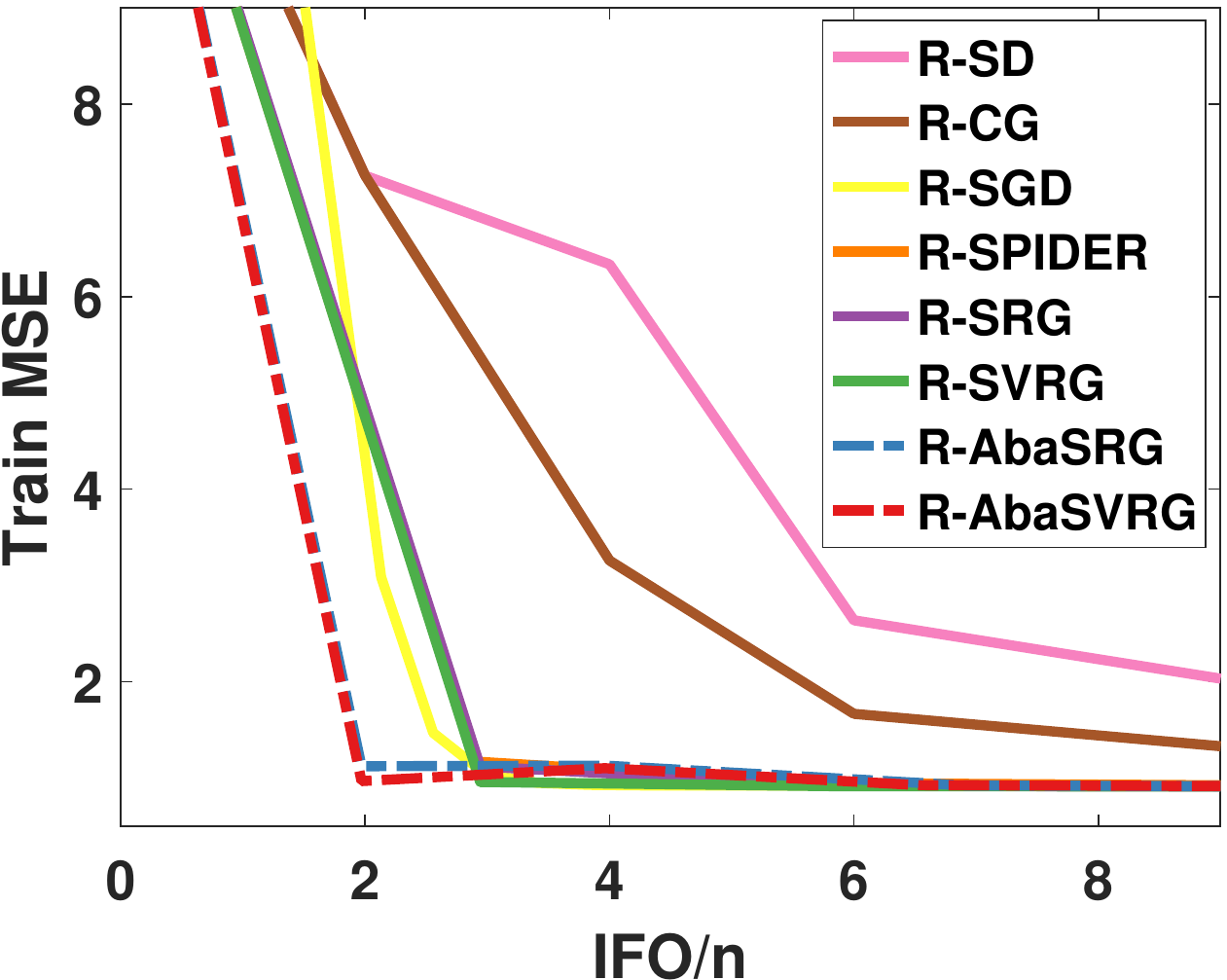}}
    \hspace{0.05in}
    \subfloat[Sensitivity of R-AbaSVRG to $c_\beta$]{\includegraphics[width = 0.28\textwidth, height = 0.21\textwidth]{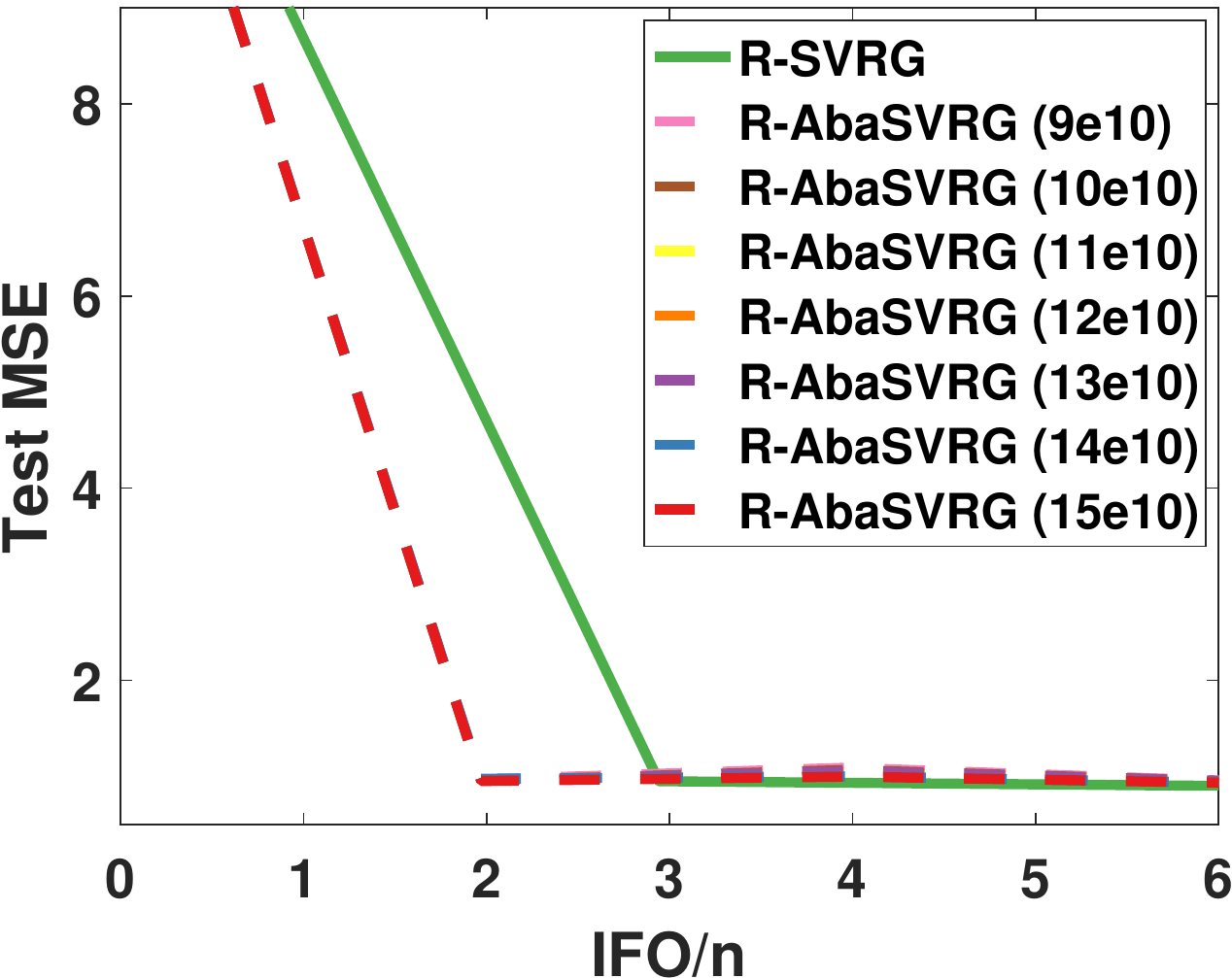}}
    \hspace{0.05in}
    \subfloat[Sensitivity of R-AbaSRG to $c_\beta$]{\includegraphics[width = 0.28\textwidth, height = 0.21\textwidth]{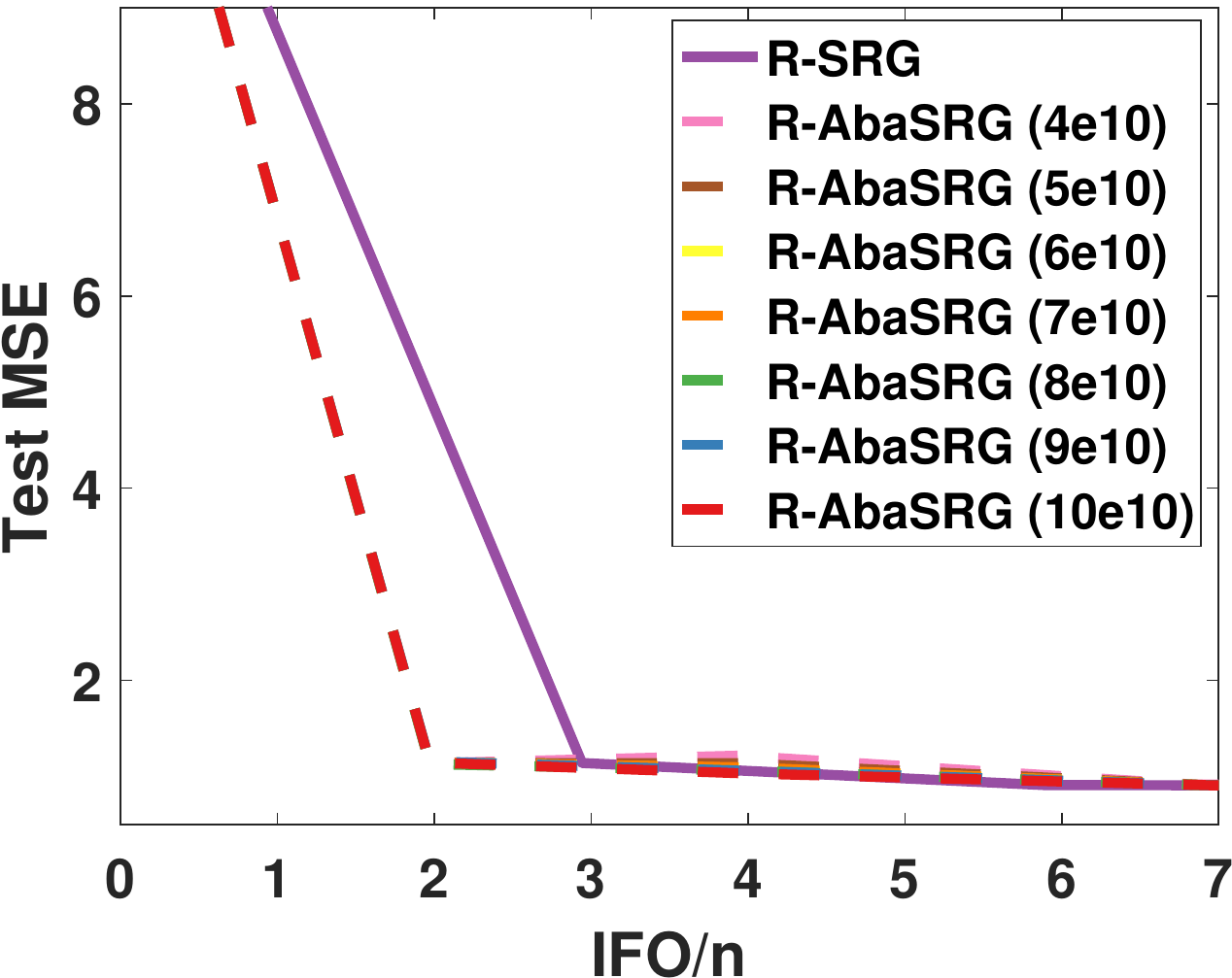}}
    \hspace{0.05in}
    \caption{Additional LRMC results on Netflix dataset.}
    \label{add_LRMC_Netflix_appendix}
\end{figure}

\begin{figure}[H]
\captionsetup{justification=centering}
    \centering
    \subfloat[Training MSE vs. IFO]{\includegraphics[width = 0.28\textwidth, height = 0.21\textwidth]{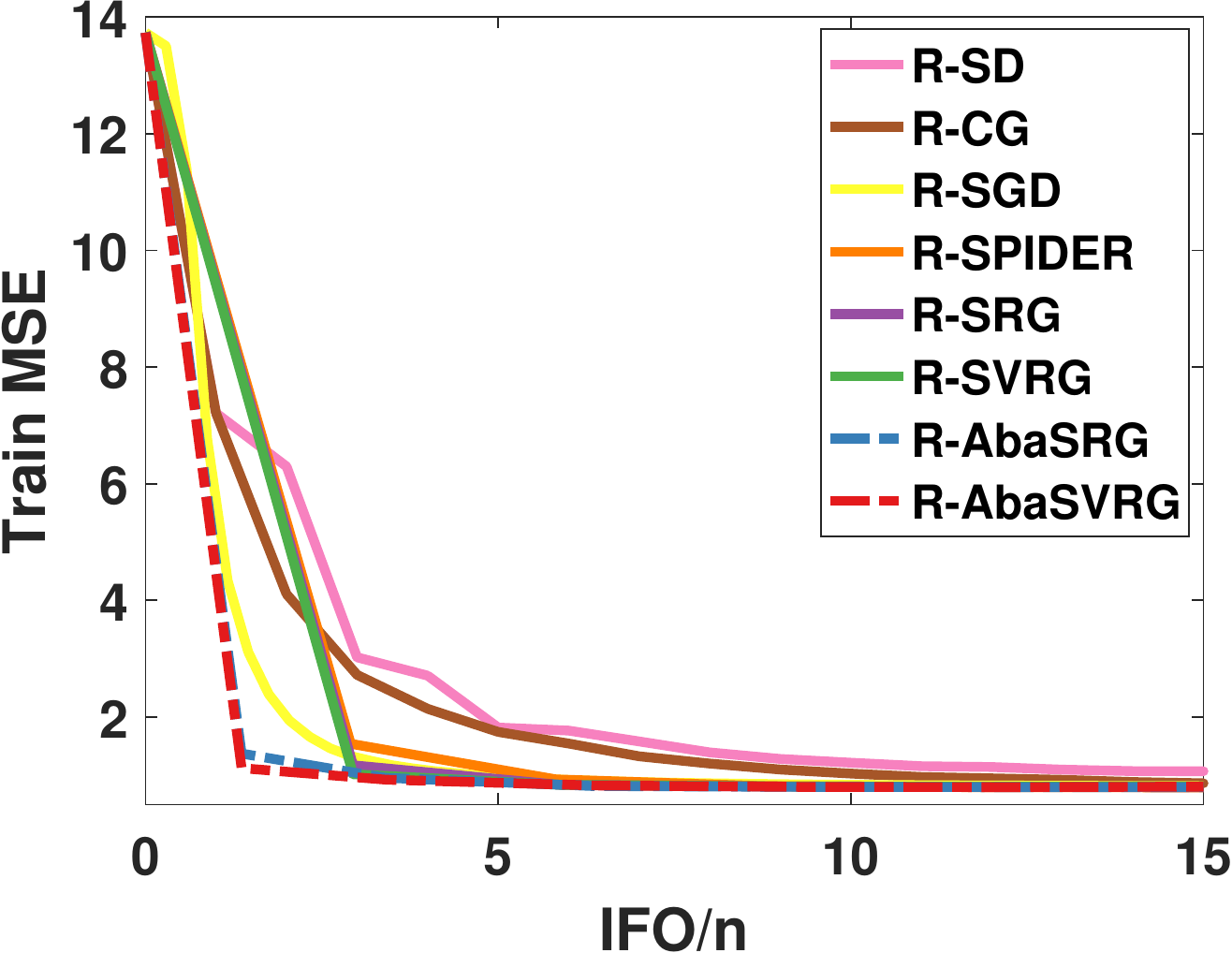}}
    \hspace{0.05in}
    \subfloat[Sensitivity of R-AbaSVRG to $c_\beta$]{\includegraphics[width = 0.28\textwidth, height = 0.21\textwidth]{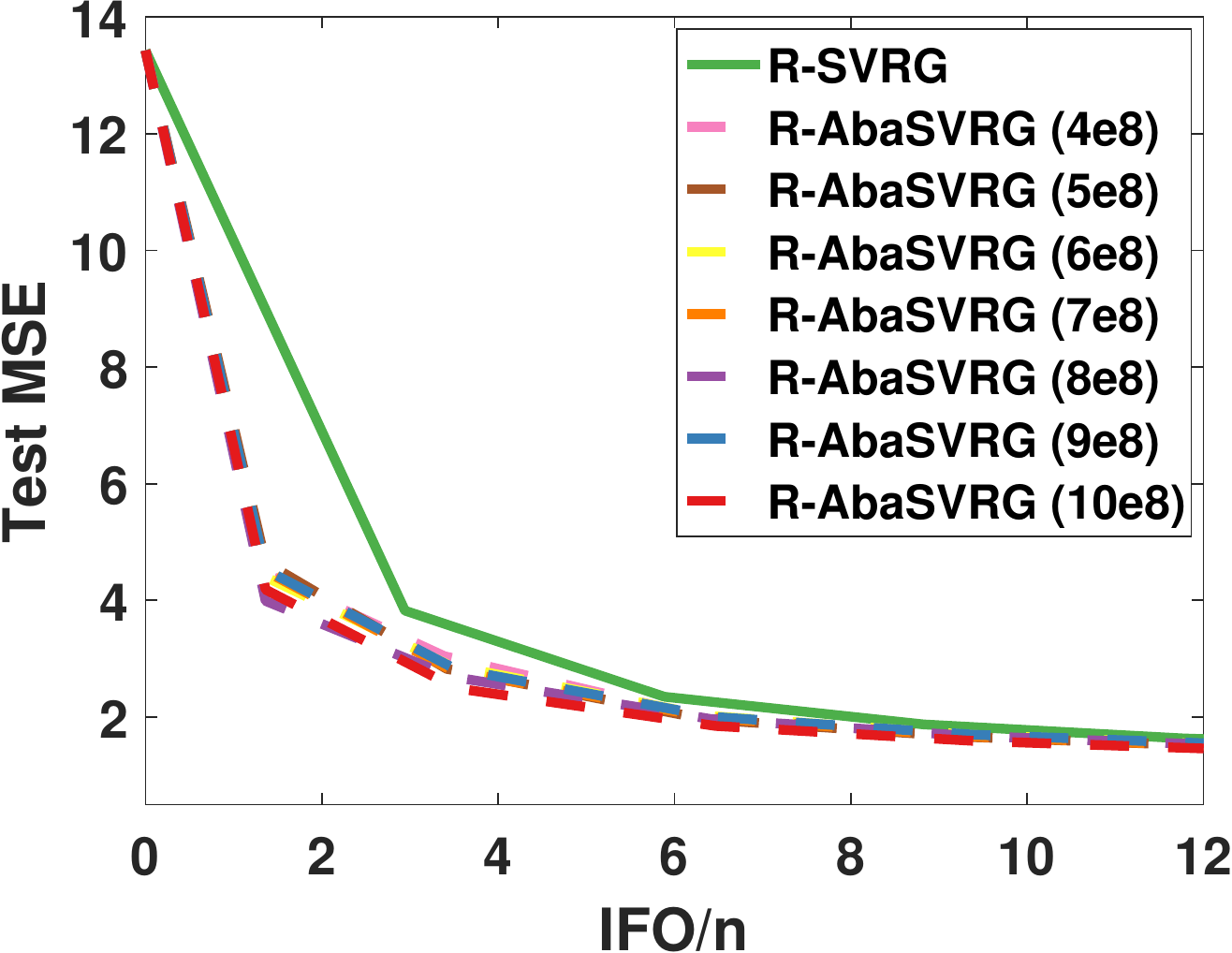}}
    \hspace{0.05in}
    \subfloat[Sensitivity of R-AbaSRG to $c_\beta$]{\includegraphics[width = 0.28\textwidth, height = 0.21\textwidth]{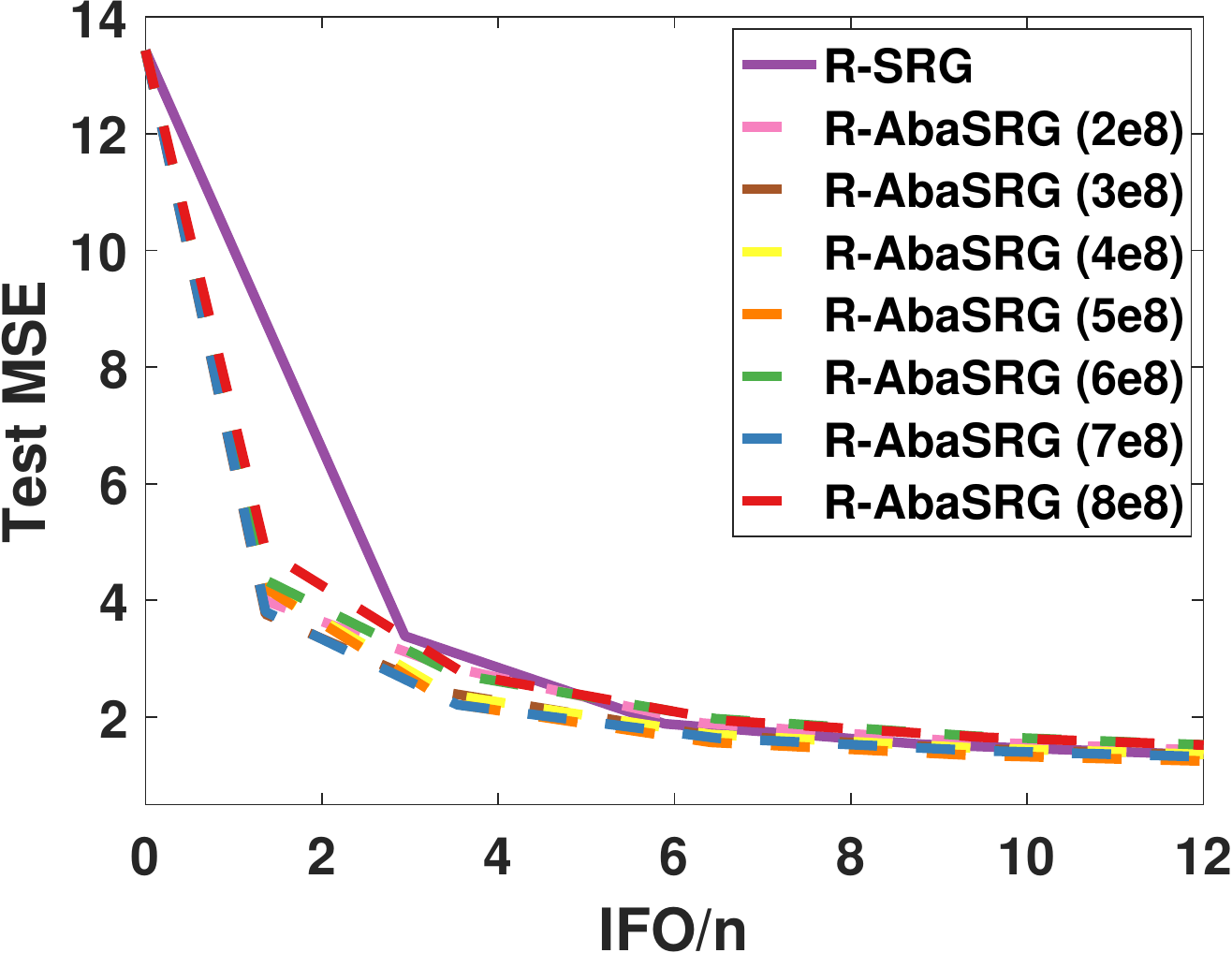}}
    \hspace{0.05in}
    \caption{Additional LRMC results on Movielens dataset.}
    \label{add_LRMC_movielens_appendix}
\end{figure}

\textbf{Additonal results on Jester dataset.} We also consider Jester dataset \cite{Goldbergjester2001} that contains continuous ratings in $[-10, 10]$ from $24983$ ($d$) users on $100$ jokes ($n$). We extract $10$ ratings per user as test set. We choose $q = -6, l = 10$.

\begin{figure}[H]
\captionsetup{justification=centering}
    \centering
    \subfloat[Test MSE vs. IFO]{\includegraphics[width = 0.28\textwidth, height = 0.21\textwidth]{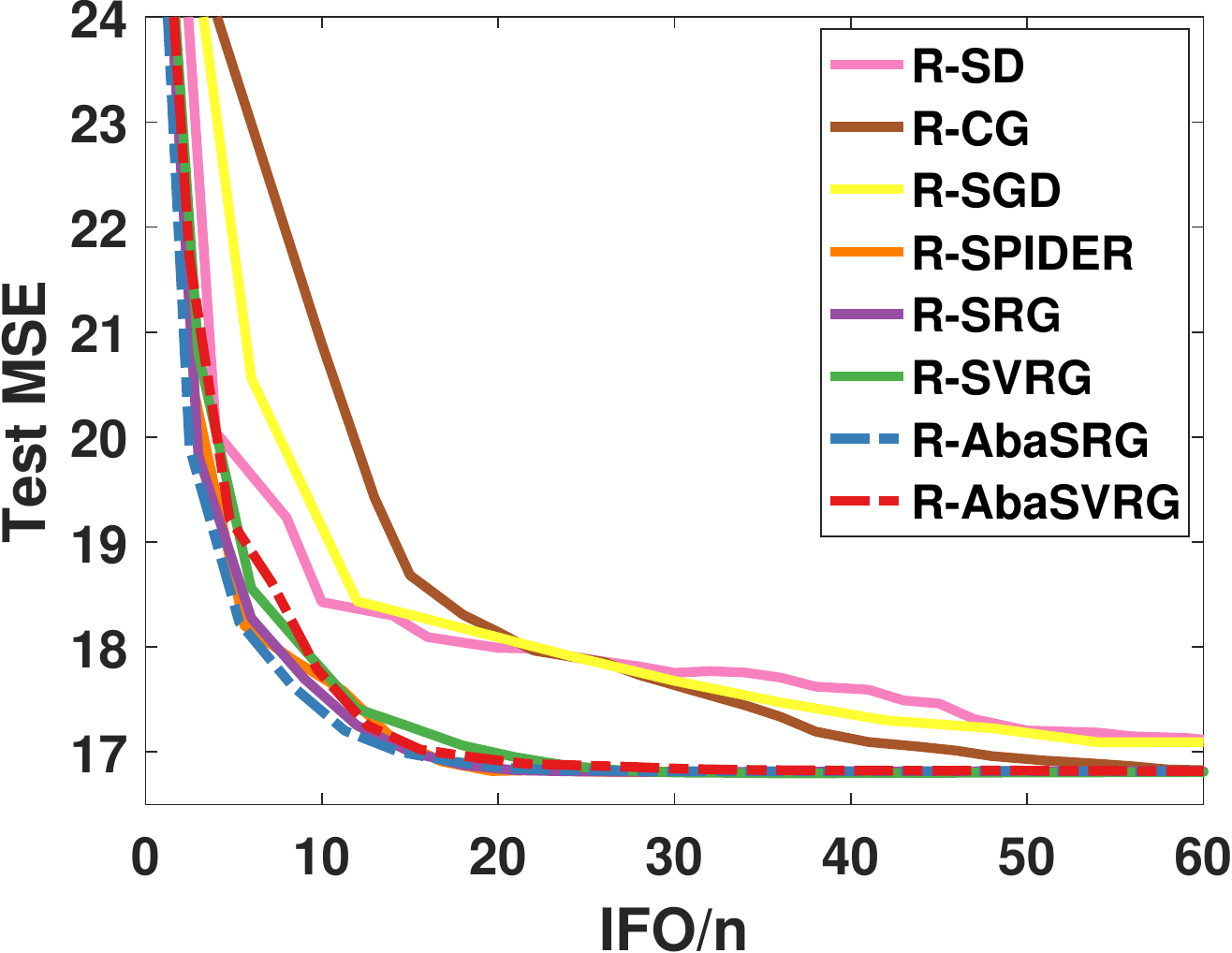}}
    \hspace{0.05in}
    \subfloat[Sensitivity of R-AbaSVRG to $c_\beta$]{\includegraphics[width = 0.28\textwidth, height = 0.21\textwidth]{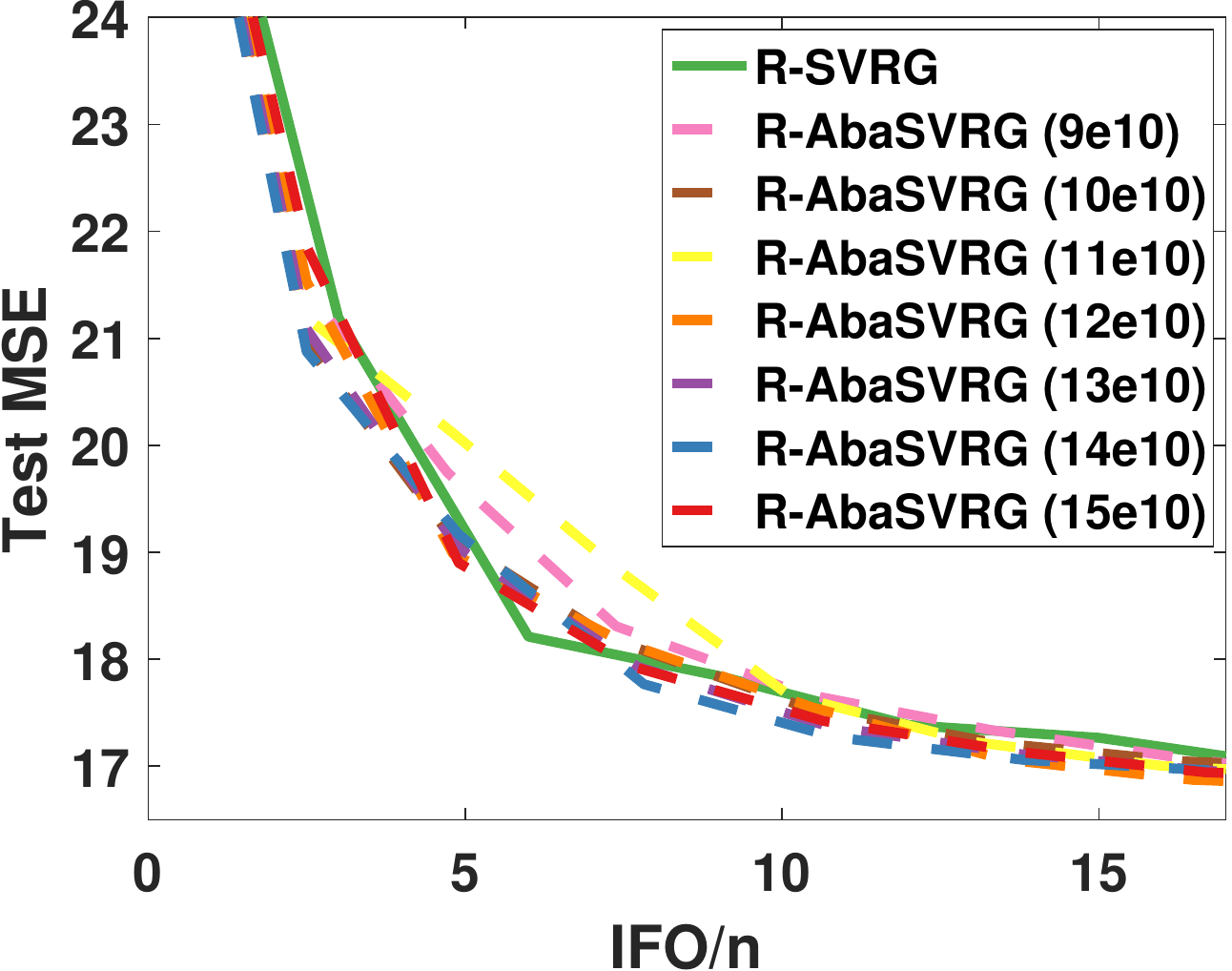}}
    \hspace{0.05in}
    \subfloat[Sensitivity of R-AbaSRG to $c_\beta$]{\includegraphics[width = 0.28\textwidth, height = 0.21\textwidth]{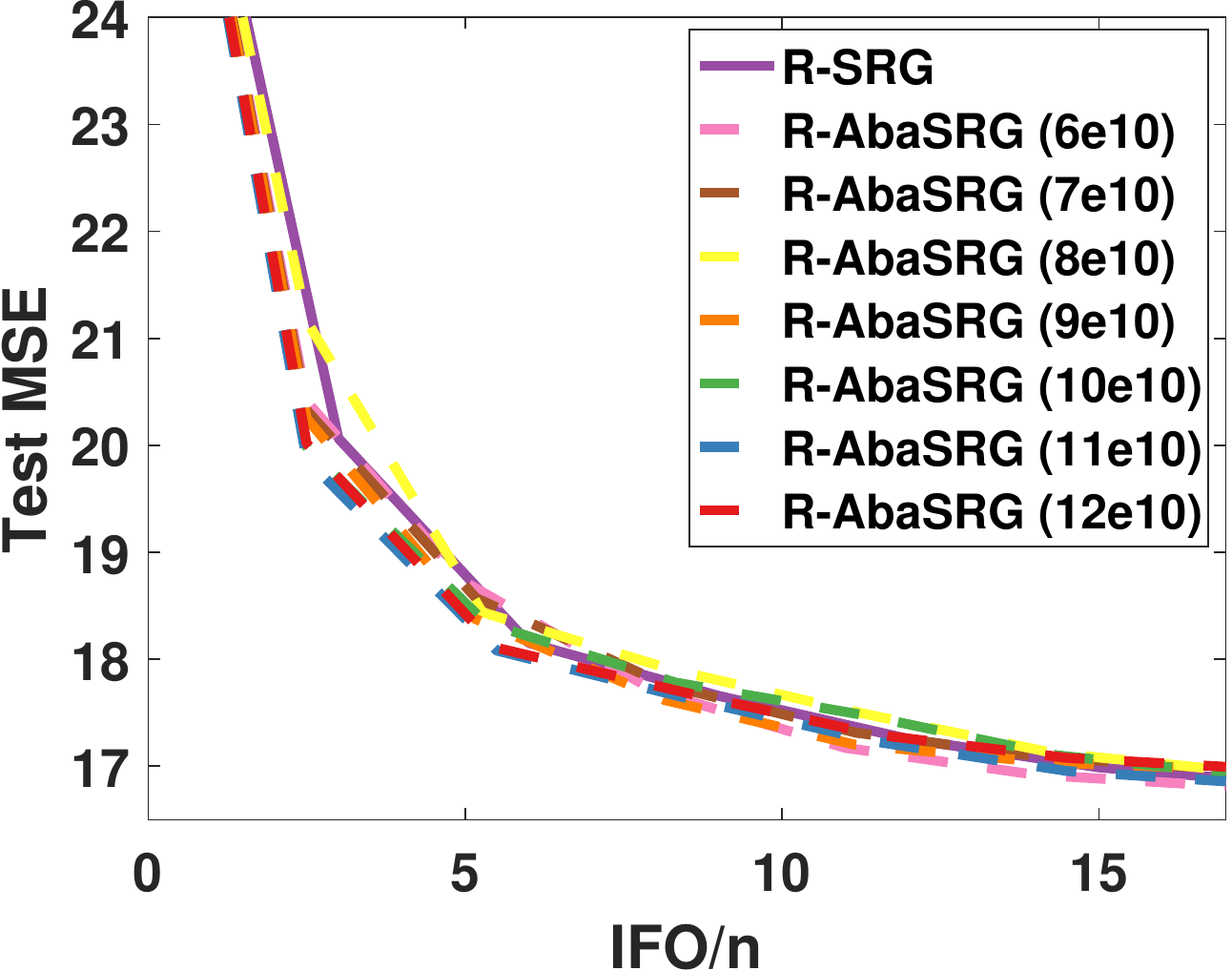}}
    \caption{LRMC results on Jester dataset.}
    \label{Jester_LRMC_appendix}
\end{figure}

\subsection{RKM on SPD manifold}
\textbf{Additional results on synthetic datasets.} Similar to PCA and LRMC, result sensitivity on baseline synthetic dataset with $(n, d, \text{cn}) = (5000, 10, 20)$ is evaluated by presenting three independent results in Fig. \ref{add_RKM_syn_run_appendix}. We also evaluate algorithms on datasets with large samples $n = 10000$, with high dimension $d = 30$ and with high condition number $\text{cn} = 50$. Optimality gap results are presented in Fig. \ref{add_RKM_syn_charcter_appendix}.

\begin{figure}[H]
\captionsetup{justification=centering}
    \centering
    \subfloat[Run 1]{\includegraphics[width = 0.28\textwidth, height = 0.21\textwidth]{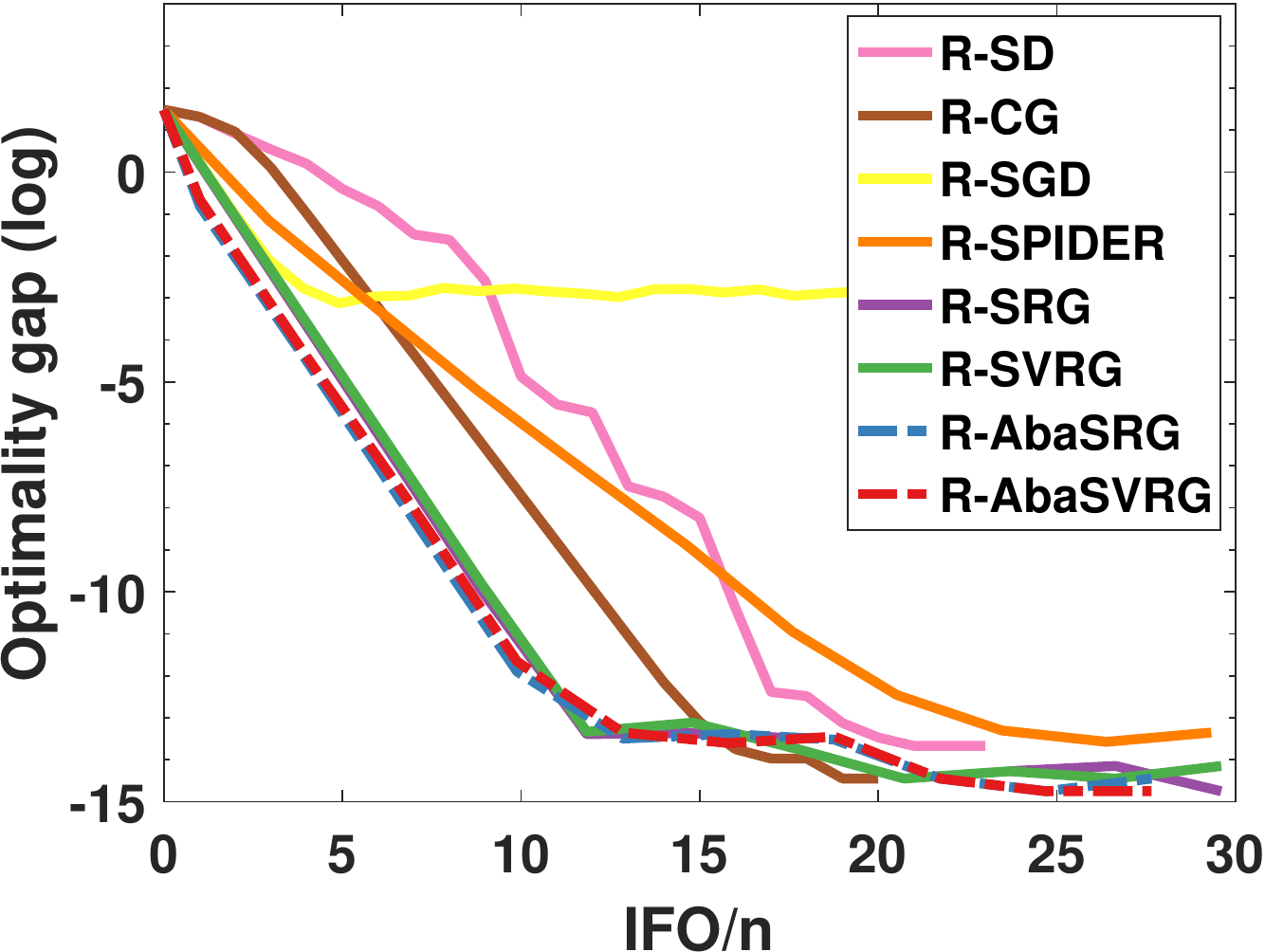}}
    \hspace{0.05in}
    \subfloat[Run 2]{\includegraphics[width = 0.28\textwidth, height = 0.21\textwidth]{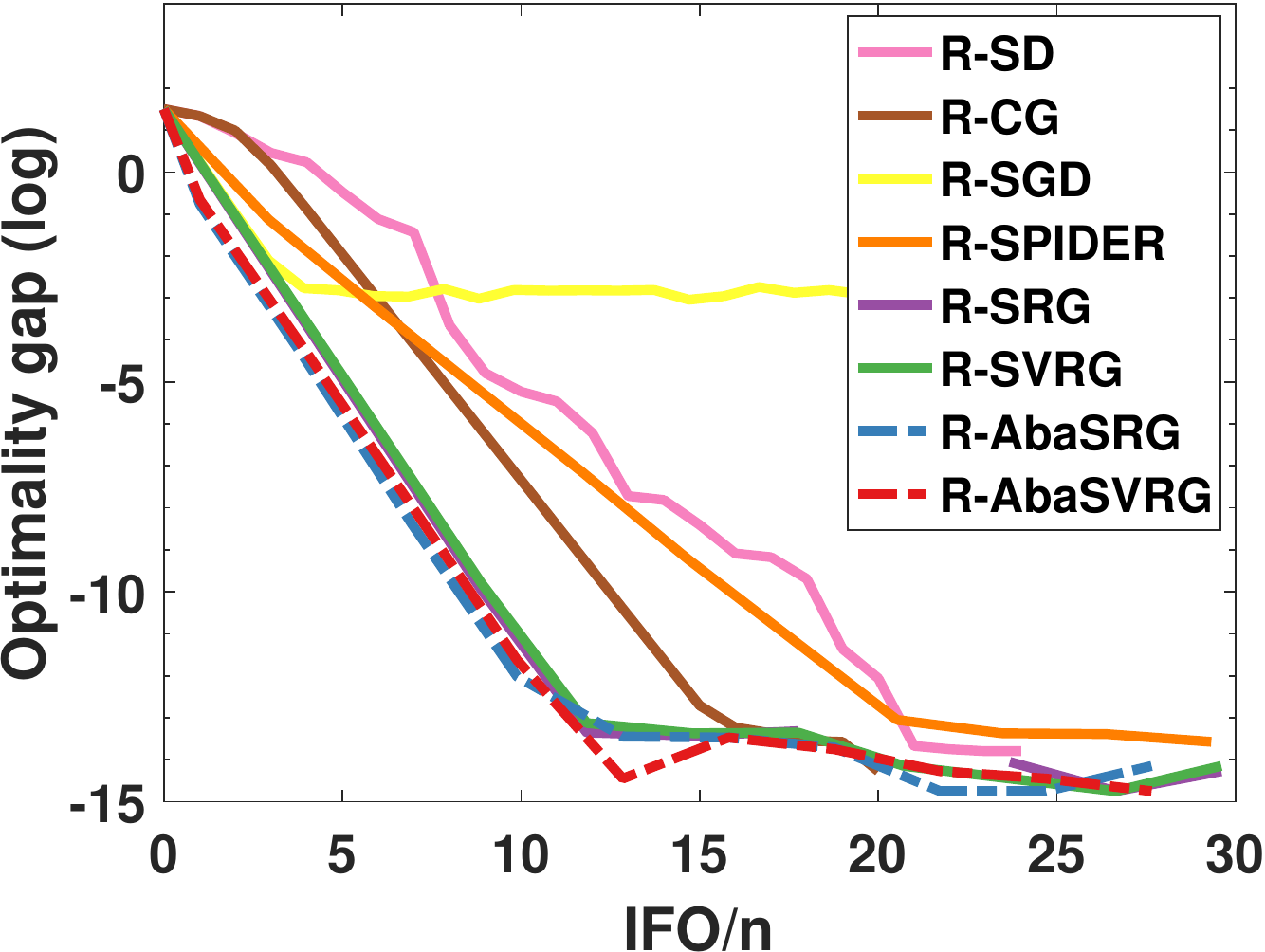}}
    \hspace{0.05in}
    \subfloat[Run 3]{\includegraphics[width = 0.28\textwidth, height = 0.21\textwidth]{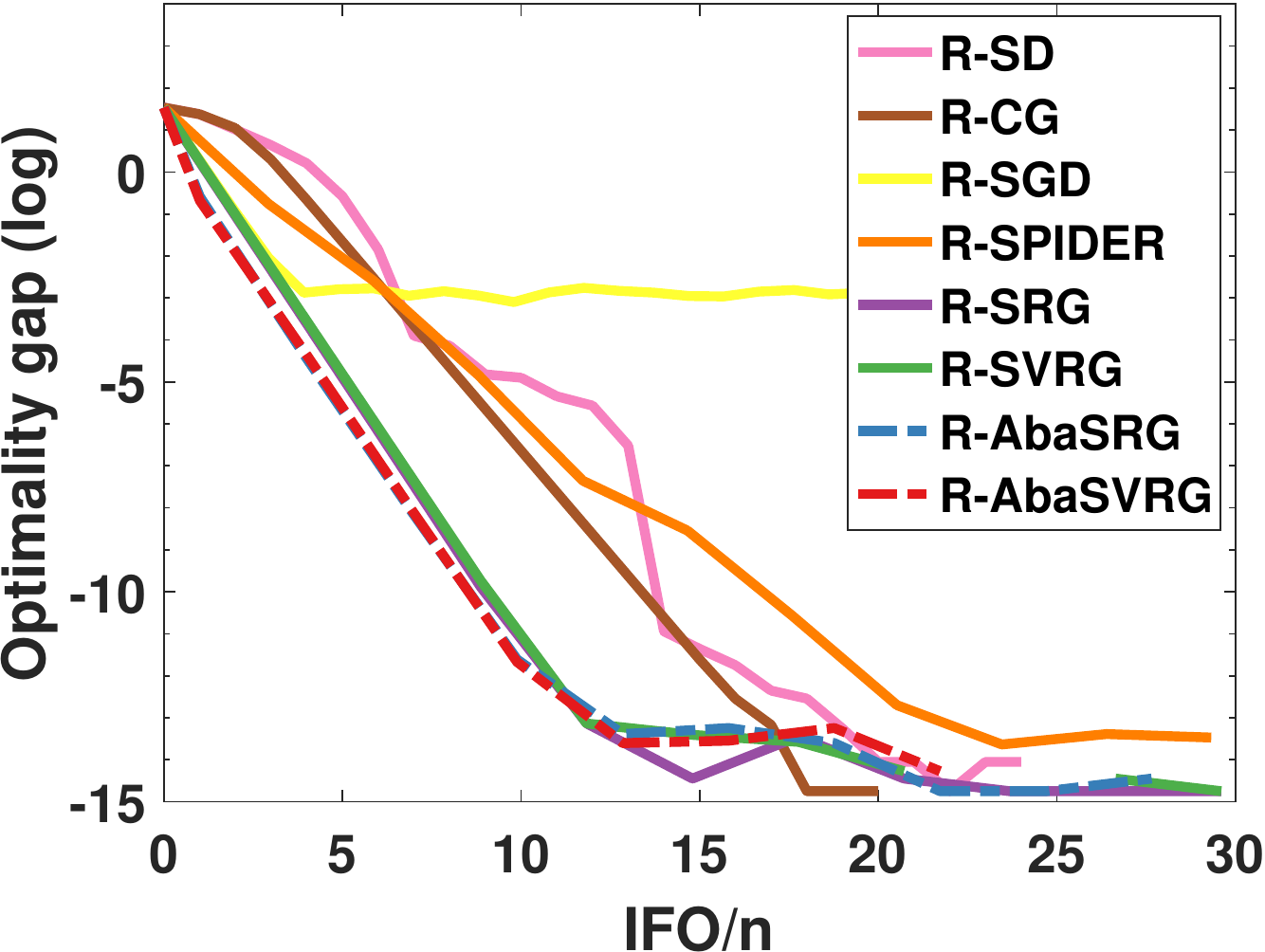}}
    \hspace{0.05in}
    \caption{RKM Result sensitivity on baseline synthetic dataset.}
    \label{add_RKM_syn_run_appendix}
\end{figure}

\begin{figure}[H]
\captionsetup{justification=centering}
    \centering
    \subfloat[Large scale]{\includegraphics[width = 0.28\textwidth, height = 0.21\textwidth]{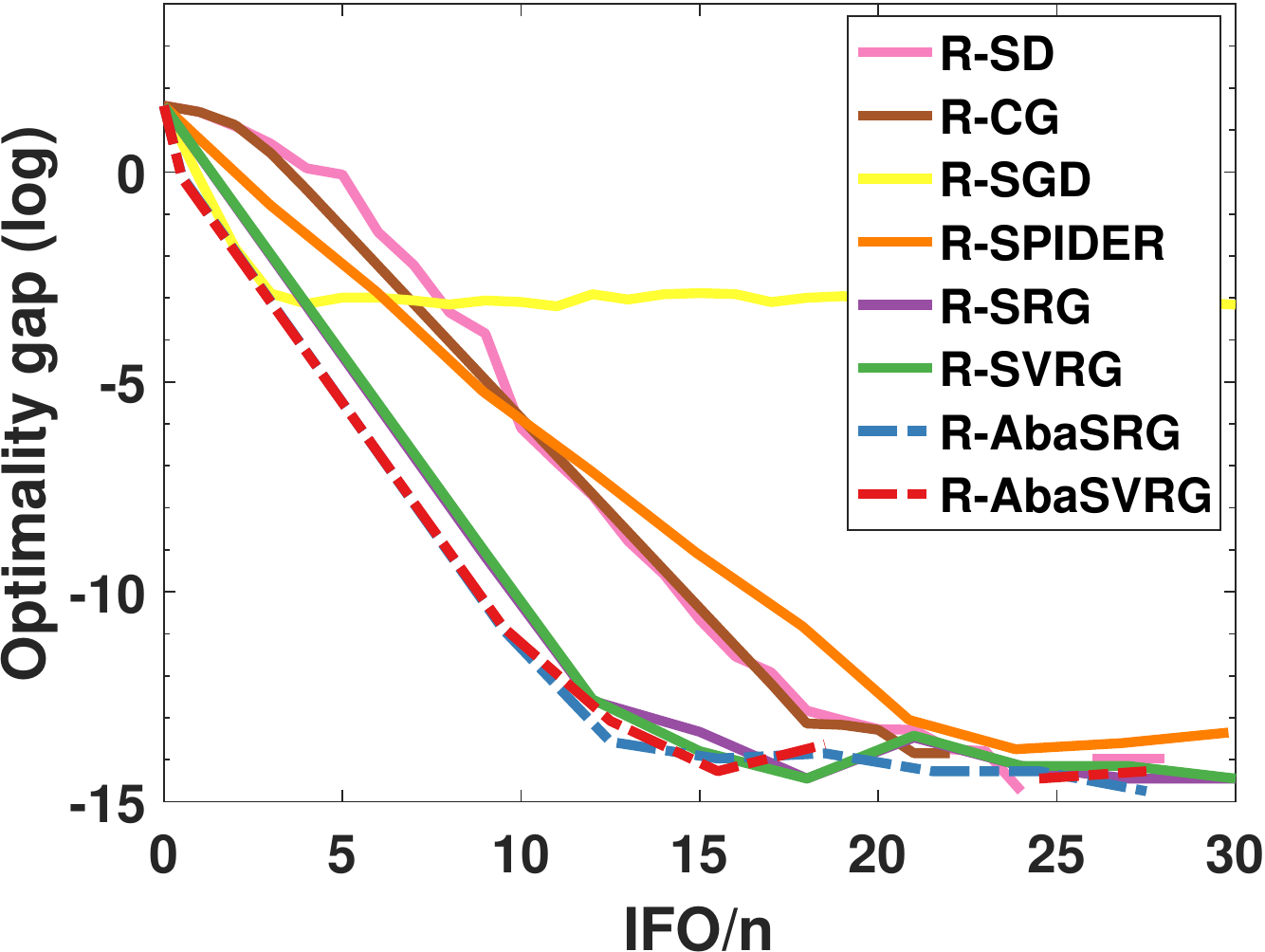}}
    \hspace{0.05in}
    \subfloat[High dimension]{\includegraphics[width = 0.28\textwidth, height = 0.21\textwidth]{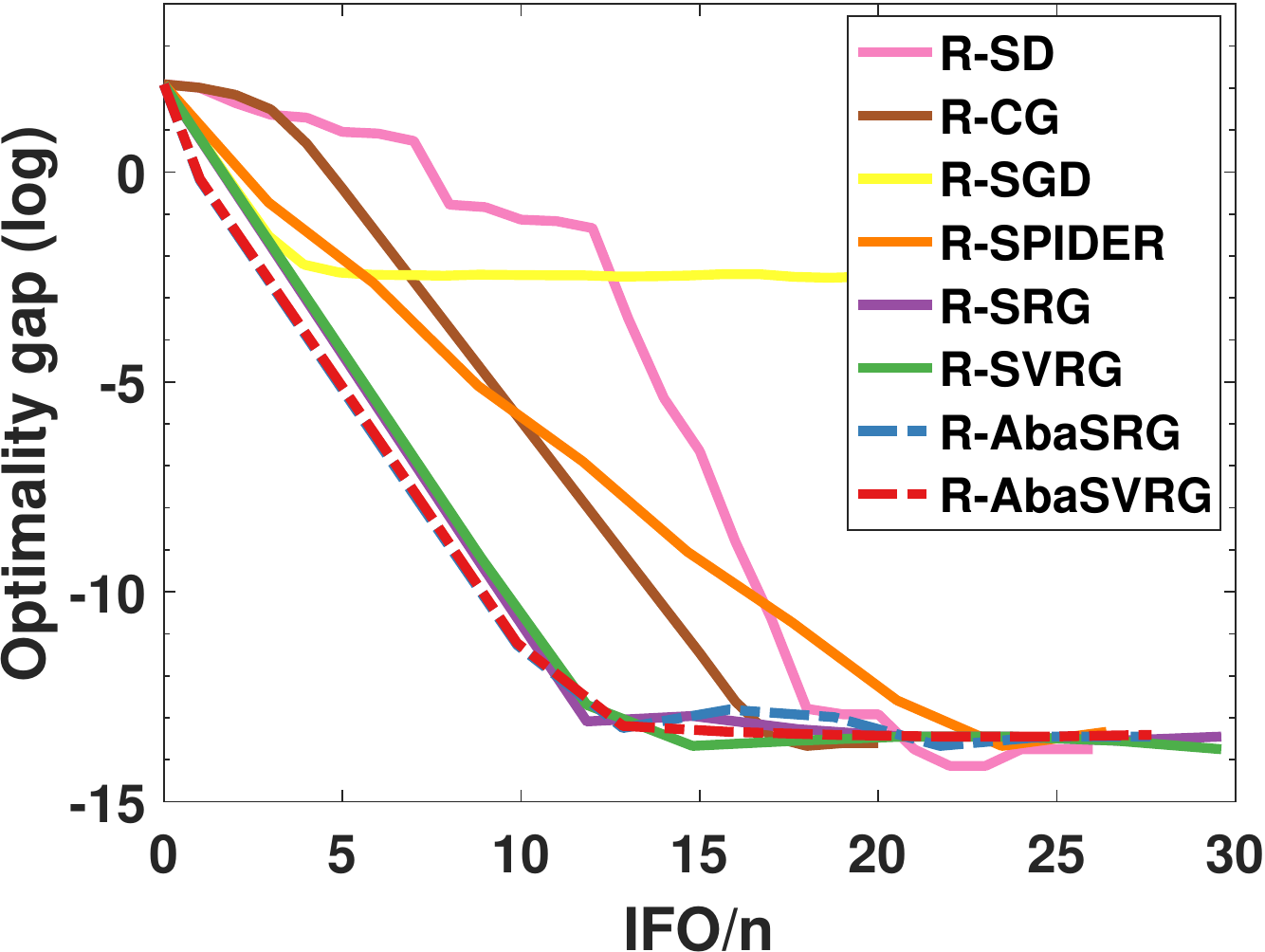}}
    \hspace{0.05in}
    \subfloat[Ill condition]{\includegraphics[width = 0.28\textwidth, height = 0.21\textwidth]{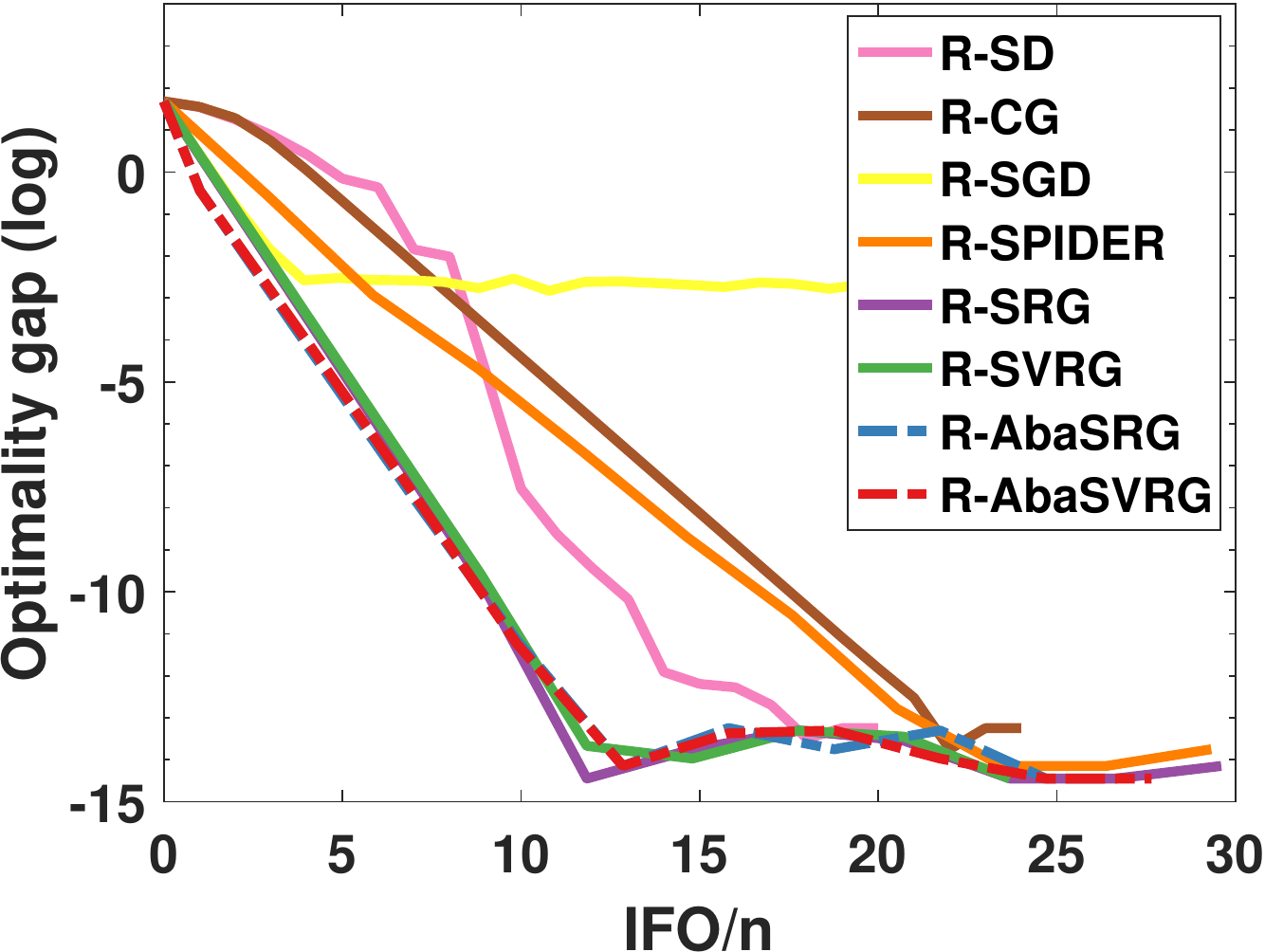}}
    \hspace{0.05in}
    \caption{RKM Result on datasets with different characteristics.}
    \label{add_RKM_syn_charcter_appendix}
\end{figure}

\end{document}